\newcommand{\circled}[1]{\begin{tikzpicture}[inner sep=0.001pt,baseline=-3pt]
\node (a) at (0,0){{\normalfont#1}};
\node[draw,circle,fit=(a)]{};
\end{tikzpicture}}
\newcommand{\af}{ \mathbf{a} }
\newcommand{\mb}[1]{\mathbf{#1}}
\newcommand{\mc}[1]{\mathcal{#1}}
\newcommand{\mf}[1]{\mathfrak{#1}}
\renewcommand{\subset}{\subseteq}
\newcommand{\nt}{{}^{{ N}}\! T}
\newcommand{\nf}{{}^{{ N}}\! f}
\newcommand{\nc}{{}^{{ N}}\! C}
\newcommand{\wn}{W_{\leq N}}
\newcommand{\hn}{\mc{H}_{\leq N} }
\newtheorem{thm}{Theorem}[section]
\newtheorem{prop}[thm]{Proposition}
\newtheorem{lem}[thm]{Lemma}
\newtheorem{cor}[thm]{Corollary}
\newtheorem{rem}[thm]{Remark}
\newtheorem{deff}[thm]{Definition}
\newtheorem{ex}[thm]{Example}
\newtheorem{conj}[thm]{Conjecture}
\numberwithin{equation}{section}
\begin{document}
\bibliographystyle{alpha}

\title[Hyperbolic Coxeter groups of rank 3]{Conjectures P1-P15 for hyperbolic Coxeter groups of rank 3}
	
\author[J. Gao]{Jianwei Gao}
\address{Beijing International Center for Mathematical Research, Peking University, Beijing 100871, China}
\email{gaojianwei827@126.com}
\author[X. Xie]{Xun Xie}
\address{School of Mathematics and Statistics, Beijing Institute of Technology, Beijing 100081, China}
\email{xieg7@163.com}

\date{\today}
	
\subjclass[2010]{Primary 20C08; Secondary  20F55}
\keywords{$ \mathbf{a} $-functions, two-sided cells, Conjectures P1-P15, Coxeter groups of rank 3}

\maketitle

\begin{abstract}
We prove  Lusztig's conjectures P1-P15 for hyperbolic Coxeter groups of rank 3. 
Our proof enables us to give a description of the $ \af $-function and Kazhdan-Lusztig cells for these Coxeter groups.
\end{abstract}
\setcounter{tocdepth}{1}
\tableofcontents

\section{Introduction}\label{sec:intr}

Lusztig proposed a series of conjectures, called P1-P15, on Hecke algebras with unequal parameters in \cite[\S14]{lusztig2003}. They are stated in a very general form, i.e. for any finitely generated Coxeter group and any positive weight function. These conjectures predict that there are nice relations between $ \af $-functions and cells, and one can define a kind of asymptotic rings.
%
Lusztig proved these conjectures for constant weight functions (the equal parameter case) by assuming the boundedness conjecture about the  $ \af $-function  \cite[\S13.4]{lusztig2003} and the positivity of the Kazhdan-Lusztig basis.  The positivity has been proved by \cite{elias-williamson2014positivity}, using some deep ideas from the Hodge theory. However, there is no positivity for the unequal parameter case, and the problem becomes mysterious. Up to now we have no efficient way to prove P1-P15 for arbitrary parameters.
The main difficulty probably lies in computing $ \af $-functions.

In \cite{lusztig2003}, Lusztig proved P1-P15 for infinite dihedral groups and the quasi-split case.
For P1-P15 of finite Weyl groups, only the case of type $ B_n $ with ``non-asymptotic parameters" remains open, see \cite{geck2011rank2} and references therein. The universal Coxeter group case is proved in \cite{Shi-Yang2015universal}. Recently,  the case of affine Weyl groups of rank 2 is proved in \cite{guilhot-parkinson2019C2,guilhot-parkinson2019G2}, by studying cell representations 
and a connection with the Plancherel formula.

This paper is a subsequent work of \cite{xie2019}, where the second named author proved P1-P15  for Coxeter groups with complete graph and right-angled Coxeter groups. The aim of this paper is to complete the proof of conjectures P1-P15  for all Coxeter groups of rank 3. The cases of finite and affine types are known, see \cite[25A.1]{bonnafe2017book} and \cite{guilhot-parkinson2019C2,guilhot-parkinson2019G2}.
Thus we  focus on  hyperbolic Coxeter groups of rank 3.
Since the cases of Coxeter groups with complete graph and right-angled Coxeter groups have been proved in \cite{xie2019},  we only need to consider   Coxeter groups $ (W,S) $
such that $ S=\{r,s,t\} $,  $ m_{rt}=2 $, $ \frac1{m_{rs}}+\frac1{m_{rs}}<\frac12 $, and at least one of $ m_{rs} $, $ m_{st} $ is not $  \infty$.
These groups are divided into three classes in our proofs:
\begin{itemize}
\item [(1)]$\infty=m_{rs}>m_{st}\geq3$;
\item [(2)] $ \infty> m_{rs}, m_{st}\geq 4$ but $(m_{rs},m_{st})\neq (4,4)$;
\item [(3)] $ \infty>m_{rs}\geq 7, m_{st}=3$.
\end{itemize}
 Their Coxeter graphs look like
\begin{tikzpicture}[scale=1.5,baseline=0]
\draw (0,0.05) node[above=1pt]{$ r $} circle [radius=0.05];\draw (0.05,0.05)--++(0.65,0); \draw (0.75,0.05) node[above=1pt]{$ s $} circle[radius=0.05];  \draw (0.8,0.05)--++(0.65,0); \draw  (1.45,0) ++(0.05,0.05) node[above=1pt]{$ t $} circle[radius=0.05];
\draw (0.35,0.05)node [below=0.1pt]{$ m_{rs} $}  ++(0.8,0) node[below=0.1pt]{$ m_{st} $};
\end{tikzpicture}.

The main  methods of proving P1-P15 in \cite{xie2019}  are  applying decreasing induction on $ \af $-values and proving a kind of decomposition formula for some Kazhdan-Lusztig basis elements in a quotient algebra.
In this paper, we prove P1-P15 for Coxeter groups in (1) (2) and (3) by similar methods. 
However, the proofs of some key properties become complicated. In fact all the sections \ref{sec:dih}--\ref{sec:est} of this paper are devoted to prove them. Once we obtain these properties, we can repeat arguments in \cite{xie2019} to prove P1-P15, see \cite[\S9]{xie2019}.
%
 The proofs in sections \ref{sec:take}-\ref{sec:est} depend on some explicit expansions of  products in the Hecke algebra (section \ref{sec:exp}).  Since results from section \ref{sec:exp} are discrete, we have to verify case by case in sections \ref{sec:take}-\ref{sec:est}.
The ideas of the proofs  are simple, and for most cases the verification is easy.

The $ \af $-functions and  cells are often not easy to determine, even for equal parameter cases. A benefit of our proof of P1-P15 is that we have a description of the $ \af $-functions and left (right) cells, see Theorem \ref{thm:92}. In the complete graph case, each $ W_N $ (the set of elements of $ W $ with $ \af $-value $ N $) is either empty or a two-sided cell. However this is not  correct in our present situation. We give a complete list of cases  where $ W_N $ contains more than one two-sided cells, see Theorem \ref{thm:twocells}. As a corollary, we conclude that, in the equal parameter case,  a non-empty $ W_N $ is always a two-sided cell. This confirms \cite[Conj. 3.1]{Belolipetsky2014hyperbolic_cells} in the case of rank 3. At last, we give some examples to show how the cell partitions depend on the parameters, which should be helpful for understanding the semi-continuity conjecture formulated in 
\cite{bonnafe2009semi-cont}.

This article is organized as follows.
In section \ref{sec:pre}, we prepare some notations and basic facts.
In section \ref{sec:3}, we state Conjecture \ref{conj:dim2} for Coxeter groups of dimension 2, and then claim that it implies P1-P15  (Theorem \ref{thm:main}).
A more general form of  Conjecture \ref{conj:dim2}  is given in \cite[\S 9]{xie2019}.
In section  \ref{sec:dih}, we prepare some facts about dihedral groups that we will use. In section \ref{sec:exp}, we give explicit expansions of some products, which are fundamental  in our proof of key properties. The main goal of sections \ref{sec:take}-\ref{sec:est} is to prove Conjecture \ref{conj:dim2} for those Coxeter groups listed above, see Proposition \ref{prop:bound}, Lemma \ref{lem:length}, and Proposition \ref{prop:strictineq}. In the section \ref{sec:9}, we arrive at our main result and determine two-sided cells.

\section{Preliminaries}\label{sec:pre}

Let us fix some notations. Let $ (W,S) $ be a Coxeter group with $ |S|<\infty $. For $ s,t\in S $, $ m_{st}\in \mathbb{N}\cup\{\infty\} $ is the order of $ st $ in $ W $. The neutral element of $ W $ is denoted by $ e $. We have the length function $ l:W\to \mathbb{N} $. For $ I\subset S $, the parabolic subgroup generated by $ I $ is denoted by $ W_I $. If $ I $ is finite, the longest element is denoted by $ w_I $. For $s,t\in S$ with $s\neq t$, we use notation $W_{st}$ instead of $W_{\{s,t\}}$ and $w_{st}$ instead of $w_{\{s,t\}}$. If $ x\in W $ can be expressed as  a product  $ x_1x_2\ldots x_k $ of some elements $ x_i\in W $ with $ l(x)=\sum_{1\leq i\leq k}l(x_i) $, then we say $ x_1x_2\ldots x_k $  is a \textit{reduced product}. We use the notation $ x_1\cdot x_2 \cdot \ldots \cdot x_k  $ to indicate that $ x_1x_2\ldots x_k $ is a reduced product. For $x,y\in W$, we say $x$ appears in $y$, if there exist $w,z\in W$ such that $y=w\cdot x\cdot z$. For  $ x\in W $,
\(\mc{L}(x)=\{s\in S\mid sx<x \}, \mc{R}(x)=\{s\in S\mid xs<x \}.\)

Let $ L:W\to \mathbb{N} $ be a  weight function on $ W $. In other words, $L$ satisfies $L(xy)=L(x)+L(y)$ for any $x,y\in W$ with $l(xy)=l(x)+l(y)$. Unless otherwise stated, $ L $ is assumed to be positive, i.e.  $L(s)>0$ for any $s\in S$. We call $(W,S,L)$ a (positively) weighted Coxeter group. Let $ \mc{A}=\mathbb{Z}[q,q^{-1}] $ be the ring of Laurent polynomials in  indeterminate  $ q $. The Hecke algebra $ \mc{H} $ of $(W,S,L)$ is a unital associative algebra over  $ \mc{A} $ with $ \mc{A} $-basis $ \{T_w\mid w\in W \} $ subject to relations: $$  T_{ww'}=T_wT_{w'} \text{ if } l(ww')=l(w)+l(w'),  $$ $$ \text{ and } T_s^2=1+\xi_s T_s \text{ with } \xi_s=q^{L(s)}-q^{-L(s)} \in\mc{A}.$$
For $ 0\neq a=\sum_i\alpha_i q^i\in \mc{A} $ with $ \alpha_i \in \mathbb{Z}$, we define $ \deg a=\max\{i\mid \alpha_i\neq 0 \} $. For $ 0\in \mc{A} $, we define $ \deg 0=-\infty $. For $  h=\sum_{w\in W} a_w T_w\in \mc{H}$ with $ a_w\in\mc{A} $, we define $ \deg h=\max\{ \deg a_w \mid w\in W\} $. This gives a function $ \deg : \mc{H}\to \mathbb{Z}\cup \{-\infty\} $.

Denote by $ C_w $, $ w\in W $ the Kazhdan-Lusztig basis of $ \mc{H} $. We have $ C_w=\sum_{y\leq w}p_{y,w} T_y $ with $ p_{y,w}\in \mc{A}_{< 0}=q^{-1}\mathbb{Z}[q^{-1}] $ for $ y<w $ and $ p_{w,w}=1 $. Moreover, $ C_w $ is invariant under the bar involution $ \overline{\phantom{q}} $ of $ \mc{H} $ such that $ \overline{q}=q^{-1} $ and $ \overline{T_w}=T_{w^{-1}}^{-1} $. Using Kazhdan-Lusztig basis, one can define preorders  $  \prec_{ {L}}$, $ \prec_{ {R}} $, $ \prec_{ {LR}} $ and  equivalence relations $ \sim_{ {L}} $, $ \sim_{ {R}} $, $ \sim_{ {LR}} $ on $ W $. The associated equivalence classes are called respectively left cells, right cells and two-sided cells. Let $ f_{x,y,z} $ and $ h_{x,y,z} $ be elements of $ \mc{A} $ given by \[
T_xT_y=\sum_{z\in W} f_{x,y,z}T_z,\quad C_xC_y=\sum_{z\in W} h_{x,y,z}C_z.
\]
For $ w\in W $, define $$  \af(w):=\max\{\deg h_{x,y,w}\mid x,y\in W \}  .$$ Then $ \af:W\to \mathbb{N}\cup\{\infty \} $ is called Lusztig's $ \af $-function. Define $ \gamma_{x,y,z^{-1}}\in \mathbb{Z}$ to be the coefficient of $ q^{\af(z)} $ in $ h_{x,y,z} $.
For $ w\in W $, integers $ \Delta(w) $ and $ n_w $ are
defined by\[
p_{e,w}=n_wv^{-\Delta(w)}+\text{ terms of lower degree, with } n_w\neq0.
\]
Let $ \mc{D}=\{z\in W \mid \af(z)=\Delta(z) \} $.
For $ N\in\mathbb{N} $, we set
\[
W_{\geq N}:=\{w\in W\mid \af(w)\geq N \},
\]
\[
W_{>N}:=W_{\geq (N+1)},\quad W_{N}:=W_{\geq N}\setminus W_{>N},
\]
and similarly  define  $ W_{\leq N} $,  $ W_{<N} $. Let  $ \mc{D}_{\geq N}=\mc{D}\cap W_{\geq N} $, and similarly define $  \mc{D}_{N}  $ etc. The boundedness conjecture says that \begin{equation}\label{eq:bound-conj}
 W_{>N_0} =\emptyset\text{ for } N_0=\max\{L(w_I)\mid I\subset S\text{ with }W_I\text{ finite}\}.
\end{equation}  This conjecture holds for Coxeter groups of rank 3 by \cite{zhou,gao}.
\begin{conj}\label{conj15} Let $ N\in\mathbb{N} $.
	
	$ (\text{P1})_{\geq N} $. For any  $w\in W_{\geq N} $, we have $\af(w)\leq\Delta(w)$.
	
$ (\text{P2})_{\geq N} $. If $ z\in\mc{D}_{\geq N} $ and $ x,y\in W $ such that $ \gamma_{x,y,z}\neq0 $, then $ x=y^{-1} $.
	
$ (\text{P3})_{\geq N} $. If $ y\in W_{\geq N} $, there exists a unique $ z\in \mc{D} $ such that $ \gamma_{y^{-1},y,z} \neq0$.
	
$ (\text{P4})_{\geq N} $. If $ w'\prec_{LR} w $ with $ w\in W_{\geq N} $, then $ \af(w')\geq\af(w) $.
	
$ (\text{P5})_{\geq N} $. If $z\in\mathcal{D}_{\geq N}$, $ y\in W $, $\gamma_{y^{-1},y,z}\neq0$, then $\gamma_{y^{-1},y,z}=n_z=\pm1$.
	
$ (\text{P6})_{\geq N} $. For any $z\in\mathcal{D}_{\geq N}$, we have  $z^2=e$.
	
$ (\text{P7})_{\geq N} $.  For any  $x,y,z\in W$ with one of them belonging to $ W_{\geq N} $, we have $\gamma_{x,y,z}=\gamma_{y,z,x}=\gamma_{z,x,y}$.
	
$ (\text{P8})_{\geq N} $.    For any $x,y,z\in W$ with one of them belonging to $ W_{\geq N} $, then $\gamma_{x,y,z}\neq0$ implies that  $x\sim_{L}y^{-1}$, $y\sim_{ L}z^{-1}$, $z\sim_{ L}x^{-1}$.
	
$ (\text{P9})_{\geq N} $.  If  $w'\prec_{ {L}}w$ with $ w\in W_{\geq N} $ and $\af(w')=\af(w)$, then $w'\sim_{ {L}}w$.
	
$ (\text{P10})_{\geq N} $. If  $w'\prec_{ {R}}w$ with $ w\in W_{\geq N} $ and $\af(w')=\af(w)$, then $w'\sim_{ {R}}w$.
	
$ (\text{P11})_{\geq N} $.  If  $w'\prec_{ {LR}}w$ with $ w\in W_{\geq N} $ and $\af(w')=\af(w)$, then $w'\sim_{ {LR}}w$.
	
$ (\text{P12})_{\geq N} $. For any $I\subset S$ and $y\in W_I\cap W_{\geq N}$,  the $ \mb{a} $-value of $ y $ in  $W_I$ is equal to that in $W$.
	
$ (\text{P13})_{\geq N} $. Any left cell  $\Gamma\subset W_{\geq N}$ contains  a unique element $ z$ in  $\mathcal{D}$. And for such  $ z $, $ \Gamma $, and  any $y\in\Gamma$, we have $\gamma_{y^{-1},y,z}\neq0$.
	
$ (\text{P14})_{\geq N} $. For any $w\in W_{\geq N}$,  we have  $w\sim_{ {LR}}w^{-1}$.
	
$ (\text{P15})_{\geq N} $. For   $w,w'\in W$ and  $ x,y\in  W_{\geq N} $ such that  $ \af(x)=\af(y)$, we have$$
\sum_{z\in W}h_{w,x,z}\otimes h_{z,w',y}  =\sum_{z\in W} h_{w,z,y}\otimes h_{x,w',z} \in  \mc{A}\mathop{\otimes}_\mathbb{Z}\mc{A}.$$
\end{conj}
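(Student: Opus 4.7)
The plan is to establish P1--P15 by a decreasing induction on the $\af$-function, following the strategy of \cite{xie2019}. Since the boundedness conjecture holds in rank 3 by \cite{zhou,gao}, there is a well-defined top $N_0=\max\{L(w_I)\mid I\subset S,\ W_I\text{ finite}\}$, above which $W_{>N_0}=\emptyset$ and the statements $(\text{P1})_{\geq N_0+1},\ldots,(\text{P15})_{\geq N_0+1}$ hold vacuously. The induction step then proceeds from $W_{\geq N+1}$ down to $W_{\geq N}$ for each $N\leq N_0$.

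Rather than attack the fifteen statements $(\text{Pi})_{\geq N}$ one by one, the plan is to reduce the whole package to a single technical input, Conjecture \ref{conj:dim2}, about weighted Coxeter groups of dimension 2. Theorem \ref{thm:main} asserts the reduction: Conjecture \ref{conj:dim2} implies all of P1--P15. The mechanism is the same as in \cite[\S9]{xie2019}, combining the decreasing induction on $\af$ with a decomposition formula for certain Kazhdan--Lusztig basis elements in a suitable quotient of $\mc{H}$. Once Conjecture \ref{conj:dim2} is in hand for the three families listed in the introduction, nothing in this reduction needs to be redone, so the argument of \cite[\S9]{xie2019} can be invoked essentially verbatim.

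Consequently the real work is to verify Conjecture \ref{conj:dim2} for the three classes (1)--(3) of rank 3 hyperbolic Coxeter groups. This is decomposed into three technical inputs: Proposition \ref{prop:bound} (an $\af$-boundedness statement on certain parabolic cosets), Lemma \ref{lem:length} (a length estimate), and Proposition \ref{prop:strictineq} (a strict inequality on degrees of structure constants). The common toolkit is a set of explicit expansions of products such as $C_w T_x$ and $C_w C_{w'}$ in $\mc{H}$, derived in section \ref{sec:exp} from dihedral identities prepared in section \ref{sec:dih}. Because these expansions are combinatorially discrete, sections \ref{sec:take}--\ref{sec:est} will proceed case by case through the subfamilies of (1), (2), (3), tracking which $m_{rs},m_{st}$ and which weight functions $L$ occur.

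The main obstacle should be Proposition \ref{prop:strictineq}. Unlike the equal-parameter setting of \cite{elias-williamson2014positivity}, we have no positivity of Kazhdan--Lusztig bases at our disposal, so degree estimates on products can be broken by cancellation. The dihedral expansions of section \ref{sec:exp} give a clean enough description of the leading terms to make the strict inequality plausible in each case, but the bookkeeping is sensitive to the arithmetic of $L(r),L(s),L(t)$ and $m_{rs},m_{st}$, which is why the argument splits into the three families and further into sub-cases. Once these inputs are verified, the $\af$-function description (Theorem \ref{thm:92}) and the list of $W_N$ that split into several two-sided cells (Theorem \ref{thm:twocells}) will come out as structural consequences, in particular confirming \cite[Conj. 3.1]{Belolipetsky2014hyperbolic_cells} in rank 3.
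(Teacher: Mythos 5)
Your proposal follows exactly the paper's own route: reduce P1--P15 to Conjecture \ref{conj:dim2} via Theorem \ref{thm:main} (quoting \cite[\S 9]{xie2019}), then verify that conjecture for the three hyperbolic families through Proposition \ref{prop:bound}, Lemma \ref{lem:length} and Proposition \ref{prop:strictineq}, using the explicit product expansions of section \ref{sec:exp} (which, as a small correction, are expansions of $T_xT_wT_y$ rather than of $C_wT_x$ or $C_wC_{w'}$). This is the same decreasing induction on $\af$-values anchored by the boundedness result of \cite{zhou,gao}, so the plan is correct and essentially identical to the paper's argument.
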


These conjectures are referred to as $ (\text{P1-P15})_{\geq N} $. Similarly, by replacing $\geq N $ by $   >N $ (resp. $  N  $) in  Conjecture \ref{conj15}, we get $ (\text{P1-P15})_{>N} $ (resp. $ (\text{P1-P15})_{N} $).
Since $ W_{\geq 0}=W $,  $  (\text{P1-P15})_{\geq 0}  $ is just Lusztig's conjectures P1-P15 from \cite[\S14.2]{lusztig2003}.

Assume that $ N $ is an integer such that $ W_{>N} $ is $ \prec_{ {LR}} $-closed. Then we can form  a quotient algebra $ \mc{H}_{\leq N} $ of $ \mc{H} $ by the subspace  $ \mc{H}_{>N} $ spanned by $ \{ C_w\mid w\in W_{>N} \} $ over $ \mc{A} $. For any $ w\in W $, the image of $ T_w $ (resp. $ C_w $) in $\hn  $ is denoted by $ \nt_w $ (resp. $ \nc_w $), and $ \{\nt_w\mid w\in\wn \} $ and $ \{\nc_w\mid w\in\wn \} $ form two $ \mc{A} $-basis of $ \hn $, see \cite[Lem. 3.2 and 3.3]{xie2019}. For $ h=\sum_{z\in\wn}b_z\nt_z \in\hn$, we define $$  \deg(h):= \max\{\deg b_{z}\mid z\in\wn \}.  $$ This gives rise to a function $ \deg:\hn\to \mathbb{N}\cup\{-\infty\}$. Note that $ \nc_z=0 $ and
$ \deg(\nt_z)<0 $  for $ z\in W_{>N} $.
For $ x,y,z\in W_{\leq N} $, let $ \nf_{x,y,z} $ be the element  of $ \mc{A} $ given by
\begin{equation*}\label{eq:fxyz}
\nt_x\nt_y =\sum_{z\in W_{\leq N} } \nf_{x,y,z}\nt_z.
\end{equation*}
By \cite[Lem. 3.4]{xie2019}, for any $ x,y\in \wn $, we always have\begin{equation}\label{eq:bound}
 \deg (\nt_x\nt_y)\leq N ,
\end{equation}
and $$  W_N=\{z\in \wn\mid \deg (\nf_{x,y,z})=N \text{ for some } x,y\in \wn \} .$$ If we further assume $ (\text{P1})_{>N} $, $ (\text{P4})_{>N} $, $ (\text{P8})_{>N} $ hold, then $$  W_N=\{ x\in\wn\mid \deg \nt_x\nt_y=N \text{ for some }y\in\wn \}.  $$
\section{Coxeter groups of dimension 2}\label{sec:3}

In this section, $ (W,S) $ is a Coxeter group such that all its finite parabolic subgroups have rank 1 or 2, and at least one of them has rank 2.
We say that this kind of Coxeter group has dimension 2,  because its Davis complex has dimension 2.  For example,   infinite Coxeter groups of rank 3 have dimension 2.

Let 
$$D=\{w_{J}\mid J\subseteq S, |W_{J}|<\infty\}\ \sqcup\ \{sw_{st}\mid s,t\in S, \infty>m_{st}\geq 4, L(t)>L(s)\}.$$
Let $ \af':D\to \mathbb{N} $ be the function given by
$$
\af'(w_J)=L(w_J) \ \mbox{ and }\ \af'(sw_{st})=L(t)+(\frac{m_{st}}{2}-1)(L(t)-L(s)).
$$
For $N\in \mathbb{N}$, we define
$$\begin{aligned}
D_{\geq N}&=\{d\in D \mid \af'(d)\geq N\},\quad D_N=D_{\geq N}\setminus D_{\geq N+1} ,\\
\Omega_{\geq N} &=\left\{x\cdot d\cdot y\mid x,y\in W, d\in D_{\geq N}
\right\},\\
\Omega_{ N}&=\Omega_{\geq N}\setminus \Omega_{\geq N+1},\quad \Omega_{<N}=W\setminus \Omega_{\geq N}.
\end{aligned}
$$
For $N\in \mathbb{N}$ and $d\in D_{N}$, we define
$$\begin{aligned}
U_d&=\{y\in W\mid d\cdot y\in \Omega_{N}\},\\
B_d&=\{x\in U_d^{-1}\mid \mbox{\ if\ }  x\cdot d=w\cdot v  \mbox{\ and\ }  v\neq e , \mbox{\ then\ }  w\in \Omega_{<N}\}.
\end{aligned}
$$

\begin{conj}\label{conj:dim2}
Assume that $ W $ is a Coxeter group of dimension 2, and $ N $ is an integer such that $ W_{>N}=\Omega_{>N} $ and $ W_{>N} $ is $ \prec_{ {LR}} $-closed. Then we have the following properties. \begin{itemize}
\item [(1)] The equality in \eqref{eq:bound} holds only if $ y\in\Omega_{\geq N} $.
\item [(2)] For any $ d\in D_N $, $ b\in B_d $, $ y\in U_d $, we have
\(	l(bdy)=l(b)+l(d)+l(y).\)
\item [(3)]	If  $ d\in D_N $, $ x\in U_d^{-1} $, $ y\in U_d $, $ w\leq d $, then
\begin{equation}\label{eq:leq}
	\deg(\nt_{xw}\nt_y) \leq-\deg p_{w,d}.
\end{equation}
If moreover $ x\in B_d $, $ w<d $, then
	\begin{equation}\label{eq:lin}
\deg(\nt_{xw}\nt_y) <-\deg p_{w,d}.
\end{equation}
\end{itemize}
\end{conj}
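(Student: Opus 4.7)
The strategy I would pursue mirrors the approach of \cite{xie2019}: reduce all three assertions to controlled computations inside dihedral parabolic Hecke subalgebras, and propagate the information into $ \hn $ by induction. The key observation, granted the hypothesis $ W_{>N}=\Omega_{>N} $, is that every element of $ D_N $ lies inside a rank-two parabolic $ W_{st} $, and the Kazhdan-Lusztig polynomials and structure constants inside $ W_{st} $ admit closed-form expressions (to be collected in Section \ref{sec:dih}). Thus the fundamental task is to expand products $ \nt_x\nt_y $ in $ \hn $ by repeatedly ``pushing'' letters through the $ d\in D_N $ that appear, using the dihedral identities from Section \ref{sec:exp}.

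For part (2), I would argue directly from the Coxeter-combinatorial structure. Suppose $ bdy $ is not reduced, with $ b\in B_d $ and $ y\in U_d $. Since $ b\cdot d $ and $ d\cdot y $ are already reduced, a cancellation must occur between a suffix of $ b $ and a prefix of $ y $ across $ d $; by the exchange condition this forces a letter of the trailing dihedral word of $ b $ to ``swallow'' part of $ d $, producing a reduced factorization $ b\cdot d=w\cdot v $ with $ v\neq e $ and $ w\notin\Omega_{<N} $. This contradicts the defining condition of $ B_d $. The verification reduces to analyzing how suffixes of $ b $ can interact with the dihedral parabolic containing $ d $, which is a finite check performed separately for each of the graph types (1), (2), (3).

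For parts (1) and (3), I would run a decreasing induction on $ N $, which is bounded above by the quantity in \eqref{eq:bound-conj} thanks to \cite{zhou,gao}. The inductive hypothesis gives control on all products with output supported in $ W_{>N} $, so modulo that subspace one only needs to track the top-degree contributions. To bound $ \deg(\nt_x\nt_y) $ by $ N $ with equality forcing $ y\in\Omega_{\geq N} $, I would expand using a reduced expression for $ x $ and multiply generators one at a time; each multiplication by a $ T_s $ either adds a letter or applies the quadratic relation, and the only way to accumulate total weight $ N $ is to run through a full chain of parabolic elements, forcing some $ d\in D_N $ to appear in a reduced expression for $ y $. For \eqref{eq:leq}, the same bookkeeping applied to $ \nt_{xw}\nt_y $ with $ w\leq d $ trades the degree of $ p_{w,d} $, which measures how deep $ w $ lies below $ d $ in the Kazhdan-Lusztig order on the dihedral piece, one-for-one against the degree of the product. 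The strictness \eqref{eq:lin} under $ x\in B_d $ and $ w<d $ then comes from the fact that the $ B_d $ condition blocks the unique top-degree term from surviving in $ \hn $.

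The main obstacle, which is what evidently makes Sections \ref{sec:take}--\ref{sec:est} necessary, is the strict inequality \eqref{eq:lin} in the unequal-parameter setting, particularly for class (3) with $ m_{st}=3 $ and arbitrary $ L(r), L(s), L(t) $. Here one loses any positivity of Kazhdan-Lusztig coefficients, so the top-degree contributions do not obviously annihilate; instead one must exhibit algebraic cancellations among the dihedral structure constants from Section \ref{sec:exp}, and the correct cancellation depends discretely on the relative sizes of $ L(r), L(s), L(t) $ and on the descent sets $ \mc{L}(x), \mc{R}(y) $. I expect, as the authors indicate, that the argument in each sub-case is short but that no uniform treatment is available, so the proof will split into finitely many explicit verifications organized by the three classes in the introduction.
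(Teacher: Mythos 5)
Your high-level picture---reduce everything to dihedral computations and then run a finite case check for each of the three graph types---does match the paper's strategy, but the proposal leaves genuine gaps, the most serious one in part (1). The paper does \emph{not} prove (1) by a decreasing induction on $N$ (that is the outer induction used to deduce P1--P15 from the conjecture); for a fixed $N$ it inducts on $l(y)$, peeling off a full rank-two factor by writing $x=x_1\cdot u$, $y=v\cdot y_1$ with $u,v\in W_I$ and $I$ a two-element set taken from the left descents of $y$, so that $\nt_x\nt_y=\sum_w\nf_{u,v,w}\nt_{x_1}\nt_w\nt_{y_1}$. Two ingredients essential to this are absent from your plan. First, the structure constants in the quotient $\hn$ are not the $f_{u,v,w}$ but
$\nf_{u,v,w}=f_{u,v,w}-f_{u,v,w_I}p_{w,w_I}-(f_{u,v,d_I}-f_{u,v,w_I}p_{d_I,w_I})p_{w,d_I}$
when $w_I,d_I\in (W_I)_{>N}$; the correction terms can have \emph{positive} degree (Lemma \ref{lem:degfp}, Corollary \ref{cor:degnfp}), and Steps II--III of the general procedure in Section \ref{sec:take}, which control them, are where much of the work lies. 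Multiplying one $T_s$ at a time never produces these corrections and gives no handle on them. Second, your key assertion that ``the only way to accumulate total weight $N$ is to run through a full chain of parabolic elements, forcing some $d\in D_N$ to appear in $y$'' \emph{is} Proposition \ref{prop:bound}; in the paper it is extracted by combining the classification of monomials in $f_{u,v,w}$ (Section \ref{subsec:pterm}) with the explicit expansions of $T_{x_1}T_wT_{y_1}$ in the non-reduced case (Lemmas \ref{lem:reduced0}, \ref{lem:reduced}, \ref{lem:reduced2}) and with the length induction hypothesis, and none of this is recoverable from one-letter-at-a-time bookkeeping.

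For part (2), your mechanism (non-reducedness of $bdy$ yields $b\cdot d=w\cdot v$ with $v\neq e$ and $w\notin\Omega_{<N}$, contradicting $b\in B_d$) asserts precisely the clause that needs proof: establishing $w\notin\Omega_{<N}$ requires the numerical comparisons (e.g.\ $L(w_{rs})\leq N$ and $L(w_{st})\leq N$ forcing $\af'(d)<N$) that constitute the paper's proof of Lemma \ref{lem:length}. In fact the paper's contradiction for $l(d)\geq 2$ is with $d\in D_N$ rather than with membership in $B_d$, and the remark following Lemma \ref{lem:length} shows the $B_d$ hypothesis is irrelevant there and only needed when $l(d)=1$---which your uniform mechanism does not reflect. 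Similarly for part (3): the strict inequality \eqref{eq:lin} for $l(w)\geq 2$ holds with no $B_d$ hypothesis and is proved by degree estimates from Section \ref{sec:exp}, not by a top-degree term being ``blocked by $B_d$''; the $B_d$ condition enters only for $l(w)\leq 1$ (via $x,xr'\in\Omega_{<N}$ and the strict form of Proposition \ref{prop:bound}). So while your skeleton points in the right direction, each of the three assertions has been reduced to a claim that is itself the content of the corresponding section of the paper.
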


\begin{thm}\label{thm:main}
If Conjecture \ref{conj:dim2} and the boundedness conjecture \eqref{eq:bound-conj} hold for $ (W,S) $  and its all  parabolic subgroups, then  P1-P15  hold for $ (W,S) $.
\end{thm}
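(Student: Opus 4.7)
The plan is to argue by decreasing induction on $N$: assuming $(\text{P1-P15})_{>N}$, deduce $(\text{P1-P15})_{\geq N}$, equivalently $(\text{P1-P15})_N$. The base case is $N\geq N_0+1$ with $N_0$ the bound from \eqref{eq:bound-conj}, where $W_{\geq N}=\emptyset$ and everything is vacuous; at $N=0$ one then recovers all of P1-P15. This is the outer induction scheme of \cite[\S 9]{xie2019}, and the aim is to reduce the inductive step to the combinatorial input provided by Conjecture \ref{conj:dim2}.

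At the inductive step I would first verify the two hypotheses of Conjecture \ref{conj:dim2} at level $N$. Closure of $W_{>N}$ under $\prec_{LR}$ is immediate from $(\text{P4})_{>N}$. For the identification $W_{>N}=\Omega_{>N}$, I would accumulate the already-established equalities $W_{N'}=\Omega_{N'}$ for $N'>N$, and use $(\text{P12})_{>N}$ together with the known P1-P15 for rank-$\leq 2$ parabolic subgroups (due to Lusztig, see \cite{lusztig2003}) to transfer $\af$-values between $W$ and its parabolic subgroups $W_J$, so that an element of $\Omega_{>N}$ of the form $x\cdot d\cdot y$ with $d\in D_{>N}$ is correctly identified as having $\af$-value $\af'(d)$, while elements outside $\Omega_{>N}$ cannot reach a comparable value.

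With Conjecture \ref{conj:dim2} available, I would work in the quotient algebra $\hn$. Parts (2) and (3) give reduced-product factorizations $b\cdot d\cdot y$ and sharp degree estimates for $\nt_{xw}\nt_y$ with $w\leq d$, while part (1) forces any element attaining degree $N$ in $\deg(\nt_x\nt_y)$ to lie in $\Omega_{\geq N}$. Combining these, following the scheme of \cite[\S 9]{xie2019} essentially verbatim, I would derive a decomposition identity in $\hn$ expressing $\nc_x\,\nc_d\,\nc_y$ as $\nc_{xdy}$ plus a combination of $\nc_z$ with $z\in\Omega_{<N}$ and lower-degree corrections, for each $d\in D_N$, $x\in B_d$, $y\in U_d$. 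From this formula one reads off $W_N=\Omega_N$ (continuing the induction), the equality $\af(xdy)=\af'(d)=N$ for $xdy\in\Omega_N$, and explicit descriptions of the two-sided, left, and right cells inside $W_N$ parametrized by $D_N$, $U_d$, $B_d$. Each of P1-P15 at level $N$ then follows by formal manipulation of the decomposition and the resulting cell description, exactly as in \cite[\S 9]{xie2019}.

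The main obstacle is not the outer induction, which is formal, but guaranteeing that the identification $W_{>N}=\Omega_{>N}$ goes through cleanly at each inductive step: this rests delicately on the global bound \eqref{eq:bound} together with part (1) of Conjecture \ref{conj:dim2} to exclude spurious high-$\af$ elements outside $\Omega_{>N}$, and on P12 to connect the global $\af$-computation with the parabolic computations (from Section \ref{sec:dih}) that feed into the definitions of $D$ and $\af'$. The derivation of the decomposition formula in $\hn$ is the other delicate moment; once it is in hand, the remaining bookkeeping for P1-P15 at level $N$ follows the template of \cite[\S 9]{xie2019}.
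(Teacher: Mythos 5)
Your proposal matches the paper's approach: the paper's proof of Theorem \ref{thm:main} is simply a citation to \cite[\S 9]{xie2019}, whose method is exactly the decreasing induction on $\af$-values combined with a decomposition formula for Kazhdan--Lusztig basis elements in the quotient algebra $\hn$ that you describe (and whose outputs are recorded in Theorem \ref{thm:92}). Your outline of the inductive step — verifying $W_{>N}=\Omega_{>N}$ and $\prec_{LR}$-closure, then feeding parts (1)--(3) of Conjecture \ref{conj:dim2} into the decomposition identity to read off the cells and P1--P15 at level $N$ — is the same reduction the paper relies on.
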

\begin{proof}
See  \cite[\S 9]{xie2019}.
\end{proof}

Thus for the aim of this paper it suffices to  prove Conjecture \ref{conj:dim2} for those Coxeter groups that are listed in section \ref{sec:intr}. We  first prepare some results in the following two sections.


\section{Dihedral groups}\label{sec:dih}
In this section, $ (W_I,I,L) $ is a weighted dihedral group with $ I=\{s,t\} $ and $ 2\leq m_{st}\leq \infty $.

If $ m_{st}\geq 4$ is even and $ L(s)\neq L(t) $, we set $I= \{s_1,s_2\}$ with $ L(s_1)>L(s_2) $,  $ d_I =s_{2}w_{I}$, $ d_I' =s_{1}w_{I}$, and define a (nonpositive) weight function  $ L':W_I\to\mathbb{Z} $ by $ L'(s_1)=L(s_1) $ and $ L'(s_2)=-L(s_2) $.

\subsection{Possible monomials in $ f_{u,v,w} $}\label{subsec:pterm}

For $ u,v,w\in W_I$, $ f_{u,v,w} $ is a polynomial of $ \xi_s $ and $ \xi_t $ with nonnegative coefficients. We view $ \xi_s $ and $ \xi_t $ as variables, and say $ \xi_s^m \xi_t^n$ appears in $ f_{u,v,w} $ if the coefficient of $ \xi_s^m \xi_t^n$ in $ f_{u,v,w} $ is nonzero. (It is possible that $ \xi_s=\xi_t $, but this does not affect the following statements.)

\begin{lem}\label{lem:fuvw} Assume $ \infty\neq m_{st}\geq 3 $.
For $ u,v\in W_I\setminus\{w_{I}\} $, consider possible monomials that appear in $ f_{u,v,sts} $. At least one of the following (maybe overlapped) situations happens:
\begin{itemize}
\item [(1)] $ f_{u,v,sts}=0 $ or has a nonzero constant term;
\item [(2)] $ \xi_s $ appears in $f_{u,v,sts}   $, and   $(u,v)$ is one of the following pairs:
\begin{itemize}
	\item $ (s\cdot u', u'^{-1}\cdot sts) $ for some $ u'\in W_{I} $;
    \item $  (tw_I, w_Is) $;
	\item $ (sts\cdot u',u'^{-1}\cdot s) $ for some $ u'\in W_{I} $;
	\item $  (sw_I, w_It) $;
\end{itemize}
\item [(3)] $ \xi_t $ appears in $f_{u,v,sts}   $, and   $(u,v)$ is one of the following pairs:
\begin{itemize}
	\item $( st\cdot u' , u'^{-1}\cdot ts)$ for some $ u'\in W_{I} $;
    \item $(sw_I, w_I s)$.
\end{itemize}
\end{itemize}
\end{lem}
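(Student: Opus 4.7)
The plan is to proceed by induction on $n = l(u) + l(v)$, driven by the fundamental recursion in the dihedral Hecke algebra: if $u = u' \cdot r$ and $v = r \cdot v'$ are reduced products for some $r \in \{s,t\}$, then the quadratic relation $T_r^2 = 1 + \xi_r T_r$ yields
\[
T_u T_v \;=\; T_{u'} T_{v'} \;+\; \xi_r \, T_{u'} T_v,
\]
so that $f_{u,v,sts} = f_{u',v',sts} + \xi_r \, f_{u',v,sts}$. A pure $\xi_s$ (resp.\ $\xi_t$) monomial on the left therefore arises either from the same kind of monomial in $f_{u',v',sts}$, treated by induction, or, when $r = s$ (resp.\ $r = t$), from a constant term of $f_{u',v,sts}$. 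Since specializing $\xi_s = \xi_t = 0$ identifies the Hecke algebra with the group algebra of $W_I$, that constant equals $1$ precisely when $u' \cdot v = sts$ in $W_I$, which forces $v = u'^{-1} \cdot sts$ and produces exactly the first listed pair $(u,v) = (u' \cdot s,\, u'^{-1} \cdot sts)$ of case~(2). The mirror pair $(sts \cdot u',\, u'^{-1} \cdot s)$ is obtained by running the argument with the symmetric form $T_u T_v = T_{u'} T_{v'} + \xi_r T_u T_{v'}$, and the case-(3) pair $(st \cdot u',\, u'^{-1} \cdot ts)$ is the analogue with $r = t$.

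The base case $n \leq 3$ is immediate: either $T_u T_v = T_{uv}$ is a single reduced term with $uv = sts$ (case~(1), constant $1$) or $f_{u,v,sts} = 0$. In the inductive step with matching tail/head letters, translating the conclusion about $(u',v')$ back to $(u,v) = (u' \cdot r,\, r \cdot v')$ preserves the listed forms, since absorbing the letter $r$ into the parameter $u''$ of the listed pair keeps it in the same family (for example, $(s \cdot u'', u''^{-1} \cdot sts)$ with $u''$ replaced by $u'' r$).

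When the last letter of $u$ differs from the first letter of $v$, the concatenation $uv$ is already alternating. If $l(u) + l(v) \leq m_{st}$, then $T_u T_v = T_{uv}$, contributing only a constant and landing us in case~(1). The remaining regime $l(u) + l(v) > m_{st}$ is the boundary case: both $u$ and $v$ must lie close to maximal length while still avoiding $w_I$, and the product must be computed via braid rewriting. Direct expansion using the standard identity $T_{w_I} T_r = T_{w_I r} + \xi_r T_{w_I}$ (and its iterates) then produces precisely the boundary pairs $(tw_I, w_I s)$ and $(sw_I, w_I t)$ of case~(2), and $(sw_I, w_I s)$ of case~(3).

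The main technical obstacle will be this boundary regime $l(u) + l(v) > m_{st}$ with mismatched tail/head letters, where the interplay between the quadratic relation and braid rewriting is subtle: one must verify that, among all monomials produced, only a single $\xi_s$ or $\xi_t$ can reach the target $T_{sts}$ with nonzero coefficient, and only in the listed configurations. Careful bookkeeping is also needed when translating the inductive conclusions through the substitution $u = u' \cdot r$, $v = r \cdot v'$ to ensure the listed forms are preserved under absorption of the cancellation letter, and one must separately verify the small case $m_{st} = 3$ where $sts = tst$ makes several listed pairs degenerate.
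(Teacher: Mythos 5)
Your strategy (induction on $l(u)+l(v)$ via the cancellation recursion $f_{u,v,sts}=f_{u',v',sts}+\xi_r f_{u',v,sts}$, with the specialization $\xi_s=\xi_t=0$ identifying constant terms) is genuinely different from the paper's proof, which instead uses the trace symmetry $f_{u,v,sts}=f_{sts,u,v^{-1}}$ and expands $T_sT_tT_sT_u$ directly, so that every $\xi$ is traced to one of the three letters of $sts$; that one observation simultaneously bounds the degree and pins down $(u,v)$ in a few lines. Your route could in principle be completed, but as written it has concrete gaps.

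First, the claim that the constant-term branch ``produces exactly the first listed pair $(u'\cdot s,\,u'^{-1}\cdot sts)$'' is false: the condition is only that $(us)v=sts$ \emph{in the group}, with $u$ ending in $s$ and $v$ beginning with $s$, and its solutions are $(s,sts)$, but also $(sts,s)$ (take $us=st$, $v=s$), which is the \emph{third} listed form, and, for every odd $m_{st}$, the pair $(sw_I,w_Is)$, which is not listed in case (2) at all and is rescued only because $\xi_s=\xi_t$ whenever $m_{st}$ is odd; this degeneracy affects all odd $m_{st}$, not just $m_{st}=3$ as you suggest, and your argument would otherwise report a $\xi_s$-contribution for a pair absent from case (2). (You cannot outsource $(sts,s)$ to ``the symmetric form of the recursion'': once you fix one decomposition you must account for all terms it produces.) Second, you never exclude monomials of degree $\geq 2$: the term $\xi_r f_{u',v,sts}$ contributes $\xi_r\xi_{r'}$ whenever $f_{u',v,sts}$ itself has a linear term, and the lemma, as it is applied later in the paper, needs every non-constant monomial of $f_{u,v,sts}$ to be exactly $\xi_s$ or $\xi_t$ when $u,v\neq w_I$; ruling this out in your framework requires checking that each listed form for $(u',v)$ is incompatible with $u=u'\cdot r$ being reduced, a nontrivial case analysis you omit (the paper dispatches it in one sentence because a product $T_sT_tT_sT_u$ can only ever produce three $\xi$'s, two of which force $u$ or $v$ to be $w_I$). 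Third, the mismatched regime $l(u)+l(v)>m_{st}$ --- which is precisely where $(tw_I,w_Is)$ and $(sw_I,w_It)$ live, while $(sw_I,w_Is)$ is in fact a \emph{matched} pair --- carries most of the content of the lemma and is only asserted, not proved: one must show that a non-constant coefficient on $T_{sts}$ forces $l(u)+l(v)\geq 2m_{st}-3$ and hence both factors near-maximal. Until these three points are filled in, the proposal is an outline rather than a proof.
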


\begin{proof}
Assume first $ \xi_s\neq \xi_t $.
We consider the product $ T_{sts}T_u $ since $ f_{u,v,sts}=f_{sts,u, v^{-1}} $. If  $ \xi_s^2\xi_t $ or $ \xi_s\xi_t $ or $ \xi_s^2 $ appears in $f_{u,v,w}   $, it is easy to see $u=w_{I}$ or $v=w_{I}$, which contradicts with the assumption $ u,v\neq w_I $.

Suppose $ \xi_s $ appears in $f_{u,v,sts}=f_{sts,u, v^{-1}}$ and consider the product $ T_{sts}T_u $. If the third factor of $ sts $ gives  $ \xi_s $, then $\mathcal{L}(u)=\{s\}$, $ stu=v^{-1} $. Thus,
\begin{itemize}
\item if $ t\cdot u<w_I $, then $(u,v)=(s\cdot u', u'^{-1}\cdot sts) $ for some $ u'\in W_{I} $;
\item if $ t\cdot u=w_I $, then $(u,v)=(tw_I, w_Is) $.
\end{itemize}
Similarly if the first factor of $ sts $ gives $ \xi_s $, then $ s\in\mc{L}(tsu) $ and $ tsu=v^{-1} $. Since $ u,v\neq w_I $, we have two cases:
\begin{itemize}
\item if $\mathcal{L}(u)=\{s\}$, then $(u,v)=( sts\cdot u' , u'^{-1}\cdot s)$ for some $ u'\in W_{I} $;
\item if $\mathcal{L}(u)=\{t\}$, then $(u,v)=(sw_I, w_It) $ for some $ u'\in W_{I} $.
\end{itemize}

Suppose $ \xi_t $ appears in $f_{u,v,sts} =f_{sts,u, v^{-1}}$. Then we have $ t\in\mc{L}(su) $ and $ u=v^{-1} $.
 If $\mathcal{L}(u)=\{s\}$, then $u=st\cdot u'=v^{-1}$ for some $ u'\in W_{I}$. If $\mathcal{L}(u)=\{t\}$,
then  $ su=w_I $, and  $ u=sw_I=v^{-1}$.

Now the lemma follows for $\xi_s \neq \xi_t $. But for  $\xi_s = \xi_t $ similar arguments shows that the same statement is valid: at least one of (1)(2)(3) happens.
\end{proof}

Similarly, we have following three lemmas.

\begin{lem}\label{lem:infinite2}
Assume that $m_{st}=\infty$. For $ u,v\in W_I $, consider possible monomials that appear in $ f_{u,v,sts} $.  We must be in (at least) one of the following situations.
\begin{itemize}
\item[(1)] $ f_{u,v,sts}=0 $ or has a nonzero constant term;
\item [(2)]$ \xi_s $ appears in $f_{u,v,sts}   $, $su<u$ and $vs<v$;
\item [(3)]$ \xi_t $ appears in $f_{u,v,sts}   $, and $(u,v)=( st\cdot u' , u'^{-1}\cdot ts)$ for some $u'\in W_{I}$.
\end{itemize}
\end{lem}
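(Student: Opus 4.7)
The plan is to exploit the fact that when $m_{st}=\infty$ every element of $W_I$ has a unique reduced expression as an alternating word in $s$ and $t$, so there are no braid relations and the Hecke algebra multiplication is governed solely by the quadratic relations $T_s^2=1+\xi_sT_s$ and $T_t^2=1+\xi_tT_t$, together with $T_xT_y=T_{xy}$ when $xy$ is reduced. Writing $u=a_1\cdots a_p$ and $v=b_1\cdots b_q$ in reduced form, all collisions in the expansion of $T_uT_v$ occur between nested pairs $(a_{p-j},b_{j+1})$, starting from the junction $a_p$--$b_1$ and propagating strictly inward.

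If $a_p\neq b_1$ (in particular if $u=e$ or $v=e$), then $T_uT_v=T_{uv}$ and $f_{u,v,sts}\in\{0,1\}$, placing us in case (1). Otherwise $a_p=b_1=\alpha$; writing $u=u'\cdot\alpha$, $v=\alpha\cdot v'$, one computes
\[
T_uT_v=T_{u'}T_{v'}+\xi_\alpha T_{u\cdot v'},
\]
where $u\cdot v'$ is automatically reduced since $v'$ begins with the letter opposite to $\alpha$. Iterating this identity until collisions cease yields
\[
T_uT_v=T_{u^{(K)}v^{(K)}}+\sum_{j=0}^{K-1}\xi_{c_j}T_{w_j},
\]
where $K=\min(p,q)$, $c_j=a_{p-j}=b_{j+1}$ is the $(j{+}1)$-st collision letter, and $w_j=a_1\cdots a_{p-j}b_{j+2}\cdots b_q$ (with the natural convention in the edge cases $j\in\{p-1,q-1\}$). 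Since the lengths $l(w_j)=l(u)+l(v)-2j-1$ for $0\le j<K$ are pairwise distinct and all have parity opposite to $l(u^{(K)}v^{(K)})=l(u)+l(v)-2K$, every basis element $T_w$ appears in $T_uT_v$ with a coefficient that is a single monomial from $\{0,1,\xi_s,\xi_t\}$.

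Specialising to $w=sts$: if $f_{u,v,sts}\in\{0,1\}$ we are in case (1). If $f_{u,v,sts}=\xi_s$, then $sts=w_j$ for some $j$ with $c_j=s$; matching the first and last letters of $w_j$ via the alternating parity of the reduced words forces $a_1=s$ and the rightmost letter of $w_j$ to be $s$, which in turn translate to $su<u$ and $vs<v$, giving case (2). If $f_{u,v,sts}=\xi_t$, a parallel parity analysis pins $(u,v)$ down to the form $(st\cdot u',\,u'^{-1}\cdot ts)$ for some $u'\in W_I$, giving case (3). The main technical obstacle is this parity bookkeeping (splitting on the parities of $p$, $q$, $j$, and on whether one is in the edge case $j=q-1$), but in each sub-case $w_j$ has length $3$ and the verification reduces to matching a handful of short alternating words, so the conclusions hold uniformly.
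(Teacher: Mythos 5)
Your argument is correct, but it takes a genuinely different route from the one the paper intends. The paper proves Lemma~\ref{lem:fuvw} by transposing to the product $T_{sts}T_u$ via the identity $f_{u,v,sts}=f_{sts,u,v^{-1}}$ and asking which of the three letters of $sts$ is responsible for the factor $\xi_s$ or $\xi_t$; Lemma~\ref{lem:infinite2} is then asserted to follow ``similarly.'' You instead expand $T_uT_v$ directly, using the fact that in the infinite dihedral group reduced words are unique and alternating, so the only cancellations are the nested collisions $a_{p-j}=b_{j+1}$; this yields the closed formula $T_uT_v=T_{u^{(K)}v^{(K)}}+\sum_{j}\xi_{c_j}T_{w_j}$ in which the elements $w_j$ and $u^{(K)}v^{(K)}$ are pairwise distinct (by length, plus the parity argument). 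Your approach buys a stronger structural fact --- every structure constant $f_{u,v,w}$ of the infinite dihedral Hecke algebra is a single monomial from $\{0,1,\xi_s,\xi_t\}$ --- from which the lemma is read off by solving $w_j=sts$; the paper's approach is shorter and is uniform with the finite dihedral case, where the braid relation forces a less clean statement anyway. I checked the bookkeeping in your cases (2) and (3): the identification $a_1=b_q=s$ when $c_j=s$, the forced shape $(u,v)=(st\cdot u',\,u'^{-1}\cdot ts)$ when $c_j=t$ (using $b_1\cdots b_{j+1}=(a_2\cdots a_p)^{-1}$), and the edge case where the $b$-part of $w_j$ is empty (which can only produce $\xi_s$, never $\xi_t$) all come out right, so there is no gap.
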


\begin{lem}\label{lem:fuvw2}
 Assume $ \infty\neq m_{st}\geq 2 $.
For $ u,v\in W_I $, consider possible monomials that appear in $ f_{u,v,st} $. We must be in one of the following situations:
\begin{itemize}
\item[(1)] $ f_{u,v,st}=0 $ or has a nonzero constant term;
\item [(2)] $ \xi_s\xi_t $ appears in $f_{u,v,st}   $, and   $(u,v)=(w_I, w_I) $;
\item [(3)] $ \xi_s $ appears in $f_{u,v,st}   $, and  $(u,v)$ is one of the following pairs:
\begin{itemize}
\item $ (u,v)=(s\cdot u', u'^{-1}\cdot st)$,
\item $ (u,v)=(w_I, w_It) $;
\end{itemize}
\item [(4)] $ \xi_t $ appears in $f_{u,v,st}   $, and  $(u,v)$ is one of the following pairs:
\begin{itemize}
\item $ (u,v)=(st\cdot v', v'^{-1}\cdot t)$,
\item $ (u,v)=(sw_I, w_I) $.
\end{itemize}
\end{itemize}
\end{lem}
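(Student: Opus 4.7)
The plan is to mirror the proof of Lemma \ref{lem:fuvw} and exploit the identity $f_{u,v,st}=f_{st,u,v^{-1}}$ so that it suffices to analyze the expansion of $T_{st}T_u=T_sT_tT_u$ and identify, for each possible monomial $m\in\{1,\xi_s,\xi_t,\xi_s\xi_t\}$, which $u$ and which $v^{-1}$ the coefficient of $T_{v^{-1}}$ can contribute $m$. Since $l(st)=2$, at most two Hecke cancellations (applications of $T_r^2=1+\xi_rT_r$) can occur in this product, so only monomials of total degree $\leq 2$ in $\xi_s,\xi_t$ can appear. Moreover the letters $s$ and $t$ each occur once in the fixed prefix $T_sT_t$, so each of them can be the ``cancelling'' letter at most once; this rules out $\xi_s^2$, $\xi_t^2$, $\xi_s^2\xi_t$, and $\xi_s\xi_t^2$, leaving only the four monomials listed in the statement.

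Next I would branch on $\mathcal{L}(u)$. If $t\notin\mathcal{L}(u)$, then $T_tT_u=T_{tu}$ is a single term, and one sub-branches on whether $s\in\mathcal{L}(tu)$: this either gives a reduced product yielding the constant term, or a single $s$-cancellation yielding an $\xi_s$ term. Writing $u=tu'$ with $su'<u'$ in the latter sub-branch recovers exactly the pair $(u,v)=(s\cdot u',u'^{-1}\cdot st)$ of case (3). If instead $t\in\mathcal{L}(u)$, write $u=t\cdot u'$ and expand $T_tT_u=T_{u'}+\xi_tT_u$, then multiply by $T_s$ on the left. The first summand $T_sT_{u'}$ contributes only to constant and $\xi_s$ coefficients, while the summand $\xi_tT_sT_u$ can further produce an $\xi_s\xi_t$ contribution exactly when $s\in\mathcal{L}(u)$. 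Since $u$ lies in the dihedral group $W_I$ and already has $t\in\mathcal{L}(u)$, having also $s\in\mathcal{L}(u)$ forces $u=w_I$. Tracking the resulting $v^{-1}$ in each remaining sub-case produces the second options in (3) and (4) (namely $(w_I,w_It)$ and $(sw_I,w_I)$) and the $(w_I,w_I)$ entry of case (2); the first option of case (4) comes from the sub-case where $u$ starts with $t$ but not $s$, so that the $\xi_t$ is produced by the $T_t$ in the prefix and $T_sT_u$ is a reduced product.

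The final ingredient is to verify the constraints on $v$ in cases (3) and (4) using $f_{u,v,st}=f_{st,u,v^{-1}}$: reading off which $T_{v^{-1}}$ survives after the chosen Hecke cancellation determines $v^{-1}$ uniquely from $u$, so one simply inverts the resulting expression to present $v$ in the listed form (e.g.\ $v=u'^{-1}\cdot st$, resp.\ $v=v'^{-1}\cdot t$, resp.\ $v=w_I$, $v=w_It$).

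The main obstacle is purely organizational: keeping the case tree tidy and making sure the $\xi_s\xi_t$ contribution is attributed only to $(w_I,w_I)$ (the crux is the observation that the second cancellation can only come from $s\in\mathcal{L}(u)$, which together with $t\in\mathcal{L}(u)$ forces $u=w_I$, and by the symmetry $f_{u,v,st}=f_{v^{-1},u^{-1},ts}$ applied to the mirror case, forces $v=w_I$ as well). Once the casework is laid out, every branch reduces to a short direct computation in the dihedral Hecke algebra, with no further arithmetic surprises.
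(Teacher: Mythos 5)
Your overall strategy is the right one, and it is exactly how the paper treats this family of statements (it proves Lemma \ref{lem:fuvw} this way and then says ``similarly'' for the present lemma): reduce to a product with the short element on the \emph{left} via a cyclic identity, expand generator by generator, and track which factor produces each $\xi$. The problem is that your cyclic identity is wrong. The correct one is $f_{u,v,st}=f_{(st)^{-1},u,v^{-1}}=f_{ts,u,v^{-1}}$ (from $f_{x,y,z}=\tau(T_xT_yT_{z^{-1}})$ and the trace property of $\tau$), i.e.\ the coefficient of $T_{v^{-1}}$ in $T_tT_sT_u$ --- not in $T_sT_tT_u$. The paper's Lemma \ref{lem:fuvw} hides this because $sts$ is an involution. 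What you actually compute, $f_{st,u,v^{-1}}$, equals $f_{u,v,ts}$, and the monomial conditions for $f_{u,v,ts}$ are the $s\leftrightarrow t$ mirror of those for $f_{u,v,st}$; they are genuinely different. Concretely, $T_sT_{st}=T_t+\xi_sT_{st}$, so $\xi_s$ appears in $f_{s,st,st}$ with $(u,v)=(s,st)=(s\cdot u',u'^{-1}\cdot st)$ for $u'=e$, exactly as in case (3); but your product $T_{st}T_s=T_{sts}$ contains no $\xi_s$ at all, so your analysis never sees this pair. Conversely, a correctly executed analysis of $T_sT_tT_u$ yields $\xi_s$ precisely for $(u,v)=(ts\cdot u',u'^{-1}\cdot s)$ or $(tw_I,w_I)$, which are not the pairs listed in case (3). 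So the proposal, carried out as written, proves the wrong statement.

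The slip is visible in the writeup: after assuming $t\notin\mathcal{L}(u)$ you ``write $u=tu'$'', and in a dihedral group the branch ``$t\notin\mathcal{L}(u)$ and $s\in\mathcal{L}(tu)$'' forces $tu=w_I$, hence produces only the single pair $(tw_I,w_I)$, not the whole family $(s\cdot u',u'^{-1}\cdot st)$ you claim to recover there. The fix is mechanical: run the identical case analysis on $T_tT_sT_u$. The second factor $T_s$ cancelling into $T_u$ (i.e.\ $s\in\mathcal{L}(u)$, $u=s\cdot u'$) gives $v^{-1}=tu$ and hence $(s\cdot u',u'^{-1}\cdot st)$, with the degenerate case $u=w_I$ producing $(w_I,w_It)$ and, when $T_t$ also cancels, $(w_I,w_I)$ with $\xi_s\xi_t$; the first factor $T_t$ cancelling into $T_{su}$ gives $(st\cdot v',v'^{-1}\cdot t)$, or $(sw_I,w_I)$ when $s\notin\mathcal{L}(u)$ and $su=w_I$, both with $\xi_t$. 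That is precisely the statement.
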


\begin{lem}\label{lem:infinite1}
Assume that $m_{st}=\infty$. For $ u,v\in W_I $, consider possible monomials that appear in $ f_{u,v,st} $.  We must be in one of the following situations.
\begin{itemize}
\item[(1)] $ f_{u,v,st}=0 $ or has a nonzero constant term;
\item [(2)] $ \xi_s $ appears in $f_{u,v,st}   $, and $(u,v)=(s\cdot u', u'^{-1}\cdot st)$ for some $u'\in W_{I}$;
\item [(3)] $ \xi_t $ appears in $f_{u,v,st}   $, and $(u,v)=(st\cdot u', u'^{-1}\cdot t)$ for some $u'\in W_{I}$.
\end{itemize}
\end{lem}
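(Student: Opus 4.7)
The plan is to compute $T_uT_v$ directly, using the only reductions available in the infinite dihedral case, namely $T_s^2=1+\xi_sT_s$ and $T_t^2=1+\xi_tT_t$; there are no braid relations.

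First I would dispose of the easy cases. If $u=e$, $v=e$, or the last letter of $u$ differs from the first letter of $v$, then $T_uT_v=T_{uv}$ is a single basis element, so $f_{u,v,st}\in\{0,1\}$ and we are in case (1). Hence we may assume $u,v$ are nonempty and share a common junction letter $x_1\in\{s,t\}$. The reduced expressions are then the alternating words $u=x_kx_{k-1}\cdots x_1$ and $v=x_1x_2\cdots x_m$, with $x_{i+1}\neq x_i$ for every $i$.

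Next I would expand $T_uT_v$ by iterating the quadratic relation. Writing $u^{(j)}$ (resp.\ $v^{(j)}$) for $u$ (resp.\ $v$) with its last (resp.\ first) $j$ letters removed, a short induction on $\min(k,m)$ yields
\[
T_uT_v=\sum_{j=0}^{\min(k,m)-1}\xi_{x_{j+1}}\,T_{u^{(j)}\cdot v^{(j+1)}}\ +\ R,
\]
where $R$ is a single basis element $T_{w_0}$ obtained by always choosing the $1$-branch of $T_{x_{j+1}}^2=1+\xi_{x_{j+1}}T_{x_{j+1}}$, and the $(j{+}1)$-st summand records the branch that first picks up the $\xi$-branch at the $(j{+}1)$-st collision. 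Every $\xi$-contribution to $f_{u,v,st}$ therefore comes from exactly one summand of the displayed sum, while the only possible constant contribution comes from $R$.

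Finally I would identify for which $(u,v)$ and which $j$ one has $u^{(j)}\cdot v^{(j+1)}=st$. Since $u^{(j)}$ ends in $x_{j+1}$ and $v^{(j+1)}$ either is empty or starts with the letter opposite to $x_{j+1}$, combining $l(u^{(j)})+l(v^{(j+1)})=2$ with the requirement that the spelling be $st$ (and not $ts$) leaves exactly two possibilities:
\[
x_{j+1}=s:\ u^{(j)}=s,\ v^{(j+1)}=t,\qquad x_{j+1}=t:\ u^{(j)}=st,\ v^{(j+1)}=e.
\]
Setting $u'=x_jx_{j-1}\cdots x_1\in W_I$ (so that $u'^{-1}=x_1x_2\cdots x_j$), these translate directly into $(u,v)=(s\cdot u',u'^{-1}\cdot st)$ in the first case and $(u,v)=(st\cdot u',u'^{-1}\cdot t)$ in the second, matching cases (2) and (3). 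If no such $j$ exists, no $\xi$ appears in $f_{u,v,st}$, the only contribution comes from $R$, and we land in case (1).

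The main (mild) obstacle is the bookkeeping in the last step: configurations with $l(u^{(j)})=2$, $v^{(j+1)}=e$ when $x_{j+1}=s$ (which would spell $ts$) and with $l(u^{(j)})=1$, $l(v^{(j+1)})=1$ when $x_{j+1}=t$ (which again spells $ts$) must be excluded. This is a one-line check once one isolates the split $st=s\mid t$, but it is precisely what forces the asymmetric shapes $(s\cdot u',u'^{-1}\cdot st)$ in case (2) versus $(st\cdot u',u'^{-1}\cdot t)$ in case (3).
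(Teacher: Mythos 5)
Your argument is correct. Note first that the paper does not write out a proof of this lemma at all: it is stated as one of three lemmas that follow ``similarly'' from the proof of Lemma \ref{lem:fuvw}, whose method is to use the cyclic identity $f_{u,v,st}=f_{ts,u,v^{-1}}$ and then multiply the short element $T_{ts}$ into $T_u$ letter by letter, asking which factor of $ts$ produces a $\xi_s$ or a $\xi_t$. You instead expand the full product $T_uT_v$ directly by iterating the quadratic relation at the junction, which is a genuinely different (and self-contained) route. Your expansion $T_uT_v=\sum_{j}\xi_{x_{j+1}}T_{u^{(j)}\cdot v^{(j+1)}}+T_{u^{(p)}\cdot v^{(p)}}$ is valid because in the infinite dihedral group the concatenation of two alternating reduced words with distinct junction letters is always reduced, and the lengths $l(u^{(j)}v^{(j+1)})=k+m-2j-1$ are pairwise distinct and distinct from $l(u^{(p)}v^{(p)})$, so each basis element receives at most one contribution; this actually yields the stronger structural fact that every $f_{u,v,z}$ here lies in $\{0,1,\xi_s,\xi_t\}$, from which Lemma \ref{lem:infinite2} would follow by the same bookkeeping. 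Your final case split (the split $st=s\mid t$ forcing either $u^{(j)}=s,\ v^{(j+1)}=t$ or $u^{(j)}=st,\ v^{(j+1)}=e$, and the exclusion of the two configurations that would spell $ts$) matches the asymmetric shapes in cases (2) and (3) exactly. What the paper's approach buys is uniformity with the finite dihedral case, where the presence of the braid relation makes a full expansion of $T_uT_v$ much messier; what yours buys is an explicit closed form for the whole product in the infinite case, at no extra cost.
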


\subsection{Possible degrees of $\delta$}\label{subsec:delta}
In this subsection, $ 3\leq m_{st}<\infty $.

\begin{lem}\label{lem:plusdeg}
Let $ u,v\in W_I $. If   $ f_{u,v,w_I}\neq 0 $, then $ \deg f_{u,v,w_I}=L(u)+L(v)-L(w_I)  $.
\end{lem}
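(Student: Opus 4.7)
The plan is to fix a reduced expression $v = s_1 s_2 \cdots s_k$ (with $s_i \in \{s,t\}$, necessarily alternating) and expand $T_u T_v$ iteratively via the relation $T_w T_{s_i} = T_{w s_i}$ when $s_i \notin \mc{R}(w)$, and $T_w T_{s_i} = T_{w s_i} + \xi_{s_i} T_w$ when $s_i \in \mc{R}(w)$. This will realize $f_{u,v,w_I}$ as a positive sum over ``paths'' $(w_0, w_1, \ldots, w_k)$ in $W_I$ with $w_0 = u$, $w_k = w_I$, and $w_i \in \{w_{i-1} s_i, w_{i-1}\}$ at each step (a stay $w_i = w_{i-1}$ being permitted only when $s_i \in \mc{R}(w_{i-1})$); each path contributes the monomial $\prod_{i : w_i = w_{i-1}} \xi_{s_i}$ with coefficient $+1$.

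For each path I classify its steps as up-moves $A$ (with $l(w_i) = l(w_{i-1}) + 1$), down-moves $B$ (with $l(w_i) = l(w_{i-1}) - 1$), and stays $C$ (with $l(w_i) = l(w_{i-1})$). Length and weight accounting yield $|A| - |B| = l(w_I) - l(u)$ and $\sum_{i \in A} L(s_i) - \sum_{i \in B} L(s_i) = L(w_I) - L(u)$, which combined with $\sum_i L(s_i) = L(v)$ give the degree of the monomial contributed by such a path as
\[ \sum_{i \in C} L(s_i) = L(u) + L(v) - L(w_I) - 2 \sum_{i \in B} L(s_i) \leq L(u) + L(v) - L(w_I), \]
with equality precisely when $B = \emptyset$. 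Because all path contributions have non-negative coefficients, there is no cancellation among top-degree monomials, so $\deg f_{u,v,w_I}$ is exactly the maximum such degree, and equals $L(u) + L(v) - L(w_I)$ if and only if some path has no down-moves.

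It then remains to exhibit such a down-move-free path whenever $f_{u,v,w_I} \neq 0$, i.e.\ whenever some path exists. I will consider the \emph{greedy} path, defined by $w_0^g = u$ and, at each step, moving if $s_i \notin \mc{R}(w_{i-1}^g)$ and staying otherwise, so that $B^g = \emptyset$ by construction. The key structural fact is that $\mc{R}(w)$ has at most one element for each $w \neq w_I$ in the dihedral group $W_I$, and because the letters $s_i$ alternate, the greedy path can stay only at step $1$ or after it has reached $w_I$. A short induction on $i$ using this observation then shows $l(w_i^g) \geq l(w_i)$ for every path $(w_j)$; consequently, if some path ends at $w_k = w_I$, then $l(w_k^g) \geq l(w_I)$ forces $w_k^g = w_I$. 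Verifying the length-dominance step cleanly is the principal subtlety; once it is in hand, the lemma follows directly.
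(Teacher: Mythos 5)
Your proposal is correct, but note that the paper does not actually prove this lemma in the text --- its ``proof'' is a citation to \cite[Lem.~4.6]{xie2019} --- so there is no internal argument to compare against; what you give is a valid self-contained substitute. The degree accounting is right: viewing $f_{u,v,w_I}$ as a polynomial in $\xi_s,\xi_t$ with nonnegative coefficients (as in \S\ref{subsec:pterm}), each path contributes $q$-degree $L(u)+L(v)-L(w_I)-2\sum_{i\in B}L(s_i)$, positivity kills cancellation in top degree, and the problem reduces to producing a down-move-free path to $w_I$ whenever any path reaches $w_I$. The step you flag as the principal subtlety --- the dominance $l(w_i^g)\geq l(w_i)$ --- does go through, and in fact more cleanly than a case-by-case induction: your structural observation shows the greedy path can stay only at step $1$ (which happens exactly when $s_1\in\mc{R}(u)$) or after reaching $w_I$, so $l(w_i^g)=\min\{l(u)+i-\epsilon,\,l(w_I)\}$ with $\epsilon\in\{0,1\}$ recording whether it stays at step $1$; on the other hand every path satisfies $l(w_i)\leq \min\{l(u)+i-\epsilon,\,l(w_I)\}$, because $l$ increases by at most $1$ per step, is bounded by $l(w_I)$, and when $\epsilon=1$ no path can move up at step $1$ either (since $us_1<u$ forces $l(w_1)\leq l(u)$). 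Dominance is then immediate, so if some path ends at $w_I$ the greedy one does too, realizing the degree $L(u)+L(v)-L(w_I)$. The only residual trivialities are the cases $u=e$ or $v=e$, which force $f_{u,v,w_I}\neq 0$ only when the other element is $w_I$ and are consistent with the formula.
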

\begin{proof}
See \cite[Lem.4.6]{xie2019}.
\end{proof}

\begin{lem}\label{lem:degfp}
Assume that $ u,v\in W_I\setminus\{w_I \} $. For  $ \delta=\deg f_{u,v,w_I}p_{sts,w_I} $, we must be in one of the following situations.
\begin{itemize}
\item [(1)] $ \delta\leq 0 $;
\item [(2)] $ L(s)=L(t) $, $ \delta=L(s) $, and $ l(u)=l(v)=m_{st}-1 $;
\item[(3)] $ L(s)\neq L(t) $, $ \delta=L(t) $,  and $ u=v=sw_{I}$;
\item [(4)]$ L(s)\neq L(t) $, $ \delta=L(s) $,  and $ \{u,v\}=\{d_I,d'_I\} $;
\item [(5)]$ L(s)\neq L(t) $, $ \delta=2L(s)-L(t) >0$,  and $ u=v=tw_{I}$;
\item [(6)] $ L(s)>L(t) $, $ \delta=L(s)-L(t) $,  and $ u=d_I,$  $ L(v)=L(w_I)-L(st) $;
\item [(7)] $ L(s)>L(t) $, $ \delta=L(s)-2L(t) $, and $ u=d_I,$  $ L(v)=L(w_I)-L(tst) $.
\end{itemize}
\end{lem}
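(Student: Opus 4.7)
The plan is to decompose $\delta = \deg f_{u,v,w_I} + \deg p_{sts, w_I}$ and treat the two factors separately. The preceding Lemma \ref{lem:plusdeg} already handles the first factor: either $f_{u,v,w_I} = 0$, forcing $\delta = -\infty$ and placing us in case (1), or else $\deg f_{u,v,w_I} = L(u) + L(v) - L(w_I)$. Hence the problem reduces to computing $\deg p_{sts, w_I}$ in each parameter regime and then enumerating the pairs $(u,v) \in (W_I \setminus \{w_I\})^2$ for which the resulting $\delta$ is strictly positive; any pair failing this test contributes case (1).

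For $\deg p_{sts, w_I}$, I would invoke the explicit description of the Kazhdan-Lusztig basis in dihedral Hecke algebras. In the equal-parameter regime (which covers both $L(s) = L(t)$ and every odd $m_{st}$, since odd $m_{st}$ forces $L(s) = L(t)$), the identity $C_{w_I} = \sum_{y \leq w_I} q^{L(y) - L(w_I)} T_y$ gives $\deg p_{sts, w_I} = 3L(s) - L(w_I)$, and substitution yields $\delta = L(s)(l(u) + l(v) + 3 - 2m_{st})$. Since $l(u), l(v) \leq m_{st} - 1$, positivity forces $l(u) = l(v) = m_{st} - 1$ and $\delta = L(s)$, exactly case (2). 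In the unequal-parameter regime ($m_{st} \geq 4$ even, $L(s_1) > L(s_2)$), the polynomial $p_{sts, w_I}$ is no longer a monomial, and I would read off its dominant terms from the standard dihedral formulas (cf.\ \cite[\S 7]{lusztig2003}); these are organized around the distinguished elements $d_I = s_2 w_I$ and $d_I' = s_1 w_I$, and give a small collection of candidate leading degrees of the form $\alpha L(s_1) + \beta L(s_2) - L(w_I)$ for small integers $\alpha, \beta$.

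With those degrees in hand, the remaining work is to verify, for each of the cases (3)--(7), that the stated constraints on $(u, v)$ are precisely those making the sum $L(u) + L(v) - L(w_I) + \deg p_{sts, w_I}$ equal the claimed value of $\delta$ and strictly positive, and to confirm that no other pairs do. The main obstacle is this unequal-parameter enumeration: $p_{sts, w_I}$ has several competing leading monomials whose dominance depends on the comparison of $L(s_1)$ with $2L(s_2)$, on the size of $m_{st}$, and on whether the ``$s$'' in $sts$ equals $s_1$ or $s_2$. Each configuration has to be matched to a prospective $(u, v)$, and for every such candidate one must verify that $f_{u,v,w_I} \neq 0$, essentially by checking that $u$ and $v$ admit reduced expressions whose concatenation reduces through braid moves to a word containing $w_I$. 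None of the individual steps is conceptually difficult, but the case analysis is delicate and splits into many small subcases.
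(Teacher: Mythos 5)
Your decomposition $\delta=\deg f_{u,v,w_I}+\deg p_{sts,w_I}$, your use of Lemma \ref{lem:plusdeg} for the first factor, and your equal-parameter computation leading to case (2) all match the paper. The gap is in the unequal-parameter half. You assert that when $L(s)\neq L(t)$ the polynomial $p_{sts,w_I}$ is ``no longer a monomial'' and has ``several competing leading monomials'' organized around $d_I$ and $d_I'$. This is false: for the longest element $w_I$ of a finite parabolic subgroup one has $C_{w_I}=q^{-L(w_I)}\sum_{y\leq w_I}q^{L(y)}T_y$ for \emph{any} positive weight function, so $p_{sts,w_I}=q^{L(sts)-L(w_I)}$ is always a single monomial and $\deg p_{sts,w_I}=2L(s)+L(t)-L(w_I)$. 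The elements $d_I,d_I'$ and the modified weight $L'$ are relevant only to $p_{w,d_I}$ (Lemma \ref{lem:aaa}), not to $p_{w,w_I}$; you have located the complexity in the wrong factor.

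Because of this, your outline never produces cases (3)--(7). The actual source of the case split is the enumeration of the possible values of $L(w_I)-L(u)$ for $u\in W_I\setminus\{w_I\}$, namely $L(s),\,L(t),\,L(s)+L(t),\,2L(s)+L(t),\,L(s)+2L(t),\dots$ (one value for each choice of length and leading letter). Writing $\delta=2L(s)+L(t)-(L(w_I)-L(u))-(L(w_I)-L(v))$, the seven cases are exactly the pairs of such ``deficits'' making $\delta>0$, sorted by the ordering of $L(s)$ and $L(t)$; e.g.\ case (6) comes from $L(w_I)-L(u)=L(t)$ (so $u=d_I$ when $L(s)>L(t)$) together with $L(w_I)-L(v)=L(s)+L(t)$. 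Your proposed enumeration over configurations of leading terms of $p_{sts,w_I}$ does not generate this list, so the unequal-parameter conclusion is not actually reached by your argument. (Also, verifying $f_{u,v,w_I}\neq 0$ for each candidate pair is unnecessary: the lemma only lists necessary conditions, and $f_{u,v,w_I}=0$ already falls under case (1).)
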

\begin{proof}
Assume $ \delta> 0$. By Lemma \ref{lem:plusdeg}, we have $$  \delta=2L(s)+L(t)-(L(w_I)-L(u))-(L(w_I)-L(v)) .$$

Assume $ L(s)=L(t) $. Then the possible values of $ L(w_I)-L(u) $ are $ L(s)$, $2L(s)$, $3L(s),\cdots $. Thus $ \delta=L(s) $ and $ l(u)=l(v)=l(w_I)-1 $.

Assume $ L(t)>L(s) $. Then the possible values of $ L(w_I)-L(u) $ (resp. $ L(w_I)-L(v) $) are
\[
L(s),L(t),L(s)+L(t),2L(s)+L(t),L(s)+2L(t),\cdots
\]
Then
$\delta =L(t),L(s),2L(s)-L(t),$
and we are in one of the following situations:
\begin{itemize}
\item $ \delta=L(t) $,  and $ u=v=d_I $;
\item $ \delta=L(s) $,  and $ \{u,v\}=\{d'_I,d_I\} $;
\item $ \delta=2L(s)-L(t) >0$,  and $ u=v=d'_I $.
\end{itemize}

Similarly, if $ L(s)>L(t)$, then we are in one of the following situations:
\begin{itemize}
\item $ \delta=2L(s)-L(t) $,  and $ u=v=d_I $;
\item $ \delta=L(s) $,  and $ \{u,v\}=\{d'_I,d_I\} $;
\item $ \delta=L(t)$,  and $ u=v=d'_I $;
\item  $ \delta=L(s)-L(t) $, and $ u=d_I,$  $ L(v)=L(w_I)-L(st) $;
\item  $ \delta=L(s)-2L(t) $, and $ u=d_I,$  $ L(v)=L(w_I)-L(tst) $.
\end{itemize}
This completes the proof.
\end{proof}

\begin{cor}\label{cor:degfp2}
Assume that $ u,v\in W_I\setminus\{w_I \} $. For  $ \delta=\deg f_{u,v,w_I}p_{sts,w_I} $, we must be in one of the following situations.
\begin{itemize}
\item [(1)] $ \delta\leq 0 $;
\item [(2)] $ su<u $ or $ vs<v $, and $ \delta< 2L(s) $;
\item [(3)] $ tu<u $ and $ vt<v $, and $ \delta=L(t) $.
\end{itemize}
\end{cor}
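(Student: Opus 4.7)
The plan is to deduce the corollary directly from Lemma \ref{lem:degfp} by case inspection. The case $\delta\leq 0$ of that lemma is exactly situation (1) of the corollary, so the work concentrates on cases (2)--(7) of Lemma \ref{lem:degfp}, each of which pins down $u$ and $v$ explicitly.

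First I would set up notation for the two elements of $W_I$ of length $m_{st}-1$: write $\alpha=tw_I$ and $\beta=sw_I$. A direct computation with the two reduced expressions of $w_I$ yields $\mc{L}(\alpha)=\{s\}$ and $\mc{L}(\beta)=\{t\}$, while the right descent sets depend on the parity of $m_{st}$: for $m_{st}$ even, $\mc{R}(\alpha)=\{s\}$ and $\mc{R}(\beta)=\{t\}$; for $m_{st}$ odd, $\mc{R}(\alpha)=\{t\}$ and $\mc{R}(\beta)=\{s\}$. I would also record that $L(s)\neq L(t)$ forces $m_{st}$ to be even, since for odd $m_{st}$ the generators $s$ and $t$ are conjugate in $W_I$ and hence carry equal weight; this eliminates several combinations up front.

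Next I would run through the remaining cases of Lemma \ref{lem:degfp}. In cases (2)--(5) the listed $u$ and $v$ all lie in $\{\alpha,\beta\}$, so the verification reduces to a short check on the sub-pairs. Whenever $u=\alpha$ we have $su<u$ and the recorded $\delta\in\{L(s),L(t),2L(s)-L(t)\}$ is bounded above by $2L(s)$, putting us in situation (2) of the corollary. Whenever $u=\beta$ we have $tu<u$; the parity-dependent $\mc{R}(v)$ then either yields $vs<v$ (again situation (2), with the same bound on $\delta$) or $vt<v$, in which case the listed $\delta$ equals $L(t)$ (via $L(s)=L(t)$ in Lemma \ref{lem:degfp}(2), or by direct inspection in Lemma \ref{lem:degfp}(3)), giving situation (3). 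In cases (6) and (7), the hypothesis $L(s)>L(t)$ forces $s_2=t$, so $u=d_I=tw_I=\alpha$ and $su<u$; the listed $\delta$, when positive, is at most $L(s)<2L(s)$, and $\delta\leq 0$ falls back into situation (1).

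The main obstacle is bookkeeping rather than any analytic content: one has to match $d_I, d_I'$ to $\alpha,\beta$ under the convention $L(s_1)>L(s_2)$, and align this with the parity of $m_{st}$ that determines the right descents. Once these identifications are made cleanly, the corollary is immediate from the tabulation above, since in each subcase of Lemma \ref{lem:degfp} one reads off either $\delta\leq 0$, or the appropriate descent relation together with the required bound or equality for $\delta$.
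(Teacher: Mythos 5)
Your proposal is correct and follows the route the paper intends: the corollary is stated without proof as an immediate consequence of Lemma \ref{lem:degfp}, and your case-by-case matching of the descent sets of $sw_I$ and $tw_I$ (together with the observation that $L(s)\neq L(t)$ forces $m_{st}$ even, and the identification $d_I=s_2w_I$) is exactly the routine bookkeeping being left to the reader. All subcases check out, including the only delicate ones where $u=sw_I$ and one must decide between situations (2) and (3) according to the parity-dependent right descent of $v$.
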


Similarly, one can prove the following lemma, see also \cite[Lem.4.7]{xie2019}.
\begin{lem}\label{lem:degfp2}
Assume that $ u,v\in W_I\setminus\{w_I \} $. For  $ \delta=\deg f_{u,v,w_I}p_{st,w_I} $, we must be in one of the following situations.
\begin{itemize}
\item [(1)] $ \delta\leq 0 $;
\item [(2)]  $ L(s)\neq L(t) $, $ \delta=|L(s)-L(t)| $,  and $ u=v=d_I$.
\end{itemize}
\end{lem}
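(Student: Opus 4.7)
The plan is to follow the same template as the proof of Lemma \ref{lem:degfp}, simply replacing $p_{sts,w_I}$ by $p_{st,w_I}$. The two inputs needed are Lemma \ref{lem:plusdeg}, which pins down $\deg f_{u,v,w_I}$ whenever it is nonzero, and the leading degree of the Kazhdan--Lusztig polynomial $p_{st,w_I}$.

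First, by Lemma \ref{lem:plusdeg}, whenever $f_{u,v,w_I}\neq 0$ one has $\deg f_{u,v,w_I}=L(u)+L(v)-L(w_I)$, and since $p_{st,w_I}\neq 0$ the degrees are additive in $\mathcal{A}$, giving $\delta = L(u)+L(v)-L(w_I)+\deg p_{st,w_I}$ (otherwise $\delta=-\infty$, already in case (1)). The next step is to verify that $\deg p_{st,w_I} = L(st)-L(w_I)$. When $L(s)=L(t)$ this is immediate, since for dihedral groups with equal parameters each $p_{y,w_I}$ is the monomial $q^{L(y)-L(w_I)}$. When $L(s)\neq L(t)$, the fact that $L$ is a weight function forces $m_{st}$ to be even; a direct expansion of $C_{w_I}$ in the unequal-parameter dihedral case (for instance via $C_{w_I}=C_{d_I}C_{s_2}$ plus a correction supported on $y<d_I$) then shows that $p_{st,w_I}$ is again the monomial $q^{L(st)-L(w_I)}$. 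Combining, we obtain $\delta = L(u)+L(v)+L(st)-2L(w_I)$.

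It remains to maximize $L(u)+L(v)$ over $u,v\in W_I\setminus\{w_I\}$. If $L(s)=L(t)=a$, every element of $W_I$ other than $w_I$ has $L$-value at most $L(w_I)-a$, and substituting yields $\delta\leq 0$, i.e.\ case (1). If $L(s)\neq L(t)$, write $2n=m_{st}$; the only elements of length $m_{st}-1$ are $d_I$ and $d_I'$, with $L$-values $nL(s_1)+(n-1)L(s_2)$ and $(n-1)L(s_1)+nL(s_2)$ respectively, and the former is strictly larger. Thus $d_I$ is the unique element of $W_I\setminus\{w_I\}$ with maximal $L$-value, so $L(u)+L(v)\leq 2L(d_I)$ with equality only when $u=v=d_I$, giving $\delta\leq |L(s)-L(t)|$ with equality exactly then; a short check shows all other choices of $(u,v)$ produce $\delta\leq 0$. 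This puts us in case (2) in the extremal situation and case (1) otherwise. The main obstacle is the clean identification of $\deg p_{st,w_I}$ in the unequal-parameter case, i.e.\ ruling out higher-degree cancellations in the explicit form of $C_{w_I}$; once this is settled, the remaining $L$-value maximization is routine.
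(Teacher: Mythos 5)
Your proposal is correct and follows essentially the same route as the paper, which proves this lemma "similarly" to Lemma \ref{lem:degfp}: apply Lemma \ref{lem:plusdeg} to get $\delta=L(u)+L(v)+L(st)-2L(w_I)$ and then maximize $L(u)+L(v)$ over $W_I\setminus\{w_I\}$, with $d_I$ the unique maximizer when $L(s)\neq L(t)$. The one point you flag as a potential obstacle is not one: $p_{y,w_I}=q^{L(y)-L(w_I)}$ for the longest element of any finite Coxeter group and any weight function is a standard fact (already used implicitly in the paper's proof of Lemma \ref{lem:degfp}), so no cancellation analysis is needed.
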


\subsection{Possible degrees of $\gamma$}\label{subsec:gamma}

In this subsection, $ 3\leq m_{st}<\infty $.

\begin{lem}\label{lem:aaa}
Assume $L(s)\neq L(t)$. For any $w\leq d_{I}$, we have
$$\deg p_{w,d_{I}}=L'(w)-L'(d_{I}).$$
\end{lem}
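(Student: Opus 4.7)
The plan is to prove the formula by induction on $l(d_I)-l(w)\geq 0$. The base case $w=d_I$ is immediate: $p_{d_I,d_I}=1$ has degree $0 = L'(d_I)-L'(d_I)$.

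For the inductive step, the key observation is that $\mathcal{L}(d_I) = \{s_1\}$, so $d_I = s_1\cdot v$ where $v := s_1 d_I$ has length $l(d_I)-1$. The standard Kazhdan--Lusztig recursion
\[
C_{s_1}\,C_v \;=\; C_{d_I} \;+\; \sum_{\substack{y<v\\ s_1 y<y}} \mu^{s_1}(y,v)\,C_y
\]
allows one to extract $p_{w,d_I}$ as the coefficient of $T_w$ in $C_{s_1}C_v$ minus the sum $\sum_y \mu^{s_1}(y,v)\,p_{w,y}$. The coefficient of $T_w$ in $C_{s_1}C_v$ expands (via $T_{s_1}T_{s_1 w} = T_w$ when $s_1 w>w$, or $T_{s_1}T_w = T_{s_1 w}+\xi_{s_1}T_w$ when $s_1 w<w$) into combinations of $p_{w,v}$ and $p_{s_1 w,v}$, to which the inductive hypothesis, or an auxiliary sub-induction on the KL polynomials for $v$, applies.

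The degree bookkeeping hinges on the identity $L'(d_I) = L(s_1) + L'(v)$, immediate from the reduced expression of $d_I$ and the definition of $L'$. The bar-invariant $\mu^{s_1}(y,v)$ contributes exactly the amount of $q$-degree needed to turn $L'(w)-L'(v)$ (coming from $p_{w,v}$ by the sub-induction) into the claimed $L'(w)-L'(d_I)$. The main obstacle is a finite case analysis verifying that the leading contributions of the two summands do not cancel in the top degree: one checks element-by-element according to whether $w$ lies in the alternating chain $u_j = s_1 s_2 s_1\cdots$ of length $j$ or in the chain $v_j = s_2 s_1 s_2\cdots$ below $d_I$, using the explicit form $\mu^{s_1}(y,v)= q^{L(s_1)-L(s_2)}+q^{L(s_2)-L(s_1)}$ (non-zero only for specific $y$). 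Alternatively, one can invoke Lusztig's explicit formulas for KL polynomials in rank-two dihedral groups with unequal parameters, from which the degree assertion can be read off directly.
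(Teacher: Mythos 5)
The paper offers no internal proof of this lemma: it cites \cite[Lem.~4.4]{xie2019}, which in turn rests on the explicit description of the Kazhdan--Lusztig elements of a dihedral group with unequal parameters going back to Lusztig's book. So your fallback option --- reading the degree off those explicit formulas --- is exactly the route the paper takes and is unobjectionable.

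Your primary argument, the induction via $C_{d_I}=C_{s_1}C_v-\sum_y\mu^{s_1}(y,v)C_y$ with $v=s_1d_I$, has a genuine gap in how the induction is set up. The recursion expresses $p_{w,d_I}$ through $p_{w,v}$, $p_{s_1w,v}$ and $p_{w,y}$, but $v$ and the $y$'s are not $d_I$, so the statement you are inducting on says nothing about them, and the ``auxiliary sub-induction'' is never formulated. This matters because the auxiliary statement is a \emph{different} formula: $v=s_1d_I$ is an even-length alternating word beginning with $s_2$ and satisfies the generic identity $p_{w,v}=q^{L(w)-L(v)}$ (with $L$, not $L'$; e.g.\ $p_{s_1,\,s_2s_1}=q^{-L(s_2)}$, whereas $L'(s_1)-L'(s_2s_1)=L(s_2)>0$), while the only $y$ with $\mu^{s_1}(y,v)\neq 0$ is the odd-length alternating word $s_1s_2\cdots s_1$ of length $m_{st}-3$, which obeys the $L'$-formula of the lemma. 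The induction must therefore carry both statements simultaneously, alternating between the two classes of elements; without naming this dichotomy it cannot be run. Relatedly, your degree bookkeeping is off: the top term $q^{L(s_1)-L(s_2)}$ of $\mu^{s_1}(y,v)$ is there precisely to \emph{cancel} the positive-degree part of $C_{s_1}C_v$, and the claimed degree $L'(w)-L'(d_I)$ emerges from the surviving lower term $q^{L(s_2)-L(s_1)}$ of $\mu$, with a \emph{negative} leading coefficient (e.g.\ $p_{s_1,\,s_1s_2s_1}=q^{-L(s_1)-L(s_2)}-q^{L(s_2)-L(s_1)}$); one must then check this term is not cancelled in turn. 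The computation does close up, but not along the lines you describe.
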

\begin{proof}
See \cite[Lem.4.4]{xie2019}.
\end{proof}

\begin{lem}\label{lem:Fuv}
Assume $ L(t)>L(s) $.
Let $ u,v\in W_I\setminus\{w_I\} $, and write\[
F(u,v)=f_{u,v,d_{I}}-p_{d_{I},w_I}f_{u,v,w_I}.
\]
Then \begin{itemize}
\item if  $ vs<v $, then $ F(u,v)=-q^{-L(s)}F(u,vs) $;
\item if  $ su<u $, then $ F(u,v)=-q^{-L(s)}F(su,v) $;
\item if $ su>u $ and $ vs>v $,  then
 \begin{itemize}
\item if $ l(u)+l(v)< 2m-1 $, $ F(u,v)=0 $,
\item if $ l(u)+l(v)=2m-1 $, $ F(u,v)=1 $,
\item if $ l(u)+l(v)=2m $, $$  F(u,v)=  \begin{cases}
\xi_s &\text{ if } l(u),l(v) \text{ are even},\\
\xi_t &\text{ if } l(u),l(v) \text{ are odd},
\end{cases}$$
\item if $ l(u)+l(v)>2m $, then $ \deg F(u,v)=L'(u)+L'(v)-L'(d_I) $.
\end{itemize}
\end{itemize}
\end{lem}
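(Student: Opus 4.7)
I would prove the lemma by induction on $l(u)+l(v)$: the first two bullets give the inductive step, and the third bullet handles the base case where no further $s$-reduction is available. A necessary preliminary is the identity $p_{d_I,w_I}=q^{-L(s)}$, which follows because every $y<d_I$ admits a reduced expression beginning with $t$; hence no $y<d_I$ has $s\in\mc{L}(y)$, so $C_sC_{d_I}$ carries no $\mu$-correction and $C_{w_I}=C_sC_{d_I}=(T_s+q^{-L(s)})C_{d_I}$, from which reading off the coefficient of $T_{d_I}$ gives the claimed value.

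For the first reduction formula, assume $vs<v$ and write $T_v=T_{vs}T_s$. Using the Hecke rules $T_zT_s=T_{zs}$ when $zs>z$ and $T_zT_s=T_{zs}+\xi_sT_z$ when $zs<z$, one obtains for every $w\in W_I$
$$f_{u,v,w}=f_{u,vs,ws}+[ws<w]\,\xi_s\,f_{u,vs,w}.$$
Since $m_{st}$ is even, $w_I$ is central in $W_I$, so $d_Is=sw_I\cdot s=w_I$ (with $d_Is>d_I$) and $w_Is=sw_I=d_I$ (with $w_Is<w_I$). Specializing the displayed formula to $w=d_I$ and $w=w_I$ gives $f_{u,v,d_I}=f_{u,vs,w_I}$ and $f_{u,v,w_I}=f_{u,vs,d_I}+\xi_s f_{u,vs,w_I}$. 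Substituting into $F(u,v)$ and using $p_{d_I,w_I}=q^{-L(s)}$ together with $\xi_s=q^{L(s)}-q^{-L(s)}$, one computes $1-p_{d_I,w_I}\xi_s=q^{-2L(s)}=q^{-L(s)}p_{d_I,w_I}$, and the expression collapses to $F(u,v)=-q^{-L(s)}F(u,vs)$. The second bullet is symmetric, using $T_u=T_sT_{su}$ and the left-centrality of $w_I$.

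For the base case $su>u$, $vs>v$, the element $u$ (when nontrivial) has a reduced expression starting with $t$ and $v$ has one ending with $t$. I would expand $T_uT_v$ directly in $W_I$, stratifying by $l(u)+l(v)$: when $l(u)+l(v)<2m-1$, every monomial has length below $l(d_I)$, so both $f_{u,v,d_I}$ and $f_{u,v,w_I}$ vanish; when $l(u)+l(v)=2m-1$, the reduced concatenation $uv$ equals $d_I$ exactly, contributing $F=1$; when $l(u)+l(v)=2m$, exactly one $s^2$ or $t^2$ collision appears in the expansion (determined by the common parity of $l(u)$ and $l(v)$), giving coefficient $\xi_s$ or $\xi_t$ on $T_{d_I}$ while $T_{w_I}$ still does not appear; and for $l(u)+l(v)>2m$, I would appeal to Lemma \ref{lem:plusdeg} together with a parity-driven analysis of the collisions to pin down the leading terms of $f_{u,v,d_I}$ and $p_{d_I,w_I}f_{u,v,w_I}$, verifying that they do not cancel and that $\deg F(u,v)=L'(u)+L'(v)-L'(d_I)$.

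The cleanest part of the argument is the reduction, which becomes purely formal once $p_{d_I,w_I}=q^{-L(s)}$ and the centrality of $w_I$ are in view. The principal obstacle lies in the final subcase of the base case, where one must verify that the leading terms of $f_{u,v,d_I}$ and $p_{d_I,w_I}f_{u,v,w_I}$ do not cancel; this requires careful bookkeeping of the Hecke-algebra collisions under the dihedral braid relations, guided by Lemma \ref{lem:plusdeg}.
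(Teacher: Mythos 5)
Your reduction step is correct and is the solid part of the argument: the identity $f_{u,v,z}=f_{u,vs,zs}+[zs<z]\,\xi_s f_{u,vs,z}$, combined with $d_Is=w_I$, $w_Is=d_I$ and $1-q^{-L(s)}\xi_s=q^{-2L(s)}$, does yield $F(u,v)=-q^{-L(s)}F(u,vs)$, and your base subcases with $l(u)+l(v)\leq 2m$ check out. But your justification of $p_{d_I,w_I}=q^{-L(s)}$ is wrong as stated: it is false that every $y<d_I$ has a reduced expression beginning with $t$ (in a dihedral group $y<d_I$ means $l(y)<l(d_I)$, so $s,st,sts,\dots$ all lie below $d_I$ and have $s\in\mc{L}(y)$), hence the claim that $C_sC_{d_I}$ carries no correction terms is unsupported. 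The identity itself is true, but for the standard reason that $C_{w_J}=\sum_{y\leq w_J}q^{L(y)-L(w_J)}T_y$ for any finite parabolic, giving $p_{d_I,w_I}=q^{L(d_I)-L(w_I)}=q^{-L(s)}$ directly. (The paper itself gives no proof to compare against; it cites \cite{xie2019}.)

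The genuine gap is in the subcase $l(u)+l(v)>2m$, and your proposed method for it rests on a false premise: the leading terms of $f_{u,v,d_I}$ and $p_{d_I,w_I}f_{u,v,w_I}$ \emph{do} cancel in general, and the asserted degree only emerges after that cancellation. Concretely, take $m_{st}=6$ (so $m=3$) and $u=v=d_I=tstst$; one computes $f_{u,v,d_I}=\xi_t+\xi_t^3$ and $f_{u,v,w_I}=2\xi_s\xi_t+\xi_s\xi_t^3$, so both $f_{u,v,d_I}$ and $q^{-L(s)}f_{u,v,w_I}$ have degree $3L(t)$ with leading coefficient $1$; these cancel in $F$, leaving $F=-\xi_t+2q^{-2L(s)}\xi_t+q^{-2L(s)}\xi_t^3$ of degree $3L(t)-2L(s)=L'(d_I)$, as the lemma claims. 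More generally, when $f_{u,v,w_I}\neq 0$ Lemma \ref{lem:plusdeg} gives $\deg\bigl(p_{d_I,w_I}f_{u,v,w_I}\bigr)=L(u)+L(v)-L(w_I)-L(s)$, which exceeds the target $L'(u)+L'(v)-L'(d_I)$ by $2(\sigma_u+\sigma_v-m)L(s)$, where $\sigma_u,\sigma_v$ count the letters $s$ in $u,v$; whenever $\sigma_u+\sigma_v>m$ the claimed degree is strictly below the degrees of both summands, so the whole content of this subcase is controlling how deep the cancellation goes. An analysis that only tracks leading terms cannot produce the signed weight $L'$; you need the explicit dihedral structure constants (or an additional recursion inside the base case), and that argument is absent rather than merely unpolished.
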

\begin{proof}
See \cite[Lem.4.8]{xie2019}.
\end{proof}

\begin{cor}\label{cor:degnfp}
Assume that $ L(s)\neq L(t) $ and  $ u,v,sts\in W_I\setminus\{w_I,d_I\} $. For  $ \gamma:= \deg (f_{u,v,d_{I}}-f_{u,v,w_I}p_{d_{I},w_I})p_{sts,d_{I}}$, we must be in one of the following situations.

\begin{itemize}
\item [(1)]$ \gamma\leq 0$.
 \item [(2)]$ L(s)>L(t) $, $ su<u $, $ vs<v $, and   $\gamma \leq  L(t)$.
\end{itemize}
\end{cor}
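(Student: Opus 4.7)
The plan is to decompose
\[
\gamma = \deg F(u,v) + \deg p_{sts, d_I},\qquad F(u,v) := f_{u,v,d_I} - f_{u,v,w_I}\,p_{d_I, w_I},
\]
and to analyze each factor using Lemma \ref{lem:aaa} and Lemma \ref{lem:Fuv}. Since $L(s)\neq L(t)$ forces $m_{st}$ to be even, write $m := m_{st}/2$; the hypothesis $sts\in W_I\setminus\{w_I, d_I\}$ together with $sts \leq d_I$ (otherwise $p_{sts,d_I}=0$ and $\gamma = -\infty$, giving situation~(1) trivially) forces $m\geq 3$. Lemma \ref{lem:aaa} yields $\deg p_{sts, d_I} = L'(sts) - L'(d_I)$, which evaluates to $(m-3)L(s)-(m-1)L(t)$ when $L(t)>L(s)$ and to $-(m-2)(L(s)-L(t))$ when $L(s)>L(t)$.

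For the first case $L(t)>L(s)$, I expect situation~(1) always to hold. Lemma \ref{lem:Fuv} applies directly: reduce $(u,v)$ to the terminal configuration $su_0>u_0$, $v_0 s>v_0$ (each step costing $-L(s)$ in degree), then in each of the four terminal subcases plug in the explicit formulas for $L'(u_0)$, $L'(v_0)$ in terms of lengths and parities. A direct computation should yield $\gamma\leq -L(t)<0$ uniformly, the negative $\deg p_{sts,d_I}$ dominating the contribution of $F(u_0,v_0)$; further reductions only decrease $\gamma$.

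For the second case $L(s)>L(t)$, I would apply Lemma \ref{lem:Fuv} with the roles of $s$ and $t$ swapped, so reductions now cost $L(t)$ and the terminal condition becomes $tu>u$ and $vt>v$. For $u,v\in W_I\setminus\{e,w_I\}$ this is exactly $\mathcal{L}(u)=\{s\}$ and $\mathcal{R}(v)=\{s\}$, i.e.\ the hypothesis $su<u$, $vs<v$ of situation~(2). In this terminal case one verifies $\gamma\leq L(t)$ subcase by subcase: the sum-$2m$ both-even subcase gives $\gamma=(m-1)L(t)-(m-2)L(s)\leq L(t)$ from $L(s)>L(t)$; the sum-$2m$ both-odd subcase gives $\gamma=(3-m)L(s)+(m-2)L(t)$, equal to $L(t)$ at $m=3$ and strictly less for $m\geq 4$; and the sum-$>2m$ subcase, after using $l(u), l(v)\leq 2m-2$ from $u,v\neq d_I$, reduces to a linear expression in $l(u)+l(v)$ whose maximum is exactly $L(t)$. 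If $(u,v)$ fails the terminal condition, at least one reduction costs $L(t)$, and combined with the terminal bound this forces $\gamma\leq 0$, putting us back in situation~(1).

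The main obstacle is the tight bookkeeping in the sum-$>2m$ subcase of the second case: the bound $\gamma\leq L(t)$ is sharp and depends critically on the exclusion $u,v\neq d_I$, since the pair $(u,v)=(d_I,d_I)$ would otherwise give $\gamma$ of order $mL(t)$. This exclusion is precisely what the hypothesis of the corollary provides. Once the terminal bounds are in hand, the non-terminal cases follow automatically from the $-L(s)$ or $-L(t)$ drop at each reduction step in Lemma \ref{lem:Fuv}.
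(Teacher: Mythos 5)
Your proposal is correct and takes essentially the same route as the paper's proof: both decompose $\gamma$ as $\deg F(u,v)+\deg p_{sts,d_I}$, use Lemma \ref{lem:aaa} and the reduction/terminal structure of Lemma \ref{lem:Fuv}, and observe that when $L(s)>L(t)$ a single reduction step (costing $L(t)$) forces $\gamma\leq 0$, so $\gamma>0$ requires $su<u$ and $vs<v$. The paper merely compresses your terminal-subcase bookkeeping into the single bound $\gamma\leq L'(sts)-2\,|L(s)-L(t)|$, which evaluates to $-L(t)$ or $L(t)$ in the two cases.
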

\begin{proof}
If $ L(t)>L(s) $, then by Lemma \ref{lem:Fuv}, we have $ \gamma\leq L(t)-2L(s)-2(L(t)-L(s))=-L(t) $.

If $L(s)>L(t)$, then $ \gamma\leq 2L(s)-L(t)-2(L(s)-L(t))=L(t) $. Note that $sts\neq d_I $ implies that $ m_{st}\geq 6 $. By Lemma \ref{lem:Fuv},  if $ tu<u $ or $ vt<v $, then we have $ \gamma\leq 0 $.
Thus, if $ \gamma>0 $, then we have $ su<u $ and $ vs<v $. Note that by Lemma \ref{lem:Fuv} we have $ \gamma\leq 0 $ if $ u=e $ or $ v=e $.
\end{proof}

Similarly, we have the following corollary, see also \cite[Lem.4.10]{xie2019}.
\begin{cor}\label{cor:degnfp2}
Assume that $ L(s)\neq L(t) $ and  $ u,v,st\in W_I\setminus\{w_I,d_I\} $. For  $ \gamma:= \deg (f_{u,v,d_{I}}-f_{u,v,w_I}p_{d_{I},w_I})p_{st,d_{I}}$, we always have
$ \gamma\leq 0$.
\end{cor}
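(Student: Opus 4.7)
The strategy is to decompose $\gamma = \deg F(u,v) + \deg p_{st,d_I}$, where $F(u,v) := f_{u,v,d_I} - p_{d_I,w_I} f_{u,v,w_I}$, and to bound the two summands separately. By Lemma \ref{lem:aaa}, $\deg p_{st,d_I} = L'(st) - L'(d_I)$. Assuming first $L(t) > L(s)$ and writing $m = m_{st}/2 \geq 2$, this evaluates to $(1-m)L(t) + (m-2)L(s) < 0$; the case $L(s) > L(t)$ will be handled by the symmetric argument, interchanging the roles of $s$ and $t$ throughout.

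Next I use Lemma \ref{lem:Fuv} to bound $\deg F(u,v)$. The recurrences there reduce us to the base case $su > u$ and $vs > v$: each reduction drops $\deg F$ by $L(s) > 0$, and it preserves the hypothesis $u, v \in W_I \setminus \{w_I, d_I\}$, because if $u$ starts with $s$ and $su = d_I$ then $u = s \cdot d_I = w_I$, which is excluded. In the base case $u$ begins with $t$ or $u = e$ and $v$ ends with $t$ or $v = e$, and Lemma \ref{lem:Fuv} furnishes four explicit bounds on $\deg F$, indexed by $l(u) + l(v)$.

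The proof then reduces to a case-by-case check that $\gamma \leq 0$. When $l(u) + l(v) \leq 2m - 1$, $\deg F \leq 0$, so $\gamma \leq \deg p_{st,d_I} < 0$. When $l(u) + l(v) = 2m$, $\deg F \leq L(t)$ gives $\gamma \leq (m-2)(L(s) - L(t)) \leq 0$. The delicate sub-case is $l(u) + l(v) > 2m$, where $\deg F = L'(u) + L'(v) - L'(d_I)$: here the hypothesis $u, v \neq d_I$ is essential, as it caps the base-form lengths at $2m - 2$ and hence $L'(u), L'(v) \leq (m-1)L(t) - (m-2)L(s)$; substitution yields $\gamma \leq L(s) - L(t) < 0$.

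I anticipate the main obstacle to be this last sub-case, which is precisely where the exclusion $u, v \neq d_I$ is indispensable: dropping it and taking $u = v = d_I$ would produce $\gamma = L(t) - L(s) > 0$, so the full hypothesis of the corollary really is required. The overall scheme parallels the proof of Corollary \ref{cor:degnfp}, and the remaining verifications amount to routine arithmetic.
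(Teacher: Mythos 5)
Your proof is correct and follows essentially the same route as the paper's (which is only sketched there as "similarly" to Corollary \ref{cor:degnfp}): split $\gamma$ as $\deg F(u,v)+\deg p_{st,d_I}$, evaluate the second term by Lemma \ref{lem:aaa}, and bound the first by running the recurrences of Lemma \ref{lem:Fuv} down to the base case, where the exclusion $u,v\neq d_I$ caps $L'(u_0),L'(v_0)$ at $(m-1)L(t)-(m-2)L(s)$. Your explicit case split on $l(u_0)+l(v_0)$ and the observation that $u=v=d_I$ would give $\gamma=L(t)-L(s)>0$ are accurate refinements of the same computation.
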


\begin{cor}\label{cor:bbb}
Let $ u,v\in W_I\setminus\{w_I\} $ and $w\in W$ with $l(w)\geq 2$. Then we have
\begin{itemize}
\item [(1)] $\deg f_{u,v,w}<L(w)$.
\item [(2)] $\deg f_{u,v,w_{I}}p_{w,w_{I}}<L(w)$.
\end{itemize}
Moreover, if $ L(s)\neq L(t) $, then we have
\begin{itemize}
\item [(3)] $\deg (f_{u,v,d_{I}}-f_{u,v,w_I}p_{d_{I},w_I})p_{w,d_{I}}<L(w)$.
\end{itemize}
\end{cor}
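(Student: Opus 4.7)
I plan to establish all three inequalities by case analysis on $l(w)$, combining two structural ingredients: the general degree bound
\[
\deg f_{u,v,z}\leq L(u)+L(v)-L(z)
\]
(proved by induction on $l(u)$ from $T_s^2=1+\xi_sT_s$), and the inequality $L(u)+L(v)<2L(w_I)$, which follows from $u,v\neq w_I$ via $L(u),L(v)\leq L(w_I)-\min\{L(s),L(t)\}$.

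For part~(1), when $l(w)=2$ or $l(w)=3$ I invoke Lemmas~\ref{lem:fuvw2} and~\ref{lem:fuvw} respectively: the listed monomials in $f_{u,v,w}$ have degree at most $\max\{L(s),L(t)\}$, since the potentially higher-degree monomial $\xi_s\xi_t$ appearing in $f_{u,v,st}$ is explicitly excluded by $u,v\neq w_I$. Hence $\deg f_{u,v,w}\leq\max\{L(s),L(t)\}<L(s)+L(t)\leq L(w)$. The case $w=w_I$ is immediate from Lemma~\ref{lem:plusdeg} together with $L(u)+L(v)<2L(w_I)$. For the remaining range $4\leq l(w)<m_{st}$ (nonempty only when $m_{st}\geq 5$), I will extend the monomial enumeration of Lemmas~\ref{lem:fuvw} and~\ref{lem:fuvw2} to longer alternating~$w$; the combinatorics of dihedral reductions forces the top-degree contribution to $T_w$ in any $T_uT_v$ to require $u=v=w_I$, so excluding this yields $\deg f_{u,v,w}<L(w)$.

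For part~(2), the cases $w=st$ and $w=sts$ are Lemmas~\ref{lem:degfp2} and~\ref{lem:degfp}: one reads off from the explicit lists of $\delta$ that each value satisfies $\delta<L(w)$. For general $w\leq w_I$ with $l(w)\geq 2$, I combine Lemma~\ref{lem:plusdeg} with the dihedral bound $\deg p_{w,w_I}\leq L(w)-L(w_I)$ (a consequence of bar-invariance of $C_{w_I}$) to obtain
\[
\deg f_{u,v,w_I}p_{w,w_I}\leq L(u)+L(v)+L(w)-2L(w_I)<L(w),
\]
again using $L(u)+L(v)<2L(w_I)$. Part~(3) proceeds analogously: Corollaries~\ref{cor:degnfp2} and~\ref{cor:degnfp} handle $w=st$ and $w=sts$, and for general $w$ I combine Lemma~\ref{lem:Fuv} (giving $\deg F(u,v)\leq L'(u)+L'(v)-L'(d_I)$) with Lemma~\ref{lem:aaa} (giving $\deg p_{w,d_I}=L'(w)-L'(d_I)$). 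Since $d_I$ maximises $L'$ on $W_I$, one has $L'(u)+L'(v)\leq 2L'(d_I)$, so the upper bound collapses to $L'(w)<L(w)$, which is valid because $l(w)\geq 2$ forces $w$ to contain at least one copy of the smaller-weight generator.

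The main obstacle I anticipate is extending the analysis of part~(1) to the range $l(w)\geq 4$ with $w\neq w_I$, which is not covered by any preceding lemma. Adapting the combinatorial enumeration of Lemmas~\ref{lem:fuvw} and~\ref{lem:fuvw2} to longer alternating~$w$ is the key step; the guiding identification is that each factor $\xi_s$ or $\xi_t$ in a monomial of $f_{u,v,w}$ corresponds to a quadratic-relation reduction at a definite letter, so the maximal-degree monomial forces $u=v=w_I$, which is excluded by hypothesis.
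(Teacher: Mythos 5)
Your parts (2) and (3) follow essentially the paper's route, but part (1) contains a genuine gap and part (3) rests on a false intermediate inequality. For (1), the range $4\le l(w)<l(w_I)$ is exactly where the content lies (the corollary is applied, e.g., to $w=rsrsr$ inside $W_{rs}$ with $m_{rs}\ge 7$), and you defer it to an unspecified ``extension of the monomial enumeration.'' Your general bound $\deg f_{u,v,z}\le L(u)+L(v)-L(z)$ cannot close this gap: combined with $L(u)+L(v)<2L(w_I)$ it only yields $\deg f_{u,v,w}<2L(w_I)-L(w)$, which exceeds $L(w)$ for every $w\ne w_I$. The paper's proof is two lines and needs no enumeration: one always has $\deg f_{x,y,z}\le L(x)$ (each letter of $x$ contributes at most one factor $\xi$ to $T_xT_y$), hence by the cyclic symmetry $f_{u,v,w}=f_{w^{-1},u,v^{-1}}$ also $\deg f_{u,v,w}\le L(w)$; if equality held, then in $T_{w^{-1}}T_u$ every letter of $w^{-1}$ would have to produce a $\xi$, so the last two letters of $w^{-1}$ --- which are distinct since $l(w)\ge 2$ and the word alternates --- would both lie in $\mc{L}(u)$, forcing $u=w_I$, a contradiction. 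You should either adopt this argument or actually carry out the enumeration; as written, part (1) is unproved for $4\le l(w)<m_{st}$.

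In part (3), the parenthetical claim that Lemma \ref{lem:Fuv} gives $\deg F(u,v)\le L'(u)+L'(v)-L'(d_I)$ is not what that lemma says and is false: for $m_{st}=4$, $L(t)>L(s)$, $u=ts$, $v=st$ one computes $T_{ts}T_{st}=1+\xi_tT_t+\xi_sT_{tst}$ with $d_I=tst$, so $F(u,v)=\xi_s$ has degree $L(s)$, whereas $L'(u)+L'(v)-L'(d_I)=-L(s)$. The inequality you actually need is $\deg F(u,v)\le L'(d_I)$, which does hold in every case of Lemma \ref{lem:Fuv} (the sporadic values $F=1,\xi_s,\xi_t$ must be compared directly with $L'(d_I)=mL(s_1)-(m-1)L(s_2)$, and the generic case uses $L'(u),L'(v)\le L'(d_I)$ as you say); with that repaired, $\deg F(u,v)p_{w,d_I}\le L'(w)<L(w)$ goes through exactly as in the paper. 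Part (2) is correct and coincides with the paper's proof; the separate treatment of $w=st$ and $w=sts$ there is redundant since your general estimate already covers them.
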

\begin{proof}
(1) First we have $\deg f_{u,v,w}\leq L(w)$. If the equality holds, since $f_{u,v,w}=f_{w^{-1},u,v^{-1}}$ and $l(w)\geq 2$, we must have $u=w_{I}$, a contradiction.
\\(2) If $f_{u,v,w_{I}}=0$, there is nothing to prove. If $f_{u,v,w_{I}}\neq 0$, by Lemma \ref{lem:plusdeg}, we have
$$\deg f_{u,v,w_{I}}p_{w,w_{I}}=(L(u)+L(v)-L(w_{I}))+(L(w)-L(w_{I}))<L(w).$$
(3) If $w\nleq d_{I}$, it is obvious. If $w\leq d_{I}$, by Lemma \ref{lem:aaa} and Lemma \ref{lem:Fuv}, we have
$$\deg (f_{u,v,d_{I}}-f_{u,v,w_I}p_{d_{I},w_I})p_{w,d_{I}}\leq L'(d_{I})+(L'(w)-L'(d_{I}))=L'(w)<L(w).$$
\end{proof}

\section{Expansions of some products}\label{sec:exp}

In sections \ref{sec:exp}, \ref{sec:take}, \ref{sec:add}, \ref{sec:est}, we assume $(W,S)$ is a  Coxeter group of rank 3 from section \ref{sec:intr}. We have $S=\{r,s,t\}$ and $m_{rt}=2$.

\begin{deff}
	For $x,w,y,x',w',y'\in W $, we call $ (x',w',y') $ the \textbf{transpose} of $( x,w,y )$ if \[
	x'=y^{-1}, w'=w^{-1}, y'=x^{-1}.
	\]
\end{deff}

\begin{deff}
	For $x,w,y,x',y'\in W $, we say $(x',w,y')$ is a \textbf{reduced extension} of $(x,w,y)$  if there exists $u,v\in W$ such that
\begin{itemize}
		\item [(1)] $x'=u\cdot x$,
		\item [(2)] $y'=y\cdot v$,
       \item [(3)] $l(uzv)=l(u)+l(z)+l(v)$ for any $z\in W$ such that $f_{x,y,z}\neq 0$.
\end{itemize}
\end{deff}
\subsection{The case of $\infty=m_{rs}>m_{st}\geq3$.}

\begin{lem}\label{lem:a0}
Let $w,x,y\in W$.
\begin{itemize}
		\item [(1)] There is no $w_{1},w_{2}\in W$ such that $w=w_{1}\cdot r=w_{2}\cdot s$.
		\item [(2)] If $w=w_{1}\cdot st$, then $r\notin \mathcal{R}(w)$.
        \item [(3)] If $w=w_{1}\cdot rs$, then $\mathcal{R}(w)=\{s\}$.
        \item [(4)] If $w\in W_{rs}$, $l(w)\geq 4$, $\mathcal{R}(x),\ \mathcal{L}(y)\subseteq\{t\}$, then $l(xwy)=l(x)+l(w)+l(y)$.
        \item [(5)] If $\mathcal{R}(x),\ \mathcal{L}(y)\subseteq\{s\}$, then $l(xrty)=l(x)+l(y)+2$.
        \item [(6)] If $w\in W_{st}$, $l(w)\geq 2$, $\mathcal{R}(x),\ \mathcal{L}(y)\subseteq\{r\}$, then $l(xwy)=l(x)+l(w)+l(y)$.
\end{itemize}
\end{lem}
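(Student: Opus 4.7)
The plan is to prove the six identities in the order (1), (2), (3), (4), (5), (6). The guiding structural inputs are: $m_{rs}=\infty$ forces $|\mathcal{R}(w)\cap\{r,s\}|\le 1$ for every $w\in W$; the commutation $m_{rt}=2$ gives the identity $rtr=t$; and the hyperbolicity of $(W,S)$ (no finite rank-$3$ parabolic) forces $\{r,s,t\}\not\subseteq\mathcal{R}(w)$ for every $w$.

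For (1), if $w$ admitted reduced expressions ending in both $r$ and $s$ then $w_{\{r,s\}}$ would divide $w$ on the right, contradicting that $W_{rs}$ is infinite dihedral. For (2), the hypothesis $w=w_1\cdot st$ with $r\in\mathcal{R}(w)$ together with $t\in\mathcal{R}(w)$ and $rt=tr$ gives $r\in\mathcal{R}(wt)=\mathcal{R}(w_1 s)$, so $\{r,s\}\subseteq\mathcal{R}(w_1 s)$ contradicts (1). For (3), $s\in\mathcal{R}(w)$ is clear, $r\notin\mathcal{R}(w)$ is (1), and to rule out $t\in\mathcal{R}(w)$ I would apply the strong exchange condition to the reduced expression $w_1\cdot r\cdot s$ assuming $wt<w$: deleting the final $s$ forces $t=s$ and deleting $r$ forces $st=rs$, both impossible. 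The remaining case, deletion of an internal letter of $w_1$, gives $wt=w_1'\cdot rs$ reduced with $l(w_1')=l(w_1)-1$, equivalently $w_1=w_1'\cdot rstsr$. A direct calculation shows $l(rstsr)=5$ for every allowed $m_{st}$, and the length-overlap identity $l(w_1\cdot rstsr)=l(w_1)-1$ together with the enumeration of length-$3$ prefixes of $rstsr$ forces $w_1$ to end in one of $tsr,\,str,\,srt$; in each case $w_1\cdot r$ collapses to length $l(w_1)-1$, contradicting reducedness of $w_1\cdot rs$.

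For (4), I build up $xw$ letter by letter: the first two appendings use $\mathcal{R}(x)\subseteq\{t\}$ and (1), and from the third letter on, the running element ends in $rs$ or $sr$, so (3) pins its right descent set to the single generator $\{s_{i_j}\}$ and the next alternating letter of $w$ is not a descent. In particular $\mathcal{R}(xw)=\{s_{i_k}\}\subseteq\{r,s\}$. To append $y$, I induct on $l(y)$ writing $y=t\cdot y_0$: the $rt=tr$ argument forces $\mathcal{L}(y_0)\subseteq\{s\}$, and hyperbolicity together with (1), (2), (3) applied to the running suffix keeps the right descent set from ever reaching $\{r,s\}$ or $\{r,s,t\}$, so each successive letter of $y$ is reduced-appendable. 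Part (5) uses the identity $rtr=t$ to deduce $t\in\mathcal{R}(xr)\Leftrightarrow t\in\mathcal{R}(x)$, hence $l(xrt)=l(x)+2$; then $\{r,t\}\subseteq\mathcal{R}(xrt)$ combined with hyperbolicity gives $s\notin\mathcal{R}(xrt)$, after which appending $y$ follows the same inductive pattern as in (4). Part (6) is the direct analogue of (4) with the finite dihedral $W_{st}$ replacing $W_{rs}$; the role of (3) is played by the standard descent-set analysis inside $W_{st}$ (where $w_{st}$ is available).

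The main obstacle is the internal-deletion case in (3): it does not contradict any definitional equality the way the other two strong-exchange cases do, and its resolution requires the explicit length computation $l(rstsr)=5$ plus the overlap argument that enumerates the possible length-$3$ suffixes of $w_1$. A secondary delicate point is the appending of $y$ in (4): when $s_{i_k}=r$, the element $xw\cdot t$ acquires $r$ as an extra right descent via $rt=tr$, so the inductive bookkeeping must verify that the forced structure $\mathcal{L}(y_0)\subseteq\{s\}$ together with hyperbolicity suffices to keep reducedness at every subsequent step.
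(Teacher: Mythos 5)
The paper offers no proof of this lemma; it simply cites \cite[3.1, 3.2]{gao}, so your proposal has to stand on its own. Parts (1) and (2) are correct. The first genuine gap is in part (3), precisely at the step you flag: after the internal deletion you know $w_1=w_1'\cdot rstsr$ as group elements with $l(w_1)=l(w_1')+1$, but the assertion that this ``forces $w_1$ to end in one of $tsr,\,str,\,srt$'' does not follow from anything you have set up. The exchange condition only produces a reduced suffix $\sigma$ of $w_1$ with $\sigma\,(rstsr)\,\sigma^{-1}\in S$, and such $\sigma$ can have any length $\geq 2$ and need not be an initial segment of $rstsr$ read backwards, so no enumeration of length-$3$ words closes the case. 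The case is in fact easy, but by a route that uses the finiteness of $W_{st}$, which your argument for (3) never touches: if $t\in\mathcal{R}(w)$ then together with $s\in\mathcal{R}(w)$ one gets $w=u\cdot w_{st}$ reduced, hence $ws=u\cdot(w_{st}s)$ has a reduced expression ending in $st$ (since $l(w_{st}s)=m_{st}-1\geq 2$), so part (2) gives $r\notin\mathcal{R}(ws)$, contradicting $ws=w_1\cdot r$. This also makes the whole strong-exchange case division unnecessary.

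The second gap is the appending of $y$ in (4), and likewise in (5) and (6). The claim that (1)--(3) ``applied to the running suffix'' keep the right descent set under control is not a proof, and the local principle it relies on is false: the $r\leftrightarrow s$ mirror of (3) fails, since $w_{st}\cdot r$ has a reduced expression ending in $sr$ and yet $t\in\mathcal{R}(w_{st}\cdot r)$, because $w_{st}rt=(w_{st}t)\cdot r$ has length $m_{st}<m_{st}+1$. So the last two or three letters of the running element do not determine whether the next letter of $y$ appends reducibly; one needs a genuine induction on $l(y)$ with a strengthened hypothesis (for instance tracking $\mathcal{R}(xwy')$ for every prefix $y'$ of $y$). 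To see the kind of work involved: in (4) with $w$ ending in $s$, after appending the first letter $t$ of $y$ you must rule out $s\in\mathcal{R}(xwt)$; otherwise $xwt=z\cdot w_{st}$, so $xw=z\cdot(w_{st}t)$ ends in $ts$, so $xws$ has both $r$ and $t$ as right descents while ending in $rsr$, contradicting the consequence of (3) that $t\notin\mathcal{R}(u\cdot rsr)$. None of this is visible from the suffix alone, and until such inductions are written out, parts (4)--(6) are not established.
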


\begin{proof}
See  \cite[3.1, 3.2]{gao}.
\end{proof}

\begin{lem}\label{lem:reduced0}
	Assume that $ I\subset S $, $ |I|=2 $, $ w\in W_I $, $ l(w)\geq 2 $ and $ \mc{R}(x)\cup\mc{L}(y)\subset S\setminus I $.
	
	If $l(xwy)<l(x)+l(w)+l(y)$, then  $ (x,w,y) $ or its transpose is in the following cases.
	
	\begin{itemize}
		\item [(1)] $ w=srs $,  $x=x'\cdot w_{st} s$, $y=sw_{st}\cdot y'$ for some $x',y'\in W$ with $\mathcal{R}(x'),\mathcal{L}(y')\subseteq\{r\}$. In this case we have
		$$
		T_{x}T_{srs}T_{y}=\xi_{t}T_{x'\cdot w_{st}\cdot r\cdot tw_{st}\cdot y'}+T_{x'\cdot w_{st}t\cdot r\cdot tw_{st}\cdot y'}.
		$$
		\item [(2)] $ w=rs $, $x=x'\cdot t$, $y=sw_{st}\cdot y'$ for some $x',y'\in W$ with $\mathcal{R}(x')\subseteq\{s\}$, $\mathcal{L}(y')\subseteq \{r\}$. We have
		$$
		T_{x}T_{rs}T_{y}=\xi_{t}T_{x'\cdot r\cdot w_{st}\cdot y'}+T_{x'\cdot r\cdot tw_{st}\cdot y'}.
		$$
	\end{itemize}
\end{lem}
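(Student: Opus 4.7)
The plan is to perform a case analysis on $I$ and on $w$, using Lemma~\ref{lem:a0} to dispose of most possibilities. First, for $I=\{s,t\}$ the hypothesis gives $\mc{R}(x),\mc{L}(y)\subseteq\{r\}$, so Lemma~\ref{lem:a0}(6) forces $l(xwy)=l(x)+l(w)+l(y)$; for $I=\{r,t\}$, the only element of length $\geq 2$ is $rt$, and Lemma~\ref{lem:a0}(5) similarly gives $l(xrty)=l(x)+l(y)+2$. Hence only $I=\{r,s\}$ can produce a reduction, and by Lemma~\ref{lem:a0}(4) we may further assume $l(w)\leq 3$, leaving the four candidates $w\in\{rs,sr,srs,rsr\}$.

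For each of these I would analyse the configurations of $x,y$ that produce a cancellation. Because $\mc{R}(x),\mc{L}(y)\subseteq\{t\}$, the only ``attacking'' letter available from $x$ or $y$ is $t$; and since $rt=tr$ while $s,t$ interact only through the braid relation of $W_{st}$, any cancellation must be produced by propagating $t$'s across the $r$'s in $w$ and then colliding them with an $s$ of $w$ through the $W_{st}$-braid. For $w=srs$, the two outer $s$'s invite exactly this collision, and the analysis forces $x$ to end with the block $w_{st}s$ and $y$ to begin with $sw_{st}$, which yields Case~(1) via the telescoping identity $(w_{st}s)(srs)(sw_{st})=w_{st}\cdot r\cdot w_{st}$. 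For $w=rs$, only the right boundary is available to attack, and the minimal configuration is $x=x'\cdot t$, $y=sw_{st}\cdot y'$, giving Case~(2); the case $w=sr$ is covered by the transpose. For $w=rsr$, the outer letters are $r$'s that commute with the adjacent $t$'s of $x$ and $y$; after applying the braid once the product simplifies to $x_1\cdot rstsr\cdot y_1$, whose length is already $l(x)+l(w)+l(y)$, and any further collapse would require a second $r$-$r$ cancellation, which would force $r\in\mc{L}(y)\cup\mc{R}(x)$ in violation of the hypothesis.

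To show that the structures of Cases~(1) and (2) are actually necessary, I implement a peeling argument: assuming a reduction with $w=srs$, I may take $x=x_1\cdot t$ and $y=t\cdot y_1$ (the cases $x=e$ or $y=e$ readily give no reduction), and the constraint $\mc{R}(x),\mc{L}(y)\subseteq\{t\}$ forces the suffix of $x$ to be extended exactly up to $w_{st}s$ and the prefix of $y$ exactly up to $sw_{st}$, since extending the alternation one step further would reach the longest element $w_{st}$, which has $s$ in both its left and right descent sets. The same argument, truncated one step earlier, handles $w=rs$ and produces Case~(2).

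Finally, for the explicit Hecke algebra identities, I would expand $T_xT_wT_y$ using the reduced factorisations $T_x=T_{x'}T_{w_{st}s}$, $T_w=T_sT_rT_s$ (resp.\ $T_rT_s$), and $T_y=T_{sw_{st}}T_{y'}$. Rearranging the interior factors via the braid relation $\underbrace{T_sT_t\cdots}_{m_{st}}=\underbrace{T_tT_s\cdots}_{m_{st}}$ and the commutation $T_rT_t=T_tT_r$ produces an adjacent square $T_t^2$ at the seam where the blocks $w_{st}s$ and $sw_{st}$ meet across the central $r$; substituting $T_t^2=1+\xi_tT_t$ then yields the two summands with coefficients $\xi_t$ and $1$ as stated. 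The main obstacle will be the combinatorial bookkeeping in the peeling step: verifying that every suffix of $x$ and prefix of $y$ compatible with $\mc{R}(x),\mc{L}(y)\subseteq\{t\}$ collapses to exactly one of the listed forms, and confirming that no exotic configuration with $w=rsr$ escapes the analysis.
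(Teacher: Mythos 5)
Your reduction to $I=\{r,s\}$ and $w\in\{rs,sr,srs,rsr\}$ via Lemma \ref{lem:a0}(4)(5)(6) is exactly the paper's first step, your two output cases (with $rsr$ excluded and $sr$ handled by transpose) are the right ones, and your account of where the $\xi_t$ comes from in the Hecke identities (a $T_t^2$ appearing at the seam after commuting $r$ past $t$) is the correct mechanism. So the skeleton of your plan coincides with the paper's proof.

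The gap sits precisely in the step you flag as ``the main obstacle'': the claims that a length drop for $w=srs$ forces $\mc{R}(xs)=\mc{L}(sy)=\{s,t\}$, that for $w=rs$ it forces $\mc{L}(sy)=\{s,t\}$, and that $w=rsr$ never drops, are justified only by a letter-cancellation heuristic (``any further collapse would require a second $r$--$r$ cancellation''). This is not a valid form of argument in a Coxeter group: a failure of $l(uv)=l(u)+l(v)$ need not be traceable to an adjacent pair of equal letters in chosen words for $u,v$. Moreover, your ``extending one step further would reach $w_{st}$'' remark only explains why the alternating $\{s,t\}$-suffix of $x$ cannot exceed $w_{st}s$ — which is immediate from $\mc{R}(x)\subseteq\{t\}$ — and not why it must be that long, which is the substantive direction. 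The paper closes these gaps by contradiction using the tools you already have: for $w=srs$, if $\mc{R}(xs)=\{s\}$ then $\mc{R}(xsr)=\{r\}$ (using $m_{rs}=\infty$ and $m_{rt}=2$), and writing $y=y_1\cdot y_2$ with $y_1\in W_{st}\setminus\{e\}$, Lemma \ref{lem:a0}(6) applied to $(xsr)(sy_1)(y_2)$ gives additivity of lengths, contradicting the assumed drop; for $w=rsr$, writing $x=x'\cdot t$, Lemma \ref{lem:a0}(3) gives $\mc{L}(sry)=\{s\}$ and then Lemma \ref{lem:a0}(5) gives additivity; for $w=rs$, if $\mc{L}(sy)=\{s\}$ then Lemma \ref{lem:a0}(5) applied to $x'\cdot rt\cdot sy$ again gives additivity. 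What is missing from your plan is running these short contradiction arguments rather than appealing to cancellation intuition. (A minor slip: $t\cdot rsr\cdot t=r\cdot tst\cdot r$, not $rstsr$.)
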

\begin{proof}
Since $l(xwy)<l(x)+l(w)+l(y)$, by Lemma \ref{lem:a0}(4)(5)(6), we must have $I=\{r,s\}$ and $2\leq l(w)\leq 3$.

If $w=srs$, we claim that $\mathcal{R}(xs)=\{s,t\}$. Otherwise, $\mathcal{R}(xs)=\{s\}$, and then $\mathcal{R}(xsr)=\{r\}$.
We assume $y=y_{1}\cdot y_{2}$ for some $y_{1}\in W_{st}\setminus\{e\}$ and $y_{2}\in W$ with $\mathcal{L}(y_{2})\subseteq\{r\}$. Since $s\cdot y_{1}\in W_{st}$ and $l(sy_{1})\geq 2$, by Lemma \ref{lem:a0}(6), we have
$$l(x)+l(srs)+l(y)=l(xsr)+l(sy_{1})+l(y_{2})=l(xsrsy).$$
It is a contradiction. Similarly, we can prove $\mathcal{L}(sy)=\{s,t\}$. Now we assume $x=x'\cdot w_{st}s$, $y=sw_{st}\cdot y'$ for some $x',y'\in W$ with $\mathcal{R}(x'),\mathcal{L}(y')\subseteq\{r\}$.
Since $\mathcal{L}(r\cdot tw_{st}\cdot y')=\{r\}$, by Lemma \ref{lem:a0}(6), we have
$$
\begin{aligned}
T_{x}T_{srs}T_{y}&=T_{x'w_{st}}T_{r}T_{w_{st}y'}\\
&=\xi_{t}T_{x'}T_{w_{st}}T_{r\cdot tw_{st}\cdot y'}+T_{x'}T_{w_{st}t}T_{r\cdot tw_{st}\cdot y'}\\
&=\xi_{t}T_{x'\cdot w_{st}\cdot r\cdot tw_{st}\cdot y'}+T_{x'\cdot w_{st}t\cdot r\cdot tw_{st}\cdot y'}.
\end{aligned}
$$

If $w=rsr$, we assume $x=x'\cdot t$ for some $x'\in W$ with $\mathcal{R}(x')\subseteq\{s\}$. By Lemma \ref{lem:a0}(3), we know $\mathcal{L}(sr\cdot y)=\{s\}$, so we have
$$l(x)+l(rsr)+l(y)=l(x')+l(rt)+l(sry)=l(xrsry).$$

At last, we consider $w=rs$ and the case of $w=sr$ is similar. We assume $x=x'\cdot t$ for some $x'\in W$ with $\mathcal{R}(x')\subseteq\{s\}$.
If $\mathcal{L}(sy)=\{s\}$, then $l(x)+l(rs)+l(y)=l(x')+l(rt)+l(sy)=l(xrsy)$ by Lemma \ref{lem:a0}(5). So we must have $\mathcal{L}(sy)=\{s,t\}$. We assume $y=sw_{st}\cdot y'$ for some $y'\in W$ with $\mathcal{L}(y')\subseteq \{r\}$.
Since $\mathcal{R}(x'r)=\{r\}$, by Lemma \ref{lem:a0}(6), we have
$$
\begin{aligned}
T_{x}T_{rs}T_{y}&=T_{x'}T_{rt}T_{w_{st}y'}\\
&=\xi_{t}T_{x'r}T_{w_{st}y'}+T_{x'r}T_{tw_{st}y'}\\
&=\xi_{t}T_{x'\cdot r\cdot w_{st}\cdot y'}+T_{x'\cdot r\cdot tw_{st}\cdot y'}.
\end{aligned}
$$
This completes the proof.
\end{proof}

\subsection{The case of $ \infty> m_{rs}, m_{st}\geq 4$ but $(m_{rs},m_{st})\neq (4,4)$}

\begin{lem}\label{lem:a1}
Let $w,x,y\in W$.
\begin{itemize}
\item [(1)] If $w=w_{1}\cdot ts$, then $r\notin \mathcal{R}(w)$.
\item [(2)] If $w=w_{1}\cdot rs$, then $t\notin \mathcal{R}(w)$.
\item [(3)] If $w=w_{1}\cdot st,\ \mathcal{R}(w_{1}s)=\{s\}$, then $r\notin \mathcal{R}(w)$.
\item [(4)] If $w=w_{1}\cdot sr,\ \mathcal{R}(w_{1}s)=\{s\}$, then $t\notin \mathcal{R}(w)$.
\item [(5)] If $w=w_{1}\cdot tst$, then $r\notin \mathcal{R}(w)$.
\item [(6)] If $w=w_{1}\cdot rsr$, then $t\notin \mathcal{R}(w)$.
\item [(7)] There is no $w_{1},w_{2}\in W$ such that $w=w_{1}\cdot st=w_{2}\cdot sr$.
\item [(8)] If $\mathcal{L}(w)\subseteq\{r\}$, then $\mathcal{L}(r\cdot tw_{st}\cdot w)=\{r\}$.
\item [(9)] If $\mathcal{L}(w)\subseteq\{t\}$, then $\mathcal{L}(t\cdot rw_{rs}\cdot w)=\{t\}$.
\item [(10)] If $w\in W_{st},\ l(w)\geq 4$, $\mathcal{R}(x),\mathcal{L}(y)\subseteq\{r\}$,
then $l(xwy)=l(x)+l(w)+l(y)$.
\item [(11)] If $w\in W_{rs},\ l(w)\geq 4$, $\mathcal{R}(x),\mathcal{L}(y)\subseteq\{t\}$,
then $l(xwy)=l(x)+l(w)+l(y)$.
\item [(12)] If $\mathcal{R}(x),\mathcal{L}(y)\subseteq\{s\},\ \mathcal{R}(xt)=\{t\},\ \mathcal{R}(xr)=\{r\},$
then $l(xtry)=l(x)+l(y)+2$.
\item [(13)] If $\mathcal{R}(x),\mathcal{L}(y)\subseteq \{r\},\ \mathcal{R}(xs)=\{s\}$ or $\mathcal{L}(sy)=\{s\}$,
 then $l(xstsy)=l(x)+l(y)+3$.
\item [(14)] If $\mathcal{R}(x),\mathcal{L}(y)\subseteq \{t\},\ \mathcal{R}(xs)=\{s\}$ or $\mathcal{L}(sy)=\{s\}$,
 then $l(xsrsy)=l(x)+l(y)+3$.
\item [(15)] If $m_{rs}\geq 5$, $\mathcal{R}(x),\mathcal{L}(y)\subseteq\{r\}$, then $l(xtsty)=l(x)+l(y)+3$.
\item [(16)] If $m_{st}\geq 5$, $\mathcal{R}(x),\mathcal{L}(y)\subseteq\{t\}$, then $l(xrsry)=l(x)+l(y)+3$.
\end{itemize}
Note that the lemmas are still correct after exchanging $r$ and $t$.
\end{lem}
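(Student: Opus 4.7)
The plan is to verify each of the sixteen items by direct combinatorial analysis, relying on Matsumoto's theorem (any two reduced expressions of the same element are linked by braid moves) together with the very restricted braid structure of $(W,S)$. Since $m_{rt}=2$, the only non-trivial braid moves available are the commutation $rt=tr$, the long relation of length $m_{rs}$ in $W_{rs}$, and the long relation of length $m_{st}$ in $W_{st}$; both of the latter act entirely within their respective rank-$2$ parabolic subgroup and do not introduce any letter from outside.

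For items (1)--(7), each of which asserts that a particular simple reflection $\rho$ cannot lie in $\mathcal{R}(w)$ given a specified reduced factorization of $w$, I would argue by contradiction: if $\rho\in\mathcal{R}(w)$, Matsumoto's theorem produces a sequence of braid moves transforming the given reduced expression into one ending in $\rho$. I then track how $\rho$ could first appear at the right-hand end, and check that the allowed moves cannot accomplish this---the commutation $rt=tr$ never introduces a new letter, while the long relations stay inside $W_{rs}$ or $W_{st}$, so they cannot bring in a letter lying outside the chosen parabolic. Item (7) is essentially of the same flavour: any element ending simultaneously in $st$ and in $sr$ would require a braid move converting the tail $st$ into the tail $sr$, and no relation mixes letters of $W_{st}$ with letters of $W_{rs}$ in that way. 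Items (8) and (9) then reduce to items (1)--(6) once one verifies that the displayed concatenations $r\cdot tw_{st}\cdot w$ and $t\cdot rw_{rs}\cdot w$ are themselves reduced; for (8) this uses $\mathcal{L}(w)\subseteq\{r\}$, the commutation $rt=tr$, and $\mathcal{L}(w_{st})=\{s,t\}$, and symmetrically for (9).

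Items (10)--(16) assert length additivity for certain concatenations $xwy$. The uniform approach is: assume $l(xwy)<l(x)+l(w)+l(y)$; by the Strong Exchange Property some letter of the displayed expression is removable. The descent-set hypotheses $\mathcal{R}(x),\mathcal{L}(y)\subseteq\{r\}$ (or $\{s\}$ or $\{t\}$ in the appropriate item) rule out direct cancellation across the boundary between $x$ and $w$, or between $w$ and $y$. Consequently the cancellation must be produced only after an interior braid move inside $xwy$. The length and parameter hypotheses---$l(w)\geq 4$ in (10)--(11), the explicit auxiliary descent conditions in (12)--(14), and $m_{rs}\geq 5$ or $m_{st}\geq 5$ in (15)--(16)---are exactly what is needed to guarantee that no interior braid move is available which could expose a boundary cancellation.

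I expect the most delicate cases to be (13)--(16), where a long braid move applied inside one rank-$2$ parabolic can push a letter into the other parabolic and trigger a cascading cancellation; these have to be excluded by careful bookkeeping of which braid relation is applicable at each step. The standing assumption $(m_{rs},m_{st})\neq(4,4)$ from the description of class (2) is crucial here: when $m_{rs}=m_{st}=4$ the two long braid relations have matching length and can chain together to produce precisely the boundary cancellations that (15)--(16) are supposed to rule out, so either $m_{rs}\geq 5$ or $m_{st}\geq 5$ must genuinely be used in the verification of those two items.
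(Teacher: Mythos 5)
The paper does not prove this lemma at all: it simply cites \cite[4.1, 4.2]{gao}, so there is no internal argument to compare against. Judged on its own terms, your proposal is a plan rather than a proof, and the plan has a genuine gap at its centre. For items (1)--(7) you claim that the forbidden descent cannot arise because ``the long relations stay inside $W_{rs}$ or $W_{st}$, so they cannot bring in a letter lying outside the chosen parabolic.'' But in the critical cases the forbidden letter lies \emph{inside} the same rank-two parabolic as the last letter of the given suffix. Take (1): if $w=w_1\cdot ts$ and $r\in\mathcal{R}(w)$, then since $s\in\mathcal{R}(w)$ as well we get $w=w_2\cdot w_{rs}$, and the long braid relation in $W_{rs}$ \emph{does} produce a reduced word of $w$ ending in $r$. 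What has to be excluded is that $w$ can simultaneously carry the suffix $ts$ and the full alternating $\{r,s\}$-suffix of length $m_{rs}$; that is exactly the content of the lemma, it is where the hyperbolicity constraint $\frac1{m_{rs}}+\frac1{m_{st}}<\frac12$ enters, and it requires a genuine induction on length in which items (1)--(9) are established simultaneously (e.g.\ (1) for $w$ reduces to a statement of type (6) for $ws=w_2\cdot(w_{rs}s)$, and so on). Your sketch never engages with this, and the heuristic you offer in its place is false as stated. The same objection applies to (7): ending in both $st$ and $sr$ forces $w=w'\cdot rt$, after which the commutation $rt=tr$ followed by long braid moves can in principle cascade, so the claim that ``no relation mixes letters of $W_{st}$ with letters of $W_{rs}$'' does not settle it. For (10)--(16) the natural route is not Strong Exchange bookkeeping but the reduction $l(xwy)<l(x)+l(w)+l(y)\Rightarrow \mathcal{R}(xw)\cap\mathcal{L}(y)\neq\emptyset$ combined with the descent computations of (1)--(9); your proposal does not make that reduction and so leaves these items unverified as well.

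A smaller point: your last paragraph misreads the role of $(m_{rs},m_{st})\neq(4,4)$. Items (15) and (16) each carry their own hypothesis ($m_{rs}\geq 5$, resp.\ $m_{st}\geq 5$), so neither needs the standing assumption for its verification; that assumption only guarantees that at least one of the two items has non-vacuous content. Nothing in (15)--(16) requires ``chaining'' the two long braid relations, and presenting that as the crucial use of the hypothesis is misleading.
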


\begin{proof}
See \cite[4.1, 4.2]{gao}.
\end{proof}

\begin{lem}\label{lem:reduced}
	Assume that $ I\subset S $, $ |I|=2 $, $ w\in W_I $, $ l(w)\geq 2 $ and $ \mc{R}(x)\cup\mc{L}(y)\subset S\setminus I $.
	
	If $ l(xwy)<l(x)+l(w)+l(y) $, then  $ (x,w,y) $, or its transpose, or the one with $ r $, $ t $ exchanged,  is in one of the following cases:
	
	\begin{itemize}
		\item [(1)] $ w=srs $,  $ x=x'\cdot w_{st} s$, $ y=sw_{st}\cdot y' $ for some $x',y'\in W$ with $\mathcal{R}(x'),\mathcal{L}(y')\subseteq\{r\}$. We have
		$$
		T_{x}T_{srs}T_{y}=\xi_{t}T_{x'\cdot w_{st}\cdot r\cdot tw_{st}\cdot y'}+T_{x'\cdot w_{st}t\cdot r\cdot tw_{st}\cdot y'}.
		$$
		\item [(2)] $ w=rsr $, $ m_{st}=4 $, $x=x''\cdot w_{rs}r\cdot t$ and $y=t\cdot rw_{rs}\cdot y''$ for some $x'',y''\in W$ with $\mathcal{R}(x''),\mathcal{L}(y'')\subseteq \{t\}$. We have
		$$
		T_{x}T_{rsr}T_{y}=\xi_{s}T_{x''\cdot w_{rs}\cdot tst\cdot sw_{rs}y''}+T_{x''w_{rs}s\cdot tst\cdot sw_{rs}\cdot y''}.
		$$
		\item [(3)] $ w=rt $, $x=x'\cdot w_{rs}r$, $y=tw_{st}\cdot y'$ for some $x',y'\in W$ with $\mathcal{R}(x')\subseteq\{t\}$, $\mathcal{L}(y')\subseteq\{r\}$. We have
		$$
		T_{x}T_{rt}T_{y}=\xi_{s}T_{x'\cdot w_{rs}\cdot sw_{st}\cdot y'}+T_{x'\cdot w_{rs}s\cdot sw_{st}\cdot y'}.
		$$
        \item [(4)] $ w=rs $.
        \begin{enumerate}[label=\circled{\arabic*}]
			\item $x=x''\cdot w_{rs}r\cdot t$, $y=sw_{st}\cdot y'$ for some $x'',y'\in W$ with $\mathcal{R}(x'')\subseteq\{t\}$, $\mathcal{L}(y')\subseteq\{r\}$, $\mathcal{L}(stw_{st}y')=\{t\}$. We have
			\begin{align*}
			T_{x}T_{rs}T_{y}&=\xi_{t}\xi_{s}T_{x''\cdot w_{rs}s\cdot w_{st}\cdot y'}+\xi_{t}T_{x''\cdot w_{rs}s\cdot sw_{st}\cdot y'}\\
            &\ \ \ +\xi_{s}T_{x''\cdot w_{rs}\cdot stw_{st}\cdot y'}+T_{x''\cdot w_{rs}s\cdot stw_{st}\cdot y'}.
			\end{align*}
			\item $m_{st}=4$, $x=x''\cdot w_{rs}r\cdot t$, $y=sw_{st}\cdot sw_{rs}\cdot y''$ for some $x'',y''\in W$ with $\mathcal{R}(x'')\subseteq\{t\}$, $\mathcal{L}(y'')\subseteq\{t\}$. We have
			\begin{align*}
			T_{x}T_{rs}T_{y}&=\xi_{t}\xi_{s}T_{x''\cdot w_{rs}s\cdot w_{st}\cdot sw_{rs}\cdot y''}+\xi_{t}T_{x''\cdot w_{rs}s\cdot sw_{st}\cdot sw_{rs}\cdot y''}\\
            &\ \ \ +\xi_{s}\xi_{r}T_{x''\cdot w_{rs}r\cdot t\cdot w_{rs}\cdot y''}+\xi_{s}T_{x''\cdot w_{rs}r\cdot t\cdot rw_{rs}\cdot y''}\\
            &\ \ \ +\xi_{r}T_{x''\cdot w_{rs}sr\cdot t\cdot w_{rs}\cdot y''}+T_{x''\cdot w_{rs}sr\cdot t\cdot rw_{rs}\cdot y''}.
			\end{align*}
            \item $x=x'\cdot t$, $y=sw_{st}\cdot y'$ for some $x',y'\in W$ with $\mathcal{R}(x')\subseteq\{s\}$, $\mathcal{R}(x'r)=\{r\}$, $\mathcal{L}(y')\subseteq\{r\}$.
            At least one of $m_{st}\geq 5$, $\mathcal{R}(x'rs)=\{s\}$ and $\mathcal{L}(sy')=\{s\}$ holds. We have
			$$
			T_{x}T_{rs}T_{y}=\xi_{t}T_{x'\cdot r\cdot w_{st}\cdot y'}+T_{x'\cdot r\cdot tw_{st}\cdot y'}.
			$$
            \item $m_{st}=4$, $x=x''\cdot w_{rs}sr\cdot t$, $y=sw_{st}\cdot sw_{rs}\cdot y''$ for some $x'',y''\in W$ with $\mathcal{R}(x'')\subseteq\{t\}$, $\mathcal{L}(y'')\subseteq\{t\}$. We have
			$$
		    T_{x}T_{rs}T_{y}=\xi_{t}T_{x''\cdot w_{rs}s\cdot w_{st}\cdot sw_{rs}\cdot y''}+\xi_{r}T_{x''\cdot w_{rs}r\cdot t\cdot w_{rs}\cdot y''}+T_{x''\cdot w_{rs}r\cdot t\cdot rw_{rs}\cdot y''}.
		    $$
            \item $ x=x''\cdot (w_{rs} r)\cdot t$, $ y=stw_{st}\cdot y'' $ for some $x'',y''\in W$ with $\mathcal{R}(x'')\subseteq\{t\}$, $\mathcal{L}(y'')\subseteq\{r\}$. We have
			$$
		    T_{x}T_{rs}T_{y}=\xi_{s}T_{x''\cdot w_{rs}\cdot sw_{st}\cdot y''}+T_{x''\cdot w_{rs}s\cdot sw_{st}\cdot y''}.
		    $$
        \end{enumerate}
	\end{itemize}
\end{lem}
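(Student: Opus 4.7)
The plan is to follow the template of the proof of Lemma \ref{lem:reduced0}, but with substantially more sub-cases dictated by the richer reducedness criteria collected in Lemma \ref{lem:a1}. First I would use Lemma \ref{lem:a1}(10)(11) to cut the length of $w$: if $w\in W_I$ has $l(w)\geq 4$, then either $I=\{s,t\}$ with $\mathcal{R}(x),\mathcal{L}(y)\subseteq\{r\}$ or $I=\{r,s\}$ with $\mathcal{R}(x),\mathcal{L}(y)\subseteq\{t\}$, and in both cases $xwy$ is already reduced. So I may assume $l(w)\in\{2,3\}$. Using the transpose $(x,w,y)\mapsto (y^{-1},w^{-1},x^{-1})$ and the $r\leftrightarrow t$ symmetry, it suffices to examine $I=\{r,s\}$ with $w\in\{rs,sr,rsr,srs\}$, plus for the length-two case the ``mixed'' dihedral element $w=rt$ arising under the constraint $\mathcal{R}(x),\mathcal{L}(y)\subseteq\{s\}$.

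For $w=srs$, Lemma \ref{lem:a1}(13) guarantees $l(xstsy)=l(x)+l(y)+3$ as soon as $\mathcal{R}(xs)=\{s\}$ or $\mathcal{L}(sy)=\{s\}$; hence non-reducedness forces $\mathcal{R}(xs)=\mathcal{L}(sy)=\{s,t\}$, so that $x=x'\cdot w_{st}s$ and $y=sw_{st}\cdot y'$ with $\mathcal{R}(x'),\mathcal{L}(y')\subseteq\{r\}$, giving case~(1). The stated expansion then follows by collapsing $T_sT_s=1+\xi_s T_s$ in the middle and using the braid relation (together with $m_{rt}=2$) to rearrange $w_{st}\cdot r\cdot w_{st}$, with Lemma \ref{lem:a1}(8) confirming the final products are reduced. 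For $w=rsr$, the parallel argument uses Lemma \ref{lem:a1}(14) to read off $x$ and $y$; the additional hypothesis $m_{st}=4$ in case~(2) is imposed to avoid Lemma \ref{lem:a1}(15), which would otherwise force reducedness. Case~(3), corresponding to $w=rt$, is handled analogously from Lemma \ref{lem:a1}(12), where the failure propagates a $w_{rs}$ factor on the left and a $w_{st}$ factor on the right.

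The core of the work is $w=rs$ (the subcase (4)), where several obstructions to reducedness can occur simultaneously: the right end of $x$ can carry a full $w_{rs}$ block that collides with the leading $r$, the left end of $y$ can carry a full $w_{st}$ block that collides with the trailing $s$, and when $m_{st}=4$ the two cascades can feed into one another because $w_{st}=stst$ is short enough for a secondary $sw_{rs}$ factor in $y$ (or its mirror in $x$) to participate. Using Lemma \ref{lem:a1}(1)(2)(7) to restrict which letters can bound $x$ and $y$, together with the constraints $\mathcal{R}(x),\mathcal{L}(y)\subseteq\{t\}$, I would separate the possibilities into the five disjoint sub-cases \circled{1}--\circled{5}; in each one the expansion is obtained by inserting $T_u^2=1+\xi_u T_u$ at every collision site and then appealing to Lemma \ref{lem:a1}(8)(9) to verify reducedness of each surviving term. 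I expect the main obstacle to be combinatorial bookkeeping: ensuring the five sub-cases are mutually exclusive and jointly exhaustive when $m_{st}=4$, and tracking the precise shape of each summand through the telescoping of quadratic relations. The individual Hecke-algebra computations are routine but need to be performed carefully to produce the exact coefficients in the $\xi$-monomials appearing in the statement.
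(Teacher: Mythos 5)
Your proposal follows essentially the same route as the paper: reduce to $l(w)\le 3$ via Lemma \ref{lem:a1}(10)(11), then for each of $w=srs,\ rsr,\ rt,\ rs$ use the reducedness criteria of Lemma \ref{lem:a1} to force the boundary blocks $w_{st}$, $w_{rs}$ (and, when $m_{st}=4$, the secondary cascade) onto $x$ and $y$, and expand by inserting the quadratic relation at each collision. The only blemishes are $r\leftrightarrow t$ citation swaps (the paper uses \ref{lem:a1}(14) for $w=srs$ and \ref{lem:a1}(16) for $w=rsr$, not (13) and (15)), which do not affect the substance.
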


\begin{proof}
If $l(w)\geq 4$, then $I=\{s,r\}$ or $I=\{s,t\}$. By Lemma \ref{lem:a1}(10)(11), we have $l(xwy)=l(x)+l(w)+l(y)$.

If $l(w)=3$, we may assume $I=\{s,r\}$ because the case of $I=\{s,t\}$ is similar. Firstly we consider $w=srs$. Since $\mathcal{R}(x),\mathcal{L}(y)\subseteq\{t\}$, by Lemma \ref{lem:a1}(1),
we get $\mathcal{R}(xs)=\{s\}$ or $\{s,t\}$, $\mathcal{L}(sy)=\{s\}$ or $\{s,t\}$. If $\mathcal{R}(xs)=\{s\}$ or $\mathcal{L}(sy)=\{s\}$, then we have $l(xwy)=l(x)+l(w)+l(y)$ by Lemma \ref{lem:a1}(14).
If $\mathcal{R}(xs)=\{s,t\}$ and $\mathcal{L}(sy)=\{s,t\}$, we assume $x=x'\cdot w_{st}s$ and $y=sw_{st}\cdot y'$ for some $x',y'\in W$ with $\mathcal{R}(x'),\mathcal{L}(y')\subseteq\{r\}$.
Then by Lemma \ref{lem:a1}(8)(10), we have
$$
\begin{aligned}
T_{x}T_{srs}T_{y}&=T_{x'w_{st}}T_{r}T_{w_{st}y'}\\
&=\xi_{t}T_{x'\cdot w_{st}}T_{r\cdot tw_{st}\cdot y'}+T_{x'\cdot w_{st}t}T_{r\cdot tw_{st}\cdot y'}.
\end{aligned}
$$
By Lemma \ref{lem:a1}(7)(8), we have $\mathcal{L}(r\cdot tw_{st}\cdot y')=\{r\}$ and $\mathcal{L}(sr\cdot tw_{st}\cdot y')=\{s\}$. Then by Lemma \ref{lem:a1}(10)(13), we get
$$
T_{x}T_{srs}T_{y}=\xi_{t}T_{x'\cdot w_{st}\cdot r\cdot tw_{st}\cdot y'}+T_{x'\cdot w_{st}t\cdot r\cdot tw_{st}\cdot y'}.
$$
Secondly we consider $w=rsr$. If $m_{st}\geq 5$, we have $l(xwy)=l(x)+l(w)+l(y)$ by Lemma \ref{lem:a1}(16).  If $m_{st}=4$, then $m_{rs}\geq 5$. We assume $y=t\cdot y'$ for some $y'\in W$ with $\mathcal{L}(y')\subseteq\{s\}$.
By Lemma \ref{lem:a1}(2)(6), we know $\mathcal{R}(xrs)=\{s\}$ and $\mathcal{R}(xrsr)=\{r\}$. Since $l(x)+l(w)+l(y)=l(xrs)+l(rt)+l(y')<l(xrsrty')$, by Lemma \ref{lem:a1}(12), we must have $\mathcal{R}(xrst)=\{s,t\}$.
We assume $xrst=x'\cdot w_{st}$ for some $x'\in W$ with $\mathcal{R}(x')\subseteq\{r\}$, then $xt\cdot r=x'\cdot s$, so we have $\mathcal{R}(xt\cdot r)=\{s,r\}$. Similarly, we can prove $\mathcal{R}(r\cdot ty)=\{s,r\}$.
Now we assume $x=x''\cdot w_{rs}r\cdot t$ and $y=t\cdot rw_{rs}\cdot y''$ for some $x'',y''\in W$ with $\mathcal{R}(x''),\mathcal{L}(y'')\subseteq \{t\}$.
Since $\mathcal{L}(tst\cdot sw_{rs}\cdot y'') =\{t\}$, by Lemma \ref{lem:a1}(11), we have
$$
\begin{aligned}
T_{x}T_{rsr}T_{y}&=T_{x''w_{rs}}T_{tst}T_{w_{rs}y''}\\
&=\xi_{s}T_{x''w_{rs}}T_{tst}T_{sw_{rs}y''}+T_{x''w_{rs}s}T_{tst}T_{sw_{rs}y''}\\
&=\xi_{s}T_{x''\cdot w_{rs}\cdot tst\cdot sw_{rs}\cdot y''}+T_{x''w_{rs}s\cdot tst\cdot sw_{rs}\cdot y''}.
\end{aligned}
$$

Now we consider $w=rt$. If $\mathcal{R}(xr)=\{r\}$ and $\mathcal{R}(xt)=\{t\}$, we have $l(xwy)=l(x)+l(w)+l(y)$ by Lemma \ref{lem:a1}(12). If $\mathcal{R}(xr)=\{r,s\}$ and $\mathcal{L}(ty)=\{t\}$, we have $l(xwy)=l(x)+l(w)+l(y)$ by Lemma \ref{lem:a1}(11). If $\mathcal{R}(xt)=\{s,t\}$ and $\mathcal{L}(ry)=\{r\}$, we have $l(xwy)=l(x)+l(w)+l(y)$ by Lemma \ref{lem:a1}(10). Summarizing the arguments above, we must have $\mathcal{R}(xr)=\{r,s\}$, $\mathcal{L}(ty)=\{s,t\}$ or $\mathcal{R}(xt)=\{s,t\}$, $\mathcal{L}(ry)=\{r,s\}$. We only consider the former case and assume $x=x'\cdot w_{rs}r$, $y=tw_{st}\cdot y'$ for some $x',y'\in W$ with $\mathcal{R}(x')\subseteq\{t\}$, $\mathcal{L}(y')\subseteq\{r\}$. Then by Lemma \ref{lem:a1}(10)(11), we have
$$
\begin{aligned}
T_{x}T_{rt}T_{y}&=T_{x'w_{rs}}T_{w_{st}y'}\\
&=\xi_{s}T_{x'w_{rs}}T_{sw_{st}y'}+T_{x'w_{rs}s}T_{sw_{st}y'}\\
&=\xi_{s}T_{x'\cdot w_{rs}\cdot sw_{st}\cdot y'}+T_{x'\cdot w_{rs}s\cdot sw_{st}\cdot y'}.
\end{aligned}
$$

At last, we consider $w=rs$ and the case of $w=sr$ is similar. Since $l(xwy)<l(x)+l(w)+l(y)$, we have $l(x)\geq 1$, so we may assume $x=x'\cdot t$ for some $x'\in W$ with $\mathcal{R}(x')\subseteq\{s\}$. We get
$$T_{x}T_{rs}T_{y}=T_{x'\cdot rt}T_{sy}.$$

 If $\mathcal{L}(sy)=\{s,t\}$, we assume $y=sw_{st}\cdot y'$ for some $y'\in W$ with $\mathcal{L}(y')\subseteq\{r\}$. Then we have
$$
\begin{aligned}
T_{x}T_{rs}T_{y}&=T_{x'\cdot rt}T_{w_{st}\cdot y'}\\
&=\xi_{t}T_{x'\cdot r}T_{w_{st}\cdot y'}+T_{x'\cdot r}T_{tw_{st}\cdot y'}.
\end{aligned}
$$
We consider the following 4 cases.

\noindent\circled{1} $\mathcal{R}(x'r)=\{r,s\}$, $\mathcal{L}(stw_{st}y')=\{t\}$.

Then we assume $x'=x''\cdot w_{rs}r$ for some $x''\in W$ with $\mathcal{R}(x'')\subseteq\{t\}$. By Lemma \ref{lem:a1}(10)(11)(16), we have
$$
\begin{aligned}
T_{x}T_{rs}T_{y}&=\xi_{t}T_{x''\cdot w_{rs}}T_{w_{st}\cdot y'}+T_{x''\cdot w_{rs}}T_{tw_{st}\cdot y'}\\
&=\xi_{t}\xi_{s}T_{x''\cdot w_{rs}s\cdot w_{st}\cdot y'}+\xi_{t}T_{x''\cdot w_{rs}s\cdot sw_{st}\cdot y'}\\
&\ \ \ +\xi_{s}T_{x''\cdot w_{rs}\cdot stw_{st}\cdot y'}+T_{x''\cdot w_{rs}s\cdot stw_{st}\cdot y'}.
\end{aligned}
$$

\noindent\circled{2} $\mathcal{R}(x'r)=\{r,s\}$, $\mathcal{L}(stw_{st}y')=\{r,t\}$.

Then $m_{st}=4$ and $\mathcal{L}(sy')=\{r,s\}$. We assume $x'=x''\cdot w_{rs}r$, $y'=sw_{rs}\cdot y''$ for some $x'',y''\in W$ with $\mathcal{R}(x''),\mathcal{L}(y'')\subseteq\{t\}$.
By Lemma \ref{lem:a1}(10)(11), we have
$$
\begin{aligned}
T_{x}T_{rs}T_{y}&=\xi_{t}T_{x''\cdot w_{rs}}T_{w_{st}\cdot y'}+T_{x''\cdot w_{rs}}T_{st\cdot w_{rs}\cdot y''}\\
&=\xi_{t}\xi_{s}T_{x''\cdot w_{rs}s\cdot w_{st}\cdot y'}+\xi_{t}T_{x''\cdot w_{rs}s\cdot sw_{st}\cdot y'}\\
&\ \ \ +\xi_{s}T_{x''\cdot w_{rs}}T_{t\cdot w_{rs}\cdot y''}+T_{x''\cdot w_{rs}s}T_{t\cdot w_{rs}\cdot y''}\\
&=\xi_{t}\xi_{s}T_{x''\cdot w_{rs}s\cdot w_{st}\cdot y'}+\xi_{t}T_{x''\cdot w_{rs}s\cdot sw_{st}\cdot y'}\\
&\ \ \ +\xi_{s}\xi_{r}T_{x''\cdot w_{rs}r\cdot t\cdot w_{rs}\cdot y''}+\xi_{s}T_{x''\cdot w_{rs}r\cdot t\cdot rw_{rs}\cdot y''}\\
&\ \ \ +\xi_{r}T_{x''\cdot w_{rs}sr\cdot t\cdot w_{rs}\cdot y''}+T_{x''\cdot w_{rs}sr\cdot t\cdot rw_{rs}\cdot y''}.
\end{aligned}
$$

\noindent
\circled{3} $\mathcal{R}(x'r)=\{r\}$. At least one of $m_{st}\geq 5$, $\mathcal{R}(x'rs)=\{s\}$ and $\mathcal{L}(sy')=\{s\}$ holds.

Since $\mathcal{L}(y')\subseteq\{r\}$, by Lemma \ref{lem:a1}(10)(13), we have
$$
\begin{aligned}
T_{x}T_{rs}T_{y}&=\xi_{t}T_{x'\cdot r}T_{w_{st}\cdot y'}+T_{x'\cdot r}T_{tw_{st}\cdot y'}\\
&=\xi_{t}T_{x'\cdot r\cdot w_{st}\cdot y'}+T_{x'\cdot r\cdot tw_{st}\cdot y'}.
\end{aligned}
$$

\noindent
\circled{4} $\mathcal{R}(x'r)=\{r\}$, $m_{st}=4$, $\mathcal{R}(x'rs)=\mathcal{L}(sy')=\{r,s\}$.

We assume $x'=x''\cdot w_{rs}sr$, $y'=sw_{rs}\cdot y''$ for some $x'',y''\in W$ with $\mathcal{R}(x''),\mathcal{L}(y'')\subseteq\{t\}$.
Since $\mathcal{R}(x''\cdot w_{rs}r\cdot t)=\{t\}$, by Lemma \ref{lem:a1}(11), we have
$$
\begin{aligned}
T_{x}T_{rs}T_{y}&=\xi_{t}T_{x'\cdot r}T_{w_{st}\cdot y'}+T_{x'\cdot r}T_{tw_{st}\cdot y'}\\
&=\xi_{t}T_{x'\cdot r\cdot w_{st}\cdot y'}+T_{x''\cdot w_{rs}}T_{t\cdot w_{rs}\cdot y''}\\
&=\xi_{t}T_{x''\cdot w_{rs}s\cdot w_{st}\cdot sw_{rs}\cdot y''}+\xi_{r}T_{x''\cdot w_{rs}r\cdot t\cdot w_{rs}\cdot y''}+T_{x''\cdot w_{rs}r\cdot t\cdot rw_{rs}\cdot y''}.
\end{aligned}
$$
If $\mathcal{L}(sy)=\{s\}$, since $\mathcal{R}(x')\subseteq\{s\}$ and $\mathcal{R}(x't)=\{t\}$, by the discussion for $w=rt$, we must have
\\\circled{5} $\mathcal{R}(x'r)=\{r,s\}$ and $\mathcal{L}(tsy)=\{s,t\}$. Now we assume $x=x''\cdot (w_{rs} r)\cdot t$, $y=stw_{st}\cdot y''$ for some $x'',y''\in W$ with $\mathcal{R}(x'')\subseteq\{t\}$, $\mathcal{L}(y'')\subseteq\{r\}$.
Since $\mathcal{R}(x''\cdot w_{rs}s)=\{r\}$, $\mathcal{L}(sw_{st}\cdot y'')=\{t\}$, by Lemma \ref{lem:a1}(10)(11), we have
$$
\begin{aligned}
T_{x}T_{rs}T_{y}&=T_{x''w_{rs}}T_{w_{st}y''}\\
&=\xi_{s}T_{x''w_{rs}}T_{sw_{st}y''}+T_{x''w_{rs}s}T_{sw_{st}y''}\\
&=\xi_{s}T_{x''\cdot w_{rs}\cdot sw_{st}\cdot y''}+T_{x''\cdot w_{rs}s\cdot sw_{st}\cdot y''}.
\end{aligned}
$$
This completes the proof.\end{proof}

\subsection{The case of $ \infty>m_{rs}\geq 7, m_{st}=3$ }

\begin{lem}\label{lem:a2}
Let $w,x,y\in W$.
\begin{itemize}
\item [(1)] There is no $w_{1},w_{2}\in W$ such that $w=w_{1}\cdot st=w_{2}\cdot sr$.
\item [(2)] If $w=w_{1}\cdot srs$, then $t\notin \mathcal{R}(w)$.
\item [(3)] If $w=w_{1}\cdot srsr$, then $t\notin \mathcal{R}(w)$.
\item [(4)] If $w=w_{1}\cdot ts$, then $r\notin \mathcal{R}(w)$.
\item [(5)] If $w=w_{1}\cdot tsr$, then $s\notin \mathcal{R}(w)$.
\item [(6)] If $\mathcal{R}(x),\mathcal{L}(y)\subseteq\{t\}$, $w\in W_{rs}$, $l(w)\geq 6$ or $w=srsrs$, then $l(xwy)=l(x)+l(w)+l(y)$, $\mathcal{R}(xwy)=\mathcal{R}(wy)$, $\mathcal{L}(xwy)=\mathcal{L}(xw)$.
\item [(7)] If $\mathcal{R}(x),\mathcal{L}(y)\subseteq\{r\}$, $\mathcal{R}(xs)=\{s\}$ or $\mathcal{L}(sy)=\{s\}$, then $T_{xsts}T_{y}=T_{xstsy}$.
\item [(8)] If $\mathcal{R}(x),\mathcal{L}(y)\subseteq \{s\}$, $\mathcal{R}(xr)=\{r\}$, $\mathcal{R}(xt)=\{t\}$, $\mathcal{R}(xrs)=\{s\}$, then $T_{xtr}T_{y}=T_{xtry}$.
\item [(9)] If $\mathcal{R}(x)\subseteq \{s\}$, $\mathcal{L}(y)\subseteq \{r\}$, then $\deg T_{x\cdot rt}T_{sts\cdot y}\leq L(rsrs)$.
\end{itemize}
\end{lem}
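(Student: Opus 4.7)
The plan is to treat the nine items in three groups, each with a distinct technique, all grounded in the combinatorial structure of reduced expressions in $(W, S)$ under the constraints $m_{rt} = 2$, $m_{st} = 3$, $m_{rs} \geq 7$. Items (1)--(5) are structural claims about reduced expressions and descent sets; item (6) is a length-additivity and descent-preservation statement; items (7)--(8) are Hecke-algebra identities following from (6); and item (9) is a degree bound requiring an explicit expansion.

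For items (1)--(5), I will argue by contradiction using the exchange condition and the standard principle that whenever a subset $J \subseteq S$ with $W_J$ finite satisfies $J \subseteq \mathcal{R}(w)$, the element $w$ admits a reduced expression ending in $w_J$. For instance, in item (1) the two hypotheses force $\{r, t\} \subseteq \mathcal{R}(w)$, so $w$ also has a reduced expression ending in $rt = tr$; playing this off against the assumed $w = w_1 \cdot st$ and using the braid $sts = tst$ together with $rt = tr$ produces a shortening braid move that contradicts the reduced-product hypothesis. Items (2)--(5) are handled by the same bookkeeping: in each case the reduced-product hypothesis pins down an initial segment of $\mathcal{R}(w)$, and the interplay of $m_{rt} = 2$, $m_{st} = 3$, and the rigidity of $W_{rs}$ (no short braid, since $m_{rs} \geq 7$) rules out the claimed additional descent.

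For item (6), the observation is that any $w \in W_{rs}$ with $l(w) \geq 6$ or $w = srsrs$ has its boundary and next-to-boundary letters lying in $\{r, s\}$ and alternating over at least three consecutive positions at each end. Under $\mathcal{R}(x), \mathcal{L}(y) \subseteq \{t\}$, the only relations available for shortening $xwy$ are $rt = tr$ and $sts = tst$, and neither can propagate a $t$-descent across the alternating $rs$-block. Length additivity follows, and the descent equalities $\mathcal{R}(xwy) = \mathcal{R}(wy)$ and $\mathcal{L}(xwy) = \mathcal{L}(xw)$ fall out of the same boundary analysis. Items (7) and (8) are then Hecke-algebra restatements of the length additivities $l(xstsy) = l(xsts) + l(y)$ and $l(xtry) = l(xtr) + l(y)$, which I will verify by inspecting descents at the interfaces via items (1)--(5) and (6).

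For item (9), I expand in the Hecke algebra. Using $rt = tr$, the braid $tst = sts$, and the quadratic relation $T_s^2 = 1 + \xi_s T_s$, a direct computation gives
\[
T_{rt} T_{sts} = T_{rst} + \xi_s T_{rsts},
\]
so that
\[
T_{x \cdot rt} T_{sts \cdot y} = T_x T_{rst} T_y + \xi_s T_x T_{rsts} T_y.
\]
Each of the two remaining triple products is then expanded one generator at a time, and at each step a length reduction contributes a single factor $\xi_r$ or $\xi_s$; the hypotheses $\mathcal{R}(x) \subseteq \{s\}$, $\mathcal{L}(y) \subseteq \{r\}$, combined with $m_{rs} \geq 7$, preclude any alternating reduction inside the $W_{rs}$-block. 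Consequently the cumulative weight of $\xi$-factors in any resulting $T_z$-coefficient is at most $2L(r) + 2L(s) = L(rsrs)$. The main obstacle will be the bookkeeping of the boundary interactions between $x$, the middle word, and $y$; the case $m_{rs} = 7$ is the tightest and must be checked carefully to confirm that the bound is not exceeded.
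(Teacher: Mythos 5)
The paper does not prove this lemma in the text at all: it is quoted verbatim from the first author's earlier work and the ``proof'' is the citation \cite[5.1--5.4, 5.8]{gao}. So you are attempting something the paper outsources, and the right comparison is whether your sketch would actually close the argument. It would not, for two reasons.

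First, for items (1)--(6) you name the correct toolkit (exchange condition, the fact that $J\subseteq\mathcal{R}(w)$ with $W_J$ finite forces $w=w'\cdot w_J$, and the relations $rt=tr$, $sts=tst$), but a single round of this bookkeeping does not terminate. For example, in item (1) the hypotheses give $r,t\in\mathcal{R}(w)$, hence $wt=u\cdot w_{rs}$ and $wr=v\cdot w_{st}$; no contradiction appears at this stage, and one must iterate the descent analysis on shorter elements, i.e.\ set up a genuine induction on $l(w)$ in which the items are proved simultaneously (this is how such statements are established in \cite{gao}). Your proposal never sets up that induction, and ``the rigidity of $W_{rs}$ rules out the claimed additional descent'' is an assertion, not an argument. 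Likewise in (6) the inclusion $\mathcal{R}(xwy)\supseteq\mathcal{R}(wy)$ is automatic from length-additivity, but the reverse inclusion (no new descents) is exactly the nontrivial part and is not addressed.

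Second, and more concretely, item (9) as you argue it is wrong. Your identity is fine up to $\xi_s=\xi_t$ (forced by $m_{st}=3$): $T_{rt}T_{sts}=T_{rst}+\xi_t T_{rsts}$. But the bound you then extract --- ``each length reduction contributes one $\xi$-factor, hence the total is at most $2L(r)+2L(s)$'' --- does not follow. Counting one $\xi$ per letter, the term $\xi_t T_xT_{rsts}T_y$ could a priori contribute degree $L(t)+L(r)+2L(s)+L(t)=L(r)+4L(s)$, which exceeds $L(rsrs)=2L(r)+2L(s)$ whenever $L(r)<2L(s)$; and since reductions can cascade (a reduced $T_{x'}$ reappears and interacts with $y$ again), the degree of a triple product is not even bounded by the sum over the letters of the middle word. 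To get the stated bound one must show that the specific descent hypotheses $\mathcal{R}(x)\subseteq\{s\}$, $\mathcal{L}(y)\subseteq\{r\}$ prevent most of these reductions from occurring simultaneously --- this is precisely the content of \cite[5.8]{gao} and it is missing from your proposal.
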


\begin{proof}
See \cite[5.1, 5.2, 5.3, 5.4, 5.8]{gao}.
\end{proof}

\begin{lem}\label{lem:reduced2}
	Assume that $ I\subset S $, $ |I|=2 $, $ w\in W_I $, $ l(w)\geq 2 $ and $ \mc{R}(x)\cup\mc{L}(y)\subset S\setminus I $.
	
	If $ l(xwy)<l(x)+l(w)+l(y) $, then  $ (x,w,y) $ or its transpose is a reduced extension of that $ (x,w,y) $ in the following cases.
	\begin{itemize}
		\item [(1)] $ w=rsrsr $, $ x=w_{rs}r\cdot t $, $ y=t\cdot rw_{rs}$. In this case, we have
		\begin{align*}
		T_{x}T_{rsrsr}T_{y}=\xi_{t}T_{w_{rs}\cdot tsrst\cdot sw_{rs}}+T_{w_{rs}s\cdot tsrst\cdot sw_{rs}}.
		\end{align*}
		\item [(2)] $ w=rsrs$, $ x=w_{rs}r\cdot t $, $ y=ts$. In this case, we have
		\begin{align*}
		T_{x}T_{rsrs}T_{y}=\xi_{s}T_{w_{rs}\cdot tsrst}+T_{w_{rs}s\cdot tsrst}.
		\end{align*}
		\item [(3)]$ w=srs $.
		\begin{enumerate}[label=\circled{\arabic*}]
			\item $ x=st $, $ y=ts $. In this case, we have
			\begin{align*}
			T_{x}T_{srs}T_{y}=\xi_{t}T_{tstrst}+T_{tsrst}.
			\end{align*}
			\item $ m_{rs}=8 $,  $ x=w_{rs}s\cdot tsrst $, $ y=tsrst\cdot sw_{rs}$. In this case, we have
			\begin{align*}
			T_{x}T_{srs}T_{y}=\xi_{t}T_{w_{rs}s\cdot tsrtstrstrst\cdot sw_{rs}}+\xi_{r}T_{w_{rs}r\cdot t\cdot w_{rs}\cdot t\cdot rw_{rs}}+T_{w_{rs}r\cdot t\cdot rw_{rs}\cdot t\cdot rw_{rs}}.
			\end{align*}			
				\item  $ m_{rs}=7$, $ x=w_{rs}s\cdot tsrst $, $ y=tsrst\cdot sw_{rs}$. In this case, we have
				\begin{align*}
				T_{x}T_{srs}T_{y}&=\xi_{t}T_{w_{rs}s\cdot tsrtstrstrst\cdot sw_{rs}}+\xi_{r}^{2}T_{w_{rs}r\cdot t\cdot w_{rs}\cdot t\cdot rw_{rs}}+\xi_{r}T_{w_{rs}r\cdot t\cdot w_{rs}r\cdot t\cdot rw_{rs}}\\
                     &\ \ \ +\xi_{r}T_{w_{rs}r\cdot t\cdot rw_{rs}\cdot t\cdot rw_{rs}}+T_{w_{rs}r\cdot t\cdot rw_{rs}r\cdot t\cdot rw_{rs}}.
				\end{align*}
			\item 	$ m_{rs}=7$, $ x=w_{rs}s\cdot tsrst $, $ y=tsrst$. In this case, we have
				\begin{align*}
				T_{x}T_{srs}T_{y}=\xi_{t}T_{w_{rs}s\cdot tsrtstrstrst}+\xi_{r}T_{w_{rs}r\cdot t\cdot w_{rs}\cdot ts}+T_{w_{rs}r\cdot t\cdot rw_{rs}\cdot ts}.
				\end{align*}
			\end{enumerate}

		\item [(4)] $ w=rsr $.
		\begin{enumerate}[label=\circled{\arabic*}]
            \item $ x=w_{rs}sr\cdot t $, $ y=t\cdot rsw_{rs} $. In this case, we have
			\begin{align*}
			T_{x}T_{rsr}T_{y}=\xi_{r}T_{w_{rs}\cdot t\cdot rw_{rs}}+T_{w_{rs}r\cdot t\cdot rw_{rs}}.
			\end{align*}
            \item $ x=w_{rs}r\cdot t $, $ y=t\cdot rw_{rs}$. In this case, we have
			\begin{align*}
			T_{x}T_{rsr}T_{y}&=\xi_{s}^{2}\xi_{r}T_{w_{rs}r\cdot t\cdot w_{rs}}+\xi_{s}^{2}T_{w_{rs}r\cdot t\cdot rw_{rs}}+\xi_{s}\xi_{r}T_{w_{rs}r\cdot t\cdot sw_{rs}}\\
            &\ \ \ +\xi_{s}T_{w_{rs}r\cdot t\cdot rsw_{rs}}+\xi_{s}\xi_{r}T_{w_{rs}s\cdot t\cdot rw_{rs}}+\xi_{s}T_{w_{rs}sr\cdot t\cdot rw_{rs}}\\
            &\ \ \ +\xi_{r}T_{w_{rs}sr\cdot t\cdot sw_{rs}}+T_{w_{rs}sr\cdot t\cdot rsw_{rs}}.
			\end{align*}
            \item $ x=w_{rs}sr\cdot t $,  $ y=t\cdot rw_{rs} $. In this case, we have
			\begin{align*}
			T_{x}T_{rsr}T_{y}&=\xi_{s}\xi_{r}T_{w_{rs}\cdot t\cdot rw_{rs}}+\xi_{s}T_{w_{rs}r\cdot t\cdot rw_{rs}}\\
            &\ \ \ +\xi_{r}T_{w_{rs}r\cdot t\cdot sw_{rs}}+T_{w_{rs}r\cdot t\cdot rsw_{rs}}.
			\end{align*}
			\item $ x=t $, $ y=t\cdot rw_{rs} $. In this case, we have
			\begin{align*}
			T_{x}T_{rsr}T_{y}=\xi_{t}T_{rst\cdot w_{rs}}+T_{rst\cdot sw_{rs}}.
			\end{align*}

		\end{enumerate}
		\item [(5)] $ w=sts $, $ x=w_{rs}s $, $ y=sw_{rs} $. In this case, we have
			\begin{align*}
			T_{x}T_{sts}T_{y}=\xi_{r}T_{w_{rs}\cdot t\cdot rw_{rs}}+T_{w_{rs}r\cdot t\cdot rw_{rs}}.
			\end{align*}
		 \item [(6)] $ w=rt $.
        \begin{enumerate}[label=\circled{\arabic*}]
			\item $ x=w_{rs}r $, $ y=st $. In this case, we have
			\begin{align*}
			T_{x}T_{rt}T_{y}=\xi_{s}T_{w_{rs}\cdot ts}+T_{w_{rs}s\cdot ts}.
			\end{align*}
			\item $ x=w_{rs}r $, $ y=st\cdot sw_{rs} $. In this case, we have
			\begin{align*}
			T_{x}T_{rt}T_{y}=\xi_{s}\xi_{r}T_{w_{rs}r\cdot t\cdot w_{rs}}+\xi_{s}T_{w_{rs}r\cdot t\cdot rw_{rs}}+\xi_{r}T_{w_{rs}sr\cdot t\cdot w_{rs}}+T_{w_{rs}sr\cdot t\cdot rw_{rs}}.
			\end{align*}
            \item $ x=w_{rs}sr$, $ y=st\cdot sw_{rs} $. In this case, we have
			\begin{align*}
			T_{x}T_{rt}T_{y}=\xi_{r}T_{w_{rs}\cdot t\cdot rw_{rs}}+T_{w_{rs}r\cdot t\cdot rw_{rs}}.
			\end{align*}
		\end{enumerate}
		
        \item [(7)] $ w=st $.
\begin{enumerate}[label=\circled{\arabic*}]
			\item $ x=w_{rs}rs $, $ y=rst $. In this case, we have
			\begin{align*}
			T_{x}T_{st}T_{y}=\xi_{s}T_{w_{rs}\cdot ts}+T_{w_{rs}s\cdot ts}.
			\end{align*}
			\item $ x=w_{rs}rs $, $ y=rst\cdot sw_{rs} $. In this case, we have
			\begin{align*}
			T_{x}T_{st}T_{y}=\xi_{s}\xi_{r}T_{w_{rs}r\cdot t\cdot w_{rs}}+\xi_{s}T_{w_{rs}sr\cdot t\cdot rw_{rs}}+\xi_{r}T_{w_{rs}sr\cdot t\cdot w_{rs}}+T_{w_{rs}sr\cdot t\cdot rw_{rs}}.
			\end{align*}
            \item $ x=w_{rs}srs$, $ y=rst\cdot sw_{rs} $. In this case, we have
			\begin{align*}
			T_{x}T_{st}T_{y}=\xi_{r}T_{w_{rs}\cdot t\cdot rw_{rs}}+T_{w_{rs}r\cdot t\cdot rw_{rs}}.
			\end{align*}
             \item $ x=w_{rs}s$, $ y=r$. In this case, we have
			\begin{align*}
			T_{x}T_{st}T_{y}=\xi_{r}T_{w_{rs}\cdot t}+T_{w_{rs}r\cdot t}.
			\end{align*}
             \item $ x=w_{rs}s$, $ y=rst$. In this case, we have
			\begin{align*}
			T_{x}T_{st}T_{y}=\xi_{r}\xi_{s}T_{w_{rs}\cdot ts}+\xi_{r}T_{w_{rs}s\cdot ts}+\xi_{s}T_{w_{rs}r\cdot ts}+T_{w_{rs}rs\cdot ts}.
			\end{align*}
             \item $m_{rs}=7$, $ x=w_{rs}r\cdot t\cdot w_{rs}s$, $ y=rst\cdot srw_{rs}$. In this case, we have
			\begin{align*}
			T_{x}T_{st}T_{y}&=\xi_{r}\xi_{s}T_{w_{rs}r\cdot t\cdot w_{rs}\cdot t\cdot rw_{rs}}+\xi_{r}T_{w_{rs}r\cdot t\cdot w_{rs}s\cdot t\cdot rw_{rs}}+\xi_{s}T_{w_{rs}r\cdot t\cdot w_{rs}r\cdot t\cdot rw_{rs}}\\
&\ \ \ +\xi_{s}T_{w_{rs}s\cdot tsrst\cdot w_{rs}}+T_{w_{rs}s\cdot tsrst\cdot sw_{rs}}.
			\end{align*}
            \item $ x=w_{rs}s$, $ y=rst\cdot sw_{rs}$. In this case, we have
			\begin{align*}
			T_{x}T_{st}T_{y}&=\xi_{r}^{2}\xi_{s}T_{w_{rs}r\cdot t\cdot w_{rs}}+\xi_{r}\xi_{s}T_{w_{rs}r\cdot t\cdot rw_{rs}}\\
&\ \ \ +\xi_{r}^{2}T_{w_{rs}sr\cdot t\cdot w_{rs}}+\xi_{r}T_{w_{rs}sr\cdot t\cdot rw_{rs}}\\
&\ \ \ +\xi_{s}T_{w_{rs}r\cdot t\cdot w_{rs}}+\xi_{r}T_{w_{rs}rs\cdot t\cdot rw_{rs}}+T_{w_{rs}rsr\cdot t\cdot rw_{rs}}.
			\end{align*}
            \item $m_{rs}=7$, $ x=w_{rs}r\cdot t\cdot w_{rs}s$, $ y=rst\cdot sw_{rs}$. In this case, we have
			\begin{align*}
			T_{x}T_{st}T_{y}&=\xi_{r}^{2}\xi_{s}T_{w_{rs}r\cdot t\cdot w_{rs}r\cdot t\cdot w_{rs}}+\xi_{r}\xi_{s}T_{w_{rs}r\cdot t\cdot w_{rs}r\cdot t\cdot rw_{rs}}\\
&\ \ \ +\xi_{r}^{2}T_{w_{rs}r\cdot t\cdot w_{rs}sr\cdot t\cdot w_{rs}}+\xi_{r}T_{w_{rs}r\cdot t\cdot w_{rs}sr\cdot t\cdot rw_{rs}}\\
&\ \ \ +\xi_{s}T_{w_{rs}r\cdot t\cdot w_{rs}r\cdot t\cdot w_{rs}}+\xi_{r}\xi_{s}T_{w_{rs}s\cdot tsrst \cdot w_{rs}}+\xi_{r}T_{w_{rs}s\cdot tsrst \cdot sw_{rs}}\\
&\ \ \ +\xi_{s}T_{w_{rs}s\cdot tsrst \cdot rw_{rs}}+T_{w_{rs}s\cdot tsrst \cdot srw_{rs}}.
			\end{align*}
		\end{enumerate}
\item [(8)] $ w=rs $.
		\begin{enumerate}[label=\circled{\arabic*}]
      \item  $ x=w_{rs}r\cdot t $, $ y=t $. In this case, we have
			\begin{align*}
			T_{x}T_{rs}T_{y}=\xi_{s}T_{w_{rs}\cdot ts}+T_{w_{rs}s\cdot ts}.
			\end{align*}
      \item  $ x=t $, $ y=ts $. In this case, we have
			\begin{align*}
			T_{x}T_{rs}T_{y}=\xi_{t}T_{rsts}+T_{rst}.
			\end{align*}
      \item $ x=w_{rs}rsr\cdot t $, $ y=tsrst$. In this case, we have
			\begin{align*}
			T_{x}T_{rs}T_{y}=\xi_{t}T_{w_{rs}r\cdot tsrst}+\xi_{s}T_{w_{rs}\cdot ts}+T_{w_{rs}s\cdot ts}.
			\end{align*}
      \item $ x=w_{rs}rsr\cdot t $, $ y=tsrst\cdot sw_{rs} $. In this case, we have
			\begin{align*}
			T_{x}T_{rs}T_{y}&=\xi_{t}T_{w_{rs}r\cdot tsrst\cdot sw_{rs}}+\xi_{s}\xi_{r}T_{w_{rs}r\cdot t\cdot w_{rs}}+\xi_{s}T_{w_{rs}sr\cdot t\cdot rw_{rs}}\\
            &\ \ \ +\xi_{r}T_{w_{rs}sr\cdot t\cdot w_{rs}}+T_{w_{rs}sr\cdot t\cdot rw_{rs}}.
			\end{align*}
      \item $ x=w_{rs}srsr\cdot t$, $ y=tsrst\cdot sw_{rs} $. In this case, we have
			\begin{align*}
			T_{x}T_{rs}T_{y}=\xi_{t}T_{w_{rs}sr\cdot tsrst\cdot sw_{rs}}+\xi_{r}T_{w_{rs}\cdot t\cdot rw_{rs}}+T_{w_{rs}r\cdot t\cdot rw_{rs}}.
			\end{align*}
      \item $ x=w_{rs}sr\cdot t $, $ y=tsr $. In this case, we have
			\begin{align*}
			T_{x}T_{rs}T_{y}=\xi_{t}T_{w_{rs}\cdot tsr}+\xi_{r}T_{w_{rs}\cdot t}+T_{w_{rs}r\cdot t}.
			\end{align*}
      \item $ x=w_{rs}sr\cdot t $, $ y=tsrst $. In this case, we have
			\begin{align*}
			T_{x}T_{rs}T_{y}&=\xi_{t}T_{w_{rs}\cdot tsrst}+\xi_{r}\xi_{s}T_{w_{rs}\cdot ts}+\xi_{r}T_{w_{rs}s\cdot ts}\\
            &\ \ \ +\xi_{s}T_{w_{rs}r\cdot ts}+T_{w_{rs}rs\cdot ts}.
			\end{align*}
      \item $m_{rs}=7$, $ x=w_{rs}r\cdot t\cdot w_{rs}sr\cdot t $, $ y=tsrst\cdot srw_{rs} $. In this case, we have
			\begin{align*}
			T_{x}T_{rs}T_{y}&=\xi_{t}T_{w_{rs}r\cdot t\cdot w_{rs}\cdot tsrst\cdot srw_{rs}}+\xi_{r}\xi_{s}T_{w_{rs}r\cdot t\cdot w_{rs}\cdot t\cdot rw_{rs}}+\xi_{r}T_{w_{rs}r\cdot t\cdot w_{rs}s\cdot t\cdot rw_{rs}}\\
&\ \ \ +\xi_{s}T_{w_{rs}r\cdot t\cdot w_{rs}r\cdot t\cdot rw_{rs}}+\xi_{s}T_{w_{rs}s\cdot tsrst\cdot w_{rs}}+T_{w_{rs}s\cdot tsrst\cdot sw_{rs}}.
			\end{align*}
      \item $ x=w_{rs}sr\cdot t $, $ y=tsrst\cdot sw_{rs} $. In this case, we have
			\begin{align*}
			T_{x}T_{rs}T_{y}&=\xi_{t}T_{w_{rs}\cdot tsrst\cdot sw_{rs}}+\xi_{r}^{2}\xi_{s}T_{w_{rs}r\cdot t\cdot w_{rs}}+\xi_{r}\xi_{s}T_{w_{rs}r\cdot t\cdot rw_{rs}}\\
&\ \ \ +\xi_{r}^{2}T_{w_{rs}sr\cdot t\cdot w_{rs}}+\xi_{r}T_{w_{rs}sr\cdot t\cdot rw_{rs}}+\xi_{s}T_{w_{rs}r\cdot t\cdot w_{rs}}\\
&\ \ \ +\xi_{r}T_{w_{rs}rs\cdot t\cdot rw_{rs}}+T_{w_{rs}rsr\cdot t\cdot rw_{rs}}.
			\end{align*}
      \item $m_{rs}=7$, $ x=w_{rs}r\cdot t\cdot w_{rs}sr\cdot t $, $ y=tsrst\cdot sw_{rs} $. In this case, we have
			\begin{align*}
			T_{x}T_{rs}T_{y}&=\xi_{t}T_{w_{rs}r\cdot t\cdot w_{rs}\cdot tsrst\cdot sw_{rs}}+\xi_{r}^{2}\xi_{s}T_{w_{rs}r\cdot t\cdot w_{rs}r\cdot t\cdot w_{rs}}+\xi_{r}\xi_{s}T_{w_{rs}r\cdot t\cdot w_{rs}r\cdot t\cdot rw_{rs}}\\
&\ \ \ +\xi_{r}^{2}T_{w_{rs}r\cdot t\cdot w_{rs}sr\cdot t\cdot w_{rs}}+\xi_{r}T_{w_{rs}r\cdot t\cdot w_{rs}sr\cdot t\cdot rw_{rs}}+\xi_{s}T_{w_{rs}r\cdot t\cdot w_{rs}r\cdot t\cdot w_{rs}}\\
&\ \ \ +\xi_{r}\xi_{s}T_{w_{rs}s\cdot tsrst \cdot w_{rs}}+\xi_{r}T_{w_{rs}s\cdot tsrst \cdot sw_{rs}}\\
&\ \ \ +\xi_{s}T_{w_{rs}s\cdot tsrst \cdot rw_{rs}}+T_{w_{rs}s\cdot tsrst \cdot srw_{rs}}.
			\end{align*}
      \item $ x=w_{rs}r\cdot t$, $ y=ts $. In this case, we have
      \begin{align*}
			T_{x}T_{rs}T_{y}=\xi_{t}\xi_{s}T_{w_{rs}\cdot ts}+\xi_{t}T_{w_{rs}s\cdot ts}+\xi_{s}T_{w_{rs}\cdot t}+T_{w_{rs}s\cdot t}.
			\end{align*}
      \item $ x=w_{rs}r\cdot t$, $ y=tsr $. In this case, we have
      \begin{align*}
			T_{x}T_{rs}T_{y}&=\xi_{t}\xi_{s}T_{w_{rs}\cdot tsr}+\xi_{t}T_{w_{rs}s\cdot tsr}+\xi_{s}\xi_{r}T_{w_{rs}\cdot t}\\
&\ \ \ +\xi_{s}T_{w_{rs}r\cdot t}+\xi_{r}T_{w_{rs}s\cdot t}+T_{w_{rs}sr\cdot t}.
			\end{align*}
       \item $ x=w_{rs}r\cdot t$, $ y=tsrst$. In this case, we have
      \begin{align*}
			T_{x}T_{rs}T_{y}&=\xi_{t}\xi_{s}T_{w_{rs}\cdot tsrst}+\xi_{t}T_{w_{rs}s\cdot tsrst}+\xi_{s}^{2}\xi_{r}T_{w_{rs}\cdot ts}\\
&\ \ \ +\xi_{s}\xi_{r}T_{w_{rs}s\cdot ts}+\xi_{s}^{2}T_{w_{rs}r\cdot ts}+\xi_{s}T_{w_{rs}rs\cdot ts}\\
&\ \ \ +\xi_{r}T_{w_{rs}\cdot ts}+\xi_{s}T_{w_{rs}sr\cdot ts}+T_{w_{rs}srs\cdot ts}.
	 \end{align*}
      \item $m_{rs}=8$, $ x=w_{rs}r\cdot t\cdot w_{rs}r\cdot t$, $ y=tsrst\cdot srw_{rs}$. In this case, we have
      \begin{align*}
			T_{x}T_{rs}T_{y}&=\xi_{t}\xi_{s}T_{w_{rs}r\cdot t\cdot w_{rs}\cdot tsrst\cdot srw_{rs}}+\xi_{t}T_{w_{rs}r\cdot t\cdot w_{rs}s\cdot tsrst\cdot srw_{rs}}+\xi_{s}^{2}\xi_{r}T_{w_{rs}r\cdot t\cdot w_{rs}\cdot t\cdot rw_{rs}}\\
&\ \ \ +\xi_{s}\xi_{r}T_{w_{rs}r\cdot t\cdot w_{rs}s\cdot t\cdot rw_{rs}}+\xi_{s}^{2}T_{w_{rs}r\cdot t\cdot w_{rs}r\cdot t\cdot rw_{rs}}+\xi_{s}T_{w_{rs}r\cdot t\cdot w_{rs}rs\cdot t\cdot rw_{rs}}\\
&\ \ \ +\xi_{r}T_{w_{rs}r\cdot t\cdot w_{rs}\cdot t\cdot rw_{rs}}+\xi_{s}T_{w_{rs}r\cdot t\cdot w_{rs}sr\cdot t\cdot rw_{rs}}\\
&\ \ \ +\xi_{t}T_{w_{rs}\cdot tsrst\cdot sw_{rs}}+T_{w_{rs}s\cdot tsrst\cdot sw_{rs}}.
	 \end{align*}
\item $m_{rs}=7$,  $ x=st\cdot w_{rs}r\cdot t$, $ y=tsrst\cdot srw_{rs}$. In this case, we have
      \begin{align*}
			T_{x}T_{rs}T_{y}&=\xi_{t}\xi_{s}T_{st\cdot w_{rs}\cdot tsrst\cdot srw_{rs}}+\xi_{t}T_{st\cdot w_{rs}s\cdot tsrst\cdot srw_{rs}}+\xi_{s}^{2}\xi_{r}T_{st\cdot w_{rs}\cdot t\cdot rw_{rs}}\\
&\ \ \ +\xi_{s}\xi_{r}T_{st\cdot w_{rs}s\cdot t\cdot rw_{rs}}+\xi_{s}^{2}T_{st\cdot w_{rs}r\cdot t\cdot rw_{rs}}+\xi_{s}T_{st\cdot w_{rs}rs\cdot t\cdot rw_{rs}}\\
&\ \ \ +\xi_{r}T_{st\cdot w_{rs}\cdot t\cdot rw_{rs}}+\xi_{s}T_{st\cdot w_{rs}sr\cdot t\cdot rw_{rs}}+\xi_{s}T_{tsrst\cdot w_{rs}}+T_{tsrst\cdot sw_{rs}}.
	 \end{align*}
\item $m_{rs}=7$, $ x=w_{rs}r\cdot t\cdot w_{rs}r\cdot t$, $ y=tsrst\cdot srw_{rs}$. In this case, we have
      \begin{align*}
			T_{x}T_{rs}T_{y}&=\xi_{t}\xi_{s}T_{w_{rs}r\cdot t\cdot w_{rs}\cdot tsrst\cdot srw_{rs}}+\xi_{t}T_{w_{rs}r\cdot t\cdot w_{rs}s\cdot tsrst\cdot srw_{rs}}+\xi_{s}^{2}\xi_{r}T_{w_{rs}r\cdot t\cdot w_{rs}\cdot t\cdot rw_{rs}}\\
&\ \ \ +\xi_{s}\xi_{r}T_{w_{rs}r\cdot t\cdot w_{rs}s\cdot t\cdot rw_{rs}}+\xi_{s}^{2}T_{w_{rs}r\cdot t\cdot w_{rs}r\cdot t\cdot rw_{rs}}+\xi_{s}\xi_{t}T_{w_{rs}\cdot tsrst\cdot sw_{rs}}\\
&\ \ \ +\xi_{s}T_{w_{rs}s\cdot tsrst\cdot sw_{rs}}+\xi_{r}T_{w_{rs}r\cdot t\cdot w_{rs}\cdot t\cdot rw_{rs}}+\xi_{s}T_{w_{rs}r\cdot t\cdot w_{rs}sr\cdot t\cdot rw_{rs}}\\
&\ \ \ +\xi_{s}T_{w_{rs}r\cdot tsrst\cdot sw_{rs}}+T_{w_{rs}rs\cdot tsrst\cdot sw_{rs}}.
	 \end{align*}
\item $x=x'\cdot w_{rs}r\cdot t$, $y=tsrst\cdot sw_{rs}\cdot y'$ for some $x',y'\in W$ with $\mathcal{R}(x'),\mathcal{L}(y')\subseteq\{t\}$. In this case, we have $\deg T_{x}T_{rs}T_{y}\leq L(rsrs)$.
     \end{enumerate}
	\end{itemize}
\end{lem}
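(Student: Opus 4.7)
The plan is to proceed by exhaustive case analysis on the pair $(I, w)$, and for each minimal triple $(x,w,y)$ producing a length defect, to both identify its template and verify the stated product expansion.

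First, I would reduce by length. Since $m_{rt}=2$, if $I=\{r,t\}$ then the only $w\in W_I$ with $l(w)\geq 2$ is $w=rt$. Otherwise $I\in\{\{r,s\},\{s,t\}\}$. By Lemma \ref{lem:a2}(6), whenever $w\in W_{rs}$ has $l(w)\geq 6$, or $w=srsrs$ with appropriate head/tail conditions, the product $xwy$ is automatically reduced; and since $m_{st}=3$, inside $W_{st}$ only $w\in\{st,ts,sts\}$ need to be examined (the analogous fact to (6) for $W_{st}$ with $m_{rs}\geq 7$ handles longer words). Thus the surviving candidate words are $rs,\ sr,\ rsr,\ srs,\ rsrs,\ srsr,\ rsrsr$ from $W_{rs}$, together with $st,\ ts,\ sts$ from $W_{st}$, and $rt$ from $W_{rt}$. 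Up to the transpose symmetry, this already halves the list.

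Second, for each candidate $w$, I would determine all minimal triples producing cancellation. The constraint $\mc{R}(x)\cup\mc{L}(y)\subset S\setminus I$ means that the descent set of $x$ at the boundary consists only of letters not in $I$, so any failure of reducedness forces the last letters of $x$ (and symmetrically the first letters of $y$) to concatenate with $w$ into a longer word of $W_I$ via a braid relation. The structural facts of Lemma \ref{lem:a2}(1)--(5),(8) then rigidly constrain the tail of $x$ and head of $y$ to be specific alternating patterns built out of $w_{rs}$, $tsrst$, $rst\cdot sw_{rs}$, etc.; each resulting minimal template appears verbatim in the stated list. The "reduced extension" formulation absorbs the freedom to prepend a further reduced prefix to $x$ and append a reduced suffix to $y$ without interfering with the interior cancellation.

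Third, for each template I would verify the stated product formula by repeated application of $T_s^2=1+\xi_s T_s$ at the cancellation site, combined with reduced product splittings guaranteed by Lemma \ref{lem:a2}(6),(7),(8) (and by the obvious dihedral identities inside $W_I$). These lemmas certify that each surviving $T_z$ on the right-hand side is indexed by a genuinely reduced expression, so no further reduction occurs. The exceptional case (8)\circled{18}, which asserts only a degree bound rather than a closed formula, follows directly from Lemma \ref{lem:a2}(9) applied after factoring $x=x'\cdot w_{rs}r\cdot t$ and $y=tsrst\cdot sw_{rs}\cdot y'$.

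The main obstacle is the sheer combinatorial volume for $w=rs$ (18 sub-cases), where several independent continuation choices at each end interact with the two regimes $m_{rs}=7$ and $m_{rs}\geq 8$. The bookkeeping is delicate because the length of the alternating tail forced by cancellation depends on $m_{rs}$: when $m_{rs}=7$, tails of the form $t\cdot w_{rs}r\cdot t$ can further extend through a second $w_{rs}$-block via the braid relation before stabilising, producing the extra sub-cases \circled{15}--\circled{17}; when $m_{rs}\geq 8$ only the shorter templates \circled{14} survive. Keeping track of which letters are fixed by the cancellation and which become part of the extension $u,v$ is the one place where the proof must be genuinely careful rather than mechanical.
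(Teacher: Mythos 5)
Your proposal follows essentially the same route as the paper: eliminate $l(w)\geq 6$ and $w=srsrs$ via Lemma \ref{lem:a2}(6), use the structural facts of Lemma \ref{lem:a2}(1)--(5),(7),(8) to force the junction patterns of $x$ and $y$ for each remaining short $w$, expand at the cancellation site with the quadratic relation, and invoke Lemma \ref{lem:a2}(9) for the single degree-bound sub-case. The only organizational difference is that the paper derives the $w=st$ and $w=rs$ templates by peeling a boundary letter and reducing to the already-settled $w=rt$ (and $w=rsrsr$) analyses rather than treating each $w$ independently, which is what keeps the enumeration of the seventeen sub-cases of (8) under control; your slight mislabelling of which sub-cases belong to $m_{rs}=7$ versus $m_{rs}=8$ does not affect the substance of the argument.
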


\begin{proof}
If $l(w)\geq 6$ or $w=srsrs$, then we have $l(xwy)=l(x)+l(w)+l(y)$ by Lemma \ref{lem:a2}(6). We consider the following cases.

(1) If $w=rsrsr$, we assume $x=x'\cdot t$ and $y=t\cdot y'$ for some $x',y'\in W$ with $\mathcal{R}(x'),\mathcal{L}(y')\subseteq\{s\}$, then we have $T_{x}T_{rsrsr}T_{y}=T_{x'}T_{rt}T_{srsrty'}$.
Since $\mathcal{L}(rsrsrty')=\{r\}$, $\mathcal{L}(srsrsrty')=\{s\}$, by Lemma \ref{lem:a2}(8), we must have $\mathcal{L}(tsrsrty')=\{s,t\}$. Thus we get $\mathcal{L}(ry')=\{r,s\}$. Similarly, we can prove $\mathcal{R}(x'r)=\{r,s\}$.
Now we assume $x'=x''\cdot w_{rs}r$, $y'=rw_{rs}\cdot y''$ for some $x'',y''\in W$ with $\mathcal{R}(x''),\mathcal{L}(y'')\subseteq\{t\}$.
By Lemma \ref{lem:a2}(6), we have $\mathcal{L}(tsrst\cdot sw_{rs}\cdot y'')=\mathcal{L}(tsrst\cdot sw_{rs})=\{t\}$, and then
$$
\begin{aligned}
T_{x}T_{rsrsr}T_{y}&=T_{x''\cdot w_{rs}r\cdot t}T_{rsrsr}T_{t\cdot rw_{rs}\cdot y''}\\
&=T_{x''\cdot w_{rs}s\cdot tst}T_{r}T_{tst\cdot sw_{rs}\cdot y''}\\
&=\xi_{t}T_{x''\cdot w_{rs}\cdot tsrst\cdot sw_{rs}\cdot y''}+T_{x''\cdot w_{rs}s\cdot tsrst\cdot sw_{rs}\cdot y''}.
\end{aligned}
$$

(2) Now we consider $w=rsrs$. Since $ l(xwy)<l(x)+l(w)+l(y) $, we have $l(y)\geq 2$, so we assume $y=ts\cdot y'$ for some $y'\in W$ with $\mathcal{L}(y')\subseteq\{r\}$. Thus, we get $T_{x}T_{rsrs}T_{y}=T_{xrsr}T_{sts}T_{y'}$.
Since $\mathcal{L}(sy')=\{s\}$, by Lemma \ref{lem:a2}(7), we must have $\mathcal{R}(xrsr)=\{r,t\}$, so we have $x=x'\cdot w_{rs}r\cdot t$ for some $x'\in W$ with $\mathcal{R}(x')\subseteq\{t\}$.
Since $\mathcal{L}(tsrst\cdot y')=\{t\}$,
by Lemma \ref{lem:a2}(6), we have
$$
\begin{aligned}
T_{x}T_{rsrs}T_{y}&=T_{x'\cdot w_{rs}r\cdot t}T_{rsrs}T_{ts\cdot y'}\\
&=T_{x'\cdot w_{rs}}T_{stsrst\cdot y'}\\
&=\xi_{s}T_{x'\cdot w_{rs}\cdot tsrst\cdot y'}+T_{x'\cdot w_{rs}s\cdot tsrst\cdot y'}.
\end{aligned}
$$

(3) If $w=srs$, since $l(x)\geq 2$, $l(y)\geq 2$, we assume $x=x'\cdot st$, $y=ts\cdot y'$ for some $x',y'\in W$ with $\mathcal{R}(x'),\mathcal{L}(y')\subseteq\{r\}$. Then by Lemma \ref{lem:a2}(7), we have
$$
\begin{aligned}
T_{x}T_{srs}T_{y}&=T_{x'\cdot st}T_{srs}T_{ts\cdot y'}\\
&=T_{x'\cdot tst}T_{r}T_{tst\cdot y'}\\
&=\xi_{t}T_{x'\cdot tst\cdot rst\cdot y'}+T_{x'\cdot ts}T_{r}T_{st\cdot y'}.
\end{aligned}
$$
If $T_{x'\cdot ts}T_{r}T_{st\cdot y'}=T_{x'\cdot tsrst\cdot y'}$, then \\\circled{1} $(x,srs,y)$ is a reduced extension of $(st,sts,ts)$.

Now we consider when $deg\ (T_{x'\cdot ts}T_{r}T_{st\cdot y'})> 0$. We must have $l(x)\geq 4$ and $l(y)\geq 4$, so we assume $x=x''\cdot srst$, $y=tsrs\cdot y''$ for some $x'',y''\in W$. Then
$$
\begin{aligned}
T_{x}T_{srs}T_{y}&=\xi_{t}T_{x''\cdot srtst\cdot rstrs\cdot y''}+T_{x''\cdot srts}T_{r}T_{strs\cdot y''}\\
&=\xi_{t}T_{x''\cdot srtst\cdot rstrs\cdot y''}+T_{x''\cdot st}T_{rsrsr}T_{ts\cdot y''}.
\end{aligned}
$$
By the proof of the case $w=rsrsr$, we must have $\mathcal{R}(x''\cdot st)=\mathcal{L}(ts\cdot y'')=\{s,t\}$, so we assume $x=x'''\cdot tsrst$, $y=tsrst\cdot y'''$ for some $x''',y'''\in W$ with $\mathcal{R}(x'''),\mathcal{L}(y''')\subseteq\{r\}$. Then
$$
T_{x}T_{srs}T_{y}=\xi_{t}T_{x'''\cdot tsrtst\cdot rstrst\cdot y'''}+T_{x'''\cdot st}T_{srsrsrs}T_{ts\cdot y'''}.
$$
When $deg\ (T_{x'''\cdot st}T_{srsrsrs}T_{ts\cdot y'''})>0$, by Lemma \ref{lem:a2}(6), we must be in the following cases.
\\\circled{2} $m_{rs}=8$, $\mathcal{R}(x'''\cdot st)=\mathcal{L}(ts\cdot y''')=\{r,t\}$. In this case, $(x,srs,y)$ is a reduced extension of $(w_{rs}s\cdot tsrst,srs,tsrst\cdot sw_{rs})$.
\\\circled{3} $m_{rs}=7$, $\mathcal{R}(x'''\cdot st)=\mathcal{L}(ts\cdot y''')=\{r,t\}$. In this case, $(x,srs,y)$ is a reduced extension of $(w_{rs}s\cdot tsrst,srs,tsrst\cdot sw_{rs})$.
\\\circled{4} $m_{rs}=7$, $\mathcal{R}(x'''\cdot st)=\{r,t\}$, $\mathcal{L}(ts\cdot y''')=\{t\}$, or $\mathcal{R}(x'''\cdot st)=\{t\}$, $\mathcal{L}(ts\cdot y''')=\{r,t\}$. In this case, $(x,srs,y)$ or its transpose is a reduced extension of $(w_{rs}s\cdot tsrst,srs,tsrst)$.

(4) If $w=rsr$, since $l(x)\geq 1$, $l(y)\geq 1$, we assume $x=x'\cdot t$, $y=t\cdot y'$ for some $x',y'\in W$ with $\mathcal{R}(x'),\mathcal{L}(y')\subseteq\{s\}$. Then
$$T_{x}T_{rsr}T_{y}=T_{x'r}T_{sts}T_{ry'}.$$
First we assume $\mathcal{R}(x'r)=\mathcal{L}(ry')=\{r\}$. By Lemma \ref{lem:a2}(7), we have $\mathcal{R}(x'rs)=\mathcal{L}(sry')=\{r,s\}$ since $l(xwy)<l(x)+l(w)+l(y)$. We assume $x'=x''\cdot w_{rs}sr$,
$y'=rsw_{rs}\cdot y''$ for some $x'',y''\in W$ with $\mathcal{R}(x''),\mathcal{L}(y'')\subseteq\{t\}$. Then by Lemma \ref{lem:a2}(6),
\\\circled{1} $(x,rsr,y)$ is a reduced extension of $(w_{rs}sr\cdot t,rsr,t\cdot rsw_{rs})$, and
$$
\begin{aligned}
T_{x}T_{rsr}T_{y}&=T_{x''\cdot w_{rs}}T_{t}T_{w_{rs}\cdot y''}\\
&=\xi_{r}T_{x''\cdot w_{rs}\cdot t\cdot rw_{rs}\cdot y''}+T_{x''\cdot w_{rs}r\cdot t\cdot rw_{rs}\cdot y''}.
\end{aligned}
$$
If $\mathcal{R}(x'r)=\{r,s\}$ or $\mathcal{L}(ry')=\{r,s\}$, we only consider the latter case. We assume $y=t\cdot rw_{rs}\cdot y''$ for some $y''\in W$ with $\mathcal{L}(y'')\subseteq\{t\}$. Then we have
$$
\begin{aligned}
T_{x}T_{rsr}T_{y}&=T_{x'r}T_{sts}T_{w_{rs}\cdot y''}\\
&=\xi_{s}T_{x'r}T_{st}T_{w_{rs}\cdot y''}+T_{x'r}T_{st}T_{sw_{rs}\cdot y''}.
\end{aligned}
$$
By Lemma \ref{lem:a2}(6), we have the following 3 cases.
\\\circled{2} $\mathcal{R}(x'r)=\{r,s\}$. In this case, $(x,rsr,y)$ is a reduced extension of $(w_{rs}r\cdot t,rsr,t\cdot rw_{rs})$.
\\\circled{3} $\mathcal{R}(x'r)=\{r\}$, $\mathcal{R}(x'rs)=\{r,s\}$. In this case, $(x,rsr,y)$ is a reduced extension of $(w_{rs}sr\cdot t,rsr,t\cdot rw_{rs})$.
\\\circled{4} $\mathcal{R}(x'r)=\{r\}$, $\mathcal{R}(x'rs)=\{s\}$. In this case, $(x,rsr,y)$ is a reduced extension of $(t,rsr,t\cdot rw_{rs})$.
\\Then $T_{x}T_{rsr}T_{y}$ can be easily computed in all these cases.

(5) If $w=sts$, by Lemma \ref{lem:a2}(7), we must have $\mathcal{R}(xs)=\mathcal{L}(sy)=\{r,s\}$. We assume $x=x'\cdot w_{rs}s$, $y=sw_{rs}\cdot y'$ for some $x',y'\in W$ with $\mathcal{R}(x'),\mathcal{L}(y')\subseteq\{t\}$.
By Lemma \ref{lem:a2}(6), we have
$$
\begin{aligned}
T_{x}T_{sts}T_{y}&=T_{x'\cdot w_{rs}}T_{t}T_{w_{rs}\cdot y'}\\
&=\xi_{r}T_{x'\cdot w_{rs}\cdot t\cdot rw_{rs}\cdot y'}+T_{x'\cdot w_{rs}r\cdot t\cdot rw_{rs}\cdot y'}.
\end{aligned}
$$

(6) Now we consider the case of $w=rt$. By Lemma \ref{lem:a2}(8), we must have $\mathcal{R}(xr)=\{r,s\}$ or $\mathcal{R}(xt)=\{s,t\}$ or $\mathcal{R}(xrs)=\{r,s\}$.
\\(i) $\mathcal{R}(xr)=\{r,s\}$. We assume $x=x'\cdot w_{rs}r$ for some $x'\in W$ with $\mathcal{R}(x')\subseteq\{t\}$. If $\mathcal{L}(ty)=\{t\}$, then $T_{x}T_{rt}T_{y}=T_{xrty}$ by Lemma \ref{lem:a2}(6). If $\mathcal{L}(ty)=\{s,t\}$, we assume $y=st\cdot y'$ for some $y'\in W$ with $\mathcal{L}(y')\subseteq\{r\}$. Thus we have
$$
T_{x}T_{rt}T_{y}=\xi_{s}T_{x'\cdot w_{rs}}T_{ts\cdot y'}+T_{x'\cdot w_{rs}s}T_{ts\cdot y'}.
$$
If $\mathcal{L}(ts\cdot y')=\{t\}$, then by Lemma \ref{lem:a2}(6), \\\circled{1} $(x,rt,y)$ is a reduced extension of $(w_{rs}r,rt,st)$, and
$$
T_{x}T_{rt}T_{y}=\xi_{s}T_{x'\cdot w_{rs}\cdot ts\cdot y'}+T_{x'\cdot w_{rs}s\cdot ts\cdot y'}.
$$
If $\mathcal{L}(ts\cdot y')=\{r,t\}$, we assume $y'=sw_{rs}\cdot y''$ for some $y''\in W$ with $\mathcal{L}(y'')\subseteq\{t\}$. Then by Lemma \ref{lem:a2}(6), \\\circled{2} $(x,rt,y)$ is a reduced extension of $(w_{rs}r,rt,st\cdot sw_{rs})$, and
$$
\begin{aligned}
T_{x}T_{rt}T_{y}&=\xi_{s}T_{x'\cdot w_{rs}}T_{t\cdot w_{rs}\cdot y''}+T_{x'\cdot w_{rs}s}T_{t\cdot w_{rs}\cdot y''}\\
&=\xi_{s}\xi_{r}T_{x'\cdot w_{rs}r\cdot t\cdot w_{rs}\cdot y''}+\xi_{s}T_{x'\cdot w_{rs}r\cdot t\cdot rw_{rs}\cdot y''}\\
&\ \ \ +\xi_{r}T_{x'\cdot w_{rs}sr\cdot t\cdot w_{rs}\cdot y''}+T_{x'\cdot w_{rs}sr\cdot t\cdot rw_{rs}\cdot y''}.
\end{aligned}
$$
(ii) $\mathcal{R}(xrs)=\{r,s\}$. We assume $x=x'\cdot w_{rs}sr$ for some $x'\in W$ with $\mathcal{R}(x')\subseteq\{t\}$, thus $T_{x}T_{rt}T_{y}=T_{x'\cdot w_{rs}s}T_{ty}$. Since $l(xwy)<l(x)+l(w)+l(y)$, we must have
$\mathcal{L}(ty)\subseteq\{s,t\}$, we assume $y=st\cdot y'$ for some $y'\in W$ with $\mathcal{L}(y')\subseteq\{r\}$ , then $T_{x}T_{rt}T_{y}=T_{x'\cdot w_{rs}}T_{tsy'}$. By Lemma \ref{lem:a2}(6), we have
$\mathcal{L}(tsy')\subseteq\{r,t\}$ since $l(xwy)<l(x)+l(w)+l(y)$. Now we assume $y'=sw_{rs}\cdot y''$ for some $y''\in W$ with $\mathcal{L}(y'')\subseteq\{t\}$. Then by Lemma \ref{lem:a2}(6), \\\circled{3} $(x,rt,y)$ is a reduced extension of $(w_{rs}sr,rt,st\cdot sw_{rs})$, and
$$
\begin{aligned}
T_{x}T_{rt}T_{y}&=T_{x'\cdot w_{rs}}T_{t\cdot w_{rs}\cdot y''}\\
&=\xi_{r}T_{x'\cdot w_{rs}\cdot t\cdot rw_{rs}\cdot y''}+T_{x'\cdot w_{rs}r\cdot t\cdot rw_{rs}\cdot y''}.
\end{aligned}
$$
(iii) $\mathcal{R}(xt)=\{s,t\}$. We assume $x=x'\cdot ts$ for some $x'\in W$ with $\mathcal{R}(x')\subseteq\{r\}$, thus $T_{x}T_{rt}T_{y}=T_{x'}T_{sts}T_{ry}$. If $\mathcal{L}(ry)=\{r,s\}$, we may consider the transpose of $(x,rt,y)$, then we are in case (i). If $\mathcal{L}(ry)=\{r\}$, by Lemma \ref{lem:a2}(7), we must have $\mathcal{R}(x's)=\mathcal{L}(sry)=\{r,s\}$. We consider the transpose of $(x,rt,y)$, then we are in case (ii).

(7) If $w=st$, since $l(y)\geq 1$, we assume $y=ry'$ for some $y'\in W$ with $\mathcal{L}(y')\subseteq\{s\}$, thus $T_{x}T_{st}T_{y}=T_{xs}T_{rt}T_{y'}$. If $\mathcal{R}(xs)=\{s\}$, by the case of $w=rt$, we must be in the following cases if $l(xwy)<l(x)+l(w)+l(y)$.
\\\circled{1} $(x,st,y)$ is a reduced extension of $(w_{rs}rs,st,rst)$.
\\\circled{2} $(x,st,y)$ is a reduced extension of $(w_{rs}rs,st,rst\cdot sw_{rs})$.
\\\circled{3} $(x,st,y)$ is a reduced extension of $(w_{rs}srs,st,rst\cdot sw_{rs})$.
\\Then $T_{x}T_{st}T_{y}$ can be easily computed in all these cases. If $\mathcal{R}(xs)=\{r,s\}$, we assume $x=x'\cdot w_{rs}s$ for some $x'\in W$ with $\mathcal{R}(x')\subseteq\{t\}$. Then
$$
\begin{aligned}
T_{x}T_{st}T_{y}&=T_{x'\cdot w_{rs}}T_{rt}T_{y'}\\
&=\xi_{r}T_{x'\cdot w_{rs}}T_{ty'}+T_{x'\cdot w_{rs}r}T_{ty'}.
\end{aligned}
$$
If $\mathcal{L}(ty')=\{t\}$, then by Lemma \ref{lem:a2}(6), we have $$T_{x}T_{st}T_{y}=\xi_{r}T_{x'\cdot w_{rs}\cdot ty'}+T_{x'\cdot w_{rs}r\cdot ty'},$$
so \circled{4} $(x,st,y)$ is a reduced extension of $(w_{rs}s,st,r)$.

If $\mathcal{L}(ty')=\{s,t\}$, we assume $y'=st\cdot y''$ for some $y''\in W$ with $\mathcal{L}(y'')\subseteq\{r\}$. Then
$$
\begin{aligned}
T_{x}T_{st}T_{y}&=\xi_{r}T_{x'\cdot w_{rs}}T_{sts\cdot y''}+T_{x'\cdot w_{rs}r}T_{sts\cdot y''}\\
&=\xi_{r}\xi_{s}T_{x'\cdot w_{rs}}T_{ts\cdot y''}+\xi_{r}T_{x'\cdot w_{rs}s}T_{ts\cdot y''}\\
&\ \ \ +\xi_{s}T_{x'\cdot w_{rs}r}T_{ts\cdot y''}+T_{x'\cdot w_{rs}rs}T_{ts\cdot y''}.
\end{aligned}
$$
If $\mathcal{L}(tsy'')=\{t\}$, by Lemma \ref{lem:a2}(6) and the case of $w=rsrsr$, we we must be in the following cases.
\\\circled{5} $(x,st,y)$ is a reduced extension of $(w_{rs}s,st,rst)$.
\\\circled{6} $m_{rs}=7$, $(x,st,y)$ is a reduced extension of $(w_{rs}r\cdot t\cdot w_{rs}s,st,rst\cdot srw_{rs})$.
\\If $\mathcal{L}(tsy'')=\{r,t\}$, we assume $y''=sw_{rs}\cdot y'''$ for some $y'''\in W$ with $\mathcal{L}(y''')\subseteq\{t\}$. By Lemma \ref{lem:a2}(6), we have
$$
\begin{aligned}
T_{x}T_{st}T_{y}&=\xi_{r}\xi_{s}T_{x'\cdot w_{rs}}T_{t\cdot w_{rs}\cdot y'''}+\xi_{r}T_{x'\cdot w_{rs}s}T_{t\cdot w_{rs}\cdot y'''}\\
&\ \ \ +\xi_{s}T_{x'\cdot w_{rs}r}T_{t\cdot w_{rs}\cdot y'''}+T_{x'\cdot w_{rs}rs}T_{t\cdot w_{rs}\cdot y'''}\\
&=\xi_{r}^{2}\xi_{s}T_{x'\cdot w_{rs}r\cdot t\cdot w_{rs}\cdot y'''}+\xi_{r}\xi_{s}T_{x'\cdot w_{rs}r\cdot t\cdot rw_{rs}\cdot y'''}\\
&\ \ \ +\xi_{r}^{2}T_{x'\cdot w_{rs}sr\cdot t\cdot w_{rs}\cdot y'''}+\xi_{r}T_{x'\cdot w_{rs}sr\cdot t\cdot rw_{rs}\cdot y'''}\\
&\ \ \ +\xi_{s}T_{x'\cdot w_{rs}r\cdot t\cdot w_{rs}\cdot y'''}+\xi_{r}T_{x'\cdot w_{rs}rs}T_{t\cdot rw_{rs}\cdot y'''}+T_{x'\cdot w_{rs}rsr}T_{t\cdot rw_{rs}\cdot y'''}.
\end{aligned}
$$
By Lemma \ref{lem:a2}(6), we must be in the following cases.
\\\circled{7} $(x,st,y)$ is a reduced extension of $(w_{rs}s,st,rst\cdot sw_{rs})$.
\\\circled{8} $m_{rs}=7$, $(x,st,y)$ is a reduced extension of $(w_{rs}r\cdot t\cdot w_{rs}s,st,rst\cdot sw_{rs})$.

(8) At last, we consider the case of $w=rs$. Since $l(x)\geq 1$, we assume $x=x't$ for some $x'\in W$ with $\mathcal{R}(x')\subseteq\{s\}$, thus $T_{x}T_{rs}T_{y}=T_{x'}T_{rt}T_{sy}$. If $\mathcal{L}(sy)=\{s\}$, by the case of $w=rt$, if $l(xwy)<l(x)+l(w)+l(y)$, we must have
\\\circled{1} $(x,rs,y)$ is a reduced extension of $(w_{rs}r\cdot t,rs,t)$.
\\If $\mathcal{L}(sy)=\{s,t\}$, we assume $y=ts\cdot y'$, for some $y'\in W$ with $\mathcal{L}(y')\subseteq\{r\}$, then we have
\begin{equation}\label{eq:rsrs}
\begin{aligned}
T_{x}T_{rs}T_{y}&=T_{x'rt}T_{tsty'}\\
&=\xi_{t}T_{x'r}T_{stsy'}+T_{x'r}T_{sty'}.
\end{aligned}
\end{equation}
First we consider $\mathcal{L}(x'r)=\{r\}$. By Lemma \ref{lem:a2}(7), we have $T_{x'r}T_{stsy'}=T_{x'\cdot rsts\cdot y'}$. By the case of $w=st$, we must be in the following cases.
\\\circled{2} $(x,rs,y)$ is a reduced extension of $(t,rs,ts)$.
\\\circled{3} $(x,rs,y)$ is a reduced extension of $(w_{rs}rsr\cdot t,rs,tsrst)$.
\\\circled{4} $(x,rs,y)$ is a reduced extension of $(w_{rs}rsr\cdot t,rs,tsrst\cdot sw_{rs})$.
\\\circled{5} $(x,rs,y)$ is a reduced extension of $(w_{rs}srsr\cdot t,rs,tsrst\cdot sw_{rs})$.
\\\circled{6} $(x,rs,y)$ is a reduced extension of $(w_{rs}sr\cdot t,rs,tsr)$.
\\\circled{7} $(x,rs,y)$ is a reduced extension of $(w_{rs}sr\cdot t,rs,tsrst)$.
\\\circled{8} $m_{rs}=7$, $(x,rs,y)$ is a reduced extension of $(w_{rs}r\cdot t\cdot w_{rs}sr\cdot t,rs,tsrst\cdot srw_{rs})$.
\\\circled{9} $(x,rs,y)$ is a reduced extension of $(w_{rs}sr\cdot t,rs,tsrst\cdot sw_{rs})$.
\\\circled{10} $m_{rs}=7$, $(x,rs,y)$ is a reduced extension of $(w_{rs}r\cdot t\cdot w_{rs}sr\cdot t,rs,tsrst\cdot sw_{rs})$.
Then we consider $\mathcal{L}(x'r)=\{r,s\}$. We assume $x'=x''\cdot w_{rs}r$ for some $x''\in W$ with $\mathcal{R}(x'')\subseteq\{t\}$, thus by Lemma \ref{lem:a2}(6), we have
$$
\begin{aligned}
T_{x}T_{rs}T_{y}&=\xi_{t}T_{x''\cdot w_{rs}}T_{stsy'}+T_{x''\cdot w_{rs}}T_{sty'}\\
&=\xi_{t}\xi_{s}T_{x''\cdot w_{rs}}T_{tsy'}+\xi_{t}T_{x''\cdot w_{rs}s}T_{tsy'}+\xi_{s}T_{x''\cdot w_{rs}}T_{ty'}+T_{x''\cdot w_{rs}s}T_{ty'}\\
&=\xi_{t}\xi_{s}T_{x''\cdot w_{rs}\cdot tsy'}+\xi_{t}T_{x''\cdot w_{rs}s\cdot tsy'}+\xi_{s}T_{x''\cdot w_{rs}}T_{ty'}+T_{x''\cdot w_{rs}s}T_{ty'}.
\end{aligned}
$$
If $y'=e$, then
\\\circled{11} $(x,rs,y)$ is a reduced extension of $(w_{rs}r\cdot t,rs,ts)$.
\\If $y'\neq e$, we assume $y'=ry''$ for some $y''\in W$ with $\mathcal{L}(y'')\subseteq\{s\}$, thus
$$
\begin{aligned}
T_{x}T_{rs}T_{y}&=\xi_{t}\xi_{s}T_{x''\cdot w_{rs}\cdot tsry''}+\xi_{t}T_{x''\cdot w_{rs}s\cdot tsry''}+\xi_{s}T_{x''\cdot w_{rs}}T_{try''}+T_{x''\cdot w_{rs}s}T_{try''}\\
&=\xi_{t}\xi_{s}T_{x''\cdot w_{rs}\cdot tsry''}+\xi_{t}T_{x''\cdot w_{rs}s\cdot tsry''}+\xi_{s}\xi_{r}T_{x''\cdot w_{rs}}T_{ty''}\\
&\ \ \ +\xi_{s}T_{x''\cdot w_{rs}r}T_{ty''}+\xi_{r}T_{x''\cdot w_{rs}s}T_{ty''}+T_{x''\cdot w_{rs}sr}T_{ty''}.
\end{aligned}
$$
If $\mathcal{L}(ty'')=\{t\}$, then
\\\circled{12} $(x,rs,y)$ is a reduced extension of $(w_{rs}r\cdot t,rs,tsr)$.
\\If $\mathcal{L}(ty'')=\{s,t\}$, we assume $y''=sty'''$ for some $y'''\in W$ with $\mathcal{L}(y''')\subseteq\{r\}$, thus
$$
\begin{aligned}
T_{x}T_{rs}T_{y}&=\xi_{t}\xi_{s}T_{x''\cdot w_{rs}\cdot tsrsty'''}+\xi_{t}T_{x''\cdot w_{rs}s\cdot tsrsty'''}+\xi_{s}\xi_{r}T_{x''\cdot w_{rs}}T_{stsy'''}\\
&\ \ \ +\xi_{s}T_{x''\cdot w_{rs}r}T_{stsy'''}+\xi_{r}T_{x''\cdot w_{rs}}T_{tsy'''}+T_{x''\cdot w_{rs}sr}T_{stsy'''}\\
&=\xi_{t}\xi_{s}T_{x''\cdot w_{rs}\cdot tsrsty'''}+\xi_{t}T_{x''\cdot w_{rs}s\cdot tsrsty'''}+\xi_{s}^{2}\xi_{r}T_{x''\cdot w_{rs}}T_{tsy'''}\\
&\ \ \ +\xi_{s}\xi_{r}T_{x''\cdot w_{rs}s}T_{tsy'''}+\xi_{s}^{2}T_{x''\cdot w_{rs}r}T_{tsy'''}+\xi_{s}T_{x''\cdot w_{rs}rs}T_{tsy'''}\\
&\ \ \ +\xi_{r}T_{x''\cdot w_{rs}}T_{tsy'''}+\xi_{s}T_{x''\cdot w_{rs}sr}T_{tsy'''}+T_{x''\cdot w_{rs}srs}T_{tsy'''}.
\end{aligned}
$$
If $\mathcal{L}(tsy''')=\{t\}$, then we must be in the following cases.
\\\circled{13} $(x,rs,y)$ is a reduced extension of $(w_{rs}r\cdot t,rs,tsrst)$.
\\\circled{14} $m_{rs}=8$, $(x,rs,y)$ is a reduced extension of $(w_{rs}r\cdot t\cdot w_{rs}r\cdot t,rs,tsrst\cdot srw_{rs})$.
\\\circled{15} $m_{rs}=7$, $(x,rs,y)$ is a reduced extension of $(st\cdot w_{rs}r\cdot t,rs,tsrst\cdot srw_{rs})$.
\\\circled{16} $m_{rs}=7$, $(x,rs,y)$ is a reduced extension of $(w_{rs}r\cdot t\cdot w_{rs}r\cdot t,rs,tsrst\cdot srw_{rs})$.
\\If $\mathcal{L}(tsy''')=\{r,t\}$, we assume $y'''=sw_{rs}\cdot y''''$ for some $y''''\in W$ with $\mathcal{L}(y'''')\subseteq\{t\}$, thus
\\\circled{17} $x=x'\cdot w_{rs}r\cdot t$, $y=tsrst\cdot sw_{rs}\cdot y'$ for some $x'',y''''\in W$ with $\mathcal{R}(x''),\mathcal{L}(y'''')\subseteq\{t\}$.
We have $\deg T_{x}T_{rs}T_{y}\leq L(rsrs)$ by \eqref{eq:rsrs} and Lemma \ref{lem:a2}(9).

This completes the proof.
\end{proof}

The following corollary follows  from Lemma \ref{lem:reduced2} and \eqref{eq:bound}.
\begin{cor}\label{cor:est}
Let  $N\in \mathbb{N}$, $ w\in W_{rs}$ and $x,y\in W$ with $ \mc{R}(x),\mc{L}(y)\subset\{t\} $.
\begin{itemize}
\item[(1)] Assume $ l(w)\geq 2 $. If $ m_{rs}\geq 8 $, then \[
\deg p_{w,w_{rs}}\nt_x\nt_w\nt_y\leq -L(rs).
\]
If $ m_{rs}=7 $, then \[
\deg p_{w,w_{rs}}\nt_x\nt_w\nt_y\leq -L(r)=-L(s).
\]
\item [(2)] Assume $ l(w)\leq 1$. Then we have
\begin{equation}\label{eq:complement}
\deg p_{w,w_{rs}}\nt_x\nt_w\nt_y\leq \max\{L(r),L(s) \}-L(w_{rs})+N.
\end{equation}
\end{itemize}
\end{cor}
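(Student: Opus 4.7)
The plan is to prove both parts by a case analysis that combines the explicit expansions of Lemma \ref{lem:reduced2} with the bound \eqref{eq:bound} and with standard dihedral estimates on $\deg p_{w,w_{rs}}$.

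For part (2), when $l(w)\leq 1$, so $w\in\{e,r,s\}$, one expands $\nt_x\nt_w$ directly: it equals $\nt_{xw}$ or $\nt_{xw}+\xi_w\nt_x$ according to whether $xw>x$, and has coefficient degree at most $L(w)$. By \eqref{eq:bound} each of $\nt_{xw}\nt_y$ and $\nt_x\nt_y$ has degree at most $N$, so $\deg \nt_x\nt_w\nt_y\leq L(w)+N$. Combined with the dihedral estimate $\deg p_{w,w_{rs}}\leq \max\{L(r),L(s)\}-L(w_{rs})-L(w)$ for $w\in\{e,r,s\}$ (which can be read off from the explicit formula for the Kazhdan--Lusztig basis elements $C_{w_{rs}}$ in the rank-2 parabolic $W_{rs}$), this gives the claimed inequality.

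For part (1), when $l(w)\geq 2$, we split on whether $l(xwy)=l(x)+l(w)+l(y)$. If equality holds, then $T_xT_wT_y=T_{xwy}$, so the coefficient degree is $0$ and we are reduced to the purely dihedral bound $\deg p_{w,w_{rs}}\leq -L(rs)$ for $m_{rs}\geq 8$ (respectively $\leq -L(r)=-L(s)$ for $m_{rs}=7$), which holds for every $w\in W_{rs}$ of length at least $2$. Otherwise $(x,w,y)$ is a reduced extension of one of the triples explicitly listed in Lemma \ref{lem:reduced2}, and that lemma displays $T_xT_wT_y$ as a short sum $\sum_i \xi^{(i)}T_{z_i}$ where each $\xi^{(i)}$ is a monomial in $\xi_r,\xi_s,\xi_t$ of known degree. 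For each subcase and each monomial one then verifies the inequality $\deg p_{w,w_{rs}}+\deg \xi^{(i)}\leq -L(rs)$ (respectively $\leq -L(r)$) by direct inspection.

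The main obstacle is the volume of the case check in part (1): Lemma \ref{lem:reduced2} lists roughly thirty subcases, each contributing a handful of monomial terms. What makes the verification tractable is a uniform pattern: in every listed triple the value of $l(w)$ is small enough that $\deg p_{w,w_{rs}}$ is negative by exactly the margin needed to absorb the positive contributions of the $\xi$-monomials. This is not coincidental, because the listed triples record precisely the braid-type cancellations appearing in the Kazhdan--Lusztig expansion of $C_{w_{rs}}$ inside the Hecke algebra of $W_{rs}$; consequently the estimate is tight but always satisfied, and no single subcase presents a genuine difficulty beyond bookkeeping.
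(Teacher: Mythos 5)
Your overall strategy --- split on whether $x\cdot w\cdot y$ is reduced, run through the explicit expansions of Lemma \ref{lem:reduced2} in the non-reduced case, and use \eqref{eq:bound} for part (2) --- is the route the paper's one-line proof intends, and your degree check for the non-reduced triples is sound in principle. But two of your intermediate claims are false, and one of them leaves a real hole. In part (1) you assert that the ``purely dihedral bound'' $\deg p_{w,w_{rs}}\leq -L(rs)$ holds for every $w\in W_{rs}$ of length at least $2$ when $m_{rs}\geq 8$. Since $C_{w_{rs}}=\sum_{y\leq w_{rs}}q^{L(y)-L(w_{rs})}T_y$, one has $\deg p_{w,w_{rs}}=L(w)-L(w_{rs})$, which for the two elements of length $m_{rs}-1$ equals $-L(r)$ or $-L(s)$, hence is strictly larger than $-L(rs)$. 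Those $w$ always land in your ``reduced'' branch (by Lemma \ref{lem:a2}(6) the product $x\cdot w\cdot y$ is reduced as soon as $l(w)\geq 6$), where your argument only yields $\deg p_{w,w_{rs}}\nt_x\nt_w\nt_y\leq -\min\{L(r),L(s)\}$. To reach $-L(rs)$ one must extract strictly negative degree from $\nt_{xwy}$ itself, i.e.\ show that $xwy$ lies in $W_{>N}$ and quantify $\deg \nt_{xwy}$; this is precisely the delicate point, it is not absorbed by bookkeeping, and it is not addressed in your proposal. (For $m_{rs}=7$, where $L(r)=L(s)$, the bound $\deg p_{w,w_{rs}}=(l(w)-7)L(r)\leq -L(r)$ does hold for all $w<w_{rs}$, so the gap is specific to $m_{rs}\geq 8$ and $l(w)=m_{rs}-1$, and also to $w=w_{rs}$ itself, which your statement formally allows.)

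The second problem is in part (2): the ``dihedral estimate'' $\deg p_{w,w_{rs}}\leq \max\{L(r),L(s)\}-L(w_{rs})-L(w)$ is false, since the left-hand side equals $L(w)-L(w_{rs})$ and your inequality amounts to $2L(w)\leq \max\{L(r),L(s)\}$, which already fails for $w=r$ when $L(r)\geq L(s)$. The error only arises because your bound $\deg\nt_x\nt_w\nt_y\leq L(w)+N$ is needlessly weak: since $\mc{R}(x)\subseteq\{t\}$, the element $x$ is the minimal length representative of $xW_{rs}$, so $x\cdot w$ is automatically reduced for every $w\in W_{rs}$, whence $\nt_x\nt_w=\nt_{xw}$ and \eqref{eq:bound} gives $\deg\nt_x\nt_w\nt_y\leq N$ outright; combined with the correct value $\deg p_{w,w_{rs}}=L(w)-L(w_{rs})\leq \max\{L(r),L(s)\}-L(w_{rs})$ for $l(w)\leq 1$, this yields \eqref{eq:complement}. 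Repair part (2) along these lines, and for part (1) either restrict the range to $l(w)\leq m_{rs}-2$ or supply the missing analysis of $\deg\nt_{xwy}$ when $l(w)=m_{rs}-1$; as written, that case is unproven.
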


\section{Conditions for the equality}\label{sec:take}

In sections \ref{sec:take}, \ref{sec:add}, \ref{sec:est},  we prove Conjecture \ref{conj:dim2} for  Coxeter groups of rank 3 that are listed in section \ref{sec:intr}.
In these sections, we  assume that $ W_{>N}=\Omega_{>N} $, and $ W_{>N} $ is $ \prec_{LR} $ closed.

By assumption,  $ \wn=\Omega_{\leq N} $. We will frequently use the argument: if $ d\in D $ appears in $ z \in \wn $, then we have $ d\in D_{\leq N} $. Otherwise, $ d\in D_{>N} $ implies that $ z\in \Omega_{>N}=W_{>N} $, a contradiction with $ z\in \wn $.
 \begin{prop}\label{prop:bound}
For any $ x,y\in \wn $, we have $ \deg \nt_x\nt_y\leq N $, and  the equality holds only if $ x,y\in \Omega_{\geq N} $.
\end{prop}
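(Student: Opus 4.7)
The upper bound $\deg(\nt_x \nt_y) \leq N$ is already given by \eqref{eq:bound}, so the task is to analyze when equality is attained and to deduce that it forces both $x$ and $y$ to lie in $\Omega_{\geq N}$.

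My plan is to reduce the problem to a dihedral computation. I would choose a two-element subset $I \subset S$ (determined by the descent sets of $x$ and $y$) and decompose $x = x' \cdot u$, $y = v \cdot y'$ with $u, v \in W_I$ taken maximal, so that $\mathcal{R}(x') \cap I = \emptyset = \mathcal{L}(y') \cap I$. Expanding $T_u T_v = \sum_{w \in W_I} f_{u,v,w} T_w$ inside the dihedral Hecke subalgebra gives
\[
T_x T_y = \sum_{w \in W_I} f_{u,v,w}\, T_{x'} T_w T_{y'},
\]
so that, after projecting to $\mathcal{H}_{\leq N}$, each $w$ contributes $f_{u,v,w}\, \nt_{x'} \nt_w \nt_{y'}$ to $\nt_x \nt_y$. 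The degree of $f_{u,v,w}$ is controlled by the dihedral lemmas of section \ref{sec:dih} (Lemma \ref{lem:plusdeg} and its corollaries), while the degree of $\nt_{x'} \nt_w \nt_{y'}$ is controlled by the explicit product expansions of section \ref{sec:exp}.

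I would then split into cases according to which $w \in W_I$ actually appears in $f_{u,v,w}$ and whether $(x', w, y')$ gives a reduced product or falls into one of the ``defective'' configurations listed in Lemma \ref{lem:reduced0}, Lemma \ref{lem:reduced}, or Lemma \ref{lem:reduced2} (the relevant one being determined by which of the three classes $(W,S)$ belongs to). For reduced triples we have $T_{x'} T_w T_{y'} = T_{x' \cdot w \cdot y'}$; equality with $N$ then forces $w \in D_{\leq N}$ together with a reduced expression of $x' \cdot w \cdot y'$ passing through an element of $D_{\geq N}$, and unpacking with the possible-monomial Lemmas \ref{lem:fuvw}--\ref{lem:infinite1} shows that $u$ and $v$ must already contain this distinguished factor, whence $x, y \in \Omega_{\geq N}$. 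For non-reduced triples, the explicit formulas in Lemmas \ref{lem:reduced0}, \ref{lem:reduced}, \ref{lem:reduced2} combined with the degree estimate in Corollary \ref{cor:est} strictly lower the achievable degree, so either equality is impossible or every surviving term again passes through some $d \in D_{\geq N}$.

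The main obstacle is the sheer volume of the case analysis, particularly in class (3) ($m_{rs} \geq 7$, $m_{st} = 3$), where Lemma \ref{lem:reduced2} enumerates more than twenty defective configurations. The uniform estimate in Corollary \ref{cor:est}(1), which bounds $\deg p_{w,w_{rs}} \nt_{x'} \nt_w \nt_{y'}$ by $-L(rs)$ (or $-L(r)$ when $m_{rs}=7$) for $w \in W_{rs}$ with $l(w) \geq 2$, is the key input that keeps each of these cases under control; Corollary \ref{cor:est}(2) handles the short-$w$ cases. Carrying out this bookkeeping across all three classes and showing uniformly that the equality $\deg(\nt_x \nt_y) = N$ pins down both $x$ and $y$ to $\Omega_{\geq N}$ is the main content of the proof.
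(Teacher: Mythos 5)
Your outline follows the paper's strategy in broad strokes --- factor off a dihedral piece $u,v\in W_I$, expand $T_uT_v$ via the structure constants $f_{u,v,w}$, and run a case analysis combining the dihedral degree bounds of section \ref{sec:dih} with the explicit expansions of section \ref{sec:exp} --- but two load-bearing ingredients are missing.

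First, the paper's argument is an induction on $l(y)$ (with $I=\{t_1,t_2\}$ taken from the left descents of $y$), and the induction hypothesis is not a convenience: every term in which the relevant dihedral coefficient has degree $\leq 0$ is dispatched by applying the statement of the proposition itself to the shorter product $\nt_{x_1w}\nt_{y_1}$, because for those terms one still needs the \emph{equality} clause (``$=N$ only if both factors lie in $\Omega_{\geq N}$''), not merely the bound \eqref{eq:bound}. Your plan offers no mechanism for these terms; the explicit expansions of section \ref{sec:exp} by themselves do not tell you when $\deg\nt_z$ attains $0$ rather than being strictly negative for the various $z$ that occur, and several of the paper's sub-cases (e.g.\ the $l(z)\leq 3$ branch in Case (2) of the class-(2) analysis) invoke the induction hypothesis again at a second level.

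Second, you expand $\nt_x\nt_y=\sum_{w\in W_I}f_{u,v,w}\,\nt_{x_1}\nt_w\nt_{y_1}$ over all of $W_I$, but when $w_I$ (or $d_I$) lies in $(W_I)_{>N}$ one must re-expand $\nt_{w_I}=-\sum_{w<w_I}p_{w,w_I}\nt_w$ (since $\nc_{w_I}=0$ in $\hn$). This is what produces the corrected structure constants $\nf_{u,v,w}$ and the products $f_{u,v,w_I}p_{w,w_I}$ and $(f_{u,v,d_I}-f_{u,v,w_I}p_{d_I,w_I})p_{w,d_I}$ whose degrees are the actual content of Lemmas \ref{lem:degfp}, \ref{lem:degfp2} and Corollaries \ref{cor:degnfp}, \ref{cor:degnfp2}. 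You cite ``Lemma \ref{lem:plusdeg} and its corollaries'' as controlling $\deg f_{u,v,w}$, but without setting up this re-expansion those corollaries have nothing to apply to, and the paper's Steps II and III --- which carry most of the weight in section \ref{sec:take} --- are precisely this bookkeeping. Note also that Corollary \ref{cor:est} is only available in class (3); in classes (1) and (2) the non-reduced configurations are closed out instead by the weight inequalities $\mathfrak{R}_3$, $\mathfrak{J}_3$, $\mathfrak{K}_3$ extracted from the membership of $x$ and $y$ in $\Omega_{\leq N}$, which your sketch does not mention.
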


The proof will occupy the rest of this section.
The first half of this proposition is known by \eqref{eq:bound}. The key point is  to prove $ x,y\in\Omega_{\geq N} $ when the equality holds.
For this, we use induction on $l(y) $. It is easy to check the proposition for $ l(y)=0,1 $. Now assume  $ l(y)\geq 2 $ and that\begin{equation}\label{eq:ind-hyp}
\text{ for } y'\text{ with }l(y')<l(y), \deg \nt_{x'}\nt_{y'}= N\text{ implies }x',y'\in \Omega_{\geq N}.
\end{equation} Let $ t_1\in\mc{L}(y) $, $ t_2\in\mc{L}(t_1y) $, $ I=\{t_1,t_2\} $.
Write $ x=x_1\cdot u $ and $ y=v\cdot y_1 $ with $u,v\in W_I$, $x_{1},y_{1}\in W$ and $\mathcal{R}(x_{1}),\mathcal{L}(y_{1})\subseteq S\setminus I$. We have\[
\nt_x\nt_y=\sum_{w\in(W_I)_{\leq N}} \nf_{u,v,w} \nt_{x_1}\nt_w\nt_{y_1}.
\]
Thus for our purpose, it suffices to prove that, for every $w\in(W_I)_{\leq N}$,
\begin{equation}\label{eq:claim}
\deg( \nf_{u,v,w} \nt_{x_1}\nt_w\nt_{y_1} )\leq N  ,\text{ and the equality holds only if }x,y\in \Omega_{\geq N} .
\end{equation}

If $ l(x_1wy_1)=l(x_1)+l(w)+l(y_1) $, then the equality in claim \eqref{eq:claim} holds only if $  \deg \nf_{u,v,w}  =N$, which implies that $ u,v\in \Omega_{\geq N} $ and hence $ x,y\in \Omega_{\geq N} $ (see case (i) of the proof of \cite[Prop.6.3]{xie2019}).
For the case of $ w $ with $ l(w)\leq 1 $, we refer the reader to cases (iii)(iv) of the proof of \cite[Prop. 6.3]{xie2019}.

In the rest of the proof, we assume that $ l(w)\geq 2 $ and $ l(x_1wy_1)<l(x_1)+l(w)+l(y_1)  $.

Note that $  \nf_{u,v,w} $ has  following 3 cases.
\begin{itemize}
\item [(1)] $ (W_I)_{>N}=\emptyset $, and $ \nf_{u,v,w}= f_{u,v,w}$;
\item [(2)] $ (W_I)_{>N}=\{w_I\}$, and $\nf_{u,v,w}= f_{u,v,w}-f_{u,v,w_I}p_{w,w_I} ;$
\item [(3)] $L(t_{1})\neq L(t_{2})$, $ (W_I)_{>N}=\{d_I, w_I\}$, and \[
	\nf_{u,v,w} =f_{u,v,w}-f_{u,v,w_I}p_{w,w_I}-(f_{u,v,d_I}-f_{u,v,w_I}p_{d_I,w_I})p_{w,d_I}.
	\]
\end{itemize}

\textbf{A general procedure}.
Let $  T_{x_1}T_wT_{y_1}  =\sum_{z}\alpha_zT_z\text{ with } \alpha_z\in\mc{A}$. Claim \eqref{eq:claim} will follow from the following three steps.

\newcommand{\cs}[1]{

\vspace{5pt}\noindent\textbf{#1}}

\newcommand{\css}[1]{

\vspace{5pt}{\color{blue}#1}.}

\css{Step I}  The goal of this step is to prove that $\deg (f_{u,v,w} \nt_{x_1}\nt_{w}\nt_{y_1})\leq N$ and the equality holds only if $ x,y\in \Omega_{\geq N} $.

Let $ \mb{\xi} $ be a monomial that appears in $ f_{u,v,w} $ (with positive coefficient).
Then for this step it suffice to prove  \[
\deg \xi+\deg(\nt_{x_1}\nt_{w}\nt_{y_1})\leq N,
\]
and that the equality holds only if $ x,y\in \Omega_{\geq N} $.
If $  \deg\xi\leq 0 $, then we can use induction hypothesis \eqref{eq:ind-hyp}, and  hence we focus on the case of $ \deg\xi >0$.
By subsection \ref{subsec:pterm}, we have a  condition $\mf{R}_1$ on $ u,v $. Then condition $ \mf{R}_1 $, together with $ x=x_1u, y=vy_1\in \wn=\Omega_{\leq N} $, gives a restriction $ \mf{R}_2 $ on $ L(r), L(s), L(t)$. Using  $ \mf{R}_2 $, we have an inequality $ \mf{R}_3 $.
Then we prove  $ \deg \xi+\deg \alpha_z +\deg \nt_z \leq N$   for any $ z $ and that the equality holds only if $ x,y\in \Omega_{\geq N} $. This will complete  Step I.

\css{Step II} Assume that $ W_I $ is finite and $ w_I\in (W_I)_{>N} $, i.e. $ N< L(w_I) $. The goal of this step is to prove that
$$\deg f_{u,v,w_I}p_{w,w_I} \nt_{x_1}\nt_{w}\nt_{y_1}\leq N,$$
 and the equality holds only if $ x,y\in \Omega_{\geq N} $.

 Let $\delta= \deg f_{u,v,w_I}p_{w,w_I} $. If $ \delta\leq 0 $, then we apply induction hypothesis \eqref{eq:ind-hyp} to $  \nt_{x_1w}\nt_{y_1}$. If $ \delta>0 $, then by subsection \ref{subsec:delta}, we have a restriction    $ \mf{J}_1 $ on $ u,v,\delta $, which, together with $ x_1u,v y_1\in \Omega_{\leq N} $, gives a  restriction $ \mf{J}_2 $ on $ L(r) $, $ L(s) $, $ L(t) $. This restriction $ \mf{J}_2 $ gives an inequality $ \mf{J}_3 $, and then implies  $ \delta+\deg \alpha_z+\deg \nt_z\leq N  $ for any $ z $ and  the condition of taking equality. This  will complete Step II.

\css{Step III} Assume that $ W_I $ is finite, $ d_I,w_I\in (W_I)_{>N} $ and $w\leq d_{I}$.
The goal of this step is to prove that\[
\deg( (f_{u,v,d_I}-f_{u,v,w_I}p_{d_I,w_I})p_{w,d_I}\nt_{x_1}\nt_{w}\nt_{y_1})\leq N,
\]
and the equality holds only if $ x,y\in \Omega_{\geq N} $.

 Let $\gamma= (f_{u,v,d_I}-f_{u,v,w_I}p_{d_I,w_I})p_{w,d_I} $. If $ \gamma\leq 0 $, then  we apply induction hypothesis \eqref{eq:ind-hyp} to $  \nt_{x_1w}\nt_{y_1}$. If $ \gamma>0 $, then  by results from subsection \ref{subsec:gamma}, we have a restriction    $ \mf{K}_1 $ on $ u,v,\delta $ and $ \gamma $, which, together with $ x_1u,v y_1\in \Omega_{\leq N} $, gives a  restriction $ \mf{K}_2 $ on $ L(r) $, $ L(s) $, $ L(t) $. This restriction $ \mf{K}_2 $ gives an inequality $ \mf{K}_3 $, and then implies  $ \gamma+\deg \alpha_z+\deg \nt_z\leq N  $ for any $ z $ and the condition of taking equality. This will complete  Step III.



\subsection{The case of $\infty=m_{rs}>m_{st}\geq3$} According to Lemma \ref{lem:reduced0}, the proof  is  divided into two cases as follows.

Since $ W_{rs}$ is an infinite group,   we only need to consider Step I when $ w\in W_{rs} $.
\cs{Case (1)}
$ w=srs $,  $x_{1}=x_{2}\cdot w_{st} s$, $y_{1}=sw_{st}\cdot y_{2}$ for some $x_{2},y_{2}\in W$ and
		$$
		T_{x_{1}}T_{srs}T_{y_{1}}=\xi_{t}T_{x_{2}\cdot w_{st}\cdot r\cdot tw_{st}\cdot y_{2}}+T_{x_{2}\cdot w_{st}t\cdot r\cdot tw_{st}\cdot y_{2}}.
		$$
 Apply the general procedure. According to Lemma \ref{lem:infinite2}, we have two cases for Step I. (Substitute $ \xi $, $ \mf{R}_1 $, $ \mf{R}_2  $, $ \mf{R}_3  $ of the following table into the general procedure outlined above.)
   \[ \begin{tabular}{cccc}
\hline
$ \xi $&$\mf{R}_1$&$ \mf{R}_2 $ &$ \mf{R}_3 $   \\
\hline
$ \xi_s $ &$ su<u $ and $ vs<v $ & $ L(w_{st})\leq N $ &  $ L(s)+L(t) < N$\\
\hline
$ \xi_r $ &$ (u,v)=( sr\cdot u' , u'^{-1}\cdot rs) $&$ L(rt)\leq N $&$ L(r)+L(t)\leq N  $\\
\hline
    \end{tabular}\]

\cs{Case (2)}
$ w=rs $, $x_{1}=x_{2}\cdot t$, $y_{1}=sw_{st}\cdot y_{2}$ for some $x_{2},y_{2}\in W$ and
		$$
		T_{x_{1}}T_{rs}T_{y_{1}}=\xi_{t}T_{x_{2}\cdot r\cdot w_{st}\cdot y_{2}}+T_{x_{2}\cdot r\cdot tw_{st}\cdot y_{2}}.
		$$
According to Lemma \ref{lem:infinite1}, we have two cases for Step I.
  \begin{center}
    \begin{tabular}{cccc}
\hline
$ \xi $&$\mf{R}_1$&$ \mf{R}_2 $ &$ \mf{R}_3 $     \\
\hline
$ \xi_s $ &$ (u,v)=(rs\cdot u', u'^{-1}\cdot s) $&$ L(w_{st})\leq N $&  $ L(s)+L(t) < N$\\
\hline
$ \xi_r $ &$ (u,v)=(r\cdot u', u'^{-1}\cdot rs)$&$ L(rt) \leq N$&$ L(r)+L(t)\leq N  $\\
\hline
    \end{tabular}
  \end{center}

Note that we also need to verify the transpose cases, but the proofs are  similar.
Hereafter, we always omit the transpose cases.

\subsection{ The case of $4\leq m_{rs}, m_{st}< \infty$ with $(m_{rs},m_{st})\neq (4,4)$}

Since $ 2L(r)+2L(s)\leq L(w_{rs}) $ and $ 2L(s)+2L(t)\leq L(w_{st}) $ and at most one of them holds, we have
\begin{equation}\label{eq:frequent44}
 L(srst)< \max\{L(w_{rs}),L(w_{st})\}.
\end{equation}
 In the following this property  is often used without mention.

According to Lemma \ref{lem:reduced}, this case  is  divided into the following ones. (In addition to the transpose ones, the cases with $ r,t $ exchanged are omitted.)

\cs{Case (1)}
$ w=srs $,  $ x_1=x_2\cdot w_{st} s$, $ y_1=sw_{st}\cdot y_2 $ for some $x_2,y_2\in W$ and
		$$
		T_{x_1}T_{srs}T_{y_1}=\xi_{t}T_{x_2\cdot w_{st}\cdot r\cdot tw_{st}\cdot y_2}+T_{x_2\cdot w_{st}t\cdot r\cdot tw_{st}\cdot y_2}.
		$$
We have $ I=\{s,r\} $. We follow the general procedure.

\css{Step I} If $u=w_{I}$ or $v=w_{I}$, then $ w_{rs}, w_{st}\in D_{\leq N}$, so we have
$$\deg f_{u,v,w} \nt_{x_1}\nt_{w}\nt_{y_1}\leq L(srs)+L(t)<N.$$
If $u\neq w_{I}$ and $v\neq w_{I}$, we have the following cases according to Lemma \ref{lem:fuvw}.
 \[
 \begin{tabular}{cccc}
\hline
$ \xi $&$\mf{R}_1$&$ \mf{R}_2 $  & $ \mf{R}_3 $ \\\hline
$ \xi_s $&$ su<u $ or $ vs<v $&$ w_{st}\in D_{\leq N} $&$ L(st) < N$\\\hline
$ \xi_r $&$(u,v)=( sr\cdot u' , u'^{-1}\cdot rs)$ or $(sw_I, w_Is)$&$ rt\in D_{\leq N} $&$ L(rt)\leq N $\\\hline
    \end{tabular}
  \]

\css{Step II} $ w_I\in (W_I)_{>N} $. By Corollary \ref{cor:degfp2}, for $ \delta>0 $ we have the following cases.
\[\begin{tabular}{ccc}
\hline
$ \mf{J}_1 $&$ \mf{J}_2 $&$ \mf{J}_3 $\\ \hline
$ su<u $ or $ vs<v $, and $ \delta< 2L(s) $& $ w_{st}\in D_{\leq N} $&$  2L(s)+L(t)<N  $\\ \hline
$ ru<u $, $ vr<v $,  and $ \delta=L(r) $ & $ rt\in D_{\leq N} $ & $ L(r)+L(t)\leq N $\\ \hline
\end{tabular}\]
(Substitute $ \mf{J}_1 $, $ \mf{J}_2 $, $ \mf{J}_3 $ into Step II of the general procedure.)

\css{Step III} $ \{d_I,w_I\} \subset  (W_I)_{>N}$. For $ \gamma> 0$, by Corollary \ref{cor:degnfp}, we have
\[
\mf{K}_1:  L(s)>L(r) ,su<u  \text{ and }vs<v ,\text{ and }\gamma \leq  L(r)
\]
\[
\mf{K}_2: w_{st}\in D_{\leq N}\quad\quad \mf{K}_3: L(s)+L(t)<N.
\]
(Substitute $ \mf{K}_1 $, $ \mf{K}_2 $, $ \mf{K}_3 $ into Step III of the general procedure.)

\cs{Case (2)} $ w=rsr $, $ m_{st}=4 $, $x_1=x_2\cdot w_{rs}r\cdot t$ and $y_1=t\cdot rw_{rs}\cdot y_2$ for some $x_2,y_2\in W$,  and
		$$
		T_{x_1}T_{rsr}T_{y_1}=\xi_{s}T_{x_2\cdot w_{rs}\cdot tst\cdot sw_{rs}y_2}+T_{x_2w_{rs}s\cdot tst\cdot sw_{rs}\cdot y_2}.
		$$
We have $ I=\{s,r\} $. We follow the general procedure.

\css{Step I} If $u=w_{I}$ or $v=w_{I}$, then $ w_{rs}\in D_{\leq N}$, so we have
$$\deg f_{u,v,w} \nt_{x_1}\nt_{w}\nt_{y_1}\leq L(rsr)+L(s)<N.$$
If $u\neq w_{I}$ and $v\neq w_{I}$, we have the following cases according to Lemma \ref{lem:fuvw}.
\begin{itemize}
\item [(1)]$\xi= \xi_r $.
  \[\mf{R}_1:ru<u\text{ or } vr<v\quad \mf{R}_2: w_I\in D_{\leq N} \]\[
\mf{R}_3: L(s)+L(r)< N.\]

\item [(2)]$ \xi_s $ appears in $f_{u,v,w}   $, and   $(u,v)=( rs\cdot u' , u'^{-1}\cdot sr)$ or $(rw_I, w_Ir)$.

The first case implies that $ L(w_{rs})\leq N $ since $ y\in \wn $, hence $ 2L(s)<N $.

In the second case, i.e. $ (u,v)= (rw_I, w_Ir)$ we need to prove   \begin{equation}\label{eq:minus}
2L(s)+\deg \nt_{x_2\cdot w_{rs}\cdot tst\cdot sw_{rs}y_2}\leq N
\end{equation}and that the equality holds only if $ x,y\in\Omega_{\geq N} $. If $ L(w_{rs})\leq N $, it is obvious.  Hence we assume $ L(w_{rs})> N $, so $ x_2\cdot w_{rs}\cdot tst\cdot sw_{rs}y_2\notin\wn $ and $ (m_{st}=4) $ \[
\nt_{x_2\cdot w_{rs}\cdot tst\cdot sw_{rs}y_2}=-\sum_{z<w_{rs}}p_{z,w_{rs}}\nt_{x_2\cdot (w_{rs}s)\cdot tst\cdot z}\nt_{y_2}
\]

For every $z<w_{rs}$, we will prove that
\begin{equation}\label{eq:case11}
2L(s)+\deg p_{z,w_{rs}}+\deg \nt_{x_2\cdot (w_{rs}s)\cdot tst\cdot z}\nt_{y_2}\leq N
\end{equation}
and the equality holds only if $ x,y\in\Omega_{\geq N} $.

Assume first $ l(z)\geq 4$, then $ x_2\cdot (w_{rs}s)\cdot tst\cdot z\cdot y_2 $ is reduced by Lemma \ref{lem:a1}(11). We have
$$2L(s)+\deg p_{z,w_{rs}}\leq 2L(s)-L(r)<L'(d_{I})\leq N, \mbox{ if } L(s)>L(r),$$
$$2L(s)+\deg p_{z,w_{rs}}\leq L(s)\leq N, \mbox{ if } L(r)\geq L(s),$$
and the equality holds only if $ x,y\in\Omega_{\geq N} $ since $ s $ appears in $x$ and $y$. (Note that $ m_{rs}\geq 5 $ due to $ m_{st}=4 $.)

Assume now $ l(z)\leq 3 $, by induction hypothesis, we have
$$\deg\nt_{x_2\cdot (w_{rs}s)\cdot tst\cdot z}\nt_{y_2}\leq N,$$
and the equality holds only if $ y_2\in \Omega_{\geq N} $. If $m_{rs}\neq 6$ or $w\neq srs$, then $2L(s)+\deg p_{z,w_{rs}}\leq 0$, so \eqref{eq:case11} holds. Assume $m_{rs}=6$ and $w=srs$. If $ x_2\cdot (w_{rs}s)\cdot tst\cdot z\cdot y_2 $ is reduced, then $ L(s)-2L(r)< N $.
If $ x_2\cdot (w_{rs}s)\cdot tst\cdot z\cdot y_2 $ is not reduced, by Lemma \ref{lem:reduced}(1), we have $ \deg \nt_{x_2\cdot (w_{rs}s)\cdot tst\cdot z}\nt_{y_2}\leq L(s) $ and $ y_2=sw_{st}\cdot y' $ for some $ y' $ and hence $ w_{st} $ appears in $ y\in \wn $. We get
\begin{align*}
&2L(s)+\deg p_{z,w_{rs}}+\deg \nt_{x_2\cdot (w_{rs}s)\cdot tst\cdot z}\nt_{y_2}\\
 \leq &2L(s)-(L(s)+2L(r))+L(s)
<L(w_{st})\leq N.
\end{align*}

\end{itemize}

\css{Step II} $ w_I\in (W_I)_{>N}$.
According to Corollary \ref{cor:degfp2}, for $ \delta>0 $, we have the following cases.
\begin{itemize}
\item[(1)]
$ \mf{J}_1: ru<u \text{ or } vr<v \text{ and }\delta< 2L(s) , $\[
\mf{J}_2: w_{rs}\in D_{\leq N},\quad  \mf{J}_3: 2L(r)+L(s)<N .
\]
\item [(2)]   $ su<u $, $ vs<v $,  and $ \delta=L(s) $. Then we need to prove $ L(s)+\deg (\nt_{x_1}\nt_{rsr}$ $\nt_{y_1})\leq N $ and that the equality holds only if $ x,y\in\Omega_{\geq N} $. For this, see \eqref{eq:minus}.
\end{itemize}

\css{Step III} $\{d_I,w_I\}\subset  (W_I)_{>N} $.
According to Corollary \ref{cor:degnfp}, we have
$ \mf{K_1} $: $ L(r)>L(s) $,  $ ru<u $ and  $ vr<v $, and   $ \gamma\leq  L(s)$,
\[ \mf{K}_2: w_{rs}\in D_{\leq N},\quad \mf{K}_3: \gamma+L(s)\leq 2L(s)<N.
\]

\cs{Case (3)} $ w=rt $, $x_1=x_2\cdot w_{rs}r$, $y_2=tw_{st}\cdot y_2$ for some $x_2,y_2\in W$,  and
		$$
		T_{x_1}T_{rt}T_{y_1}=\xi_{s}T_{x_2\cdot w_{rs}\cdot sw_{st}\cdot y_2}+T_{x_2\cdot w_{rs}s\cdot sw_{st}\cdot y_2}.
		$$
In this case, $ I=\{r,t\} $. Since $ w=rt\in (W_I)_{\leq N} $, and $ m_{rt}=2 $, we only need to consider step I. We have the following cases.\[
\begin{tabular}{cccc}
\hline
$ \xi $&$\mf{R}_1$&$ \mf{R}_2 $ &$ \mf{R}_3 $   \\
\hline
$ \xi_r\xi_t $ &$ u=v=rt $ &$  w_{st}, w_{rs}\in D_{\leq N}  $&  $L(r)+L(t)+L(s)< N$\\
\hline
$ \xi_r $ &$ (u,v)=(r,rt), (rt,r) $&$w_{rs}\in D_{\leq N}$&$ L(r)+L(s)< N  $\\
\hline
$ \xi_t$ &$ (u,v)=(t,rt), (rt,t) $&$w_{st}\in D_{\leq N}$&$ L(s)+L(t)< N  $\\
\hline
\end{tabular}
\]
\cs{Case (4)} $ w=rs $.

We have $I=\{r,s\}$. By Corollary \ref{cor:degnfp2}, $ \gamma\leq 0 $ since $ l(w)=2 $. Thus we only consider Step I and  II.

Here we only give the proof of case \circled{1} of Lemma \ref{lem:reduced}(4) and the other cases are similar.
In this case,  $x_1=x_2\cdot w_{rs}r\cdot t$, $y_1=sw_{st}\cdot y_2$ for some $x_2,y_2\in W$, and
			$$
            \begin{aligned}
            T_{x_1}T_{rs}T_{y_1}&=\xi_{t}\xi_{s}T_{x_2\cdot w_{rs}s\cdot w_{st}\cdot y_2}+\xi_{t}T_{x_2\cdot w_{rs}s\cdot sw_{st}\cdot y_2}\\
            &\ \ \ +\xi_{s}T_{x_2\cdot w_{rs}\cdot stw_{st}\cdot y_2}+T_{x_2\cdot w_{rs}s\cdot stw_{st}\cdot y_2}.
            \end{aligned}
            $$

\css{Step I} By Lemma \ref{lem:fuvw2}, we have following 3 cases.
\begin{itemize}
\item [(1)] $ \xi=\xi_r\xi_s $.  $ \mf{R}_1 $: $ u=v=w_{rs} $.
$ \mf{R}_2: w_{rs}, w_{st}\in D_{\leq N}. $ $ \mf{R}_3: L(r)+2L(s)+L(t)<N. $
\item [(2)] $ \xi_r $ appears in $f_{u,v,w}$ and $ ru<u $.


Then $ x\in \wn $ implies that $ L(w_{rs})\leq N $ and $ L(rt)\leq N $.
 If $ L(t)\leq L(s) $, we have $L(r) +L(s)+L(t)\leq 2L(s)+L(r)<N  $.
It is similar for $ L(t)\leq L(r) $. For the case of $ L(w_{st})\leq N $, it is also easy.

In the following we assume that $ L(t)>L(s) $, $ L(t)>L(r) $ and $  L(w_{st})> N$. Thus by Lemma \ref{lem:fuvw2}(3), $(u,v)=(w_{rs},w_{rs}s)$.
\begin{itemize}
\item [(a)] $ m_{rs}\geq 5 $. Since $ L(t)>L(s) $, then the fact that $ sw_{st} $ appears in $ y$ implies that $ 2L(t)-L(s)\leq N $, i.e. $ L(t)-\frac12L(s)\leq \frac12 N $. Since $ m_{rs}\geq 5 $, we have $ 3L(s)+2L(r)\leq N $, i.e. $ \frac32 L(s)+L(r)\leq \frac12 N $. Then we obtain $ L(s)+L(r)+L(t)\leq N $, and the equality holds only if $ x,y\in \Omega_{\geq N} $.

\item [(b)]$ m_{rs}=4 $.
We will
 prove  \[
L(r)+\deg \xi_{t}\xi_{s}\nt_{x_2\cdot w_{rs}s\cdot w_{st}\cdot y_2}\leq N,
\]
and that the equality holds only if $ x,y\in \Omega_{\geq N} $.
Then we consider \[
\nt_{x_2\cdot w_{rs}s\cdot w_{st}\cdot y_2}=-\sum _{z< w_{st}}p_{z,w_{st}}\nt_{x_2\cdot w_{rs}s\cdot z}\nt_{ y_2}.
\]
It suffices to prove that
\begin{equation}\label{eq:case1}
\quad L(rst)+\deg p_{z,w_{st}}+\deg \nt_{x_2\cdot w_{rs}s\cdot z}\nt_{ y_2}\leq N,
\end{equation}
and the equality holds only if $ x,y\in \Omega_{\geq N} $.
Note that $ m_{st}\geq 6 $, since $ L(s)\neq L(t) $ and $ (m_{rs},m_{st})\neq (4,4) $. We have  two cases.
\begin{itemize}
\item [(i)]If $ l(z)\leq 2 $ or $ z=sts $, we have $ L(rst)+\deg p_{z,w_{st}}< 0$. Then \eqref{eq:case1} follows since $\deg \nt_{x_2\cdot w_{rs}s\cdot z}\nt_{ y_2}\leq N$.

\item [(ii)]If $ l(z)\geq 4 $ or $ z=tst $, then $ x_2\cdot w_{rs}s\cdot z\cdot  y_2 $ is reduced\footnote{Assume  $ x_2\cdot w_{rs}s\cdot z\cdot  y_2  $ is not reduced. By applying Lemma \ref{lem:reduced} (with $ r,t $ exchanged) to $ ( x_2\cdot w_{rs}s, z, y_2) $, we have $ z=tst $ and $  x_2\cdot w_{rs}s=x_2'\cdot w_{st}t\cdot r$ for some $ x'_2 $. Since $ m_{rs}=4 $, we have
$ x_2\cdot r=x_2'\cdot w_{st}ts $, a contradiction with Lemma  \ref{lem:a1}(5).}, and  we have $ \deg p_{z,w_{st}}\leq -L(s) $.
Then \eqref{eq:case1} follows and the equality holds only if $ L(rt)=N$, which implies $ x,y\in \Omega_{\geq N} $.
\end{itemize}
\end{itemize}
\item [(3)] $ \xi=\xi_s $. $ \mf{R}_1 : sv<v$. $ \mf{R}_2 : w_{st}\in D_{\leq N}$. $ \mf{R}_3: 2L(s)+L(t)< N  $.
\end{itemize}

\css{Step II} $ w_I\in (W_I)_{>N} $. By Lemma \ref{lem:degfp2}, for $ \delta>0 $, we have $$   L(r)\neq L(s), \delta=|L(r)-L(s)|, u=v=d_I . $$
If $ L(r)>L(s) $,
then $ w_{rs} $ appears in $ x\in x_1u \in \wn$, a contradiction with $ w_I\in (W_I)_{>N}  $.
If $ L(r)<L(s) $, then $ vs<v $ and $ w_{st}\in D_{\leq N} $, which implies that  $ \delta+L(st)<2L(s)+L(t)<N $.

\subsection{The case of $ m_{rs}\geq 7, m_{st}=3$}  One important feature of this case is that $ L(s)=L(t) $.

According to Lemma \ref{lem:reduced2}, the proof  is  divided into the following cases.

\cs{Case (1)} $ w=rsrsr $. We have $ I=\{r,s\} $.

If $L(w_{rs})\leq N$, then we have $\nf_{u,v,w}=f_{u,v,w}$, thus
$$\deg\nf_{u,v,w}\nt_{x_1}\nt_w\nt_{y_1}\leq L(rsrsr)+L(s)<L(w_{rs})\leq N.$$

If $L(w_{rs})>N$, then $u\neq w_{rs}$ and $v\neq w_{rs}$ since $u,v\in (W_{I})_{\leq N}$. We have $x_{1}=x_{2}\cdot w_{rs}r\cdot t$, $y_{1}=t\cdot rw_{rs}\cdot y_{2}$ for some $y_{1},y_{2}\in W$ with $\mathcal{R}(x_{2}),\mathcal{L}(y_{2})\subseteq\{t\}$, and
$$
\begin{aligned}
\nt_{x_1}\nt_{w}\nt_{y_1}&=\xi_{s}\nt_{x_{2}\cdot w_{rs}\cdot tsrst\cdot sw_{rs}\cdot y_{2}}+\nt_{x_{2}\cdot w_{rs}s\cdot tsrst\cdot sw_{rs}\cdot y_{2}}\\
&=-\xi_{s}\sum_{z<w_{rs}}p_{z,w_{rs}}\nt_{x_{2}\cdot z}\nt_{tsrst\cdot sw_{rs}\cdot y_{2}}+\nt_{x_{2}\cdot w_{rs}s\cdot tsrst\cdot sw_{rs}\cdot y_{2}}.
\end{aligned}
$$
In $ W_I $ we have
$\deg \nf_{u,v,w}\leq N,\mbox{ and the equality holds only if }u,v\in \Omega_{\geq N}.$
Then by Corollary \ref{cor:est}(1), we obtain
$$\deg \nf_{u,v,w}\left (-\xi_{s}\sum_{z<w_{rs},l(z)\geq 2}p_{z,w_{rs}}\nt_{x_{2}\cdot z}\nt_{tsrst\cdot sw_{rs}\cdot y_{2}} +\nt_{x_{2}\cdot w_{rs}s\cdot tsrst\cdot sw_{rs}\cdot y_{2}}\right )\leq N,$$
and the equality holds only if $x,y\in \Omega_{\geq N}$.

On the other hand, by Corollary \ref{cor:bbb}, we have
$\deg \nf_{u,v,w}<L(w).$
Then by Corollary \ref{cor:est}(2), we obtain
$$
\begin{aligned}
&\ \ \ \ \deg \nf_{u,v,w}\xi_{s}\sum_{z<w_{rs},l(z)\leq 1}p_{z,w_{rs}}\nt_{x_{2}\cdot z}\nt_{tsrst\cdot sw_{rs}\cdot y_{2}}\\
&<L(w)+L(s)+\max\{L(r),L(s)\}-L(w_{rs})+N\\
&\leq N.
\end{aligned}
$$

\cs{Case (2)} $ w=rsrs $. Take the same method as the case (1).

	\cs{Case (3)} $ w=srs $. We have $ I=\{r,s\} $.

\css{Step I}
By Lemma \ref{lem:reduced2}(3), we have $\deg T_{x_{1}}T_{srs}T_{y_{1}}\leq L(rs)$. Hence\[
\deg f_{u,v,w}\nt_{x_1}\nt_w\nt_{y_1}\leq L(srsrs).
\]
Thus if $ L(w_{rs})\leq N $, then we are done. In the following, we assume $ L(w_{rs})>N $. Thus $u,v\neq w_{rs}$. For case \circled{1} of Lemma \ref{lem:reduced2}(3), we have the following cases.\[
\begin{tabular}{cccc}
\hline
$ \xi $&$\mf{R}_1$&$ \mf{R}_2 $ &$ \mf{R}_3 $   \\
\hline
$ \xi_s $ &$su<u$ or $vs<v$&$sts\in D_{\leq N}$&$ L(s)+L(t)< N  $\\
\hline
$ \xi_r$ &$u=v^{-1}=sr\cdot u'$ or $sw_{rs}$&$rt\in D_{\leq N}$&$ L(r)+L(t)\leq N  $\\
\hline
\end{tabular}
\]
For cases \circled{2} and \circled{3}, we only need to consider the case of $ u=v^{-1}=sw_{rs} $, then apply Corollary \ref{cor:est}. For case \circled{4}, we have the following cases.\[
\begin{tabular}{cccc}
\hline
$ \xi $&$\mf{R}_1$&$ \mf{R}_2 $ &$ \mf{R}_3 $   \\
\hline
$ \xi_s $ &$vs<v$&$sts\in D_{\leq N}$&$ L(s)+L(t)< N  $\\
\hline
$ \xi_r$ &$u=v^{-1}=sw_{rs}$&$rt\in D_{\leq N}$&$ L(r)+L(t)\leq N  $\\
\hline
\end{tabular}
\]

\css{Step II} $ w_I\in (W_I)_{>N}$. Assume $\delta>0 $. For cases \circled{1} of Lemma \ref{lem:reduced2}(3), we have the following cases.
\[\begin{tabular}{ccc}
\hline
$ \mf{J}_1 $&$ \mf{J}_2 $&$ \mf{J}_3 $\\ \hline
$ su<u $ or $ vs<v $, and $ \delta< 2L(s) $& $ sts\in D_{\leq N} $&$  2L(s)+L(t)\leq N  $\\ \hline
$ ru<u $, $ vr<v $, and $ \delta=L(r) $ & $ rt\in D_{\leq N} $ & $ L(r)+L(t)\leq N $\\ \hline
\end{tabular}\]
For cases \circled{2} and \circled{3}, we only need to consider the case of $ ru<u $, $ vr<v $, and $\delta=L(r)$, then apply Corollary \ref{cor:est}. For case \circled{4}, we have the following cases.
\[\begin{tabular}{ccc}
\hline
$ \mf{J}_1 $&$ \mf{J}_2 $&$ \mf{J}_3 $\\ \hline
$ vs<v $ and $ \delta< 2L(s) $& $ sts\in D_{\leq N} $&$  2L(s)+L(t)\leq N  $\\ \hline
$ ru<u $, $ vr<v $, and $ \delta=L(r) $ & $ rt\in D_{\leq N} $ & $ L(r)+L(t)\leq N $\\ \hline
\end{tabular}\]

\css{Step III} $ d_I,w_I\in (W_I)_{>N} $. For $\gamma>0$, by Corollary \ref{cor:degnfp}, we have
$L(s)>L(r)$, $su<u$, $vs<v$, $\gamma\leq L(r)$. Since $m_{rs}$ is even and $L(w_{rs})>N$, we must be in case \circled{1} of Lemma \ref{lem:reduced2}(3). Then it is obvious.

\cs{Case (4)} $ w=rsr $. We have $ I=\{r,s\} $.

\css{Step I}  If $ w_{rs} \in D_{\leq N}$,  then $ \deg f_{u,v,w}\nt_{x_1}\nt_{w}\nt_{y_1} <N$. Assume in the rest of this step that $ L(w_{rs})>N $. Thus $u,v\neq w_{rs}$.

For cases \circled{1}\circled{4} of Lemma \ref{lem:reduced2}(4) we use Corollary \ref{cor:est}.

Consider case \circled{2} of Lemma \ref{lem:reduced2}(4). If $ ru<u$ or $ vr<v $, then $ w_{rs} \in D_{\leq N}$. By Lemma \ref{lem:fuvw}, it remains to consider the  case
$ u=rw_{rs}=v^{-1} $, $ \xi=\xi_t $. Then we have \begin{itemize}
\item $ \deg \xi_t(\xi_s^2\xi_r\nt_{x_2\cdot w_{rs}r\cdot t\cdot w_{rs}\cdot y_2}+\xi_s\xi_r\nt_{x_2\cdot w_{rs}r\cdot t\cdot sw_{rs}\cdot y_2}+\xi_s\xi_r\nt_{x_2\cdot w_{rs}s\cdot t\cdot rw_{rs}\cdot y_2})$ $<N  $ using Corollary \ref{cor:est},
\item $ 3L(s)\leq N $ with the equality holding only if $ x,y\in \Omega_{\geq N} $,
\item $ L(rt)\leq N $ with the equality holding only if $ x,y\in \Omega_{\geq N} $.
\end{itemize}

Consider case \circled{3} of Lemma \ref{lem:reduced2}(4). If $ vr<v $, then $ w_{rs} \in D_{\leq N}$. By Lemma \ref{lem:fuvw}, it remains to consider the cases
(i) $ u=sw_{rs}, v=w_{rs}r ,\xi=\xi_r$ and (ii)
$ u=rw_{rs}=v^{-1} $, $ \xi=\xi_t $. Then they are proved  as the above case \circled{2}.

\medskip
\css{Step II}  $w_I\in  (W_I)_{>N} $. Assume $\delta>0 $.
By Corollary \ref{cor:degfp2}, we have the following two cases:
\begin{itemize}
\item [(i)] $ ru<u $ or $ vr<v $, and $ \delta< 2L(r) $;
\item [(ii)] $ su<u $ and $ vs<v $, and $ \delta=L(s) $.
\end{itemize}

For cases \circled{1}\circled{4} of Lemma \ref{lem:reduced2}(4),  using  Corollary \ref{cor:est} to conclude that $ \deg f_{u,v,w}$ $\nt_{x_1}\nt_{w}\nt_{y_1} \leq N $ and the equality holds only if $ x,y\in \Omega_{\geq N} $.

For \circled{2} of Lemma \ref{lem:reduced2}(4), we can exclude (i). For (ii) we  using  Corollary \ref{cor:est} again.

For \circled{3} of Lemma \ref{lem:reduced2}(4), we must have $vs<v  $. By Lemma \ref{lem:degfp}, (i) can be refined as (i') : $ ru<u,vs<v ,\delta\leq L(r)$.  Then we apply Corollary \ref{cor:est}.

\css{Step III} $ \{d_I,w_I\} \subset(W_I)_{>N}$. For $\gamma>0$, by Corollary \ref{cor:degnfp}, we have
$L(r)>L(s)$, $ru<u$, $vr<r$, $\gamma\leq L(s)$. Since $L(w_{rs})>N$, we must be in case \circled{1} of Lemma \ref{lem:reduced2}(4). Thus $\deg T_{x_{1}}T_{rs}T_{y_{1}}\leq L(r)$. We obtain
$$
\gamma+\deg\nt_{x_1}\nt_{w}\nt_{y_1}\leq L(rs)\leq N,
$$
The equality holds only if $ L(rt)= N$, which implies  $x,y\in\Omega_{\geq N} $.

\cs{Case (5)} $ w=sts $.
In this case, $  I=\{s,t\} $, $(x_{1},sts,y_{1})$ is a reduced extension of $ (w_{rs}s,sts,sw_{rs}) $, and $\deg T_{x_{1}}T_{sts}T_{y_{1}}=L(r)$. Since $ L(s)=L(t) $, by Corollary \ref{cor:degfp2} and Lemma \ref{lem:degfp}, we only need to consider Step I.

If $ su<u $ or $ vs<v $, then $ w_{rs} $ appears in $ x _1 u$ or $ vy_1 $, and hence
$$
\deg f_{u,v,w}\nt_{x_1}\nt_w\nt_{y_1}\leq  L(sts)+L(r)<L(w_{rs})\leq N.
$$
According to Lemma \ref{lem:fuvw}, it remains to prove that case $ u=v^{-1}=ts$, $ \xi=\xi_t $. In this case, we have \[
\deg \xi\nt_{x_1}\nt_w\nt_{y_1}\leq L(rt)\leq N,
\]and the equality holds only if $ x,y\in\Omega_{\geq N} $.

\cs{Case(6)} $ w=rt $.

In this case, $ I=\{r,t\} $, $ m_{rt}=2 $. Since $ rt\in (W)_{\leq N} $, we only need to do Step I.

Assume first that $ L(w_{rs}) \leq N$. We have \[
\deg f_{u,v,w}\nt_{x_1}\nt_w\nt_{y_1}\leq L(rt)+L(rs)=L(srsr)<N.
\]Then we are done. Assume in the rest that $ L(w_{rs})>N $.

Consider  \circled{1} of Lemma \ref{lem:reduced2}(6). If $ ru<u $, then $ w_{rs} $ appears in $ x_1u $. It remains to consider the case $ u=t,v=rt $, $ f_{u,v,w}=\xi_t $. Then we have $ \deg f_{u,v,w}\nt_{x_1}\nt_{rt}\nt_{y_1}\leq 2L(s)<N $.

Consider   \circled{2} of Lemma \ref{lem:reduced2}(6). If $ \deg f_{u,v,w} >0$ we have $ru<u  $ or $ vt<v $, and hence $ w_{rs} $  appears in $ x_1u $ or $v y_1 $.

Consider  \circled{3} of Lemma \ref{lem:reduced2}(6). If $ vt<v $, then $ w_{rs} $ appears in $ vy_1$.  It remains to consider the case $ u=rt, v=r , f_{u,v,w}=\xi_r$. Then we use Corollary \ref{cor:est} to obtain    $ \deg f_{u,v,w}\nt_{x_1}\nt_w\nt_{y_1}<N $.

\cs{Case(7)} $ w=st $.
In this case, $ I=\{s,t\} $. Since $ L(s)=L(t) $, we only need to consider Step I and Step II.

\css{Step I}
Assume first that $ L(w_{rs}) \leq N$. We have \[
\deg f_{u,v,w}\nt_{x_1}\nt_w\nt_{y_1}\leq L(st)+L(rsr)=L(srsrs)<N.
\]Then we are done. Assume in the rest of this step that $ L(w_{rs})>N $.

Consider \circled{1} of  Lemma \ref{lem:reduced2} (7). According to Lemma \ref{lem:fuvw2}, if $ \deg\ f_{u,v,w}> 0 $, then $ u=w_{st} $ or $ vt<v$.  Thus $ sts\in (W)_{\leq N} $. Therefore
$$
\deg f_{u,v,w}\nt_{x_1}\nt_w\nt_{y_1}\leq L(st)+L(s)=L(sts)\leq N.
$$
The equality holds only if $ L(sts)= N$ and $u=v=sts$, which implies $ x,y\in\Omega_{\geq N} $.

Consider \circled{2}\circled{3} of Lemma \ref{lem:reduced2}(7). If $ vt<v $, then $ w_{rs} $ appears in $ y $. Since we assume $ L(w_{rs})>N $, by Lemma \ref{lem:fuvw2}, it remains to consider the case of $ u=sts $, $ v=ts $, $ f_{u,v,w}=\xi_t $. Then we conclude $ \deg  f_{u,v,w}\nt_{x_1}\nt_w\nt_{y_1}<N $ using Corollary \ref{cor:est} and $ 2L(s)<N $.

Consider \circled{4}\circled{5}\circled{6} of Lemma \ref{lem:reduced2}(7). If $ su<u $, then $ w_{rs} $ appears in $ x $. Since we assume $ L(w_{rs})>N $, by Lemma \ref{lem:fuvw2}, it remains to consider the case of $ u=ts $, $ v=sts $, $ f_{u,v,w}=\xi_s $. Then we conclude $ \deg  f_{u,v,w}\nt_{x_1}\nt_w\nt_{y_1}<N $ using Corollary \ref{cor:est} and $ 2L(s)<N $. (In the case of \circled{5}, we also need $ L(rt)\leq N $ and the equality holds only if $ y\in \Omega_{\geq N} $.)

Consider  \circled{7}\circled{8}  of Lemma \ref{lem:reduced2}(7).
If $ \deg f_{u,v,w}>0 $, then by Lemma \ref{lem:fuvw2}, we have $ u=w_{st} $ or $ vt<v $, and then $ w_{rs} $ always appears in $ x_1u $ or $ vy_1 $ in these two cases.

\css{Step II} Since $ L(s)=L(t) $, by Lemma \ref{lem:degfp2}, we always have $ \delta\leq 0 $.

\cs{Case (8)} $ w=rs $.

In this case, we have $ I=\{r,s\} $ and by Corollary \ref{cor:degnfp2}, we only need to do Step I and Step II. By Lemma \ref{lem:reduced2}, we have $\deg T_{x_{1}}T_{rs}T_{y_{1}}\leq L(rsrs)$.

\css{Step I}
Assume first that $ L(w_{rs}) \leq N$. We have \[
\deg f_{u,v,w}\nt_{x_1}\nt_w\nt_{y_1}\leq L(rs)+L(rsrs)=L(rsrsrs)<N.
\]Then we are done. Assume in the rest of this step that $ L(w_{rs})>N $, thus $u,v\neq w_{rs}$. According to Lemma \ref{lem:fuvw2}, if $\deg f_{u,v,rs}>0$, then $ru<u$ and $vs<v$,
so we are in case \circled{2}\circled{3}\circled{6}\circled{7}\circled{8} of Lemma \ref{lem:reduced2}(8), and we have $L(rt)\leq N$ and $L(sts)\leq N$. If $m_{rs}$ is odd, then we obtain
$$\deg f_{u,v,rs}\nt_{x_1}\nt_{w}\nt_{y_1}<L(s)+2L(s)=L(sts)\leq N.$$
If $m_{rs}$ is even, then we are in case \circled{2}\circled{3}\circled{6}\circled{7} of Lemma \ref{lem:reduced2}(8). Here we only give the proof of case \circled{7} and the other cases are similar. In this case, we assume $x_{1}=x_{2}\cdot w_{rs}sr\cdot t$, $y_{1}=tsrst\cdot y_{2}$ for some $x_{2},y_{2}\in W$ with $\mathcal{R}(x_{2})\subseteq\{t\}$, $\mathcal{L}(y_{2})\subseteq\{r\}$, then
$$
\begin{aligned}
&\ \ \ \ \nt_{x_{2}\cdot w_{rs}sr\cdot t}\nt_{rs}\nt_{tsrst\cdot y_{2}}\\
&=\xi_{s}\nt_{x_{2}\cdot w_{rs}\cdot tsrst\cdot y_{2}}+\xi_{r}\xi_{s}\nt_{x_{2}\cdot w_{rs}\cdot ts\cdot y_{2}}+\xi_{r}\nt_{x_{2}\cdot w_{rs}s\cdot ts\cdot y_{2}}\\
&\ \ \ +\xi_{s}\nt_{x_{2}\cdot w_{rs}r\cdot ts\cdot y_{2}}+\nt_{x_{2}\cdot w_{rs}rs\cdot ts\cdot y_{2}}\\
&=\xi_{s}\nt_{x_{2}\cdot w_{rs}\cdot tsrst\cdot y_{2}}+\xi_{r}\xi_{s}(-\sum_{z<w_{rs},z\neq w_{rs}s,w_{rs}r}p_{z,w_{rs}}\nt_{x_{2}\cdot z}\nt_{ts\cdot y_{2}})\\
&\ \ \ +q^{-2L(s)}\xi_{r}\nt_{x_{2}\cdot w_{rs}s\cdot ts\cdot y_{2}}+q^{-2L(r)}\xi_{s}\nt_{x_{2}\cdot w_{rs}r\cdot ts\cdot y_{2}}+\nt_{x_{2}\cdot w_{rs}rs\cdot ts\cdot y_{2}}.
 \end{aligned}
$$
It is easy to see
$$\deg f_{u,v,rs}(\xi_{s}\nt_{x_{2}\cdot w_{rs}\cdot tsrst\cdot y_{2}}+q^{-2L(r)}\xi_{s}\nt_{x_{2}\cdot w_{rs}r\cdot ts\cdot y_{2}}+\nt_{x_{2}\cdot w_{rs}rs\cdot ts\cdot y_{2}})<N.$$
By Corollary \ref{cor:est}, we have
$$\deg f_{u,v,rs}\xi_{r}\xi_{s}(-\sum_{z<w_{rs}, z\neq w_{rs}s,w_{rs}r}p_{z,w_{rs}}\nt_{x_{2}\cdot z}\nt_{ts\cdot y_{2}})<N.$$
If $L(s)\geq L(r)$, then $\deg q^{-2L(s)}f_{u,v,rs}\xi_{r}\nt_{x_{2}\cdot w_{rs}s\cdot ts\cdot y_{2}}\leq 0<N$. If $L(r)>L(s)$, then $d_{I}$ appears in $x_{1}$, thus
$$\deg q^{-2L(s)}f_{u,v,rs}\xi_{r}\nt_{x_{2}\cdot w_{rs}s\cdot ts\cdot y_{2}}\leq 2L(r)-2L(s)<L'(d_{I})\leq N.$$

\css{Step II} $ w_I\in (W_I)_{>N} $. Let $ \delta>0 $. We have $m_{rs}=2m$ is even, $L(r)\neq L(s)$ and $u=v=d_{I}$, and $ \delta=|L(r)-L(s)| $ by Lemma \ref{lem:degfp2}.

We only need to consider cases \circled{1}--\circled{7}, \circled{9}, \circled{11}--\circled{14} of Lemma \ref{lem:reduced2}(8). Since $ m_{rs} $ is even, we exclude \circled{8}\circled{10}\circled{15}\circled{16}. We exclude \circled{17} because $ w_I $ appears in $ x_1u $ or $ vy_1 $.

If we are in cases \circled{2}\circled{3}\circled{5}\circled{6}\circled{7} and $L(r)>L(s)$, then we have $\deg T_{x_{1}}T_{rs}$ $T_{y_{1}}\leq L(r)$ and $\delta=L(r)-L(s)$.  Hence
\[\delta+ \deg\nt_{x_1}\nt_{w}\nt_{y_1}\leq 2L(r)-L(s)< L'(d_I)\leq N.\]

If we are in case \circled{4} and $L(r)>L(s)$, then $x_{1}=x_{2}\cdot w_{rs}rsr\cdot t$, $y_{1}=tsrst\cdot sw_{rs}\cdot y_{2}$ for some $x_{2},y_{2}\in W$
with $\mathcal{R}(x_{2}),\mathcal{L}(y_{2})\subseteq\{t\}$, and
$$
\begin{aligned}
&\ \ \ \ \nt_{x_{2}\cdot w_{rs}rsr\cdot t}\nt_{rs}\nt_{tsrst\cdot sw_{rs}\cdot y_{2}}\\
&=\xi_{t}\nt_{x_{2}\cdot w_{rs}r\cdot tsrst\cdot sw_{rs}\cdot y_{2}}+\xi_{s}\xi_{r}\nt_{x_{2}\cdot w_{rs}r\cdot t\cdot w_{rs}\cdot y_{2}}\\
            &\ \ \ +\xi_{s}\nt_{x_{2}\cdot w_{rs}sr\cdot t\cdot rw_{rs}\cdot y_{2}}+\xi_{r}\nt_{x_{2}\cdot w_{rs}sr\cdot t\cdot w_{rs}\cdot y_{2}}+\nt_{x_{2}\cdot w_{rs}sr\cdot t\cdot rw_{rs}\cdot y_{2}}\\
&=\xi_{t}\nt_{x_{2}\cdot w_{rs}r\cdot tsrst\cdot sw_{rs}\cdot y_{2}}+\xi_{s}\xi_{r}(-\sum_{z<w_{rs}}p_{z,w_{rs}}\nt_{x_{2}\cdot w_{rs}r\cdot t}\nt_{z\cdot y_{2}})\\
            &\ \ \ +\xi_{s}\nt_{x_{2}\cdot w_{rs}sr\cdot t\cdot rw_{rs}\cdot y_{2}}+\xi_{r}\nt_{x_{2}\cdot w_{rs}sr\cdot t\cdot w_{rs}\cdot y_{2}}+\nt_{x_{2}\cdot w_{rs}sr\cdot t\cdot rw_{rs}\cdot y_{2}}.\\
 \end{aligned}
$$
By  Corollary \ref{cor:est}, we have
$$\deg \left (\sum_{z<w_{rs}}p_{z,w_{rs}}\nt_{x_{2}\cdot w_{rs}r\cdot t}\nt_{z\cdot y_{2}}\right )\leq \max\{-L(rs),L(r)-L(w_{rs})+N\}.$$
Then we conclude that $\delta+\deg \nt_{x_1}\nt_{w}\nt_{y_1}<N$.

If we are in case \circled{9} and $L(r)>L(s)$,  then $x_{1}=x_{2}\cdot w_{rs}sr\cdot t$, $y_{1}=tsrst\cdot sw_{rs}\cdot y_{2}$ for some $x_{2},y_{2}\in W$
with $\mathcal{R}(x_{2}),\mathcal{L}(y_{2})\subseteq\{t\}$, and
$$
\begin{aligned}
&\ \ \ \ \nt_{x_{2}\cdot w_{rs}sr\cdot t}\nt_{rs}\nt_{tsrst\cdot sw_{rs}\cdot y_{2}}\\
&=\xi_{t}\nt_{x_{2}\cdot w_{rs}\cdot tsrst\cdot sw_{rs}\cdot y_{2}}+\xi_{r}^{2}\xi_{s}\nt_{x_{2}\cdot w_{rs}r\cdot t\cdot w_{rs}\cdot y_{2}}+\xi_{r}\xi_{s}\nt_{x_{2}\cdot w_{rs}r\cdot t\cdot rw_{rs}\cdot y_{2}}\\
&\ \ \ +\xi_{r}^{2}\nt_{x_{2}\cdot w_{rs}sr\cdot t\cdot w_{rs}\cdot y_{2}}+\xi_{r}\nt_{x_{2}\cdot w_{rs}sr\cdot t\cdot rw_{rs}\cdot y_{2}}+\xi_{s}\nt_{x_{2}\cdot w_{rs}r\cdot t\cdot w_{rs}\cdot y_{2}}\\
&\ \ \ +\xi_{r}\nt_{x_{2}\cdot w_{rs}rs\cdot t\cdot rw_{rs}\cdot y_{2}}+\nt_{x_{2}\cdot w_{rs}rsr\cdot t\cdot rw_{rs}\cdot y_{2}}\\
&=\xi_{t}\nt_{x_{2}\cdot w_{rs}\cdot tsrst\cdot sw_{rs}\cdot y_{2}}+\xi_{r}^{2}\xi_{s}(-\sum_{z<w_{rs},z\neq rw_{rs}}p_{z,w_{rs}}\nt_{x_{2}\cdot w_{rs}r\cdot t}\nt_{z\cdot y_{2}})\\
&\ \ \ +q^{-2L(r)}\xi_{r}\xi_{s}\nt_{x_{2}\cdot w_{rs}r\cdot t\cdot rw_{rs}\cdot y_{2}}+\xi_{r}^{2}(-\sum_{z<w_{rs},z\neq rw_{rs}}p_{z,w_{rs}}\nt_{x_{2}\cdot w_{rs}sr\cdot t}\nt_{z\cdot y_{2}})\\
&\ \ \ +q^{-2L(r)}\xi_{r}\nt_{x_{2}\cdot w_{rs}sr\cdot t\cdot rw_{rs}\cdot y_{2}}+\xi_{s}\nt_{x_{2}\cdot w_{rs}r\cdot t\cdot w_{rs}\cdot y_{2}}\\
&\ \ \ +\xi_{r}\nt_{x_{2}\cdot w_{rs}rs\cdot t\cdot rw_{rs}\cdot y_{2}}+\nt_{x_{2}\cdot w_{rs}rsr\cdot t\cdot rw_{rs}\cdot y_{2}}.
 \end{aligned}
$$
Then, as the case of \circled{4}, we conclude that $\delta+\deg \nt_{x_1}\nt_{w}\nt_{y_1}<N$.

If we are in cases \circled{2}--\circled{7}, \circled{9}, and $L(s)>L(r)$, then we only need to consider \circled{2}\circled{3}\circled{6}\circled{7}; other cases are excluded using $w_{rs}\in D_{>N}$. We have $\deg T_{x_{1}}T_{rs}T_{y_{1}}\leq L(rs)$,  $\delta=L(s)-L(r)$, and $ sts $ appears in $ vy_1 $. Thus
$$\delta +\deg  \nt_{x_1}\nt_{w}\nt_{y_1}\leq 2L(s)<L(sts)\leq N.$$

If we are in case \circled{1} or \circled{11}-\circled{14}, then we must have $L(s)>L(r)$; otherwise, $ w_{rs} \in D_{>N}$ appears in $ x_1v \in \Omega_{\leq N}$, a contradiction. We have $\deg T_{x_{1}}T_{rs}T_{y_{1}}\leq L(srs)$ and $\delta=L(s)-L(r)$. Then
$$\delta+\deg  \nt_{x_1}\nt_{w}\nt_{y_1}\leq 3L(s)\leq N.$$
The equality holds only if we are in case \circled{13} or \circled{14}, $L(sts)=N$ and $ sts $ appears in $x$ and $ y$, which implies $ x,y\in \Omega_{\geq N} $.

This completes the proof of Proposition \ref{prop:bound}.

\section{A  property of length adding}\label{sec:add}

\begin{lem}\label{lem:length}
For any $ d\in D_N $, $ x\in B_d $, $ y\in U_d $, we have\[
l(xdy)=l(x)+l(d)+l(y).
\]
\end{lem}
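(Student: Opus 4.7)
The plan is to argue by contradiction. Suppose $l(xdy) < l(x) + l(d) + l(y)$. Since $x \in B_d \subseteq U_d^{-1}$ and $y \in U_d$, both $xd$ and $dy$ are reduced products, so any deficit must come from cancellation ``through'' $d$. By the standard matching-letter consequence of the exchange condition, there exists $s \in S$ with $s \in \mathcal{R}(xd) \cap \mathcal{L}(y)$. Writing $y = s \cdot y'$, the reducedness of $dy = d \cdot s \cdot y'$ forces $s \notin \mathcal{R}(d)$: otherwise $d$ would end in $s$ and $dy$ would collapse to length at most $l(d) + l(y) - 2$. Hence $ds$ is reduced of length $l(d) + 1$, and applying exchange to the right multiplication of $xd$ by $s$ shows the deleted letter must lie in the $x$-part. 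This produces $x' \in W$ with $l(x') = l(x) - 1$ and $xds = x' \cdot d$, giving the reduced factorization
\[
xd = x' \cdot (ds), \qquad ds \neq e.
\]

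Applying the defining property of $B_d$ with $v = ds$ yields $x' \in \Omega_{<N}$; likewise, with $v = d$ (allowed since $d \neq e$ when $N > 0$), we obtain $x \in \Omega_{<N}$. Consequently $x \in \Omega_{<N} \subseteq \wn$, and $dy \in \Omega_N \subseteq \wn$, so Proposition \ref{prop:bound} forces
\[
\deg(\nt_x \nt_{dy}) \leq N,
\]
with strict inequality because equality requires $x \in \Omega_{\geq N}$, contradicting $x \in \Omega_{<N}$. Thus $\deg(\nt_x \nt_{dy}) < N$.

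The contradiction is then extracted from the explicit Hecke-algebra expansion
\[
T_x T_{dy} \;=\; T_{xd} T_s T_{y'} \;=\; T_{x'd} T_{y'} \;+\; \xi_s\, T_{xd} T_{y'},
\]
using the relation $T_{xd} T_s = T_{xds} + \xi_s T_{xd} = T_{x'd} + \xi_s T_{xd}$ (valid because $l(xds) = l(xd) - 1$). The summand $\xi_s T_{xd} T_{y'}$ carries an explicit factor of degree $L(s)$ multiplied against a product in which $d \in D_N$ appears as a reduced left factor of $xd$; tracking the leading behaviour with Proposition \ref{prop:bound} applied to $T_{xd} T_{y'}$ produces a surviving term of degree exactly $N$ in $\nt_x \nt_{dy}$, contradicting the strict bound above.

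The main obstacle is the last step: verifying that the degree-$N$ contribution from $\xi_s T_{xd} T_{y'}$ survives and is not killed by cancellation against the $T_{x'd} T_{y'}$ summand upon passage to $\hn$. This is where one must use the explicit product expansions of Section \ref{sec:exp} (Lemmas \ref{lem:reduced0}, \ref{lem:reduced}, \ref{lem:reduced2}) together with the case-by-case degree bookkeeping developed in Section \ref{sec:take}, matching the data $(x, d, y, s)$ against the enumerated reduced-extension patterns in the relevant class of Coxeter groups from Section \ref{sec:intr}.
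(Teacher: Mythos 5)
Your combinatorial setup is fine as far as it goes: the extraction of a common letter $s\in\mc{R}(xd)\cap\mc{L}(y)$, the observation that $s\notin\mc{R}(d)$, the exchange argument placing the deleted letter in the $x$-part so that $xd=x'\cdot(ds)$, and the use of the defining property of $B_d$ to get $x',x\in\Omega_{<N}$ are all correct. The problem is the final step, which is where the entire content of the lemma lives and which you have not actually proved. Proposition \ref{prop:bound} only gives \emph{upper} bounds: it yields $\deg(\nt_{xd}\nt_{y'})\leq N$, hence $\deg(\xi_s\nt_{xd}\nt_{y'})\leq N+L(s)$, and it says nothing about a term of degree exactly $N$ existing, let alone surviving the addition of $\nt_{x'd}\nt_{y'}$. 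Nothing in your hypotheses forces $\nt_{xd}\nt_{y'}$ to attain a large degree (membership of $xd$ in $\Omega_N$ only says some $d''\in D_N$ appears in $xd$ as a reduced factor; it does not produce a degree-$N$ structure constant for this particular product), so the asserted ``surviving term of degree exactly $N$'' is unsubstantiated and, as far as I can see, not true in general. You acknowledge this yourself by deferring to ``the explicit product expansions of Section \ref{sec:exp} together with the case-by-case degree bookkeeping,'' but that deferral is the whole proof, not an obstacle to be waved at.

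For comparison, the paper's argument does not touch the Hecke algebra at all. For $l(d)\leq 1$ it is elementary (using $x\in\Omega_{<N}$, which is the only place $B_d$ is genuinely needed). For $l(d)\geq 2$ it invokes the classification of non-length-additive triples $(x,d,y)$ from Lemmas \ref{lem:reduced0}, \ref{lem:reduced}, \ref{lem:reduced2}: each configuration in which $l(xdy)<l(x)+l(d)+l(y)$ forces an element such as $w_{rs}$ or $w_{st}$ (or $w_{rs}$ via $\mc{R}(xs)=\{r,s\}$, etc.) to appear in $xd$ or $dy\in\Omega_{\leq N}$, whence $L(w_{rs})\leq N$ or $L(w_{st})\leq N$, and a short weight computation then gives $\af'(d)<N$, contradicting $d\in D_N$. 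That length-and-weight arithmetic is the actual source of the contradiction; if you want to salvage your route you would still have to run exactly that case analysis to control the products $\nt_{xd}\nt_{y'}$ and $\nt_{x'd}\nt_{y'}$, so the Hecke-algebra detour buys nothing.
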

\begin{proof}
If $d=e $, then $N=0$ and $x=y=e$, the result is obvious. If $d\in S$, we have $d,xd,dy\in \Omega_{N}$ and $x\in \Omega_{<N}$. Thus, $x\in W_{S\setminus\{d\}}$ and $\mathcal{L}(dy)=\{d\}$. Then we get $l(xdy)=l(x)+l(d)+l(y)$.

Now suppose $ l(d)\geq 2 $. Then $d=w_{J}$ for some $J\subseteq S$ with $|J|=2$, or $d=r_{2}w_{r_{1}r_{2}}$ for some $r_{1},r_{2}\in S$ with $m_{r_{1}r_{2}}\geq 4$ and $L(r_{1})>L(r_{2})$.
We have  following 3 cases.

\cs{Case (1)} $\infty=m_{rs}>m_{st}\geq3$. We have $d=rt$ or $w_{st}$ or $sw_{st}$ or $tw_{st}$. Then by Lemma \ref{lem:a0}(5)(6),
we have $l(xdy)=l(x)+l(d)+l(y)$.

\cs{Case (2)} $4\leq m_{rs}, m_{st}< \infty$ with $(m_{rs},m_{st})\neq (4,4)$. Without loss of generality, we assume $m_{rs}\geq m_{st}$. Thus $m_{rs}\geq 5$. By  Lemma \ref{lem:a1}(10)(11)(15), we only need to verify the case of $d=rt$ and the case of $d=sts$ with $m_{st}=4$ and $L(s)>L(t)$. If $d=rt$ and $l(xdy)<l(x)+l(d)+l(y)$, then by Lemma \ref{lem:reduced}(3), one of
$ \{\mathcal{R}(xr),\mathcal{L}(ty)\}$ and $ \{\mathcal{R}(xt),\mathcal{L}(ry)\}$ is $\{ \{r,s\}, \{s,t\}\}$. Thus $L(w_{rs})\leq N$ and $L(w_{st})\leq N$, so $L(rt)<N$, which contradicts with $rt\in D_{N}$. If $d=sts$, $m_{st}=4$, $L(s)>L(t)$ and $l(xdy)<l(x)+l(d)+l(y)$, by Lemma \ref{lem:reduced}(1),  we have $ \mc{R}(xs)=\mc{L}(sy)=\{r,s\} $ and hence $ L(w_{rs})\leq N$, which contradicts with $sts\in D_{N}$.

\cs{Case (3)} $\infty>m_{rs}\geq 7, m_{st}=3$. Note that $L(s)=L(t)  $. By Lemma \ref{lem:a2}(6), we only need to verify the case of $d=rt$ and the case of $d=sts$. If $d=rt$ and $l(xdy)<l(x)+l(d)+l(y)$, then by Lemma  \ref{lem:reduced2}(6), $w_{rs}$ appears in $ xd $ or $ dy $, so $ L(w_{rs})\leq N $, which contradicts with $rt\in D_{N}$. If $d=sts$ and $l(xdy)<l(x)+l(d)+l(y)$, by Lemma  \ref{lem:reduced2}(5), $ w_{rs} $ appears in $ xd$ and $dy $, and hence $ L(w_{rs})\leq N $, which contradicts with $sts\in D_{N}$.
\end{proof}

\begin{rem}
From the above proof, one can see that  if $ d\in D_N $ with $ l(d)\geq 2$, and $ x\in U_d^{-1} $, $ y\in U_d $, then we have \[
l(xdy)=l(x)+l(d)+l(y).
\]
For $ l(d) =1$, it is easy to find a counterexample.
\end{rem}

\section{Estimation of degrees}\label{sec:est}

The aim of this section is to prove a stronger version of Conjecture \ref{conj:dim2}(3) for the Coxeter groups that are listed in section \ref{sec:intr}.
\begin{prop}\label{prop:strictineq}
For $ d\in D_N $, $ x\in U_d^{-1} $, $ y\in U_d $, $ w\leq d $, we have\[
\deg (\nt_{x}\nt_w\nt_y)\leq -\deg p_{w,d}.
\]
If furthermore $ w<d $ and one of $ x\in B_d , y\in B_d^{-1} , l(w)\geq 2$ holds, then
$$  \deg (\nt_{x}\nt_w\nt_y)< -\deg p_{w,d}.  $$

\end{prop}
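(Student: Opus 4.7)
My plan is to prove Proposition \ref{prop:strictineq} by induction on $l(y)$, with the symmetric clause about $y \in B_d^{-1}$ handled by induction on $l(x)$, mirroring closely the three-step general procedure of section \ref{sec:take}. The base case $y = e$ reduces to bounding $\deg(\nt_x \nt_w)$: when $w = d$, the condition $x \in U_d^{-1}$ forces $xd$ to be a reduced product, so $\nt_x \nt_d = \nt_{xd}$ has degree at most $0 = -\deg p_{d,d}$; for $w < d$ one uses the estimate $\deg p_{w,d} = L'(w) - L'(d)$ from Lemma \ref{lem:aaa} (when $d$ is of type $d_I$) or the analogous estimate when $d = w_I$, combined with a direct computation in the relevant rank-$2$ parabolic $W_I$.

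For the inductive step, I would factor $y = v\cdot y_1$ with $v \in W_I\setminus\{e\}$ for some $2$-element subset $I \subset S$ containing an element of $\mc{L}(y)$, and symmetrically $x = x_1\cdot u$; then rewrite
\[
\nt_x \nt_w \nt_y = \nt_{x_1}\bigl(\nt_u \nt_w \nt_v\bigr)\nt_{y_1}
\]
and expand the middle product. When the resulting configurations are reduced, Proposition \ref{prop:bound} combined with the estimates on $\deg p_{w,d}$ yields the desired bound; when they are not, one invokes the explicit expansions of Lemma \ref{lem:reduced0}, \ref{lem:reduced}, or \ref{lem:reduced2} (according to the family under consideration) and estimates the resulting degrees case by case, using Corollary \ref{cor:est} and Corollary \ref{cor:bbb} to bound the contributions involving higher-degree $\xi$-factors.

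The strict inequality hinges on excluding extremal configurations. When equality would occur in $\deg(\nt_x \nt_w \nt_y) = -\deg p_{w,d}$, the case analysis localizes it to configurations in which $xd$ (respectively $dy$) admits a factorization $w' \cdot v$ with $v \neq e$ and $w' \in \Omega_N$ — precisely the factorizations ruled out by the defining property of $B_d$ (respectively $B_d^{-1}$). When the strictness hypothesis is instead $l(w) \geq 2$, the sharper bound $\deg p_{w,d} < \deg p_{w',d}$ for any $w' \leq w$ with $l(w') \leq 1$ provides the extra slack needed, so that the previously extremal terms no longer saturate the inequality. The main obstacle, as in section \ref{sec:take}, is the long case analysis in Lemma \ref{lem:reduced2} for the $m_{rs} \geq 7$, $m_{st} = 3$ family: each of its many sub-cases must be individually inspected, and in each one must check that every extremal contribution is excluded by at least one of the three strictness hypotheses.
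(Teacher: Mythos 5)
Your overall instinct --- that the non-reduced configurations are handled by the explicit expansions of Lemmas \ref{lem:reduced0}, \ref{lem:reduced}, \ref{lem:reduced2} together with Corollary \ref{cor:est}, and that the reduced configurations are trivial because $\deg p_{w,d}<0$ for $w<d$ --- matches the paper. But two of your structural choices would fail. First, the induction on $l(y)$ with the factorization $y=v\cdot y_1$ and the ``middle product'' $\nt_u\nt_w\nt_v$ does not fit this proposition: here $w\leq d$ forces $w\in W_{I'}$ for the support $I'$ of $d$, while $x\in U_d^{-1}$ and $y\in U_d$ force $\mc{R}(x)\cup\mc{L}(y)\subset S\setminus I'$ (a letter of $I'$ at the relevant end would either violate reducedness of $x\cdot d$ or $d\cdot y$, or make $w_{I'}\in D_{>N}$ appear in $xd$ or $dy$). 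So the leading dihedral factor $v$ of $y$ lives in a parabolic different from the one containing $w$, the product $\nt_u\nt_w\nt_v$ is not a dihedral computation, and $y_1$ need not lie in $U_d$, so the induction hypothesis would not apply to $(x,w,y_1)$. No induction is needed: since $\mc{R}(x)\cup\mc{L}(y)\subset S\setminus I'$ already holds, the paper applies Lemma \ref{lem:reduced} (resp. \ref{lem:reduced2}) directly to the triple $(x,w,y)$ and, in each listed configuration, extracts weight inequalities from $xd,dy\in\Omega_N$ (e.g. $L(w_{st})\leq N$ when $w_{st}$ appears in $dy$) which, combined with Corollary \ref{cor:est}, beat $-\deg p_{w,d}$ strictly.

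Second, and more seriously, you have no mechanism for $l(w)=1$, which is exactly where the bound is tight and where $B_d$ actually enters. For $w=r'$ with $r'd<d$ the target is $-\deg p_{r',d}=N-L(r')$, and Proposition \ref{prop:bound} alone gives only $\deg(\nt_{xr'}\nt_y)\leq N$. The missing $L(r')$ comes from the identity $T_{xr'}T_{r'y}=\xi_{r'}T_{xr'}T_y+T_xT_y$: both $\deg(\nt_{xr'}\nt_{r'y})$ and $\deg(\nt_{x}\nt_{r'y})$ are $\leq N$, hence $\deg(\nt_{xr'}\nt_y)\leq N-L(r')$, and if $x\in B_d$ then taking $v=d$ and $v=r'd$ in the definition of $B_d$ puts $x$ and $xr'$ in $\Omega_{<N}$, so the equality clause of Proposition \ref{prop:bound} makes both degrees $<N$ and yields strictness. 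Your stated source of slack for $l(w)\geq 2$ is also backwards: for positive weights $-\deg p_{w,d}$ is \emph{smaller} when $l(w)\geq 2$ than when $l(w')\leq 1$ (e.g. $-\deg p_{w,w_I}=L(w_I)-L(w)$), so the target gets harder, not easier, as $w$ grows; strictness for $l(w)\geq 2$ instead comes for free from the dichotomy above (reduced case gives $\deg\leq 0$, non-reduced cases are settled by the case analysis).
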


The proof will occupy the rest of this section.

If $ w=e $, then $-\deg p_{w,d}=N$ and it follows from Proposition \ref{prop:bound}.

Assume that $ w=r'$ for some $ r'\in S $. By \eqref{eq:bound}, $\deg (\nt_{xr'}\nt_{r'y})\leq N $ and $\deg (\nt_{x}\nt_{r'y})\leq N $. Since $T_{xr'}T_{r'y}=\xi_{r'}T_{xr'}T_{y}+T_{x}T_{y}$,
 we have $\deg (\xi_{r'}\nt_{xr'}\nt_y) \leq N $. This implies $$ \deg (\nt_{xr'}\nt_y) \leq N -L(r')\leq -\deg p_{r',d}. $$
 Assume furthermore $ w< d $ and $ x\in B_d $  (the proof is similar for $y\in B_d^{-1}$). If $ r'd<d $, then we have $ x,xr'\in\Omega_{<N} $ since $r'<d$, and hence by Proposition \ref{prop:bound}, $\deg (\nt_{xr'}\nt_{r'y})< N $ and $\deg (\nt_{x}\nt_{r'y})< N $. Then we have $$\deg (\nt_{xr'}\nt_y) < N -L(r')\leq-\deg p_{r',d}.$$
 If $ r'd>d $, then $ d=d_I$ with $ I=\{r',s'\}$ and $ L(s')>L(r') $ for some $s'\in S$. Then
 $\deg (\nt_{xr'}\nt_y) \leq N -L(r')<N+L(r')= -\deg p_{r',d}$ by Lemma \ref{lem:aaa}. This proves the proposition for $ l(w)= 1 $.

If $ l(xwy)=l(x)+l(w)+l(y) $, then  $ \deg(\nt_{x}\nt_w\nt_y)\leq 0$, and the proposition follows since $ \deg p_{w,d}<0 $ when $ w<d $.

In the remainder we assume  $l(d)\geq l(w)\geq 2 $ and $ l(xwy)<l(x)+l(w)+l(y) $, and prove the strict inequality in the proposition.

 If $ m_{rs}=\infty $ and $ 3\leq m_{st} <\infty$, then by Lemma \ref{lem:reduced0} nothing needs to prove.

\subsection{The case of $4\leq m_{rs}, m_{st}< \infty$ with $(m_{rs},m_{st})\neq (4,4)$} According to Lemma \ref{lem:reduced}, the proof is divided into the following cases.

We omit the transpose cases and the ones with  $r$, $t$ exchanged.

\cs{Case (1)} $ w=srs $,  $ x=x_2\cdot w_{st} s$, $ y=sw_{st}\cdot y_2 $ for some $x_2,y_2\in W$, and
		$$
		T_{x}T_{srs}T_{y}=\xi_{t}T_{x_2\cdot w_{st}\cdot r\cdot tw_{st}\cdot y_2}+T_{x_2\cdot w_{st}t\cdot r\cdot tw_{st}\cdot y_2}.
		$$

Assume $ d=w_{rs} $. Since $ xd\in \Omega_{N} $, we have $ L(w_{st})\leq N=L(w_{rs})$. We obtain
$$\deg (\nt_{x}\nt_w\nt_y)\leq L(t)< L(w_{rs})-L(srs)=-\deg p_{w,d}.$$
The second inequality follows from \eqref{eq:frequent44}.

Assume $ d=d_{I} $ with $ I=\{r,s\} $. If $ L(r)>L(s) $,  then $ xd\in \Omega_{N} $ implies  that $ L(rt)\leq  N=L'(d_{I})$. Thus
$$\deg (\nt_{x}\nt_w\nt_y)\leq L(t)< L(rt)-L'(srs)\leq L'(d_{I})-L'(srs)=-\deg p_{w,d}.$$
If $ L(s)>L(r) $, then $ xd\in \Omega_{N} $ implies that $ L(w_{st})\leq  N=L'(d_{I})$. Thus
$$\deg (\nt_{x}\nt_w\nt_y)\leq L(t)< L(w_{st})-L'(srs)\leq L'(d_{I})-L'(srs)=-\deg p_{w,d}.$$

\cs{Case (2)} $ w=rsr $, $ m_{st}=4 $, $x=x_2\cdot w_{rs}r\cdot t$ and $y=t\cdot rw_{rs}\cdot y_2$ for some $x_2,y_2\in W$, and
		$$
		T_{x}T_{rsr}T_{y}=\xi_{s}T_{x_2\cdot w_{rs}\cdot tst\cdot sw_{rs}y_2}+T_{x_2\cdot w_{rs}s\cdot tst\cdot sw_{rs}\cdot y_2}.
		$$

Assume $ d=w_{rs} $. Then we have $L(w_{rs})=N$ with $m_{rs}\geq 5$. We obtain
$$\deg (\nt_{x}\nt_w\nt_y)\leq L(s)< L(w_{rs})-L(rsr)=-\deg p_{w,d}.$$

Assume $ d=d_{I} $ with $ I=\{r,s\} $ and $m_{rs}\geq 6$. Since $L(w_{rs})>L'(d_{I})=N$, we must have $ L(s)>L(r) $. Then
$$\deg (\nt_{x}\nt_w\nt_y)\leq L(s)< L'(d_{I})-L'(rsr)=-\deg p_{w,d}.$$

\cs{Case (3)} $ w=rt $, $x=x_2\cdot w_{rs}r$, $y=tw_{st}\cdot y_2$ for some $x_2,y_2\in W$, and
		$$
		T_{x}T_{rt}T_{y}=\xi_{s}T_{x_2\cdot w_{rs}\cdot sw_{st}\cdot y_2}+T_{x_2\cdot w_{rs}s\cdot sw_{st}\cdot y_2}.
		$$
We  must have $ d=w=rt $. Then $ xd,dy\in\Omega_{N} $ imply that $L(w_{rs}),L(w_{st})\leq N $. Thus $ L(rt)<N$, a contradiction with $ d\in D_{N} $. This case cannot happen.

\cs{Case (4)} $ w=rs $. There are  5 sub-cases as follows.

\noindent\circled{1} $x=x_2\cdot w_{rs}r\cdot t$, $y=sw_{st}\cdot y_2$ for some $x_2,y_2\in W$, and
			$$
            \begin{aligned}
            T_{x}T_{rs}T_{y}&=\xi_{t}\xi_{s}T_{x_2\cdot w_{rs}s\cdot w_{st}\cdot y_2}+\xi_{t}T_{x_2\cdot w_{rs}s\cdot sw_{st}\cdot y_2}\\
            &\ \ \ +\xi_{s}T_{x_2\cdot w_{rs}\cdot stw_{st}\cdot y_2}+T_{x_2\cdot w_{rs}s\cdot stw_{st}\cdot y_2}.
            \end{aligned}
            $$

Assume $ d=w_{rs} $. Then $dy\in\Omega_{N} $ implies that $L(w_{st})\leq N=L(w_{rs}) $. Thus
$$\deg (\nt_{x}\nt_w\nt_y)\leq L(st)< L(w_{rs})-L(rs)=-\deg p_{w,d}.$$

Assume $ d=d_{I} $ with $ I=\{r,s\} $. Since $L(w_{rs})>L'(d_{I})=N$, we must have $ L(s)>L(r) $. Then $dy\in\Omega_{N} $ implies that  $ L(w_{st})\leq N=L'(d_I)$. Hence
$$\deg (\nt_{x}\nt_w\nt_y)\leq L(st)<L(w_{st})-L'(rs)\leq L'(d_{I})-L'(rs)=-\deg p_{w,d}.$$
\noindent\circled{2} $m_{st}=4$, $x=x_2\cdot w_{rs}r\cdot t$, $y=sw_{st}\cdot sw_{rs}\cdot y_2$ for some $x_2,y_2\in W$, and
			$$
            \begin{aligned}
            T_{x}T_{rs}T_{y}&=\xi_{t}\xi_{s}T_{x_2\cdot w_{rs}s\cdot w_{st}\cdot sw_{rs}\cdot y_2}+\xi_{t}T_{x_2\cdot w_{rs}s\cdot sw_{st}\cdot sw_{rs}\cdot y_2}\\
            &\ \ \ +\xi_{s}\xi_{r}T_{x_2\cdot w_{rs}r\cdot t\cdot w_{rs}\cdot y_2}+\xi_{s}T_{x_2\cdot w_{rs}r\cdot t\cdot rw_{rs}\cdot y_2}\\
            &\ \ \ +\xi_{r}T_{x_2\cdot w_{rs}sr\cdot t\cdot w_{rs}\cdot y_2}+T_{x_2\cdot w_{rs}sr\cdot t\cdot rw_{rs}\cdot y_2}.
            \end{aligned}
            $$
We have  $m_{rs}\geq 5$.
Assume $ d=w_{rs} $. Since $ dy\in \Omega_{N} $, we have $ L(w_{st})\leq N=L(w_{rs})$. Then we obtain
$$\deg (\nt_{x}\nt_w\nt_y)\leq \max \{L(st),L(sr)\}< L(w_{rs})-L(rs)=-\deg p_{w,d}.$$

Assume $ d=d_{I} $ with $ I=\{r,s\} $. Then $ xd,dy\in \Omega_{N} $ imply that $ L(w_{rs})\leq N $, thus $L'(d_{I})<N$, contradict with $ d\in D_{N} $. So this case cannot happen.

\noindent\circled{3} $x=x_2\cdot t$, $y=sw_{st}\cdot y_2$ for some $x_2,y_2\in W$, and
			$$
            T_{x}T_{rs}T_{y}=\xi_{t}T_{x_2\cdot r\cdot w_{st}\cdot y_2}+T_{x_2\cdot r\cdot tw_{st}\cdot y_2}.
            $$

Assume $ d=w_{rs} $. Since $ dy\in \Omega_{N} $, we have $ L(w_{st})\leq N=L(w_{rs})$. Then we obtain
$$\deg (\nt_{x}\nt_w\nt_y)\leq L(t)< L(w_{rs})-L(rs)=-\deg p_{w,d}.$$
The second inequality follows from \eqref{eq:frequent44}.

Assume $ d=d_{I} $ with $ I=\{r,s\} $. If $ L(r)>L(s) $, then $ xd\in \Omega_{N} $ implies that $ L(rt)\leq N=L'(d_{I}) $, so we obtain
$$\deg (\nt_{x}\nt_w\nt_y)\leq L(t)<L(rt)-L'(rs)\leq L'(d_{I})-L'(rs)=-\deg p_{w,d}.$$
If $ L(s)>L(r) $, then $ dy\in \Omega_{N} $ implies that $ L(w_{st})\leq N=L'(d_{I}) $. Thus
$$\deg (\nt_{x}\nt_w\nt_y)\leq L(t)<L(w_{st})-L'(rs)\leq L'(d_{I})-L'(rs)=-\deg p_{w,d}.$$
\noindent\circled{4} $m_{st}=4$, $x=x_2\cdot w_{rs}sr\cdot t$, $y=sw_{st}\cdot sw_{rs}\cdot y_2$ for some $x_2,y_2\in W$, and
			$$
T_{x}T_{rs}T_{y}=\xi_{t}T_{x_2\cdot w_{rs}s\cdot w_{st}\cdot sw_{rs}\cdot y_2}+\xi_{r}T_{x_2\cdot w_{rs}r\cdot t\cdot w_{rs}\cdot y_2}+T_{x_2\cdot w_{rs}r\cdot t\cdot rw_{rs}\cdot y_2}.
		    $$

We have $m_{rs}\geq 5$.
Assume $ d=w_{rs} $.  Then $ dy\in \Omega_{N} $ implies that $ L(w_{st})\leq N=L(w_{rs}) $, so we obtain
$$\deg (\nt_{x}\nt_w\nt_y)\leq \max \{L(r),L(t)\}< L(w_{rs})-L(rs)=-\deg p_{w,d}.$$

Assume $ d=d_{I} $ with $ I=\{r,s\} $. Then $ m_{rs}\geq 6 $. Since $L(w_{rs})>L'(d_{I})=N$, we must have $ L(r)>L(s) $. Then $ xd\in \Omega_{N} $ implies that $ L(rt)\leq N$, so we obtain
$$\deg (\nt_{x}\nt_w\nt_y)\leq \max \{L(r),L(t)\}< L'(d_{I})-L'(rs)=-\deg p_{w,d}.$$
\noindent\circled{5} $ x=x_2\cdot (w_{rs} r)\cdot t$, $ y=stw_{st}\cdot y_2 $ for some $x_2,y_2\in W$, and
			$$
		    T_{x}T_{rs}T_{y}=\xi_{s}T_{x_2\cdot w_{rs}\cdot sw_{st}\cdot y_2}+T_{x_2\cdot w_{rs}s\cdot sw_{st}\cdot y_2}.
		    $$

Assume $ d=w_{rs} $.  Then we have
$$\deg (\nt_{x}\nt_w\nt_y)\leq L(s)< L(w_{rs})-L(rs)=-\deg p_{w,d}.$$

Assume $ d=d_{I} $ with $ I=\{r,s\} $. Since $L(w_{rs})>L'(d_{I})=N$, we must have $ L(s)>L(r) $, so we obtain
$$\deg (\nt_{x}\nt_w\nt_y)<L(s)\leq L'(d_{I})-L'(rs)=-\deg p_{w,d}.$$

\subsection{The case of $ m_{rs}\geq 7, m_{st}=3$} According to Lemma \ref{lem:reduced2}, the proof  is  divided into the following cases. The transpose cases are omitted.

If $ d=w_{rs}\in D_N $, then $ \deg(\nt_x\nt_w\nt_y)<-\deg p_{w,d} $ follows directly from Corollary \ref{cor:est}(1). Thus in the following we assume $ d\neq w_{rs} $.

\cs{Case (1)} $ w=rsrsr $.

%
We have $ d=d_{I} $ with $ I=\{r,s\} $.
Since $ L(w_{rs})> L'(d_I)=N $ and $ w_{rs} $ appears in $ xr $, we must have $ L(s)>L(r) $. Then $xd,dy\in \Omega_{N}$ imply  $L(sts)\leq N=L'(d_{I})$. Thus
$$\deg (\nt_{x}\nt_w\nt_y)\leq L(s)< L'(d_{I})-L'(rsrsr)=-\deg p_{w,d}.$$

\cs{Case (2)} $ w=rsrs $.

We have $ d=d_{I} $ with $ I=\{r,s\} $. As the last case we must have $L(s)>L(r)$. Then $xd,dy\in \Omega_{N}$ imply $L(sts)\leq N=L'(d_{I})$. Thus
$$\deg (\nt_{x}\nt_w\nt_y)\leq L(s)< L'(d_{I})-L'(rsrs)=-\deg p_{w,d}.$$

\cs{Case (3)} $ w=srs $.


We have $ d=d_{I} $ with $ I=\{r,s\} $. Then we are in case \circled{1} or \circled{2} of Lemma \ref{lem:reduced2}(3). If $L(r)>L(s)$, then
$$\deg (\nt_{x}\nt_w\nt_y)\leq \max \{L(r),L(s)\}< L'(d_{I})-L'(srs)=-\deg p_{w,d}.$$
If $L(s)>L(r)$, then we must be in case \circled{1} of Lemma \ref{lem:reduced2}(3) since $L(w_{rs})>L'(d_{I})=N$. Then $xd,dy\in \Omega_{N}$ imply $L(sts)\leq N=L'(d_{I})$. Thus
$$\deg (\nt_{x}\nt_w\nt_y)\leq L(s)< L'(d_{I})-L'(srs)=-\deg p_{w,d}.$$

\cs{Case (4)} $ w=rsr $.


We have $ d=d_{I} $ with $ I=\{r,s\} $. If $L(r)>L(s)$, then we must be in case \circled{1} of Lemma \ref{lem:reduced2}(4) since $L(w_{rs})>L'(d_{I})=N$.
We have $x=x'\cdot w_{rs}sr\cdot t$, $y=t\cdot rsw_{rs}\cdot y'$ for some $x',y'\in W$ with $\mathcal{R}(x'),\mathcal{L}(y')\subseteq\{t\}$, and
$$
            \begin{aligned}
            \nt_{x}\nt_w\nt_y&=\xi_{r}\nt_{x'\cdot w_{rs}\cdot t\cdot rw_{rs}\cdot y'}+\nt_{x'\cdot w_{rs}r\cdot t\cdot rw_{rs}\cdot y'}\\
            &=\xi_{r}(-\sum_{z<w_{rs}}p_{z,w_{rs}}\nt_{x'\cdot z}\nt_{t\cdot rw_{rs}\cdot y'})+\nt_{x'\cdot w_{rs}r\cdot t\cdot rw_{rs}\cdot y'}.
            \end{aligned}
            $$
Let $m_{rs}=2m$.
By Corollary \ref{cor:est} and the assumption $ L(r)>L(s) $, we have
$$
\begin{aligned}
\deg (\nt_{x}\nt_w\nt_y)&\leq \max \{2L(r)-(2m-1)L(s),0\}\\
&<(m-2)L(r)-(m-2)L(s)\\
&=L'(d_{I})-L'(rsr)\\
&=-\deg p_{w,d}.
 \end{aligned}
$$
If $L(s)>L(r)$, then $xd,dy\in \Omega_{N}$ imply $L(sts)\leq N=L'(d_{I})$, and hence
$$\deg (\nt_{x}\nt_w\nt_y)\leq L(srs)< L'(d_{I})-L'(rsr)=-\deg p_{w,d}.$$

\cs{Case (5)} $ w=sts $.

We must have $ d=w=sts $. Then $ xd,dy\in\Omega_{N} $ imply that $L(w_{rs})\leq N $. Thus $ L(sts)<N$,  a contradiction with $ d\in D_{N} $. This case cannot happen.

\cs{Case (6)} $ w=rt $.

We must have $ d=w=rt $. Then $ xd,dy\in\Omega_{N} $ imply that $L(w_{rs})\leq N $. Thus $ L(rt)<N$, a contradiction with $ d\in D_{N} $. This case cannot happen.

\cs{Case (7)} $ w=st $.

We must have $ d=sts $. Since $L(w_{rs})>L(sts)=N$, we must be in case \circled{1} of Lemma \ref{lem:reduced2}(7). Thus
$$\deg (\nt_{x}\nt_w\nt_y)<L(s)=L(sts)-L(st)=-\deg p_{w,d}.$$
The first inequality is due to $ \deg( \nt_{x'\cdot w_{rs}\cdot ts\cdot y'})<0 $.

\cs{Case (8)} $ w=rs $.


We have  $ d=d_{I} $ with $ I=\{r,s\} $. If $L(r)>L(s)$, then we must be in cases \circled{2}\circled{3}\circled{4}\circled{5}\circled{6}\circled{7}\circled{9} of Lemma \ref{lem:reduced2}(8) since $L(w_{rs})>L'(d_{I})=N$ and $ m_{rs}\neq 7 $.
Here we only give details for case \circled{9} and the other cases are similar. In this case,  $x=x'\cdot w_{rs}sr\cdot t$, $y=tsrst\cdot sw_{rs}\cdot y'$ for some $x',y'\in W$ with $\mathcal{R}(x'),\mathcal{L}(y')\subseteq\{t\}$, and
$$
\begin{aligned}
			\nt_{x}\nt_{w}\nt_{y}&=\xi_{t}\nt_{x'\cdot w_{rs}\cdot tsrst\cdot sw_{rs}\cdot y'}+\xi_{r}^{2}\xi_{s}\nt_{x'\cdot w_{rs}r\cdot t\cdot w_{rs}\cdot y'}+\xi_{r}\xi_{s}\nt_{x'\cdot w_{rs}r\cdot t\cdot rw_{rs}\cdot y'}\\
&\ \ \ +\xi_{r}^{2}\nt_{x'\cdot w_{rs}sr\cdot t\cdot w_{rs}\cdot y'}+\xi_{r}\nt_{x'\cdot w_{rs}sr\cdot t\cdot rw_{rs}\cdot y'}+\xi_{s}\nt_{x'\cdot w_{rs}r\cdot t\cdot w_{rs}\cdot y'}\\
&\ \ \ +\xi_{r}\nt_{x'\cdot w_{rs}rs\cdot t\cdot rw_{rs}\cdot y'}+\nt_{x'\cdot w_{rs}rsr\cdot t\cdot rw_{rs}\cdot y'}\\
&=\xi_{t}\nt_{x'\cdot w_{rs}\cdot tsrst\cdot sw_{rs}\cdot y'}+\xi_{r}^{2}\xi_{s}(-\sum_{z<w_{rs},z\neq rw_{rs}}p_{z,w_{rs}}\nt_{x'\cdot w_{rs}r\cdot t}\nt_{z\cdot y'})\\
&\ \ \ +q^{-2L(r)}\xi_{r}\xi_{s}\nt_{x'\cdot w_{rs}r\cdot t\cdot rw_{rs}\cdot y'}+\xi_{r}^{2}(-\sum_{z<w_{rs}}p_{z,w_{rs}}\nt_{x'\cdot w_{rs}sr\cdot t}\nt_{z\cdot y'})\\
&\ \ \ +\xi_{r}\nt_{x'\cdot w_{rs}sr\cdot t\cdot rw_{rs}\cdot y'}+\xi_{s}\nt_{x'\cdot w_{rs}r\cdot t\cdot w_{rs}\cdot y'}\\
&\ \ \ +\xi_{r}\nt_{x'\cdot w_{rs}rs\cdot t\cdot rw_{rs}\cdot y'}+\nt_{x'\cdot w_{rs}rsr\cdot t\cdot rw_{rs}\cdot y'}.
 \end{aligned}
$$
Let $m_{rs}=2m$ for some $m\geq 4$. Using Corollary \ref{cor:est} and the assumption $ L(r)>L(s)=L(t) $, we have
$$
\begin{aligned}
\deg (\nt_{x}\nt_w\nt_y)&\leq \max \{3L(r)-(2m-2)L(s),L(r)\}\\
&<(m-1)L(r)-(m-2)L(s)\\
&=L'(d_{I})-L'(rs)\\
&=-\deg p_{w,d}.
 \end{aligned}
$$
If $L(s)>L(r)$, then we must be in cases \circled{1}\circled{2}\circled{3}\circled{6}\circled{7}\circled{11}\circled{12}\circled{13}\circled{14} of Lemma \ref{lem:reduced2}(8) since $L(w_{rs})>L'(d_{I})=N$ and $ m_{rs} \neq 7$. Here we only give details for case \circled{14} and the other cases are similar. In this case, $m_{rs}=8$. We assume $x=x'\cdot w_{rs}r\cdot t\cdot w_{rs}r\cdot t$, $y=tsrst\cdot srw_{rs}\cdot y'$ for some $x',y'\in W$ with $\mathcal{R}(x'),\mathcal{L}(y')\subseteq\{t\}$, then
\begin{align*}
&\ \ \ \ \nt_{x}\nt_{w}\nt_{y}\\
&=\xi_{t}\xi_{s}\nt_{x'\cdot w_{rs}r\cdot t\cdot w_{rs}\cdot tsrst\cdot srw_{rs}\cdot y'}+\xi_{t}\nt_{x'\cdot w_{rs}r\cdot t\cdot w_{rs}s\cdot tsrst\cdot srw_{rs}\cdot y'}\\
&\ \ \ +\xi_{s}^{2}\xi_{r}\nt_{x'\cdot w_{rs}r\cdot t\cdot w_{rs}\cdot t\cdot rw_{rs}\cdot y'}+\xi_{s}\xi_{r}\nt_{x'\cdot w_{rs}r\cdot t\cdot w_{rs}s\cdot t\cdot rw_{rs}\cdot y'}\\
&\ \ \ +\xi_{s}^{2}\nt_{x'\cdot w_{rs}r\cdot t\cdot w_{rs}r\cdot t\cdot rw_{rs}\cdot y'}+\xi_{s}\nt_{x'\cdot w_{rs}r\cdot t\cdot w_{rs}rs\cdot t\cdot rw_{rs}\cdot y'}\\
&\ \ \ +\xi_{r}\nt_{x'\cdot w_{rs}r\cdot t\cdot w_{rs}\cdot t\cdot rw_{rs}\cdot y'}+\xi_{s}\nt_{x'\cdot w_{rs}r\cdot t\cdot w_{rs}sr\cdot t\cdot rw_{rs}\cdot y'}\\
&\ \ \ +\xi_{t}\nt_{x'\cdot w_{rs}\cdot tsrst\cdot sw_{rs}\cdot y'}+\nt_{x'\cdot w_{rs}s\cdot tsrst\cdot sw_{rs}\cdot y'}\\
&=\xi_{t}\xi_{s}(-\sum_{z<w_{rs}}p_{z,w_{rs}}\nt_{x'\cdot w_{rs}r\cdot t\cdot z}\nt_{tsrst\cdot srw_{rs}\cdot y'})+\xi_{t}\nt_{x'\cdot w_{rs}r\cdot t\cdot w_{rs}s\cdot tsrst\cdot srw_{rs}\cdot y'}\\
&\ \ \ +\xi_{s}^{2}\xi_{r}(-\sum_{z<w_{rs}\atop z\neq w_{rs}s,w_{rs}r}p_{z,w_{rs}}\nt_{x'\cdot w_{rs}r\cdot t\cdot z}\nt_{t\cdot rw_{rs}\cdot y'})+q^{-2L(s)}\xi_{s}\xi_{r}\nt_{x'\cdot w_{rs}r\cdot t\cdot w_{rs}s\cdot t\cdot rw_{rs}\cdot y'}\\
&\ \ \ +q^{-2L(r)}\xi_{s}^{2}\nt_{x'\cdot w_{rs}r\cdot t\cdot w_{rs}r\cdot t\cdot rw_{rs}\cdot y'}+\xi_{s}\nt_{x'\cdot w_{rs}r\cdot t\cdot w_{rs}rs\cdot t\cdot rw_{rs}\cdot y'}\\
&\ \ \ +\xi_{r}\nt_{x'\cdot w_{rs}r\cdot t\cdot w_{rs}\cdot t\cdot rw_{rs}\cdot y'}+\xi_{s}\nt_{x'\cdot w_{rs}r\cdot t\cdot w_{rs}sr\cdot t\cdot rw_{rs}\cdot y'}\\
&\ \ \ +\xi_{t}\nt_{x'\cdot w_{rs}\cdot tsrst\cdot sw_{rs}\cdot y'}+\nt_{x'\cdot w_{rs}s\cdot tsrst\cdot sw_{rs}\cdot y'}.
 \end{align*}
Using Corollary \ref{cor:est} and the assumption $ L(s)>L(r) $, $ m_{rs}=8 $, we have
$$
\begin{aligned}
\deg (\nt_{x}\nt_w\nt_y)&\leq \max \{3L(s)-6L(r),L(s), 2L(s)-2L(r)\}\\
&<3L(s)-2L(r)\\
&=L'(d_{I})-L'(rs)\\
&=-\deg p_{w,d}.
 \end{aligned}
$$

This completes the proof of the proposition.

\section{Conclusions}\label{sec:9}

\subsection{P1-P15}
Now we arrive at the following main result.

\begin{thm}\label{thm:91}
Conjectures P1-P15 hold for any  weighted Coxeter group of rank 3.
\end{thm}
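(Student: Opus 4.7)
The plan is to reduce Theorem \ref{thm:91} to Theorem \ref{thm:main} by collecting all the pieces proved in the paper and citing known results for the cases outside the three classes treated here. First, I would stratify the rank-3 weighted Coxeter groups $(W,S,L)$ into the following classes: (a) $W$ finite, (b) $W$ affine, (c) $W$ hyperbolic with complete Coxeter graph, (d) $W$ right-angled, and (e) the three remaining families (1)(2)(3) listed in Section \ref{sec:intr}. For (a) P1-P15 is in \cite[25A.1]{bonnafe2017book}; for (b) it is in \cite{guilhot-parkinson2019C2,guilhot-parkinson2019G2}; for (c) and (d) it is in \cite{xie2019}. So the only work left concerns families (1)(2)(3).

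For these remaining families, I would invoke Theorem \ref{thm:main}: it suffices to verify the boundedness conjecture \eqref{eq:bound-conj} and Conjecture \ref{conj:dim2} for $(W,S,L)$ and for all its parabolic subgroups. The boundedness conjecture in rank $3$ is provided by \cite{zhou,gao}, so that input is automatic. The proper parabolic subgroups are dihedral or of rank $1$, for which Conjecture \ref{conj:dim2} is either vacuous (rank $1$) or can be read off from the dihedral analysis of Section \ref{sec:dih}; in any case, P1-P15 for these parabolic subgroups is classical (and covered by the dihedral treatment in \cite{lusztig2003}). Thus the only genuine content needed is Conjecture \ref{conj:dim2} for the three hyperbolic families themselves.

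The conjecture has three parts. Part (1) is precisely Proposition \ref{prop:bound}. Part (2) is Lemma \ref{lem:length}. Part (3) is the statement proved in Proposition \ref{prop:strictineq} (which in fact gives a slightly stronger version, implying both inequalities \eqref{eq:leq} and \eqref{eq:lin}). All three of these have been established in Sections \ref{sec:take}-\ref{sec:est}, case by case, using the expansion results of Section \ref{sec:exp} and the dihedral input from Section \ref{sec:dih}. Strictly speaking, the formulation of Conjecture \ref{conj:dim2} requires an integer $N$ such that $W_{>N}=\Omega_{>N}$ and $W_{>N}$ is $\prec_{LR}$-closed; since Theorem \ref{thm:main} is proved in \cite[\S9]{xie2019} by decreasing induction on $N$ starting from the boundedness bound $N_0$, I would simply note that the induction machinery of \cite{xie2019} together with the three properties above closes the loop, because $W_{>N_0}=\emptyset=\Omega_{>N_0}$ trivially and the inductive hypothesis produces the required $W_{>N}=\Omega_{>N}$ at each step.

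The ``hard part'' has already been discharged in Sections \ref{sec:take}-\ref{sec:est}: the entire content of this section is therefore an assembly, with the only genuine risk being a verification that each family in the paper's classification really is covered by one of (a)-(e). I would finish by remarking that, combined with the earlier cases, this completes the proof of P1-P15 for every weighted Coxeter system of rank $3$, and hence establishes Theorem \ref{thm:91}.
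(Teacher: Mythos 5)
Your proposal follows the same route as the paper: stratify the rank-$3$ groups, cite the known cases (finite, affine, complete graph, right-angled), and for the three remaining hyperbolic families reduce to Theorem \ref{thm:main}, whose hypotheses are exactly Proposition \ref{prop:bound}, Lemma \ref{lem:length}, Proposition \ref{prop:strictineq}, plus the boundedness result of \cite{zhou,gao}. Your extra remarks on parabolic subgroups and on the decreasing induction on $N$ from \cite[\S 9]{xie2019} are correct and, if anything, slightly more explicit than the paper, which simply delegates this to Theorem \ref{thm:main}.

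There is, however, one case that slips through your stratification as written: the affine group $\tilde{A}_2$ (all $m_{ij}=3$). It is affine of rank $3$, but the papers \cite{guilhot-parkinson2019C2,guilhot-parkinson2019G2} you cite for class (b) treat only $\tilde{B}_2$ and $\tilde{G}_2$; and since $\tilde{A}_2$ is not hyperbolic it is not caught by your class (c) either. The paper closes this hole with a separate branch: when all three $m_{ij}$ are odd the weight function is necessarily constant, so P1--P15 follow from Lusztig's equal-parameter argument \cite[\S 16]{lusztig2003} (using positivity and the rank-$3$ boundedness). You should either add this ``all $m_{ij}$ odd'' branch or enlarge class (c) to all complete-graph groups covered by \cite{xie2019} rather than only the hyperbolic ones. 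With that repair the assembly is complete and matches the paper's proof.
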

\begin{proof}
 First note that the boundedness conjecture for Coxeter groups of rank 3 has been proved in \cite{zhou,gao}.
	
Assume $ (W,S) $  is an irreducible Coxeter group with $ S=\{r,s,t\} $. If $ m_{rs} $, $ m_{st} $, $ m_{rt} $ are all odd, then $ (W,S) $ only admits a constant weight function, and P1-P15 hold by \cite[\S16]{lusztig2003}. The only irreducible finite Coxeter group which admits unequal parameters is of type $ B_3 $, and P1-P15 can be proved  directly as noted by \cite[25.1A]{bonnafe2017book}. For affine Weyl groups of type $ \tilde{B}_2 $ and $ \tilde{G}_2 $, P1-P15 are proved by Guilhot and Parkinson in  \cite{guilhot-parkinson2019G2,guilhot-parkinson2019C2}. For the case of $ m_{rs}\neq 2 $, $ m_{st} \neq 2$, $ m_{rt} \neq 2$ and  the case of $ m_{rt}=2 $, $ m_{rs}=m_{st}=\infty $, P1-P15  follow from \cite{xie2019}. The remaining cases are those Coxeter groups  listed in section \ref{sec:intr}, and P1-P15 follow from Theorem \ref{thm:main} and Propositions \ref{prop:bound}, \ref{prop:strictineq} and Lemma \ref{lem:length}.
\end{proof}

\subsection{Kazhdan-Lusztig Cells}
 The cell partitions of affine Weyl groups of type $ \tilde{B}_2 $ and $ \tilde{G}_2 $ are due to  \cite{guilhot2010cells,guilhot2008computedrank2}. The cell partitions for Coxeter groups with complete graph are also clear by \cite[\S8]{xie2019} in certain sense.

 In the remainder of this section we assume that $ S=\{r,s,t\} $ with $ m_{rt}=2 $ and $ \frac{1}{m_{rs}}+\frac{1}{m_{st}}<\frac12 $, $ 2<m_{rs} \leq \infty $, $ 2<m_{st} \leq \infty $, and discuss the cell partitions. 
 
 We use the notations from section \ref{sec:3}. First, we have the following results on $ \af $-functions and cells.
\begin{thm}\label{thm:92}
Let $\mathbb{A}=\{N\in \mathbb{N}\mid D_{N}\neq \emptyset\}$.
\begin{itemize}
\item [(1)] For any $d\in D$, $\af(d)=\af'(d)$ .
\item [(2)] For any $N\in \mathbb{N}$, $W_{N}=\Omega_{N}$.
\item [(3)] For any $N\in \mathbb{A}$, the following are decompositions into right cells
$$\Omega_{N}=\bigsqcup_{d\in D_{N},b\in B_{d}}bdU_{d},$$
$$W=\bigsqcup_{N\in \mathbb{A},d\in D_{N},b\in B_{d}}bdU_{d}.$$
\item [(4)] For any $N\in \mathbb{A}$, the set $ \Omega_{\leq N} $ is $ \prec_{ {LR}} $-closed, and hence $ \Omega_{ N} $ is a union of some two-sided cells. For any $d\in D_{N}$, $ B_d d U_d $ is contained in a two-sided cell.
\end{itemize}
\end{thm}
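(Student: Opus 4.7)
My plan is to proceed by decreasing induction on $N$, using as base case the fact that $W_{>N_0}=\emptyset=\Omega_{>N_0}$ for $N_0=\max\{L(w_J)\mid J\subset S, W_J \text{ finite}\}$, which holds in rank 3 by \cite{zhou,gao}. At the inductive step I assume the four conclusions hold for every $N'>N$; in particular $W_{>N}=\Omega_{>N}$ is $\prec_{LR}$-closed, so the standing hypothesis of Propositions \ref{prop:bound}, \ref{prop:strictineq} and Lemma \ref{lem:length} is satisfied, and by Theorem \ref{thm:main} the conjectures (P1-P15)$_{>N}$ are available.

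The centerpiece is part (2). The easy inclusion $W_N\subset \Omega_N$ is immediate: if $w\in W_N$ there exist $x,y\in \wn$ with $\deg \nf_{x,y,w}=N$, and Proposition \ref{prop:bound} forces $x,y\in \Omega_{\geq N}$ and hence $w\in \Omega_{\geq N}\setminus \Omega_{>N}=\Omega_N$. For the reverse, I would first treat an element $d\in D_N$: in the dihedral Hecke subalgebra $\mc{H}_I$, a direct expansion of $T_d T_{d^{-1}}$ using the classical formulas recalled in Section \ref{sec:dih} shows that the coefficient of $T_d$ has leading term of degree $\af'(d)$, giving $\af(d)\geq \af'(d)=N$; together with the easy half this pins down $\af(d)=N$, proving (1). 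For a general $\omega=b\cdot d\cdot y\in\Omega_N$ with $b\in B_d$, $y\in U_d$, Lemma \ref{lem:length} ensures the expression is reduced, so I would compute $\nt_{y^{-1}\cdot d\cdot b^{-1}}\nt_{bdy}$ by expanding $\nt_{bdy}=\sum_{w\leq d}p_{w,d}\nt_{b\cdot w\cdot y}$ and invoking Proposition \ref{prop:strictineq}: every term with $w<d$ contributes strictly less than $-\deg p_{w,d}$, so only the $w=d$ term survives in the top degree, producing a coefficient of degree exactly $N$ on $\nt_\omega$. This gives $\omega\in W_N$ and completes (2).

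Parts (3) and (4) then fall out. For (4), once (2) holds at every level we have $\Omega_{\leq N}=\wn$, which is $\prec_{LR}$-closed by (P4); thus $\Omega_N$ is a union of two-sided cells, and (P11)\,+\,(P14) show that each slab $B_d d U_d$ lies in a single two-sided cell. For (3), the unique distinguished involution in the right cell of $bdy$ must lie in $\mc{D}_N\subset D_N$ by (P13), and Lemma \ref{lem:length} together with the design of $B_d$ and $U_d$ from Section \ref{sec:3} forces this involution to be $d$ and forces $b$ to be uniquely determined by the right cell; summing over $d\in D_N$ and $b\in B_d$ then gives the decomposition of $\Omega_N$, and taking the union over $N\in\mathbb{A}$ gives the decomposition of $W$.

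The main obstacle I anticipate is the reverse inclusion in (2): one has to rule out cancellations among the $w<d$ contributions when assembling $\nt_{y^{-1}\cdot d\cdot b^{-1}}\nt_{bdy}$. Proposition \ref{prop:strictineq} was precisely formulated to give \emph{strict} inequalities for $w<d$, so the subleading degrees can never conspire to reach $N$ and the leading $w=d$ term persists; this is the delicate point that makes the decreasing induction work uniformly across the three families of Coxeter groups listed in Section \ref{sec:intr}.
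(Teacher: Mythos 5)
Your overall architecture is the right one and is essentially what the paper does: its proof of Theorem \ref{thm:92} consists of citing Theorem \ref{thm:main} and \cite[Thm.~6.13]{xie2019} (the machine whose hypotheses are exactly Propositions \ref{prop:bound}, \ref{prop:strictineq} and Lemma \ref{lem:length}), plus a three-line direct argument for the final sentence of (4): for $z\in B_ddU_d$ one has $z\prec_{LR}d$ and $\af(z)=\af(d)$, so P11 gives $z\sim_{LR}d$. Your use of P4, P9--P11 and P13 for (3) and (4), and of Proposition \ref{prop:strictineq} together with Lemma \ref{lem:length} to isolate the $w=d$ term for the inclusion $\Omega_N\subseteq W_N$, matches that machinery. (One imprecision there: the identity ``$\nt_{bdy}=\sum_{w\leq d}p_{w,d}\nt_{b\cdot w\cdot y}$'' is not an identity of basis elements; what you actually want to expand is the image of $T_bC_dT_y$, whose $w=d$ summand is $\nt_{bdy}$ by Lemma \ref{lem:length} and whose $w<d$ summands are the ones controlled by Proposition \ref{prop:strictineq}.)

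The one genuine gap is in your ``easy inclusion'' $W_N\subseteq\Omega_N$. From $\deg\nf_{x,y,w}=N$ you apply Proposition \ref{prop:bound} to conclude $x,y\in\Omega_{\geq N}$ and then write ``hence $w\in\Omega_{\geq N}$''; this does not follow, because the proposition constrains the two \emph{factors} of a degree-$N$ product, not the basis elements $\nt_z$ occurring in the \emph{output}. The repair used in \cite{xie2019} is the cyclic symmetry of the structure constants, $f_{x,y,w}=f_{y,w^{-1},x^{-1}}$ (coming from the trace form), which under the inductive hypotheses persists for the $\nf$'s on $\hn$: then $\deg\nf_{y,w^{-1},x^{-1}}=N$, so $\deg(\nt_y\nt_{w^{-1}})=N$, and Proposition \ref{prop:bound} applied to \emph{that} product yields $w^{-1}\in\Omega_{\geq N}$, hence $w\in\Omega_{\geq N}$ since every element of $D$ is an involution and $\Omega_{\geq N}$ is stable under inversion. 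Without this step (or some equivalent bookkeeping, inside the proof of Proposition \ref{prop:bound}, of which $\nt_z$ can carry a degree-$N$ coefficient), part (2) of your argument is not closed, and parts (1), (3), (4) all lean on it.
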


\begin{proof}
See Theorem \ref{thm:main} and \cite[Thm. 6.13]{xie2019}. Here we only show that $ B_d d U_d $ is contained in a two-sided cell. If $ z\in B_d d U_d $, then $ z\prec_{ {LR}} d $ and $ \af(z)=\af(d) $. By P11, we get $ z\sim_{ {LR}} d $. Thus $ z\sim_{ {LR}} d \sim_{ {LR}} z'$ for any $z,z'\in B_d d U_d $.
\end{proof}

\begin{lem}\label{lem:2cell}
If $ B_d =U_d^{-1}$ with $ d\in D $, then $ B_ddU_d $ is a two-sided cell.
\end{lem}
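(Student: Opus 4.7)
My plan is to complement the inclusion $B_{d}dU_{d}\subseteq \Gamma$ from Theorem \ref{thm:92}(4) (where $\Gamma$ denotes the two-sided cell of $d$) by showing that $B_{d}dU_{d}$ is simultaneously a union of right cells and a union of left cells. Once this is established, $B_{d}dU_{d}$ is closed under $\sim_{L}$ and $\sim_{R}$, hence under $\sim_{LR}$, and so is a nonempty union of two-sided cells sitting inside $\Gamma$; it must therefore equal $\Gamma$.

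The right-cell decomposition is handed to us directly by Theorem \ref{thm:92}(3):
$$B_{d}dU_{d}=\bigsqcup_{b\in B_{d}} bdU_{d}$$
is a disjoint union of right cells. For the left-cell decomposition I would invert the global right-cell partition of $\Omega_{N}$. Because P14 places each $w$ in the same two-sided cell as $w^{-1}$, $\Omega_{N}$ is stable under inversion, and because $d^{-1}=d$ for $d\in\mc{D}$ by P6, taking inverses of the cells in Theorem \ref{thm:92}(3) gives
$$\Omega_{N}=\bigsqcup_{d'\in D_{N},\,b\in B_{d'}} U_{d'}^{-1}d'b^{-1},$$
a partition into left cells. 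The hypothesis $B_{d}=U_{d}^{-1}$ now enters crucially: for each $b\in B_{d}$, $b^{-1}\in U_{d}$, so the left cell $U_{d}^{-1}db^{-1}$ equals $B_{d}db^{-1}\subseteq B_{d}dU_{d}$. As $b$ ranges over $B_{d}$, $b^{-1}$ ranges bijectively over $B_{d}^{-1}=U_{d}$, producing the desired left-cell decomposition
$$B_{d}dU_{d}=\bigsqcup_{u\in U_{d}} B_{d}du.$$

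Combining the two decompositions finishes the proof: $B_{d}dU_{d}$ is a union of two-sided cells, contains $d$ (since the trivial expression $d=e\cdot d\cdot e$ together with length-additivity from Lemma \ref{lem:length} and the uniqueness in Theorem \ref{thm:92}(3) forces $e\in B_{d}\cap U_{d}$), and sits inside $\Gamma$, so it equals $\Gamma$. I do not foresee a genuine obstacle; the whole argument is a duality between the right- and left-cell decompositions, and the condition $B_{d}=U_{d}^{-1}$ is precisely what is needed for the inverted partition to cut $B_{d}dU_{d}$ into pieces that remain inside it.
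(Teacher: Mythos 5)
Your core construction --- inverting the right-cell partition of Theorem \ref{thm:92}(3) and using $B_d=U_d^{-1}$ to conclude that $B_ddU_d$ is simultaneously a union of right cells and a union of left cells --- is exactly the observation the paper makes (phrased there as ``$B_ddU_d$ is stable by taking inverse''). The gap is in the sentence ``closed under $\sim_{L}$ and $\sim_{R}$, hence under $\sim_{LR}$.'' That implication is not formal: $\sim_{LR}$ is the equivalence relation attached to the preorder $\prec_{LR}$ generated by $\prec_{L}$ and $\prec_{R}$, not the equivalence relation generated by $\sim_{L}$ and $\sim_{R}$. If $z\sim_{LR}d$, all you get a priori is a chain $z=z_1,\dots,z_n=d$ (and one in the reverse direction) in which each link is merely $z_i\prec_{L}z_{i+1}$ or $z_i\prec_{R}z_{i+1}$; a set that is a union of left cells and a union of right cells has no reason to contain the intermediate $z_i$, so it need not be a union of two-sided cells. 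This upgrade from $\prec$ to $\sim$ is the actual content of the paper's proof beyond the inversion remark, and you have omitted it.

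The repair is short and is what the paper writes out: by P4 the $\af$-values are weakly decreasing along the chain, and since $\af(z)=\af(d)$ (again by P4, applied to chains in both directions) they are all equal; P9 and P10 then convert each link into $z_i\sim_{L}z_{i+1}$ or $z_i\sim_{R}z_{i+1}$, and only at that point does your union-of-left-and-right-cells property let you walk from $d$ back to $z$ inside $B_ddU_d$. Two smaller remarks: the identity $d^{-1}=d$ follows directly from the explicit description of $D$ (longest elements $w_J$, and $sw_{st}$ with $m_{st}$ even, which commutes with $w_{st}$), not from P6, which concerns $\mc{D}$ rather than $D$; and the nonemptiness of $B_ddU_d$ (which you do need in order to upgrade ``union of two-sided cells contained in $\Gamma$'' to ``equals $\Gamma$'') deserves a cleaner justification than the appeal to Lemma \ref{lem:length}, e.g.\ by locating $d$ itself in the partition of $\Omega_N$ from Theorem \ref{thm:92}(3).
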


\begin{proof}
By Theorem \ref{thm:92}, $ B_d d U_d $ is contained in a two-sided cell. Assume $z\in \Omega_{\af(d)}$ satisfies $ z\sim_{ {LR}} d $. By the definition of $\sim_{ {LR}}$, there exists $z_{1},\ldots,z_{n}\in W$ such that $z_{1}=z$, $z_{n}=d$, and $z_{i}\prec_{ {L}} z_{i+1}$ or $z_{i}\prec_{ {R}} z_{i+1}$ for any $1\leq i\leq n-1$. By P4, we have
$\af(z_{1})\geq\ldots\geq\af(z_{n})$ and hence $\af(z_{1})=\ldots=\af(z_{n})$. By P9, P10, we get $z_{i}\sim_{ {L}} z_{i+1}$ or $z_{i}\sim_{ {R}} z_{i+1}$ for any $1\leq i\leq n-1$. Since $ B_d=U_d^{-1} $, $ B_d d U_d $ is stable by taking inverse. Thus $ B_ddU_d $ is a union of some right cells and in the same time a union of some left cells. Hence all the $ z_i $ and in particular $ z $ belong to $  BdU_d $.  This proves the lemma.
\end{proof}

Contrary to the complete graph case,  it is possible that $ \Omega_{ N} $ contains more than one two-sided cells.
\begin{ex}\label{ex:two}
Assume that $ m_{rt}=2 $, $ m_{rs}=4 $, $ m_{st}=5$, and  $ L(r)=5$, $L(s)=L(t)=1 $. Then we have
\[D_{0}=\{e\}, \quad D_1 =\{s,t\}, \quad D_5=\{r,w_{st}\},\]
\[D_6=\{rt\}, \quad D_9=\{rsr\},\quad D_{12}=\{w_{rs}\}.\]

Since $L(rt)=6>5$, we have $B_{w_{st}}=U_{w_{st}}=\{e\}$. By Lemma \ref{lem:2cell}, $\{w_{st}\}$ is a two-sided cell.  Therefore, $\Omega_{5}$ contains 2 two-sided cells: $\{w_{st}\}$ and $ B_{r}rU_{r} $.
\end{ex}

\begin{lem}\label{lem:connect}
Assume that $ d_1,d_2\in D_N $ and there exists some $ w $ in $ \Omega_{ N} $ such that $ w=d_1\cdot x =y\cdot d_2$ for some $ x,y\in W $. Then we have $ d\sim_{ {LR}} d_2 $.
\end{lem}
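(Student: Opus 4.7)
The plan is to exploit the chain $d_1 \sim_R w \sim_L d_2$ by combining the reduced-product hypothesis with conjectures P9 and P10, which are now at our disposal thanks to Theorem \ref{thm:91}.

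\medskip

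\noindent\textbf{Step 1 (set up preorder comparisons).} First I would note that from $w = d_1 \cdot x$ being a reduced product, a standard induction on $l(x)$ gives $w \leq_R d_1$ in the Kazhdan--Lusztig right preorder. The inductive step uses the basic multiplication rule $C_u C_s = C_{us} + \sum_{y<u,\, ys<y} \mu(y,u)\, C_y$ whenever $us > u$: writing $x = x' \cdot s$ reduced, we get $d_1 x' \cdot s = w$ reduced, hence $w \leq_R d_1 x'$ by the single-generator case, and $d_1 x' \leq_R d_1$ by the inductive hypothesis; transitivity does the rest. By the symmetric argument applied to $w = y \cdot d_2$, we obtain $w \leq_L d_2$.

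\medskip

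\noindent\textbf{Step 2 (equate $\mathbf{a}$-values and upgrade to equivalence).} By Theorem \ref{thm:92}(1)(2), $w \in \Omega_N = W_N$ and $d_1, d_2 \in D_N \subset W_N$, so $\af(w) = \af(d_1) = \af(d_2) = N$. Since $d_1 \in W_{\geq N}$, I would apply $(\text{P10})_{\geq N}$ (available from Theorem \ref{thm:91}) to the comparison $w \leq_R d_1$: either $w = d_1$ (trivially $w \sim_R d_1$), or $w \prec_R d_1$ with $\af(w) = \af(d_1)$, and P10 forces $w \sim_R d_1$. Symmetrically, $(\text{P9})_{\geq N}$ applied to $w \leq_L d_2$ yields $w \sim_L d_2$.

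\medskip

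\noindent\textbf{Step 3 (conclude).} Combining the two equivalences gives $d_1 \sim_R w \sim_L d_2$, hence $d_1 \sim_{LR} d_2$, which is the claim (correcting the evident typo $d$ for $d_1$ in the statement).

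\medskip

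There is no serious obstacle: the only point that requires care is the preorder claim $w \leq_R d_1$ from a reduced product, which is a well-known folklore fact but is worth spelling out briefly via the inductive argument above. Everything else is a direct appeal to Theorem \ref{thm:91} and Theorem \ref{thm:92}.
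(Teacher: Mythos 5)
Your proposal is correct and follows essentially the same route as the paper's own proof, which simply records $w\prec_R d_1$, $w\prec_L d_2$, notes the equal $\af$-values, and invokes P9 and P10. The only difference is that you spell out the folklore fact that a reduced product $w=d_1\cdot x$ gives $w\prec_R d_1$, which the paper takes as known.
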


\begin{proof}
We have $ w\prec_{ {L}} d_2 $, $ w\prec_{ {R}} d_1 $ and $ \af(d_1)=\af(d_2)=\af(w) =N$. By P9, P10,  $ d_1\sim_{ {R}} w\sim_{ {L}} d_2 $.
\end{proof}

When $ m_{rs} $ (resp. $ m_{st}  $)  is  even, we often set $ m_{rs}=2m $ (resp. $ m_{st}=2n $). Let $ a=L(r) $, $ b=L(s) $, $ c=L(t) $.
Keep assumptions that $ m_{rt}=2 $ and $ \frac{1}{m_{rs}}+\frac{1}{m_{st}}<\frac12 $, where $ 2<m_{rs} \leq \infty $, $ 2<m_{st} \leq \infty $.
\begin{thm}\label{thm:twocells}
Assume that $ d_1, d_2$ belong to  $ D_N  $ for some $ N $ and lie in different two-sided cells. Then this happens if and only if   $ \{d_1,d_2\} $ is in one of the following cases, or the ones with  $ r , t $ (resp. $ a,c $) exchanged:
\begin{itemize}
\item[(1)] $ \{w_{rs},t \}$;
\item [(2)] $ \{r,t\} $ with $ b>a $.
\item[(3)] $ \{sw_{rs}, t \} $ with $a>b $;
\item [(4)]$ \{ sw_{rs},tw_{st} \} $ with $ a>b>c $ and $ a+c>N $;
\item [(5)]$ \{ sw_{rs}, sw_{st} \} $ with $ a>b<c $, and $ a+c>N $;
\item [(6)]$ \{rw_{rs} ,rt\}$ with $ a<b $, $ m_{st}=3 $.
\end{itemize}
Note that $ \af(d_1)=\af(d_2)  $ is a hidden restriction on  $ a,b,c $.
\end{thm}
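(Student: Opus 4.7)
The proof reduces, via Lemma \ref{lem:connect} and Theorem \ref{thm:92}(3), to a combinatorial analysis of the finite set $D_N$. Two elements $d_1, d_2 \in D_N$ lie in the same two-sided cell as soon as some $w \in \Omega_N$ admits reduced factorizations $w = d_1 \cdot x = y \cdot d_2$; and by Theorem \ref{thm:92}(3) every $w \in \Omega_N$ has the form $b \cdot d \cdot u$ with $d \in D_N$, $b \in B_d$, $u \in U_d$. So the ``same cell'' relation on $D_N$ is the symmetric-transitive closure of the relation $d_1 \sim d_2$ defined by: there exists $bdu \in B_{d_1} d_1 U_{d_1}$ which can also be written as $y \cdot d_2$ (reduced) for some $y \in W$, or symmetrically on the left. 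Thus everything is reduced to a descent-set calculation on the explicit elements of $D_N$.

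\textbf{Exceptional pairs lie in different cells.} For each pair listed in (1)--(6), I first write down $B_{d_1}, U_{d_1}, B_{d_2}, U_{d_2}$ from the definitions in Section \ref{sec:3} and the expansion lemmas of Section \ref{sec:exp}. The point in each case is that the ``small'' element $d_i$ of the pair (say $t$ in (1), or $rt$ in (6)) is essentially isolated: either $B_{d_i} = U_{d_i} = \{e\}$, so by Lemma \ref{lem:2cell} the singleton $\{d_i\}$ already is a two-sided cell; or, more generally, every $bdu \in B_{d_1} d_1 U_{d_1}$ has the property that its left (resp. right) descent behaviour precludes a reduced factorization ending (resp. beginning) in $d_2$. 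The numerical inequalities attached to each case (e.g.\ $b>a$ in (2), $a+c>N$ in (4), $m_{st}=3$ in (6)) are exactly what forces this isolation: they ensure that the natural candidate for a connecting $w$, namely $d_1$ followed or preceded by a bridge to $d_2$, leaves $\Omega_N$ (its $\af$-value exceeds $N$) and is therefore unavailable.

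\textbf{All other pairs lie in the same cell.} For each pair $\{d_1, d_2\} \subset D_N$ not in the exceptional list, I exhibit an explicit $w \in \Omega_N$ witnessing $d_1 \sim_{LR} d_2$. Typically one of $d_1, d_2$ already contains the other as a subword (as with $w_{rs}$ and the longer $sw_{rs}$, or $w_{st}$ and $sw_{st}$) and then $d_1$ or a short extension $d_1 \cdot u$ (with $u \in U_{d_1}$) already admits both factorizations. When the two elements live on opposite sides of the Coxeter diagram one uses an intermediate element; for example, if $d_1 \in W_{rs}$ and $d_2 \in W_{st}$ both lie in $D_N$ and the exceptional constraints fail, then the element $d_1 \cdot t \cdot \cdots$ or $\cdots \cdot s \cdot d_2$ constructed by concatenation has $\af$-value exactly $N$ (by Proposition \ref{prop:bound} and the boundedness of $N$ by $\max L(w_I)$) and serves as the bridge. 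Each verification is a direct length and descent computation using Lemmas \ref{lem:a0}, \ref{lem:a1}, \ref{lem:a2}.

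The main obstacle is the case-by-case enumeration: for each of the three families (1)--(3) of hyperbolic triples in Section \ref{sec:intr}, each choice of the ordering of the weights $a = L(r), b = L(s), c = L(t)$, and each value of $N$ for which $|D_N| \geq 2$, one has to list $D_N$ and check the dichotomy. What makes this tractable is the observation that $D$ itself is small (a bounded number of elements, plus the generators and their products), and that $|D_N| \geq 2$ forces a numerical coincidence $\af'(d_1) = \af'(d_2)$ that sharply constrains the parameter regime. Once the list of ``coincidences'' is written out, the cases reduce to those catalogued in (1)--(6) and their $r \leftrightarrow t$ mirrors, and the verification in each is a short combinatorial check.
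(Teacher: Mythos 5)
Your overall strategy coincides with the paper's: use Lemma \ref{lem:connect} to certify that two elements of $D_N$ joined by a bridge $w=d_1\cdot x=y\cdot d_2$ with $w\in\Omega_N$ lie in the same two-sided cell, and use Lemma \ref{lem:2cell} to isolate the exceptional ones. But the proposal stops where the proof actually begins. The entire content of the theorem --- in particular the precise boundary conditions $b>a$ in (2), $a+c>N$ in (4) and (5), and $m_{st}=3$ in (6) --- emerges only from writing down explicit bridge words (in the paper: $d_1sd_2$, $d_1td_2$, $d_1tsrd_2$, $d_1tsrsrd_2$, $d_1tstsrstsrd_2$, $d_1tsd_2$, etc., chosen according to $m_{rs}$, $m_{st}$ and the ordering of $a,b,c$) and verifying arithmetically, from the hidden relation $\af'(d_1)=\af'(d_2)=N$, that no element of $D_{>N}$ appears as a factor of the chosen word. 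Your justification that the concatenated word ``has $\af$-value exactly $N$ (by Proposition \ref{prop:bound}\dots)'' is not how membership in $\Omega_N$ is checked: one must verify, for instance, that $rt\in D_{<N}$, i.e.\ $a+c<N$, from inequalities such as $3a-2b\le N$ and $2b+2c\le N$, and it is exactly these derivations that produce (and delimit) the case list in the statement. None of these computations is carried out or even set up, so the ``if and only if'' cannot be extracted from the proposal.

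There is also a logical gap on the ``different cells'' side. You assert at the outset that the same-cell relation on $D_N$ is the symmetric-transitive closure of the direct bridging relation; this is not proved (the chains of $\sim_L$/$\sim_R$ steps inside $\Omega_N$ a priori pass through elements in which $d_1$ and $d_2$ occur only as internal factors $b\cdot d\cdot u$ with $b\ne e$, not as a prefix or suffix as Lemma \ref{lem:connect} requires), and the paper does not use it. Consequently your fallback argument --- ``the descent behaviour of every element of $B_{d_1}d_1U_{d_1}$ precludes a reduced factorization ending in $d_2$'' --- only excludes a direct bridge and does not by itself show that $d_1$ and $d_2$ lie in different two-sided cells. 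The paper instead settles every exceptional case by checking $B_d=U_d^{-1}$ for one of the two elements (usually $B_d=U_d=\{e\}$, and $U_{d}=\{e,t\}$ in case (6)), so that Lemma \ref{lem:2cell} certifies $B_ddU_d$ as a complete two-sided cell which visibly omits the other element. You mention this route as one option, but any case in which you would fall back on the descent argument is a case in which your proof is incomplete.
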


\begin{proof}
The set $ D$ is a  union of  $A_1$ and $ A_2 $ with \[
A_1=\{e,r,s,w_{rs} \}\cup\{t, w_{st} \}\cup\{rt \},\quad A_2=\{d_{rs}, d_{st} \}
\]
where $ d_{rs}=\begin{cases}
rw_{rs}& \text{ if }  b>a \\
sw_{rs}& \text{ if }  b<a
\end{cases} $,
$ d_{st}=\begin{cases}
sw_{st}& \text{ if }  b<c \\
tw_{st}& \text{ if }  b>c
\end{cases} $.
When $ m_{rs}$ (resp. $ m_{st}$) is  $\infty $,  we ignore $ w_{rs} $ (resp. $ w_{st}$). When $a=b$ (resp. $b=c$),  we ignore $ d_{rs} $ (resp. $ d_{st}$). This does not affect the following proof.

In the following, we take any $ d_1,d_1\in D $, assume that $ d_1,d_2\in D_N $ for some $ N $, and check whether
\begin{itemize}
\item [(i)] there is an element $ w\in \Omega_{ N} $ such that $ w=d_1\cdot y=x\cdot d_2$ for some $ x, y\in W $, which will imply that $ d_1,d_2 $ are in the same two-sided cell by Lemma \ref{lem:connect},
\item [(ii)] or $ d_1,d_2 $ lie in different two-sided cells.
\end{itemize}

If $ d_1,d_2\in A_1 \cap D_N $, then  (i) holds except cases $ \{d_1,d_2\}= \{w_{rs},t \} ,\{r,w_{st} \} $ and $ \{r,t\} $ with $ b>a+c $. For example, if $ d_1=w_{rs},d_2=w_{st}$, then $ L(r)+\frac{L(s)}{2} \leq \frac N2$ and $ L(t)+\frac{L(s)}{2} \leq \frac N2$, which implies that $ rt\in D_{< N} $ and $ w=d_1sd_2 \in \Omega_{ N} $.
For the exceptional cases, (ii) holds by Lemma \ref{lem:2cell}.

It remains to consider the cases where one of $ d_1, d_2 $, say $ d_1 $, belongs to $ A_2 $. By the symmetric role of $ r,t $, we may only consider the case of $ d_1=d_{rs} $. Accordingly  possible choices of $ d_2$ are $ t, w_{st}, rt, d_{st} $ due to $ \af(d_1)=\af(d_2)$.

\noindent\textbf{Case(1)} $ a>b$.
\begin{itemize}
\item If $ d_2=t $, we have $B_{d_1}= U_{d_1} =\{e\}$. Thus by Lemma \ref{lem:connect}  $ d_1 $ and $ t $ are always in different two-sided cells.
\item  Assume  $ d_2=w_{st} $. In this case, we will prove $ \af(rt)<N=\af(d_1)=\af(d_2) $ and that (i) holds.
\begin{itemize}
\item If $ m_{rs}\neq 4 $ and $ m_{st}\neq 3 $, then we take $ w=d_1tsrd_2 $. Since $ 3a-2b\leq N $ and $ 2b+2c\leq N $, we have $ a+c<N $. Then we have $ w\in \Omega_N$.
\item  If $ m_{st}=3 $ ($ m_{rs}\geq 8 $), then we take $w= d_1 tsrsr d_2$. Since $ 4a-3b\leq N $ and $ 3b=N $, we have $ a+c=a+b<2a\leq N $. Hence  $ w\in \Omega_N$.
\item If $ m_{rs}=4 $ ($ m_{st}\geq 5 $), we take $ w= d_1tstsrstsrd_2$. Since $ 2a-b=N $, $ 3b+2c\leq N$, we have $ a+c<a+b+c\leq N $, and hence $ w\in \Omega_{ N} $.
\end{itemize}
\item If $ d_2=rt $, then we take $ w=d_1t $ and (i) holds.
\item Assume $ d_2=d_{st} $ with $ b<c $. If $ a+c\leq N $, then we take $ w=d_1d_2 $ and (i) holds.
Otherwise, $ B_{d_2}=U_{d_2}=\{e\} $, and hence $ \{d_2\} $ is a two-sided cell.
Since
$ N=ma-(m-1)b=nc-(n-1)b $, then $ a+c>N $ is equivalent to $ 1<a/b<\frac{mn-m}{mn-m-n} $. This indeed could  happen.
\item Assume $ d_2=d_{st} $ with $ b>c $. If $ a+c\leq N $, then (i) holds by taking \begin{itemize}
\item $ w=d_1tsrd_2 $ if $ m_{rs}\geq 6 $;
\item $ w= d_1tstsrd_2$ if $ m_{rs}=4 $ ($ m_{st}\geq 6$).
\end{itemize}
Then we consider the situation  $ a+c> N $. It implies that $ \{d_1\} $ is a  two-sided cell and hence $ d_1,d_2 $ are in different two-sided cells. Since
$ N=ma-(m-1)b=nb-(n-1)c $,  $ a+c>N $ is equivalent to $ 1<a/b<\frac{mn}{mn-n+1}$. This indeed could  happen.
\end{itemize}

\noindent\textbf{Case(2)} $ a<b$.  Then $ d_1=rw_{rs} $.
\begin{itemize}
\item If $ d_2=t $, then we take $ w=d_1t $.
\item Let $ d_2=w_{st} $.  We have $ 2b-a\leq N $ and $ 2c+b\leq N $, and hence $ a+c<\frac{3}{2}b-\frac{1}{2}a+c\leq N $. Take $ w=d_{1}sd_2 $ and (i) follows.
\item Let $ d_2=rt $.
\begin{itemize}
\item If $ m_{st}=3 $. Then one can check that $ B_{d_1}^{-1}=U_{d_1}=\{e,t\} $. Then $ B_{d_1} d_1 U_{d_1}$ is a two-sided cell by Lemma \ref{lem:connect},  and  $ d_1,d_2 $ lie in different two-sided cells.
\item If $ m_{st}=4 $, then $ m_{rs}\geq 6 $, and we take $ w= d_1tsrsd_2 $.
\item If $ m_{st}\geq 5 $, we take $ w=d_1tsd_2 $.
\end{itemize}
\item Let $ d_2=d_{st} $.
\begin{itemize}
\item If $ b>c $, then we take $ w=d_1sd_2 $.
\item If $ b<c $, then this  can be reduced to  the last one of  Case(1)  by exchanging $ r $, $ t $.
\end{itemize}
\end{itemize}
This completes the proof.
\end{proof}

\begin{cor}
Assume that $(W,S)$ is an irreducible hyperbolic Coxeter group of rank 3 with $ L $ constant. Then for any $ N\in \mathbb{A}$, $ \Omega_{N} $ is precisely a two-sided cell.
\end{cor}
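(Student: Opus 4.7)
The plan is to deduce this from Theorem \ref{thm:twocells} together with Theorem \ref{thm:92}(4). Since $N\in\mathbb{A}$ means $D_N\neq\emptyset$, the set $\Omega_N$ is nonempty, and by Theorem \ref{thm:92}(4) it is a union of two-sided cells, each containing a subset of the form $B_d d U_d$ for some $d\in D_N$. Hence it suffices to show that any two $d_1,d_2\in D_N$ lie in the same two-sided cell.

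First I would dispose of the cases outside the scope of this paper. For complete graph and right-angled hyperbolic rank $3$ Coxeter groups, this is already contained in the cell partitions established in \cite{xie2019}. So the reduction is to the remaining irreducible hyperbolic rank $3$ groups, namely those of Section \ref{sec:intr} where $m_{rt}=2$, $\tfrac{1}{m_{rs}}+\tfrac{1}{m_{st}}<\tfrac12$, and at least one of $m_{rs},m_{st}$ is finite.

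For these groups the plan is to apply Theorem \ref{thm:twocells}: $d_1$ and $d_2$ can belong to different two-sided cells only if $\{d_1,d_2\}$ (possibly after exchanging $r\leftrightarrow t$) appears in one of the six listed configurations. Writing $a=L(r)$, $b=L(s)$, $c=L(t)$ and using the constant weight hypothesis $a=b=c>0$, case (1) would force $\af(w_{rs})=\af(t)$, i.e.\ $m_{rs}\cdot a=a$, which is impossible since $m_{rs}\geq 3$ (and the $r\leftrightarrow t$ exchange $\{w_{st},r\}$ is ruled out identically). Cases (2)--(6) each impose a strict inequality among $a,b,c$ (such as $b>a$, $a>b>c$, or $c<b$), all incompatible with $a=b=c$. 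Therefore none of the exceptional configurations can occur, so all pairs $d_1,d_2\in D_N$ are $\sim_{{LR}}$-equivalent, and $\Omega_N$ is precisely one two-sided cell.

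The substantive content of the argument is entirely contained in Theorem \ref{thm:twocells}; once that classification is available, the corollary reduces to the routine case checks just outlined, and I do not anticipate any serious obstacle.
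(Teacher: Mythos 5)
Your proposal is correct and follows the same route the paper intends: the corollary is stated as an immediate consequence of Theorem \ref{thm:twocells}, and your verification that none of the exceptional configurations (1)--(6) is compatible with $a=b=c$ (together with Theorem \ref{thm:92}(3)(4) to pass from $D_N$ to all of $\Omega_N$) is exactly the omitted argument. The only cosmetic difference is your extra reduction for the complete-graph and right-angled cases, which the paper sidesteps by keeping the standing assumptions of the subsection.
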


This corollary confirms \cite[Conj. 3.1]{Belolipetsky2014hyperbolic_cells} in the case of rank 3, see the homepage of Paul Gunnells for some beautiful figures about cells partitions in the equal parameter case.

\begin{thm}\label{thm:3cells}
There are at most three elements in $ D_{N} $, and $ |D_N|=3 $ occurs in one of the following situations:
\begin{itemize}
\item [(1)] $ D_N=\{r,s,t\} $, $ a=b=c $;
\end{itemize}
(For other cases, $ m_{rs}=2m $, $ m_{st}=2n $ for some $ m,n \in \mathbb{N}$.)
\begin{itemize}
\item [(2)] $ D_N=\{rw_{rs},rt,sw_{st} \} $,  $(a/b,c/b)=\left (\frac{(m-1)(n-1)}{mn-m+1},\frac{mn}{mn-m+1} \right )$;
\item [(3)] $ D_N=\{rw_{rs},rt,tw_{st} \} $,  $(a/b,c/b)=\left (\frac{(m-1)n}{mn-1},\frac{m(n-1)}{mn-1} \right )$;
\item [(4)] $ D_N=\{sw_{rs},rt,sw_{st} \} $,  $(a/b,c/b)=\left (\frac{m(n-1)}{mn-m-n},\frac{(m-1)n}{mn-m-n} \right )$;
\item [(5)] $ D_N=\{sw_{rs},rt,tw_{st} \} $,  $(a/b,c/b)=\left (\frac{mn}{mn-n+1},\frac{(m-1)(n-1)}{mn-n+1} \right )$.
\end{itemize}
If $ |D_N| =3$,  $ D_N $ is contained in a two-sided cell.
\end{thm}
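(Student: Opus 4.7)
The plan is to proceed in three stages: first, enumerate all elements of $D$ together with their $\af'$-values and record two elementary inequalities; second, use these to bound $|D_N|$ by $3$ and classify equality; third, deduce the single two-sided cell assertion from Theorem \ref{thm:twocells}.

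First I record that $D$ has at most nine elements in the present setting: $e$, the three generators $r,s,t$, the commuting product $rt$, the longest elements $w_{rs}, w_{st}$ (when the respective dihedral subgroups are finite), and $d_{rs}, d_{st}$ (when the relevant $m_{ij}$ is even and the corresponding weights are unequal). Their $\af'$-values are $0, a, b, c, a+c, L(w_{rs}), L(w_{st}), \af'(d_{rs}), \af'(d_{st})$. Two identities will drive the whole analysis: $L(w_{rs}) - \af'(d_{rs}) = (2m-1)\min(a,b) > 0$ (and its $st$-analogue), so $L(w_I)$ and $\af'(d_I)$ can never coincide, and $\af'(d_{rs}) \in \{a,b\}$ would force $a=b$, contradicting the very definition of $d_{rs}$ (and symmetrically for $d_{st}$).

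To bound $|D_N|$, I enumerate possible triples of distinct $D$-elements with a common $\af'$-value. If the triple consists of three generators, then $a=b=c=N$, and a short verification using the identities above shows no other $D$-element can share the value $a$; this is case (1). Otherwise the triple must meet $\{rt, d_{rs}, d_{st}\}$ nontrivially: triples containing $w_I$ alongside $d_I$ are blocked by the first identity, triples of shape $\{w_{rs},w_{st},X\}$ force two incompatible linear relations in $a,b,c$ with no positive solution, and the mixed triples $\{w_{rs}, rt, d_{st}\}$ (and variants) collapse, after eliminating $c$ from $L(w_{rs})=a+c$ and $\af'(d_{st}) = a+c$, to $a = (m-1)(n-1)(a+b)$, again impossible for positive $b$. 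The only surviving option is $\{d_{rs}, rt, d_{st}\}$, which requires both $m_{rs} = 2m$ and $m_{st} = 2n$ even; imposing $\af'(d_{rs}) = a+c = \af'(d_{st})$ in the four subcases (according to which of $sw_I, rw_I$ or $sw_I, tw_I$ is meant) yields a linear system whose unique projective solution is precisely the ratio $(a/b, c/b)$ listed in each of (2)--(5).

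For the final claim, by Theorem \ref{thm:92}(4) each $B_d d U_d$ already sits inside a single two-sided cell, so by transitivity of $\sim_{LR}$ it suffices to check that any two elements of $D_N$ lie in the same two-sided cell. Theorem \ref{thm:twocells} enumerates all pairs in $D_N$ that lie in distinct two-sided cells, so I just verify that none of the $\binom{3}{2}=3$ pairs among $D_N$ is in that list. In case (1) with $a=b=c$, the only candidate pair is $\{r,t\}$ (item (2) of Theorem \ref{thm:twocells}), whose defining inequality $b>a$ fails. In each of cases (2)--(5), the pairs $\{d_{rs}, rt\}$ and $\{rt, d_{st}\}$ do not appear at all in Theorem \ref{thm:twocells} (the only item involving $rt$ is (6), which requires $m_{st}=3$, contrary to $m_{st}=2n$ here), while the pair $\{d_{rs}, d_{st}\}$, when it does appear (items (4) or (5), possibly with $r,t$ exchanged), carries the inequality $a+c>N$, which is violated in each of (2)--(5) because there $N = \af'(rt) = a+c$ exactly.

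The main obstacle is the classification in the second paragraph: while each individual exclusion is an elementary algebraic manipulation, one must patiently treat all choices of parities of $m_{rs}, m_{st}$ and of the signs of $a-b$, $b-c$, and ensure no triple involving $w_{rs}, w_{st}$ together with $rt$ or a $d_I$ slips through. The rest, including the two-sided cell statement, is then immediate from the already-proved Theorems \ref{thm:92} and \ref{thm:twocells}.
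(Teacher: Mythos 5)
Your proposal is correct and follows essentially the same route as the paper, which simply enumerates the finitely many elements of $D$, equates their $\af'$-values, and solves the resulting linear conditions on $a,b,c$ (the paper explicitly "omits the details" that you spell out, including the verification that only the triple $\{d_{rs},rt,d_{st}\}$ survives among the non-generator cases). Your derivation of the ratios in (2)--(5) checks out, and your deduction of the final single-cell claim from Theorems \ref{thm:92}(4) and \ref{thm:twocells} (noting that $N=a+c$ kills the $a+c>N$ conditions) is exactly the intended argument.
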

\begin{proof}
The proof is straightforward by choosing three elements $ d_1,d_2,d_3 $ from the set \[
\{ r,s,rw_{rs}(a<b), sw_{rs}(a>b), w_{rs}\}\cup \{t, sw_{st}(b<c), tw_{st}(b>c) ,w_{st} \}\cup \{ rt\},
\]
and then using $ \af(d_1)=\af(d_2)=\af(d_3) $ to determine the $ a/b, c/b $. Note that when $ m_{rs}$ or $ m_{st} $ is $\infty $, we need to ignore some elements. We omit the details.
\end{proof}

\subsection{Examples, I}
Assume that $ b=c $, $ m_{rs} =2m$ and $ k=m_{st} $ for some $ m,k\in \mathbb{N} $.
The cell partition is determined by $ a/b \in \mathbb{R}_{>0}$.

 Let $ d_1,d_2\in D $ with $ \af(d_1)=\af(d_2) $. This equation on $ a,b $ gives a value $ h $ of $ a/b $.  We call $ h $  a critical value  when $ d_1 ,d_2$ are  in the same two-sided cell.  By definition of $ \Omega_{ N} $ and Theorem \ref{thm:92}, the cell partition is completely determined by the relative position of $ a/b $ with all such critical values.

By Theorem \ref{thm:twocells}, the possible critical values are given by equations\[
\af(r)=\af(s),
\]
\[
\af(rw_{rs})=\af(w_{st})\text{ with }a<b,
\]\[
\af(sw_{rs})=\af(w_{st})\text{ with }a>b,
\]\[
\af(w_{rs})=\af(w_{st}),
\]
\[
\af(rw_{rs})=\af(rt) \text{ with }a<b, m_{st}\neq 3,
\]
\[
\af(sw_{rs})=\af(rt)\text{ with }a>b,
\]\[
 \af(w_{st})=\af(rt).
\]
Thus possible critical values of $ a/b $ are\[
1,\frac{m-k}{m-1}, \frac{m+k-1}{m},\frac{m-1}{m} (\text{for } k\neq 3), \frac{k-m}{m},\frac{m}{m-1}, k-1.
\]

For the hyperbolic Coxeter group $ 245 $, i.e. $ m=2 $, $ k=5 $, the critical values are
$ \frac12,1,\frac32,2,3,4. $
\begin{center}
\begin{tikzpicture}
\draw[->] (0,0)--(6,0) node[right] {$ a/b $};
\foreach \x/\xtext in {0, 0.5/\frac12, 1,1.5/\frac32,2,3,4}
\draw (\x cm,2pt) -- (\x cm,0) node[anchor=north] {$\xtext$};
\end{tikzpicture}
\end{center}
The critical values for the hyperbolic Coxeter group $ 238 $ are
$ \frac13,1, \frac32,\frac43,2. $
\begin{center}
\begin{tikzpicture}
\draw[->] (0,0)--(6,0) node[right] {$ a/b $};
\foreach \x/\xtext in {0, 0.3333/\frac13,1,1.5/ \frac32,2}
\draw (\x cm,2pt) -- (\x cm,0) node[anchor=north] {$\xtext$};
\draw (1.3333cm,2pt) -- (1.3333cm,0) node[anchor=south] {$\frac43$};
\end{tikzpicture}
\end{center}


\subsection{Examples, II}

Assume that $ m_{rs}=2m $, $ m_{st}=2n $ for some $ m $, $ n \in\mathbb{N}$.
Then $ (W,S) $ admits three parameters.

We can draw  the points $ (a/b,c/b) \in \mathbb{R}_{>0}^2 $ such that there exists some elements $ d_1\neq d_2$  of $ D $ with $ \af(d_1) =\af( d_2) $ and $ d_1,d_2 $ are in the same two-sided cell. These form a partition of $ \mathbb{R}_{>0}^2 $  by some linear hyperplanes (see Figure \ref{fig:246}). We call them critical hyperplanes.
By definition of $ \Omega_{ N} $ and Theorem \ref{thm:92},
the cell partition of $ (W,S) $ is completely determined by  the relative position of $ (a/b,c/b) $ with critical hyperplanes \footnote{These hyperplanes should be the ones  in Bonnaf{\'e}'s semi-continuity conjecture, see  \cite{bonnafe2009semi-cont}.}. This visualizes how the cell partitions depend on parameters.  Note that, for example, if $ d_1=rw_{rs} $,  we need to require $ a<b $, i.e. $ 0<a/b<1 $.

\tiny

\begin{figure}[h]
     \centering
\newcommand{\m}{2}
\newcommand{\n}{3}
\renewcommand{\d}{20}
\begin{tikzpicture} [domain=0:\d,scale=0.225]
\draw[->](0,0)--(\d+.2,0) node[right]{$ a/b$};
\draw [->](0,0)--(0,\d+0.2) node[above]{$ c/b $};

\foreach \x in {0,2,...,\d}
 \draw (\x,0) node[below]{$ { \x} $};

\foreach \x in {2,4,...,\d}
    \draw (0,\x) node[left]{$ { \x} $};

\clip (0,0) rectangle (\d,\d);

\draw (1,0)--(1,\d)node[above]{$ r,s$}; 
\draw plot(\x, 1)node[right]{$ s,t $}; 
\draw plot(\x, \x)node[right]{$ r,t $};

\draw  plot(\x,1-\x) node[right]{$ rt,s $};
\draw [domain=1:\d]plot(\x, \m*\x-\x-\m+1) node[right]{$ rt,rsr $};
\draw [domain=0:1] plot(\x,\m-\m*\x);
\draw plot(\x,{1+\x*1/(\n-1)})node[right]{$ rt,tst $};
\draw plot(\x,{1-1/\n*\x}) node[right]{$ rt,sts $};
\draw plot(\x,{\m+(\m-1)*\x}) node [right]{$ rt,w_{rs} $};
\draw plot(\x,{1/(\n-1)*(\x-\n)}) node [right]{$ rt,w_{st} $};

\draw[domain=1:\d,densely dotted] plot(\x,{\m*\x-(\m-1)}) node [right]{$ t,rsr$};
\draw [domain=0:1] plot(\x,{\m-(\m-1)*\x}); 
\draw [domain=0:\d/2,densely dotted] plot(\x,{\m*(1+\x)}) node [right]{$ t,w_{rs}$};
\draw [domain=1:\d]plot(\x,{1/(\n-1)*(\n-\x)}) node [right]{$ r,sts $};
\draw[domain=1:\d,densely dotted]  plot(\x,{1/\n*(\n-1+\x)}) node [right]{$ r,tst $};
\draw[densely dotted]  plot(\x,{1/\n*\x-1}) node [right]{$ r,w_{st} $};

\draw [densely dotted,domain=1:\m*\n/(\m*\n+1-\n)]  plot(\x,{1/(\n-1)*(\m+\n-1-\m*\x)});
\draw [domain=\m*\n/(\m*\n+1-\n):\d]  plot(\x,{1/(\n-1)*(\m+\n-1-\m*\x)}) node [right]{$ sts,rsr$};
\draw [domain=0:1]plot(\x,{1/(\n-1)*(\n-\m+(\m-1)*\x)}); 

\draw [densely dotted,domain=1:(\m*\n-\m)/(\m*\n-\m-\n)]plot(\x,{1/\n*(\m*\x+\n-\m)}); 
\draw [domain=(\m*\n-\m)/(\m*\n-\m-\n):\d]plot(\x,{1/\n*(\m*\x+\n-\m)});

\draw [domain=0:(\m-1)*(\n-1)/(\m*\n+1-\m)]plot(\x,{1/\n*(\n+\m-1-(\m-1)*\x)}) ;
\draw [densely dotted,domain=(\m-1)*(\n-1)/(\m*\n+1-\m):1]plot(\x,{1/\n*(\n+\m-1-(\m-1)*\x)}); 

\draw plot(\x,{1/(\n-1)*(\n-\m-\m*\x)}) node [right] {$ w_{rs},sts $};
\draw plot(\x,{1/\n*(\m*\x+\m+\n-1)}) node [right] {$ w_{rs},tst $};
\draw [domain=0:1]plot(\x,{1/\n*(\m-\n-(\m-1)*\x)}); 
\draw [domain=1:\d]plot(\x,{1/\n*(\m*\x-\m-\n+1)}) node [right] {$ w_{st},rsr $};

\draw plot(\x,{1/\n*(\m*\x+\m-\n)})node [right] {$ w_{rs},w_{st} $};
\draw (14,8)node[above=1pt]{$ F $};
\end{tikzpicture}
\renewcommand{\m}{2}
\renewcommand{\n}{3}
\renewcommand{\d}{5}
\begin{tikzpicture} [domain=0:\d,scale=0.9]
\draw[->](0,0)--(\d+.06,0) node[right]{$ a/b$};
\draw [->](0,0)--(0,\d+0.06) node[above]{$ c/b $};

\foreach \x in {0,2,...,\d}
 \draw (\x,0) node[below]{$ { \x} $};

\foreach \x in {2,4,...,\d}
    \draw (0,\x) node[left]{$ { \x} $};

\clip (0,0) rectangle (\d,\d);

\draw (1,0)--(1,\d)node[above]{$ r,s$}; 
\draw plot(\x, 1)node[right]{$ s,t $}; 
\draw plot(\x, \x)node[right]{$ r,t $};

\draw  plot(\x,1-\x) node[right]{$ rt,s $};
\draw [domain=1:\d]plot(\x, \m*\x-\x-\m+1) node[right]{$ rt,rsr $};
\draw [domain=0:1] plot(\x,\m-\m*\x);
\draw plot(\x,{1+\x*1/(\n-1)})node[right]{$ rt,tst $};
\draw plot(\x,{1-1/\n*\x}) node[right]{$ rt,sts $};
\draw plot(\x,{\m+(\m-1)*\x}) node [right]{$ rt,w_{rs} $};
\draw plot(\x,{1/(\n-1)*(\x-\n)}) node [right]{$ rt,w_{st} $};

\draw[domain=1:\d,densely dotted] plot(\x,{\m*\x-(\m-1)}) node [right]{$ t,rsr$};
\draw [domain=0:1] plot(\x,{\m-(\m-1)*\x}); 
\draw [domain=0:\d/2,densely dotted] plot(\x,{\m*(1+\x)}) node [right]{$ t,w_{rs}$};
\draw [domain=1:\d]plot(\x,{1/(\n-1)*(\n-\x)}) node [right]{$ r,sts $};
\draw[domain=1:\d,densely dotted]  plot(\x,{1/\n*(\n-1+\x)}) node [right]{$ r,tst $};
\draw[densely dotted]  plot(\x,{1/\n*\x-1}) node [right]{$ r,w_{st} $};

\draw [densely dotted,domain=1:\m*\n/(\m*\n+1-\n)]  plot(\x,{1/(\n-1)*(\m+\n-1-\m*\x)});
\draw [domain=\m*\n/(\m*\n+1-\n):\d]  plot(\x,{1/(\n-1)*(\m+\n-1-\m*\x)}) node [right]{$ sts,rsr$};
\draw [domain=0:1]plot(\x,{1/(\n-1)*(\n-\m+(\m-1)*\x)}); 

\draw [densely dotted,domain=1:(\m*\n-\m)/(\m*\n-\m-\n)]plot(\x,{1/\n*(\m*\x+\n-\m)}); 
\draw [domain=(\m*\n-\m)/(\m*\n-\m-\n):\d]plot(\x,{1/\n*(\m*\x+\n-\m)});

\draw [domain=0:(\m-1)*(\n-1)/(\m*\n+1-\m)]plot(\x,{1/\n*(\n+\m-1-(\m-1)*\x)}) ;
\draw [densely dotted,domain=(\m-1)*(\n-1)/(\m*\n+1-\m):1]plot(\x,{1/\n*(\n+\m-1-(\m-1)*\x)}); 

\draw plot(\x,{1/(\n-1)*(\n-\m-\m*\x)}) node [right] {$ w_{rs},sts $};
\draw plot(\x,{1/\n*(\m*\x+\m+\n-1)}) node [right] {$ w_{rs},tst $};
\draw [domain=0:1]plot(\x,{1/\n*(\m-\n-(\m-1)*\x)}); 
\draw [domain=1:\d]plot(\x,{1/\n*(\m*\x-\m-\n+1)}) node [right] {$ w_{st},rsr $};

\draw plot(\x,{1/\n*(\m*\x+\m-\n)})node [right] {$ w_{rs},w_{st} $};

\draw(2/5,6/5) node[left=2pt]{$ A $};
\draw(3/5,4/5) node[left=2pt]{$ B $};
\draw(3/2,1/2) node[below]{$ C $};
\draw(2,1) node[below]{$ D$};
\draw(4,3) node[below]{$ E$};
\draw(1,1) node[below]{$ O$};
\end{tikzpicture}

        \caption{Hyperplanes $ \af(d_1)=\af(d_2) $ for the Coxeter group 246. Real lines are critical hyperplanes while  dotted lines are $ \af(d_1)=\af(d_2) $ with  $ d_1, d_2$  in different two-sided cells.}
        \label{fig:246}
\end{figure}
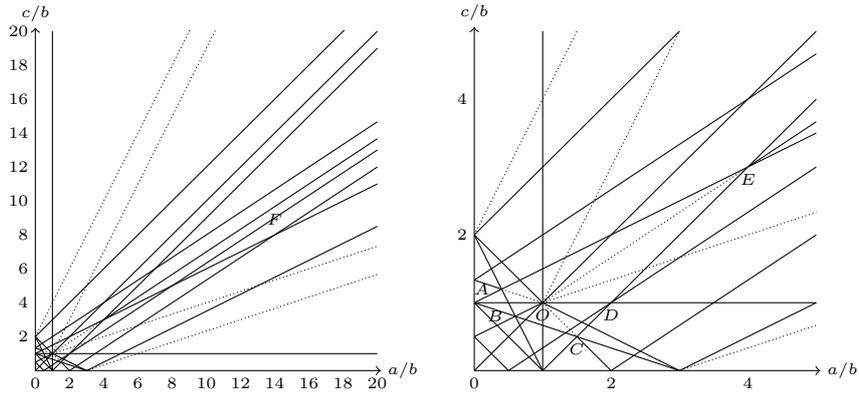

\newcommand{\sca}{0.7}
\newcommand{\picwid}{0.3}
\newcommand{\pla}{(-3,-2)}

\begin{figure}[h]
\centering
\begin{tikzpicture}[scale=\sca]
 \node at \pla {\includegraphics[width=\picwid\linewidth]{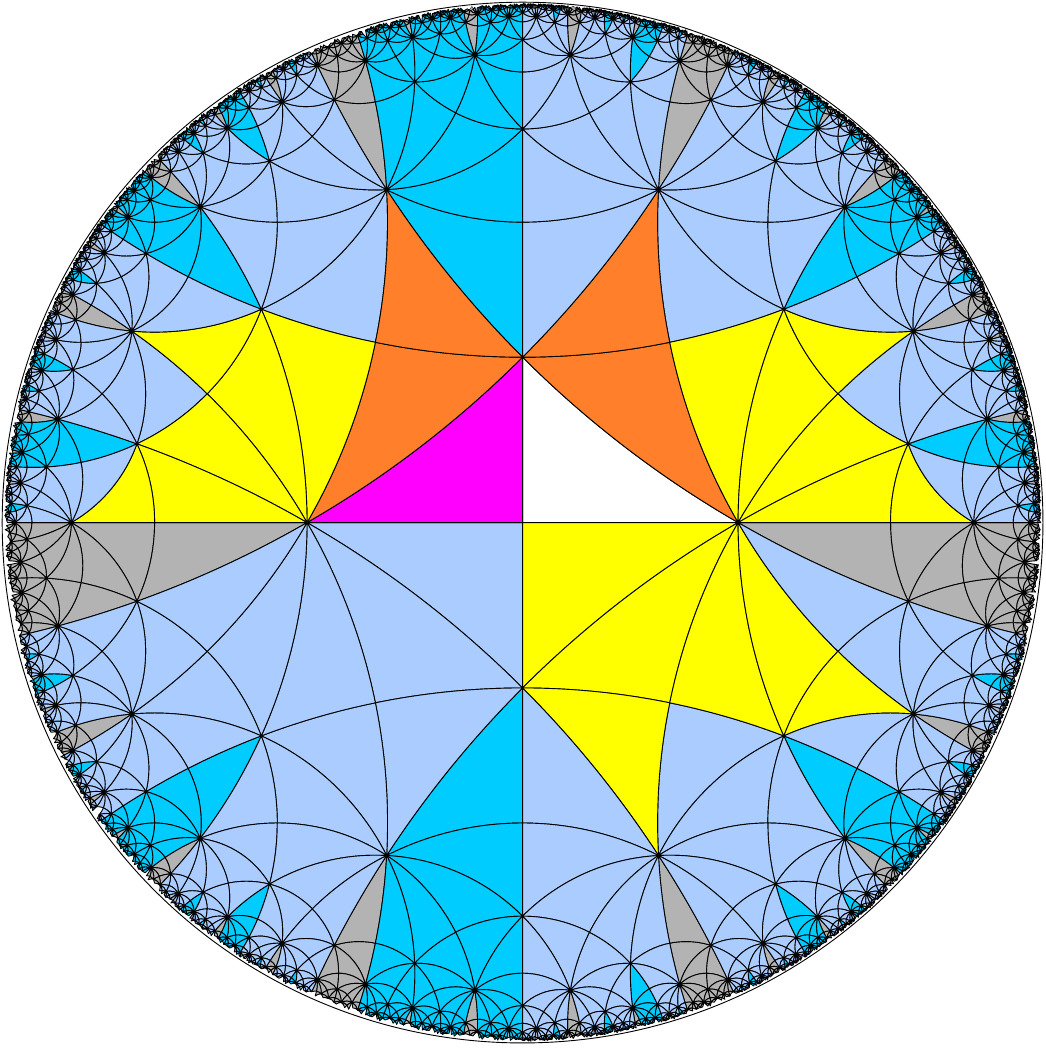}};
\draw (0,0.05) node[above=1pt]{$ r $} circle [radius=0.05];\draw (0.05,0.05)--++(0.65,0); \draw (0.75,0.05) node[above=1pt]{$ s $} circle[radius=0.05];  \draw (0.8,0.05)--++(0.65,0); \draw  (1.45,0) ++(0.05,0.05) node[above=1pt]{$ t $} circle[radius=0.05];
\draw (0.35,0.05)node [above=3pt]{$ 4 $}  ++(0.8,0) node[above=3pt]{$ 6 $};
\draw (0,0.05) node[below=1pt]{$ 2 $};
\draw (0.75,0.05) node[below=1pt]{$ 5 $};
\draw  (1.45,0) ++(0.05,0.05) node[below=1pt]{$ 6 $};

\filldraw[fill={rgb,255:red,255; green,255; blue,255}][xshift=0 cm,yshift=-1cm] (0,0) rectangle (0.2,0.2) +(0,-0.1);
\draw [yshift=-1cm] (0.3,0.1)node[right] {$ e $};

\filldraw[fill={rgb,255:red,255; green,0; blue,255}][xshift=0 cm,yshift=-1.5cm] (0,0) rectangle (0.2,0.2) +(0,-0.1);
\draw [yshift=-1.5cm] (0.3,0.1)node[right] {$ r$};

\filldraw[fill={rgb,255:red,255; green,127; blue,42}][xshift=0 cm,yshift=-2cm] (0,0) rectangle (0.2,0.2) +(0,-0.1);
\draw [yshift=-2cm] (0.3,0.1)node[right] {$ s$};

\filldraw[fill={rgb,255:red,255; green,255; blue,0}][xshift=0 cm,yshift=-2.5cm] (0,0) rectangle (0.2,0.2) +(0,-0.1);
\draw [yshift=-2.5cm] (0.3,0.1)node[right] {$ t$};

\filldraw[fill={rgb,255:red,170; green,204; blue,255}][xshift=0 cm,yshift=-3cm] (0,0) rectangle (0.2,0.2) +(0,-0.1);
\draw [yshift=-3cm] (0.3,0.1)node[right] {$ rw_{rs}$, $ rt $, $ sw_{st} $};

\filldraw[fill={rgb,255:red,0; green,204; blue,255}][xshift=0 cm,yshift=-3.5cm] (0,0) rectangle (0.2,0.2) +(0,-0.1);
\draw [yshift=-3.5cm] (0.3,0.1)node[right] {$ w_{rs}$};

\filldraw[fill={rgb,255:red,179; green,179; blue,179}][xshift=0 cm,yshift=-4cm] (0,0) rectangle (0.2,0.2) +(0,-0.1);
\draw [yshift=-4cm] (0.3,0.1)node[right] {$ w_{st}$};
\end{tikzpicture}
\begin{tikzpicture}[scale=\sca]
 \node at \pla {\includegraphics[width=\picwid\linewidth]{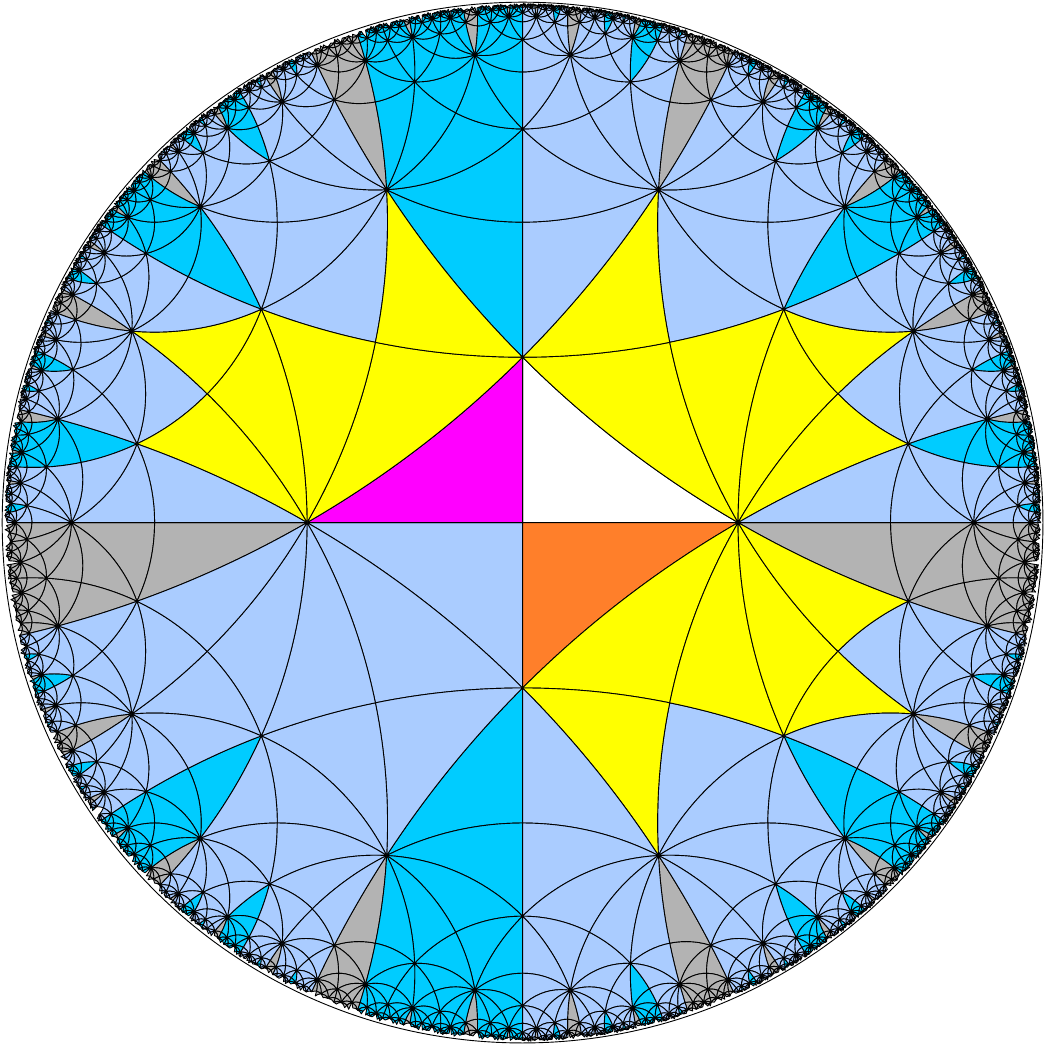}};
\draw (0,0.05) node[above=1pt]{$ r $} circle [radius=0.05];\draw (0.05,0.05)--++(0.65,0); \draw (0.75,0.05) node[above=1pt]{$ s $} circle[radius=0.05];  \draw (0.8,0.05)--++(0.65,0); \draw  (1.45,0) ++(0.05,0.05) node[above=1pt]{$ t $} circle[radius=0.05];
\draw (0.35,0.05)node [above=3pt]{$ 4 $}  ++(0.8,0) node[above=3pt]{$ 6 $};
\draw (0,0.05) node[below=1pt]{$ 3$};
\draw (0.75,0.05) node[below=1pt]{$ 5 $};
\draw  (1.45,0) ++(0.05,0.05) node[below=1pt]{$ 4$};

\filldraw[fill={rgb,255:red,255; green,255; blue,255}][xshift=0 cm,yshift=-1cm] (0,0) rectangle (0.2,0.2) +(0,-0.1);
\draw [yshift=-1cm] (0.3,0.1)node[right] {$ e $};

\filldraw[fill={rgb,255:red,255; green,0; blue,255}][xshift=0 cm,yshift=-1.5cm] (0,0) rectangle (0.2,0.2) +(0,-0.1);
\draw [yshift=-1.5cm] (0.3,0.1)node[right] {$ r$};

\filldraw[fill={rgb,255:red,255; green,127; blue,42}][xshift=0 cm,yshift=-2cm] (0,0) rectangle (0.2,0.2) +(0,-0.1);
\draw [yshift=-2cm] (0.3,0.1)node[right] {$ t$};

\filldraw[fill={rgb,255:red,255; green,255; blue,0}][xshift=0 cm,yshift=-2.5cm] (0,0) rectangle (0.2,0.2) +(0,-0.1);
\draw [yshift=-2.5cm] (0.3,0.1)node[right] {$ s$};

\filldraw[fill={rgb,255:red,170; green,204; blue,255}][xshift=0 cm,yshift=-3cm] (0,0) rectangle (0.2,0.2) +(0,-0.1);
\draw [yshift=-3cm] (0.3,0.1)node[right] {$ rw_{rs}$, $ rt $, $ tw_{st} $};

\filldraw[fill={rgb,255:red,0; green,204; blue,255}][xshift=0 cm,yshift=-3.5cm] (0,0) rectangle (0.2,0.2) +(0,-0.1);
\draw [yshift=-3.5cm] (0.3,0.1)node[right] {$ w_{rs}$};

\filldraw[fill={rgb,255:red,179; green,179; blue,179}][xshift=0 cm,yshift=-4cm] (0,0) rectangle (0.2,0.2) +(0,-0.1);
\draw [yshift=-4cm] (0.3,0.1)node[right] {$ w_{st}$};
\end{tikzpicture}
\begin{tikzpicture}[scale=\sca]
 \node at \pla {\includegraphics[width=\picwid\linewidth]{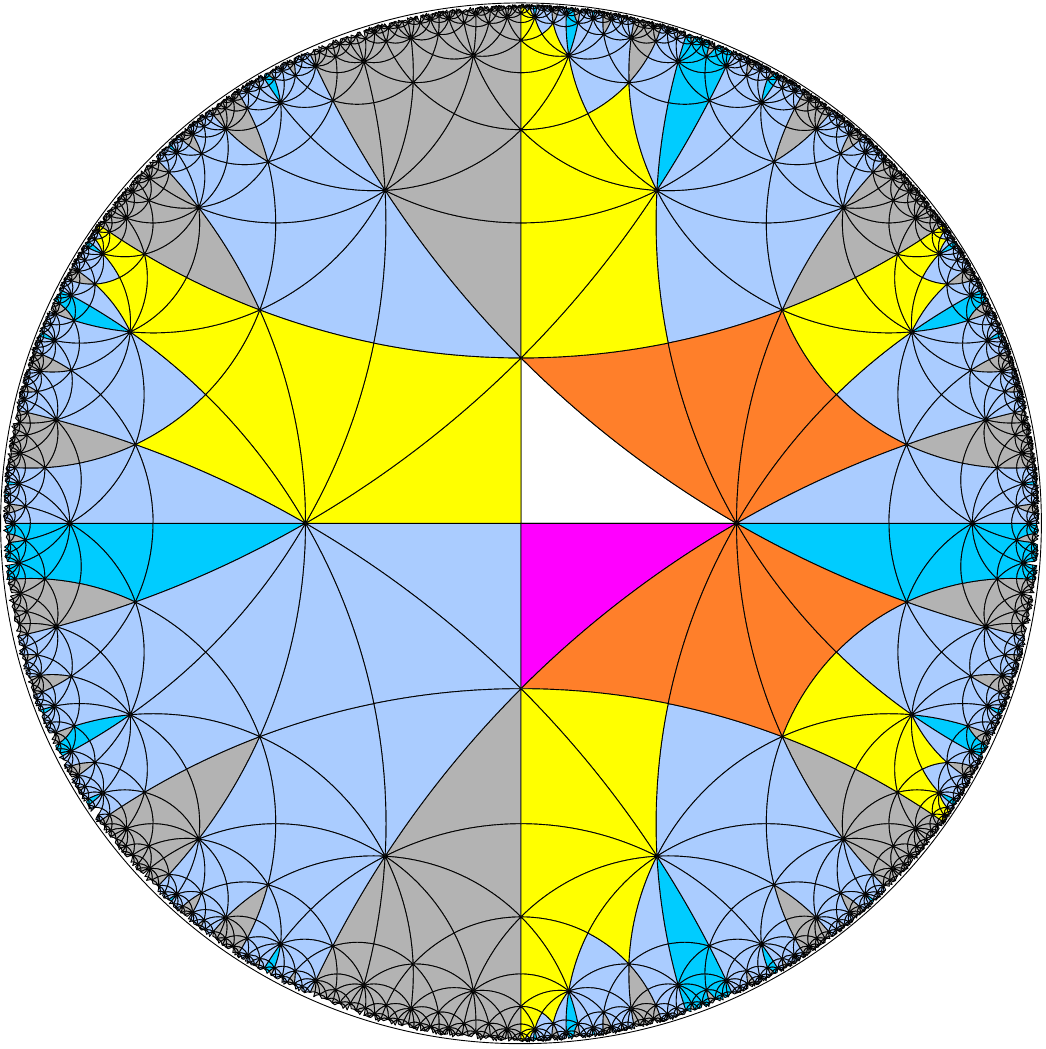}};
\draw (0,0.05) node[above=1pt]{$ r $} circle [radius=0.05];\draw (0.05,0.05)--++(0.65,0); \draw (0.75,0.05) node[above=1pt]{$ s $} circle[radius=0.05];  \draw (0.8,0.05)--++(0.65,0); \draw  (1.45,0) ++(0.05,0.05) node[above=1pt]{$ t $} circle[radius=0.05];
\draw (0.35,0.05)node [above=3pt]{$ 4 $}  ++(0.8,0) node[above=3pt]{$ 6 $};
\draw (0,0.05) node[below=1pt]{$ 3$};
\draw (0.75,0.05) node[below=1pt]{$ 2 $};
\draw  (1.45,0) ++(0.05,0.05) node[below=1pt]{$ 1$};

\filldraw[fill={rgb,255:red,255; green,255; blue,255}][xshift=0 cm,yshift=-1cm] (0,0) rectangle (0.2,0.2) +(0,-0.1);
\draw [yshift=-1cm] (0.3,0.1)node[right] {$ e $};

\filldraw[fill={rgb,255:red,255; green,0; blue,255}][xshift=0 cm,yshift=-1.5cm] (0,0) rectangle (0.2,0.2) +(0,-0.1);
\draw [yshift=-1.5cm] (0.3,0.1)node[right] {$ t$};

\filldraw[fill={rgb,255:red,255; green,127; blue,42}][xshift=0 cm,yshift=-2cm] (0,0) rectangle (0.2,0.2) +(0,-0.1);
\draw [yshift=-2cm] (0.3,0.1)node[right] {$ s$};

\filldraw[fill={rgb,255:red,255; green,255; blue,0}][xshift=0 cm,yshift=-2.5cm] (0,0) rectangle (0.2,0.2) +(0,-0.1);
\draw [yshift=-2.5cm] (0.3,0.1)node[right] {$ r$};

\filldraw[fill={rgb,255:red,170; green,204; blue,255}][xshift=0 cm,yshift=-3cm] (0,0) rectangle (0.2,0.2) +(0,-0.1);
\draw [yshift=-3cm] (0.3,0.1)node[right] {$ sw_{rs}$, $ rt $, $ tw_{st} $};

\filldraw[fill={rgb,255:red,0; green,204; blue,255}][xshift=0 cm,yshift=-3.5cm] (0,0) rectangle (0.2,0.2) +(0,-0.1);
\draw [yshift=-3.5cm] (0.3,0.1)node[right] {$ w_{st}$};

\filldraw[fill={rgb,255:red,179; green,179; blue,179}][xshift=0 cm,yshift=-4cm] (0,0) rectangle (0.2,0.2) +(0,-0.1);
\draw [yshift=-4cm] (0.3,0.1)node[right] {$ w_{rs}$};
\end{tikzpicture}
\begin{tikzpicture}[scale=\sca]
 \node at \pla {\includegraphics[width=\picwid\linewidth]{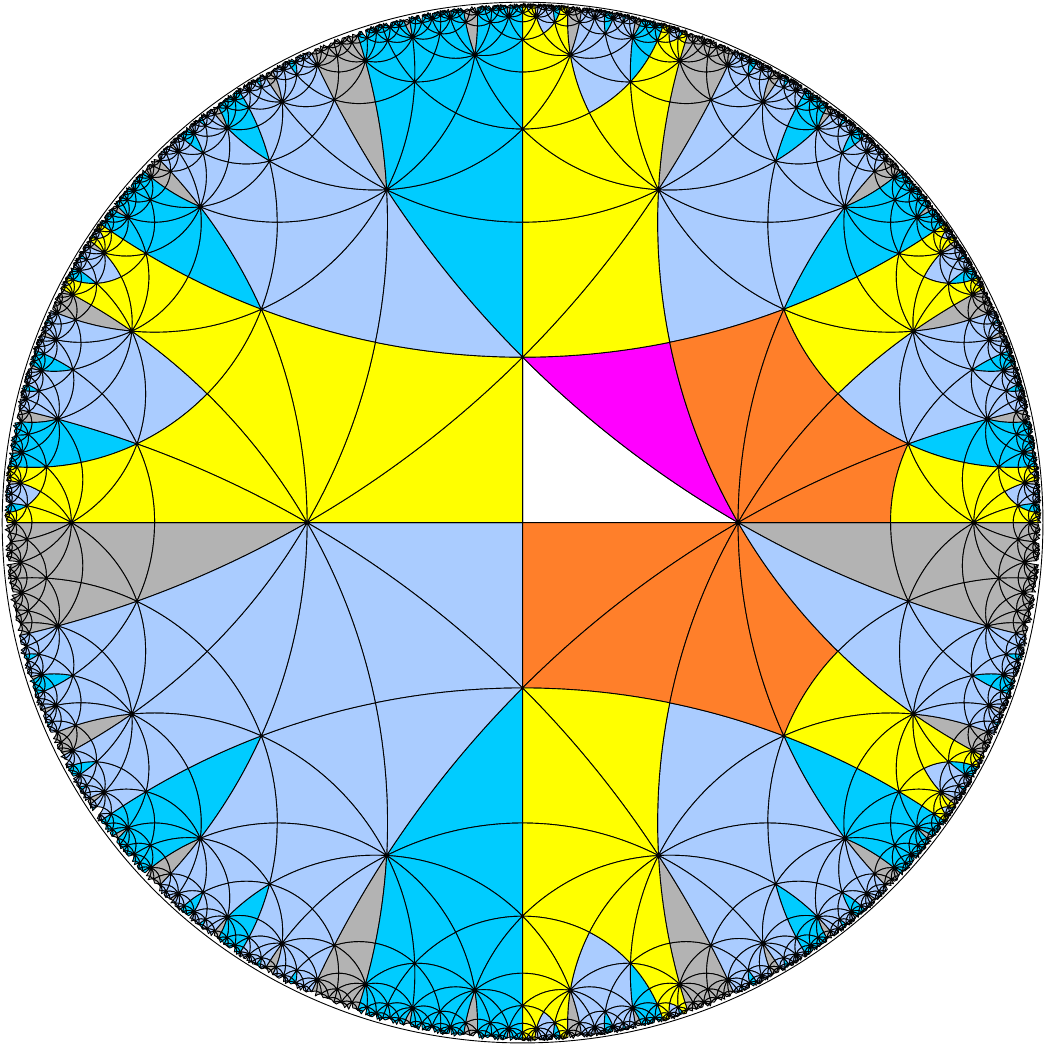}};
\draw (0,0.05) node[above=1pt]{$ r $} circle [radius=0.05];\draw (0.05,0.05)--++(0.65,0); \draw (0.75,0.05) node[above=1pt]{$ s $} circle[radius=0.05];  \draw (0.8,0.05)--++(0.65,0); \draw  (1.45,0) ++(0.05,0.05) node[above=1pt]{$ t $} circle[radius=0.05];
\draw (0.35,0.05)node [above=3pt]{$ 4 $}  ++(0.8,0) node[above=3pt]{$ 6 $};
\draw (0,0.05) node[below=1pt]{$ 4$};
\draw (0.75,0.05) node[below=1pt]{$ 1 $};
\draw  (1.45,0) ++(0.05,0.05) node[below=1pt]{$ 3$};

\filldraw[fill={rgb,255:red,255; green,255; blue,255}][xshift=0 cm,yshift=-1cm] (0,0) rectangle (0.2,0.2) +(0,-0.1);
\draw [yshift=-1cm] (0.3,0.1)node[right] {$ e $};

\filldraw[fill={rgb,255:red,255; green,0; blue,255}][xshift=0 cm,yshift=-1.5cm] (0,0) rectangle (0.2,0.2) +(0,-0.1);
\draw [yshift=-1.5cm] (0.3,0.1)node[right] {$ s$};

\filldraw[fill={rgb,255:red,255; green,127; blue,42}][xshift=0 cm,yshift=-2cm] (0,0) rectangle (0.2,0.2) +(0,-0.1);
\draw [yshift=-2cm] (0.3,0.1)node[right] {$ t$};

\filldraw[fill={rgb,255:red,255; green,255; blue,0}][xshift=0 cm,yshift=-2.5cm] (0,0) rectangle (0.2,0.2) +(0,-0.1);
\draw [yshift=-2.5cm] (0.3,0.1)node[right] {$ r$};

\filldraw[fill={rgb,255:red,170; green,204; blue,255}][xshift=0 cm,yshift=-3cm] (0,0) rectangle (0.2,0.2) +(0,-0.1);
\draw [yshift=-3cm] (0.3,0.1)node[right] {$ sw_{rs}$, $ rt $, $ sw_{st} $};

\filldraw[fill={rgb,255:red,0; green,204; blue,255}][xshift=0 cm,yshift=-3.5cm] (0,0) rectangle (0.2,0.2) +(0,-0.1);
\draw [yshift=-3.5cm] (0.3,0.1)node[right] {$ w_{rs}$};

\filldraw[fill={rgb,255:red,179; green,179; blue,179}][xshift=0 cm,yshift=-4cm] (0,0) rectangle (0.2,0.2) +(0,-0.1);
\draw [yshift=-4cm] (0.3,0.1)node[right] {$ w_{st}$};
\end{tikzpicture}
\caption{Cells of 246 with unequal parameters such that $ |D_N| \geq 3$ for some $ N $}
\label{fig:246-E}
\end{figure}

\normalsize

Let us consider the example with $ m_{rs}=4 $ and $ m_{st}=6 $. See  Figure \ref{fig:246} for critical hyperplanes.
The points with more than 3 hyperplanes passing are
$ O(1,1),$  $A(\frac25,$  $\frac65),$  $B(\frac35,$  $\frac45),$  $C(\frac32,$  $\frac12),$  $D(2,1),$  $E(4,3)$.
The furthest point with two hyperplanes passing is $ F(14,8) $.
By writing down  $ D_N $, one can see that only for points $ O,A,B,C,E $  there exists $ N $ such that $ |D_N|\geq 3 $, which is consistent with Theorem \ref{thm:3cells}. See Figure \ref{fig:246-E}  for cell partitions for points $ A,B,C,E $. For  the point $ D $, taking $ (a,b,c)=(2,1,1) $, we have $ D_1=\{s,t\} $, $ D_2=\{r\} $, $ D_3=\{rt,rsr\} $, $ D_6=\{rsrs,ststst\} $. For the point $ F $, taking $ (a,b,c)=(14,1,8) $,  we have $ D_1=\{s \} $, $ D_{8}=\{t\} $, $ D_{14}=\{r\} $, $ D_{22} =\{tstst\}$, $ D_{27}=\{rsr,ststst \} , $ $ D_{30}=\{rsrs\} $.


\subsection*{Acknowledgments}
We would like to thank Gaston Burrull for helping us color cells in alcoves,  thank Paul Gunnells for the beautiful pictures on his homepage about cells, and thank Mikhail Belolipetsky for helpful email correspondence.

The  second named author  would like to thank  the University of Sydney and Sydney Mathematical Research Institute for hospitality during his visit, and he is  supported by NSFC Grant No.11601116,  No. 11801031 and Beijing Institute of Technology Research Fund Program for Young Scholars.

\medskip
\bibliography{hyperbolic}

\begin{thebibliography}{BGS14}

\bibitem[BGS14]{Belolipetsky2014hyperbolic_cells}
M.~Belolipetsky, P.~Gunnells, and R.~Scott.
\newblock Kazhdan-{L}usztig cells in planar hyperbolic {C}oxeter groups and
  automata.
\newblock {\em Internat. J. Algebra Comput.}, 24(5):757--772, 2014.

\bibitem[Bon09]{bonnafe2009semi-cont}
C.~Bonnaf\'{e}.
\newblock Semicontinuity properties of {K}azhdan-{L}usztig cells.
\newblock {\em New Zealand J. Math.}, 39:171--192, 2009.

\bibitem[Bon17]{bonnafe2017book}
C.~Bonnaf\'{e}.
\newblock {\em Kazhdan-{L}usztig cells with unequal parameters}, volume~24 of
  {\em Algebra and Applications}.
\newblock Springer, Cham, 2017.

\bibitem[EW14]{elias-williamson2014positivity}
B.~Elias and G.~Williamson.
\newblock The {H}odge theory of {S}oergel bimodules.
\newblock {\em Ann. of Math. (2)}, 180(3):1089--1136, 2014.

\bibitem[Gao19]{gao}
J.~Gao.
\newblock The boundness of weighted {C}oxeter groups of rank 3.
\newblock {\em arXiv:1607.02286v10}, 2019.

\bibitem[Gec11]{geck2011rank2}
M.~Geck.
\newblock On {I}wahori-{H}ecke algebras with unequal parameters and {L}usztig's
  isomorphism theorem.
\newblock {\em Pure Appl. Math. Q.}, 7(3, Special Issue: In honor of Jacques
  Tits):587--620, 2011.

\bibitem[GP18]{guilhot-parkinson2019C2}
J.~Guilhot and J.~Parkinson.
\newblock Balanced representations, the asymptotic {P}lancherel formula, and
  {L}usztig's conjectures for $\tilde{C}_2$.
\newblock {\em arXiv:1803.11067v2}, 2018.

\bibitem[GP19]{guilhot-parkinson2019G2}
J.~Guilhot and J.~Parkinson.
\newblock A proof of {L}usztig's conjectures for affine type {$G_2$} with
  arbitrary parameters.
\newblock {\em Proc. Lond. Math. Soc. (3)}, 118(5):1188--1244, 2019.

\bibitem[Gui08]{guilhot2008computedrank2}
J.~Guilhot.
\newblock Some computations about {K}azhdan-{L}usztig cells in affine {W}eyl
  groups of rank 2.
\newblock {\em arXiv:0810.5165}, 2008.

\bibitem[Gui10]{guilhot2010cells}
J.~Guilhot.
\newblock Kazhdan-{L}usztig cells in affine {W}eyl groups of rank 2.
\newblock {\em Int. Math. Res. Not.}, (17):3422--3462, 2010.

\bibitem[Lus03]{lusztig2003}
G.~Lusztig.
\newblock {\em Hecke algebras with unequal parameters}, volume~18 of {\em CRM
  Monograph Series}.
\newblock American Mathematical Society, Providence, RI, 2003.

\bibitem[SY15]{Shi-Yang2015universal}
J.~Shi and G.~Yang.
\newblock The weighted universal {C}oxeter group and some related conjectures
  of {L}usztig.
\newblock {\em J. Algebra}, 441:678--694, 2015.

\bibitem[Xie19]{xie2019}
X.~Xie.
\newblock Conjectures {P}1-{P}15 for {C}oxeter groups with complete graph.
\newblock {\em arXiv:1903.00078v2}, 2019.

\bibitem[Zho13]{zhou}
P.~Zhou.
\newblock Lusztig's {$a$}-function for {C}oxeter groups of rank 3.
\newblock {\em J. Algebra}, 384:169--193, 2013.

\end{thebibliography}

\end{document}